\documentclass[12pt, reqno]{amsart}
\setlength{\textwidth}{6.0in}
\setlength{\oddsidemargin}{0.25in}
\setlength{\evensidemargin}{0.25in}

\usepackage{amsfonts}
\usepackage{amsthm}
\usepackage{amscd}
\usepackage{amssymb}
\usepackage{graphics}
\usepackage{amsmath}
\usepackage[english]{babel}
\usepackage{hyperref}
\usepackage{bbm}
\usepackage{yfonts}
\usepackage{mathrsfs}
\usepackage{color}
\usepackage{epsfig}
\usepackage{graphicx}
%


\allowdisplaybreaks[4]
\DeclareFontFamily{OML}{rsfs}{\skewchar\font'177}
\DeclareFontShape{OML}{rsfs}{m}{n}{ <5> <6> rsfs5 <7> <8> <9> rsfs7
<10> <10.95> <12> <14.4> <17.28> <20.74> <24.88> rsfs10 }{}
\DeclareMathAlphabet{\mathfs}{OML}{rsfs}{m}{n}


\addtolength{\hoffset}{-0.5cm} \addtolength{\textwidth}{1cm}

\newtheorem{prop}{Proposition}[section]

\newtheorem{thm}[prop]{Theorem}
\newtheorem{lem}[prop]{Lemma}

\newtheorem{conj}[prop]{Conjecture}

\newtheorem{defn}[prop]{Definition}

\newtheorem{clm}[prop]{Claim}

\newtheorem{ques}[prop]{Question}

\newtheorem{ass}[prop]{Assumption}


\newcommand{\BE}{{\mathbb{E}}}

\newcommand{\BN}{{\mathbb{N}}}

\newcommand{\BP}{{\mathbb{P}}}

\newcommand{\BR}{{\mathbb{R}}}

\newcommand{\BZ}{{\mathbb{Z}}}

\newcommand{\FB}{{\mathfrak{B}}}

\newcommand{\CE}{{\mathcal{E}}}

\newcommand{\CP}{{\mathcal{P}}}
\newcommand{\CQ}{{\mathcal{Q}}}


\newcommand{\ind}{{\mathbbm{1}}}

\newcommand{\bae}{\begin{equation}\begin{aligned}}
\newcommand{\eae}{\end{aligned}\end{equation}}


\newcommand{\om}{{\omega}}
\newcommand{\Om}{{\Omega}}
\newcommand{\si}{{\sigma}}

\newcommand{\gth}{{\theta}}

\newcommand{\ep}{{\epsilon}}


\begin{document}
\numberwithin{equation}{section} \numberwithin{figure}{section}

\newtheorem*{theo}{Theorem}
\newtheorem*{propo}{Proposition}

\title{Random Walk on Discrete Point Processes}
\author{Ron Rosenthal}

\maketitle

\begin{abstract}
We consider a model for random walks on random environments (RWRE)
with random subset of $\BZ^d$ as the vertices, and uniform
transition probabilities on $2d$ points (two "coordinate nearest
points" in each of the $d$ coordinate directions). We prove that the
velocity of such random walks is almost surely $0$, and give partial
characterization of transience and recurrence in the different
dimensions. Finally we prove Central Limit Theorem (CLT) for such
random walks, under a condition on the distance between coordinate
nearest points.
\end{abstract}



\section{Introduction}

\subsection{Background}
$~$\\

Random walk on random environments is the object of intensive
mathematical research for more then 3 decades. It deals with models
from condensed matter physics, physical chemistry, and many other
fields of research. The common subject of all models is the
investigation of movement of particles in an inhomogeneous media. It
turnes out that the randomness of the media (i.e. the environment)
is responsible for some unexpected results, especially in large
scale behavior. In the general case, the random walk takes place in
a countable graph $(V,E)$, but the most investigated models deals
with the graph of the $d$-dimensional integer lattice, (i.e.
$V=\BZ^d$). For some of the results on those models see \cite{Ze03},
\cite{Sz00}, \cite{Hu96} and \cite{Re05}. The definition of RWRE
involves two steps: First the environment is randomly chosen by some
given probability, then the random walk, which takes place on this
given fixed environment, is a Markov chain with transition
probabilities that depend on the environment. We note that the
environment is kept fixed and does not evolve during the random
walk, and that the random walk, given an environment, is not
necessarily reversible. The questions on RWRE come in two major
types: quenched, in which the walk is distributed according to a
given typical environment, and annealed, in which the distribution
of the walk is taken according to an average on the environments.
The two main differences between the quenched and the annealed are:
First the quenched is Markovian, while the annealed distribution is
usually not. Second, in most of the models we assume some kind of
translation invariance on the environments and therefore annealed is
usually translation invariance while quenched is not. In contrast to
most of the models for RWRE on $\BZ^d$, this work deals with non
nearest neighbor random walks. The subject of non nearest neighbor
random walks has not been systematically studied. For results on
long range percolation see \cite{Be01}. For literature on the
subject in the one dimensional case see \cite{BG08},\cite{Br02},
\cite{CS09}. For some results on bounded non nearest neighbors see
\cite{Ke84}. For some results that are valid in that general case
see \cite{Va02} and \cite{caputo2009invariance}. For recurrence and transience criteria for random
walks on random point processes, with transition probabilities
between every two points proportional to their distance, see
\cite{CFG8}. Our model also has the property that the random walk is
reversible. For some results in this topic see \cite{BBHK},
\cite{BP07}, \cite{MP07} and \cite{SS09}.


%
%

\subsection{The Model}
$~$\\

Let $\BZ^d$ be the $d$-dimensional lattice of integers. We define
$\Om=\{0,1\}^{\BZ^d}$ and $\mathfrak{B}$ the Borel $\si$-algebra
(with respect to the product topology) on $\Om$. Let $Q$ be a
probability measure on $\Om$. We assume the following about $Q$:

\begin{ass} $~$\\
\label{Assumptions}
\begin{enumerate}

\item $Q$ is stationary and ergodic with respect to each of
$\{\gth_{e_i}\}_{i=1}^{d}$, where $e_i$ is the $i^{\mbox{th}}$ principal axes and for $x\in\BZ^d$
we define $\gth_x:\Om\rightarrow\Om$ as the shift in direction $x$, i.e for every $y\in\BZ^d$ and every $\om\in\Om$
we have $\gth_{x}(\om)(y)=\om(x+y)$.\\

\item $Q(\CP(\om) = \emptyset)<1$, where
$\mathcal{P}(\om)=\{x\in\BZ^d:\om(x)=1\}$.
\end{enumerate}
\end{ass}

We denote by $\CE=\{\pm e_i\}_{i=1}^d$ the set of $2d$ points in
$\BZ^d$ with length $1$.

Let $\Om_0=\{\om\in\Om:\om(0)=1\}$, it follows from assumption
\ref{Assumptions} that $Q(\Om_0)>0$. We can therefore define the
probability $P$ on $\Om_0$ as the conditional probability on $\Om_0$
of $Q$, i.e.:
\begin{equation}
P(B)=Q(B|\Om_0)=\frac{Q(B\cap \Om_0)}{Q(\Om_0)}~~~~~\forall
B\in\mathfrak{B}. \label{P_definition}
\end{equation}
We denote by $\BE_Q$ and $\BE_P$ the expectation with respect to $Q$
and $P$ respectively.\\

\begin{clm}
Given $\om\in\Om$ and $v\in\CP(\om)$, for every vector $e\in\CE$
there exist $Q$ almost surely infinitely many $k\in\BN$ such that
$v+ke\in\CP(\om)$. \label{The_model_claim}
\end{clm}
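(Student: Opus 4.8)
The plan is to reduce the statement to a single point $v$ and a single direction $e$, and then recognize it as an instance of the Poincar\'e recurrence theorem for the shift. Since $\BZ^d\times\CE$ is countable, it suffices to prove that for each fixed $v\in\BZ^d$ and each fixed $e\in\CE$ we have, $Q$-almost surely, the implication: if $v\in\CP(\om)$ then $v+ke\in\CP(\om)$ for infinitely many $k\in\BN$. The claim as stated then follows by intersecting the corresponding (countably many) full-measure events.

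So fix $v$ and $e$, and write $e=\pm e_i$. From the definition $\gth_x(\om)(y)=\om(x+y)$ one checks by induction that $\gth_e^{\,k}=\gth_{ke}$, and hence, setting $A_v:=\{\om\in\Om:\om(v)=1\}$,
\[
\gth_e^{\,k}(\om)\in A_v \;\Longleftrightarrow\; \om(v+ke)=1 \;\Longleftrightarrow\; v+ke\in\CP(\om)
\qquad\text{for every }k\in\BN,\ \om\in\Om.
\]
Because $Q$ is stationary with respect to $\gth_{e_i}$, the map $\gth_{e_i}$ is a $Q$-preserving bijection of $(\Om,\mathfrak{B})$; therefore so is its inverse $\gth_{-e_i}$, and in either case $T:=\gth_e$ is a measure-preserving transformation of the probability space $(\Om,\mathfrak{B},Q)$.

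It remains to apply the Poincar\'e recurrence theorem to $T$ and the measurable set $A_v$: for $Q$-almost every $\om\in A_v$ one has $T^k\om\in A_v$ for infinitely many $k\ge 1$. Reading this through the displayed equivalence shows that, $Q$-almost surely on the event $\{v\in\CP(\om)\}$, there are infinitely many $k\in\BN$ with $v+ke\in\CP(\om)$---precisely the fixed-$(v,e)$ statement. (Alternatively one may invoke Birkhoff's ergodic theorem together with the ergodicity of $Q$ under $\gth_{e_i}$; this gives the stronger conclusion $\frac1N\,\#\{1\le k\le N: v+ke\in\CP(\om)\}\to Q(A_v)=Q(\Om_0)>0$, which again forces infinitely many such $k$.) Taking the intersection over all $(v,e)\in\BZ^d\times\CE$ finishes the proof.

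There is essentially no hard step here: the whole point is to rewrite the event as ``the $T$-orbit of $\om$ visits the cylinder $A_v$ infinitely often,'' after which it is a direct citation of a classical theorem. The only things needing a little care are that the countable-union reduction is legitimate---it is, since ``infinitely many'' is stable under it---that $\gth_{-e_i}$ inherits measure-preservation from $\gth_{e_i}$, and that one uses the infinitely-often form of recurrence (or the ergodic theorem) rather than the single-return form.
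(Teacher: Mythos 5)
Your proof is correct, but its primary route differs from the paper's. The paper fixes $v$ and $e$ and applies Birkhoff's Ergodic Theorem to $\mathbbm{1}_{\Om_v}$ under the shift $\gth_e$, using ergodicity and stationarity to conclude that the time average of visits to $\Om_v$ equals $Q(\Om_v)=Q(\Om_0)>0$ almost surely, whence infinitely many returns; this is exactly your parenthetical alternative. Your main argument instead invokes the infinitely-often form of the Poincar\'{e} recurrence theorem for $T=\gth_e$ and the cylinder $A_v$, which is a genuinely lighter tool: it needs only that $\gth_e$ preserves $Q$ (no ergodicity), and it directly yields ``$Q$-a.s.\ on $\{v\in\CP(\om)\}$, infinitely many $k$ with $v+ke\in\CP(\om)$,'' without passing through a density statement. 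You also make explicit two points the paper glosses over: that the conclusion for the directions $-e_i$ follows because $\gth_{-e_i}=\gth_{e_i}^{-1}$ inherits measure preservation from the invertible $\gth_{e_i}$, and that the statement for all $(v,e)$ simultaneously follows by intersecting countably many full-measure events. What the paper's Birkhoff route buys in exchange is the quantitative conclusion that returns occur with positive asymptotic frequency $Q(\Om_0)$, a fact in the spirit of what is used later (e.g.\ in the Law of Large Numbers argument), whereas your route is more economical in hypotheses; note that the paper itself uses Poincar\'{e} recurrence in Section 2 for the induced shift, so your argument is entirely consonant with its toolkit.
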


\begin{proof}
Given $\om$,$v$ and a vector $e$ as above, since $\gth_e$ is measure
preserving and ergodic with respect to $Q$, if we define
$\Om_v=\{\om\in\Om:v\in\CP(\om)\}$ then $\mathbbm{1}_{\Om_v}\in
L^1(\Om,\mathfrak{B},Q)$, and therefore by Birkhoff's Ergodic
Theorem
\[\lim_{n\rightarrow\infty}\frac{1}{n}\sum_{k=0}^{n-1}{\gth_e^k\mathbbm{1}_{\Om_v}}=\BE_Q(\mathbbm{1}_{\Om_v})=Q(\Om_v)=Q(\Om_0)>0~~~Q~a.s.\]
Consequently, there $Q$ almost surely exist infinitely many integers
such that $\gth_e^k\mathbbm{1}_{A_v}=1$, and therefore infinitely
many $k\in\BN$ such that $v+ke\in\CP(\om)$.
\end{proof}

We define for every $v\in\BZ^d$ the set $N_v(\om)$ of the $2d$
"coordinate nearest neighbors" in $\om$, one for each direction. By
Claim \ref{The_model_claim}
$N_v(\om)$ is $Q$ almost surely a set of $2d$ points in $\BZ^d$.\\

\begin{figure}\label{fig:fig1}
\epsfig{figure=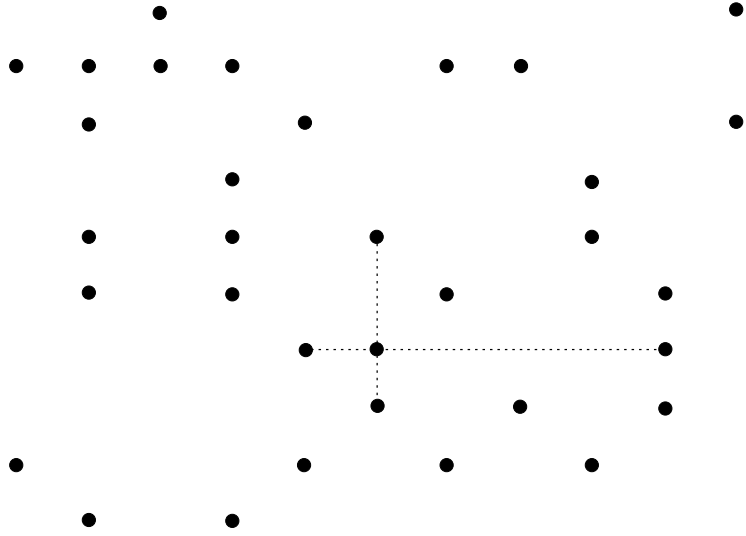, width=0.6\textwidth}
\smallskip
\caption{An example for nearest coordinate points}
\end{figure}

We can now define a random walk for $P$-almost every $\om\in \Om_0$
(on the space $((\BZ^d)^\BN,\mathcal{G},P_\om)$, where $\mathcal{G}$
is the $\si$-algebra generated by cylinder functions) as the Markov
chain taking values in $\CP(\om)$ with initial condition
\begin{equation}
P_\om(X_0=0)=1, \label{inital_condition}
\end{equation}
and transition probability
\begin{equation}
P_\om(X_{n+1}=u|X_n=v)=\left\{
\begin{array}{cc}
0&~~~u\notin N_v(\om) \\
\frac{1}{2d}&~~~u\in N_v(\om) \\
\end{array}
\right. ,\label{transition_probability}
\end{equation}
which will be called the quenched law of the random walk.
We denote the corresponding expectation by $E_\om$.

Finally, since for each $G\in\mathcal{G}$, the map
\[\om\mapsto P_\om(G),\] is $\mathcal{B}$ measurable, we may define
the probability measure $\bold{P}=P\otimes P_\om$ on
$(\Om_0\times(\BZ^d)^\BN,\mathcal{B}\times\mathcal{G})$ by
\[\bold{P}(B\times G)=\int_{B}{P_\om(G)P(d\om)},~~~~\forall B\in\mathfrak{B},~~\forall G\in\mathcal{G}.\]
The marginal of $\bf{P}$ on $(\BZ^d)^\BN$, denoted by $\BP$, is
called the annealed law of the random walk $\{X_n\}_{n=0}^\infty$.
We denote by $\BE$ the expectation with respect to $\BP$.

We will need one more definition:

\begin{defn}
For every $e\in\CE^d$ we define $f_e:\Om\rightarrow\BN^+$ by
\begin{equation}
f_e(\om)=\min\{k>0:\gth_e^k(\om)(0)=\om(ke)=1\}.
\end{equation}
\label{f_e_defn}
\end{defn}

In order to prove high dimensional Central Limit Theorem we will
assume in addition to assumption \ref{Assumptions} the following:

\begin{ass} $~$\\

(3) There exists $\ep_0>0$ such that for every coordinate direction
$e\in\CE$, $E_P(f_e^{2+\ep_0})<\infty$.
\label{assumption3}\\
\end{ass}


\subsection{Main Results}$~$\\

Our main goal is to characterize these kind of random walks on
random environments. The characterization is given by
the following theorems:\\

(1) Law of Large Numbers - For $P$ almost every $\om\in\Om_0$, the
limiting velocity of the random walk exists and equals zero. More
precisely:

\begin{thm}
Define the event
\[A=\left\{\lim_{n\rightarrow\infty}{\frac{X_n}{n}}=0\right\}.\]
Then $\BP(A)=1$. \label{LLN}
\end{thm}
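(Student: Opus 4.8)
The plan is to analyse the walk through the environment viewed from the particle, combining Birkhoff's ergodic theorem with the fact that the one-step drift has mean zero. The starting observation is that the coordinate-nearest-neighbour relation is symmetric on $\CP(\om)$: if $u$ is the nearest point of $\CP(\om)$ to $v$ in a direction $e\in\CE$, then $v$ is the nearest point of $\CP(\om)$ to $u$ in direction $-e$. Hence, for $P$-a.e.\ $\om$, the quenched chain $\{X_n\}$ is reversible with respect to the counting measure on $\CP(\om)$, and the environment process $\bar\om_n:=\gth_{X_n}\om$ is a Markov chain on $\Om_0$. A mass-transport computation using the stationarity of $Q$ --- summing the contribution of each directed edge $(v,u)$ with $u\in N_v(\om)$, re-indexing by the shift $\gth_u$, and using that there are exactly $2d$ such edges at each point (Claim~\ref{The_model_claim}) --- shows that the measure $P$ of \eqref{P_definition} is stationary and reversible for $\bar\om_n$; note that $\BP$ is, by construction, the law of $\{X_n\}$ run with an initial environment of law $P$.

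The next step is the ergodicity of $\bar\om_n$ under $P$. If $f\in L^\infty(\Om_0,P)$ is $\bar\om_n$-invariant, reversibility forces the Dirichlet form to vanish, $\tfrac12\,\BE_P\bigl[\tfrac1{2d}\sum_{u\in N_0(\om)}\bigl(f(\gth_u\om)-f(\om)\bigr)^2\bigr]=0$, so $f(\gth_u\om)=f(\om)$ for every $u\in N_0(\om)$, $P$-a.s. Since for each coordinate direction $e$ the induced transformation $\om\mapsto\gth_{v_e(\om)}\om$ ($v_e(\om)$ being the point of $\CP(\om)$ nearest to $0$ in direction $e$) preserves $P$, this identity iterates along graph paths and yields that $f$ is $P$-a.s.\ constant on the cluster of $0$ in the graph $\bigl(\CP(\om),\{(v,u):u\in N_v(\om)\}\bigr)$. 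It then suffices to check that this graph is $Q$-a.s.\ connected, for then $f$ descends to a function on $\Om$ invariant under every $\gth_{e_i}$, hence $Q$-a.s.\ constant by Assumption~\ref{Assumptions}(1), and so $\bar\om_n$ is ergodic. Connectivity I would obtain from two facts: (i) there is no finite cluster, because in a finite cluster the point with largest first coordinate would have its $+e_1$-neighbour outside the cluster; and (ii) there is a unique cluster, because every coordinate line meeting $\CP(\om)$ is a path in the graph and therefore lies in a single cluster, so a splitting of $\CP(\om)$ into two or more clusters would produce a non-trivial $\gth_{e_i}$-invariant labelling of the lines parallel to $e_i$ that meet $\CP(\om)$, contradicting the $\gth_{e_i}$-ergodicity of $Q$.

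For the drift, the one-step displacement at $\om\in\Om_0$ has mean $d(\om)=E_\om[X_1]=\tfrac1{2d}\sum_{u\in N_0(\om)}u=\tfrac1{2d}\sum_{i=1}^d\bigl(f_{e_i}(\om)-f_{-e_i}(\om)\bigr)e_i$. Applying Kac's return-time formula to the ergodic transformation $\gth_{\pm e_i}$ and the set $\Om_0$ gives $\BE_P[f_{e_i}]=\BE_P[f_{-e_i}]=1/Q(\Om_0)<\infty$; this uses only Assumption~\ref{Assumptions} (not Assumption~\ref{assumption3}), so $d\in L^1(\Om_0,P)$ and $\BE_P[d]=0$. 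Finally, under $\BP$ the increments $X_{n+1}-X_n$ form a stationary sequence --- a measurable function of the stationary process $(\bar\om_n)_{n\ge0}$ together with the i.i.d.\ uniform choices of direction --- ergodic by the previous step and integrable with $\BE[X_{n+1}-X_n]=\BE_P[d]=0$; Birkhoff's ergodic theorem then gives $\tfrac1n X_n=\tfrac1n\sum_{k=0}^{n-1}(X_{k+1}-X_k)\to0$ $\BP$-a.s., which is the assertion $\BP(A)=1$.

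The step I expect to be the main obstacle is the ergodicity of $\bar\om_n$, and in particular the $Q$-a.s.\ connectivity of the coordinate-nearest-neighbour graph. The neighbour relation is strongly non-local --- the neighbour of $v$ in a given direction depends on arbitrarily distant sites --- so uniqueness of the cluster is not accessible via a Burton--Keane argument and has to be extracted from the ergodicity of $Q$ under each $\gth_{e_i}$ separately; one must also handle the degenerate periodic environments, for which $\bar\om_n$ may be $P$-a.s.\ constant, although there $\{X_n\}$ reduces to an ordinary symmetric random walk on a sublattice and the conclusion is immediate.
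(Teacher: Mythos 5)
Your overall architecture (environment viewed from the particle, stationarity and reversibility of $P$, ergodicity of the environment chain, mean-zero integrable drift, Birkhoff) is the same as the paper's, and your use of Kac's return-time formula to get $\BE_P[f_e]=1/Q(\Om_0)<\infty$ is a legitimate shortcut past the paper's truncation argument. But the step you yourself flag as the main obstacle --- ergodicity of $\bar\om_n$ via $Q$-a.s.\ connectivity of the coordinate-nearest-neighbour graph --- contains a genuine gap: the connectivity claim is false under Assumption \ref{Assumptions}. Take $d=2$ and let $Q$ put mass $\tfrac12$ on each of the two checkerboard configurations $\CP=\{(x,y):x+y \text{ even}\}$ and $\CP=\{(x,y):x+y \text{ odd}\}$. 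This $Q$ is stationary and ergodic for both $\gth_{e_1}$ and $\gth_{e_2}$ (a two-point rotation), yet in either configuration the coordinate-nearest neighbours of a point differ from it by $\pm 2e_i$, so the graph splits into the (even,even) and (odd,odd) sublattices: two infinite clusters, almost surely. Your argument (ii) cannot rule this out, because the cluster decomposition is shift-\emph{equivariant}, not shift-invariant: the event ``there are at least two clusters'' is itself $\gth_{e_i}$-invariant and may well have probability one, and the ``labelling of lines'' you produce is permuted by the shift rather than fixed by it, so ergodicity of $Q$ gives no contradiction. Also (i) is shaky as stated: the $+e_1$-neighbour of the point with largest first coordinate in a putative finite cluster is by definition joined to it, so it lies in the same cluster; finiteness of clusters is anyway not the issue.

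The good news is that your own Dirichlet-form computation already hands you the correct fix, which is exactly the mechanism the paper uses. From $f(\gth_u\om)=f(\om)$ for all $u\in N_0(\om)$ you get in particular $f\circ\si_e=f$ $P$-a.s., where $\si_e=\gth_e^{f_e}$ is the induced shift; and the induced transformation of an invertible, measure-preserving, ergodic map on a positive-measure subset is itself ergodic (this is the paper's Lemma \ref{lem_induced_shift_ergodicy}, giving Theorem \ref{induced_shift_ergodicy}). Hence $f$ is $P$-a.s.\ constant, with no connectivity of the graph needed --- equivalently, one can invoke the paper's zero-one law (Lemma \ref{lemma_sec2}), which only uses that the walk jumps to $f_e(\om)e$ with probability $\tfrac1{2d}>0$, and then the ergodic theorem along the walk (Theorem \ref{Thm_mutual_ergodic}). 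With the connectivity step replaced by this induced-shift argument, the rest of your proof (stationarity/reversibility of $P$, $\BE_P[f_{e}]=\BE_P[f_{-e}]<\infty$, mean-zero increments, Birkhoff) goes through and is essentially the paper's proof.
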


(2) Recurrence Transience Classification - We give a partial
classification of recurrence transience for the random walk on a
discrete point process. The precise statements are:

\begin{prop}
The one dimensional random walk on a discrete point process is
$\BP$-almost surely recurrent. \label{tran_recu1}
\end{prop}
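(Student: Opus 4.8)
The plan is to reduce the one-dimensional problem to the classical recurrence of simple symmetric random walk on $\BZ$. The key observation is that in dimension $d=1$ the quenched walk, read off on the points of $\CP(\om)$ listed in increasing order, is exactly nearest-neighbour simple random walk on $\BZ$.

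First I would fix $\om\in\Om_0$ in the full-measure set on which the walk is defined, and record two preliminary facts. Since $\om(0)=1$ we have $0\in\CP(\om)$; and applying Claim \ref{The_model_claim} with $v=0$ to the directions $e=e_1$ and $e=-e_1$ shows that, $P$-almost surely, $\CP(\om)\subseteq\BZ$ is unbounded both above and below. Hence $\CP(\om)$ can be enumerated as $\cdots<p_{-1}(\om)<p_0(\om)=0<p_1(\om)<\cdots$, and the order isomorphism $\phi_\om:\CP(\om)\to\BZ$, $\phi_\om(p_k(\om))=k$, is a bijection. For $v=p_k(\om)$ the set of coordinate nearest neighbours is precisely $N_v(\om)=\{p_{k-1}(\om),p_{k+1}(\om)\}$, so by the transition rule \eqref{transition_probability} the image process $Y_n:=\phi_\om(X_n)$ is, under $P_\om$, a Markov chain on $\BZ$ with $Y_0=0$ and $P_\om(Y_{n+1}=j+1\mid Y_n=j)=P_\om(Y_{n+1}=j-1\mid Y_n=j)=\tfrac12$; that is, $\{Y_n\}$ is simple symmetric random walk on $\BZ$.

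Next I would invoke the classical fact that simple symmetric random walk on $\BZ$ is recurrent: $P_\om$-almost surely $Y_n=0$ for infinitely many $n$. Since $\phi_\om$ is a bijection with $\phi_\om^{-1}(0)=0$, this is equivalent to $X_n=0$ for infinitely many $n$, $P_\om$-a.s. As this holds for $P$-almost every $\om\in\Om_0$, integrating over the environment gives
\[
\BP\bigl(X_n=0\ \text{infinitely often}\bigr)=\int_{\Om_0} P_\om\bigl(X_n=0\ \text{i.o.}\bigr)\,P(d\om)=1,
\]
i.e. the walk is $\BP$-almost surely recurrent.

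I do not expect a genuine obstacle here: the entire content is the reduction to the one-dimensional classical statement. The only points needing care are the two preliminary observations — that $\CP(\om)$ is $P$-a.s. bi-infinite, so that $\phi_\om$ really is a bijection onto all of $\BZ$ and the walk never "runs out of" neighbours on one side, and that the geometric nearest-neighbour structure on $\BZ$ is carried to the nearest-index structure on $\BZ$ — and both are immediate from Claim \ref{The_model_claim} and the definition of $N_v(\om)$.
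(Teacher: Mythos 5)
Your proposal is correct and is essentially the paper's own argument: the paper likewise maps the walk to the simple symmetric random walk $Y_n$ on $\BZ$ (via the enumeration $t_n$ of the points of $\CP(\om)$, the inverse of your $\phi_\om$), invokes recurrence of $Y_n$, and uses $X_n=t_{Y_n}$ with $t_0=0$ to conclude $X_n=0$ infinitely often $\BP$-almost surely. Your explicit verification that $\CP(\om)$ is $P$-a.s.\ bi-infinite via Claim \ref{The_model_claim} is a point the paper leaves implicit, but the route is the same.
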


\begin{thm}
Let $(\Om,\FB,P)$ be a two dimensional discrete point process and
assume there exists a constant $C>0$ such that
\begin{equation}
\sum_{k=N}^{\infty}{\frac{k\cdot
P(f_{e_i}=k)}{\BE(f_{e_i})}}\leq\frac{C}{N}~~~\forall
i\in\{1,2\}~~\forall N\in\BN. \label{Cauchy_tail_ass}
\end{equation}
which in particular holds, whenever $f_{e_i}$ has a second moment
for $i\in\{1,2\}$. Then the random walk is $\BP$ almost surely
recurrent.\label{tran_recu2}
\end{thm}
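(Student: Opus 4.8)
My plan is to use reversibility to reduce to an electrical network, and then bound the effective resistance from the origin to infinity by a Nash--Williams argument whose cutsets are controlled by \eqref{Cauchy_tail_ass}. For $P$-a.e.\ $\om$ the coordinate-nearest-neighbour relation is symmetric: if $u$ is the first point of $\CP(\om)$ reached from $v$ in a direction $e\in\CE$, then, since no point of $\CP(\om)$ lies strictly between them, $v$ is the first point reached from $u$ in direction $-e$; so $u\in N_v(\om)\iff v\in N_u(\om)$. Hence $G(\om):=\big(\CP(\om),\{\{v,u\}:u\in N_v(\om)\}\big)$ is a locally finite graph in which every vertex has degree exactly $4$, the chain $\{X_n\}$ is simple random walk on $G(\om)$, and it is reversible for the counting measure (the reversibility already noted in the introduction). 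By the standard criterion, $\{X_n\}$ is $P_\om$-recurrent iff $R_{\mathrm{eff}}^{G(\om)}(0\leftrightarrow\infty)=\infty$ when every edge carries unit conductance, so it suffices to prove the latter for $P$-a.e.\ $\om$.

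The obstruction is that $G(\om)$ is not a subgraph of $\BZ^2$: its edges are long axis-parallel segments, which act as shortcuts. I would pass to an electrically equivalent network $\widetilde G(\om)$ carried by $\BZ^2$ by subdividing each edge of $G(\om)$ of $\BZ^2$-length $m$ into the straight path through the $m-1$ intermediate lattice sites, each of the $m$ resulting unit edges receiving conductance $m$ (so the path still has resistance $1$), and giving conductance $0$ to the unit edges of $\BZ^2$ lying in no edge of $G(\om)$. Subdividing edges does not change effective resistances, so $R_{\mathrm{eff}}^{\widetilde G}(0\leftrightarrow\infty)=R_{\mathrm{eff}}^{G(\om)}(0\leftrightarrow\infty)$, and $\widetilde G$ now uses only nearest-neighbour edges of $\BZ^2$. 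Therefore the box boundaries $\Pi_n:=\partial_E([-n,n]^2)$, $n\ge1$, are \emph{pairwise disjoint} cutsets separating $0$ from $\infty$, and Nash--Williams gives $R_{\mathrm{eff}}^{\widetilde G}(0\leftrightarrow\infty)\ge\sum_{n\ge1}\Sigma_n^{-1}$ with $\Sigma_n:=\sum_{e\in\Pi_n}c(e)$. It thus remains to prove $\sum_n\Sigma_n^{-1}=\infty$ almost surely.

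This is where \eqref{Cauchy_tail_ass} enters. First, \eqref{Cauchy_tail_ass} is equivalent to the bound $P(f_{e_i}\ge N)\le C'N^{-2}$ ($i=1,2$): the series in \eqref{Cauchy_tail_ass} equals $\BE[f_{e_i}\,\mathbf 1_{\{f_{e_i}\ge N\}}]/\BE[f_{e_i}]$, and $\BE[f_{e_i}\,\mathbf 1_{\{f_{e_i}\ge N\}}]=O(1/N)$ is equivalent to $P(f_{e_i}\ge N)=O(1/N^2)$. Next, the conductance of an edge of $\Pi_n$ equals the $\BZ^2$-length of the edge of $G(\om)$ containing it (or $0$); since that edge straddles a fixed hyperplane, by stationarity and length-biasing its length has, for every $n$, a tail at most $C/t$ --- up to the occupancy factor this is exactly the length-biased tail of $f_{e_i}$, i.e.\ the left side of \eqref{Cauchy_tail_ass}. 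Hence $\Sigma_n$ is a sum of $O(n)$ stationary nonnegative conductances of tail index $1$, so typically $\Sigma_n=O(n\log n)$; summing over a dyadic scale and using disjointness, $\sum_{2^j\le n<2^{j+1}}\Sigma_n=\sum_{e\in[-2^{j+1},2^{j+1}]^2\setminus[-2^j,2^j]^2}c(e)=O(4^{j}j)$, so by Cauchy--Schwarz $\sum_{2^j\le n<2^{j+1}}\Sigma_n^{-1}\ge 4^{j}/\!\sum_{2^j\le n<2^{j+1}}\Sigma_n\gtrsim 1/j$, and $\sum_j1/j=\infty$. The logarithmic factor here is exactly the borderline permitted by \eqref{Cauchy_tail_ass}; with a heavier tail the annulus sums would grow by a further power and the argument would genuinely break down, consistently with transience being possible.

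The step I expect to be hardest is turning this last estimate into an almost-sure statement. The conductances have \emph{infinite} mean (tail index $1$), so expectations cannot be used directly, and $Q$ is only assumed ergodic, not mixing, so decorrelation across scales has to come from the Ces\`aro form of the ergodic theorem. I would truncate the conductances at level $\approx 4^{j}$ in the $j$-th annulus, use \eqref{Cauchy_tail_ass} to bound the truncated first moment by $O(4^{j}j)$ and the truncated second moment well enough for a Chebyshev estimate that is summable over the countably many scales $j$, and then control separately the $O(1)$ over-threshold edges in each annulus by a union bound --- and it is precisely here, where the index-$1$ tail is in exact balance with the $\asymp4^{j}$ edges present at scale $2^{j}$, that the quantitative content of \eqref{Cauchy_tail_ass} is indispensable. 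Alternatively, a Paley--Zygmund estimate for $\sum_{n\le N}\Sigma_n^{-1}$ together with Ces\`aro decay of its pair correlations and the $0$--$1$ law for recurrence (valid since the environment is ergodic) would also yield $\BP$-a.s.\ recurrence.
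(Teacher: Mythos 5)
Your structural reduction is essentially the paper's: pass to the network whose edges join coordinate-nearest neighbours, subdivide an edge of $\BZ^2$-length $m$ into $m$ unit edges of conductance $m$, observe that a unit edge crossing a box boundary carries the size-biased conductance law $\BP(c(e)=k)=k\,P(f_{e_i}=k)/\BE(f_{e_i})$, which by \eqref{Cauchy_tail_ass} has a Cauchy tail, and apply Nash--Williams to the disjoint box-boundary cutsets with the target bound $\Sigma_n=O(n\log n)$ at most scales. Two small remarks on this part: the paper performs the subdivision on two separate levels, identified only at genuine points of $\CP(\om)$, exactly because your embedded version glues a long horizontal and a long vertical edge at a crossing lattice site; your claim that effective resistances are equal is therefore not correct as stated, though it is harmless here, since shorting only decreases resistance and you need a lower bound. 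Your dyadic Cauchy--Schwarz step is a legitimate variant of the paper's direct choice $a_n\asymp(n\log n)^{-1}$.

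The genuine gap is the step you yourself flag: upgrading ``$\Sigma_n\le K n\log n$ with probability $1-\ep$'' to the almost sure divergence of $\sum_n\Sigma_n^{-1}$. Under the standing hypotheses the conductances along a cutset are identically distributed but their joint law is completely uncontrolled ($Q$ is only stationary and ergodic), so neither a Chebyshev bound for the annulus sum (its variance involves covariances you cannot estimate) nor ``Ces\`aro decay of pair correlations'' for a Paley--Zygmund argument is available; with independence your truncated Chebyshev scheme would indeed be summable over scales, but independence is not assumed. Worse, any first-moment route that insists on failure probabilities summable in the scale $j$ is self-defeating at this borderline: truncation plus Markov forces the threshold for the $j$-th annulus up to order $4^{j}j^{2+\delta}$, and then the resistance contribution $\sum_j 4^{j}j^{-(2+\delta)}$ converges, destroying exactly the divergence you need. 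The paper closes this with a soft device you would need to reproduce: Lemma \ref{Cauchy_tail_lemma} (truncation and Markov, valid for arbitrarily dependent identically distributed Cauchy-tailed variables) gives the $1-\ep$ bound at every large scale, and Lemma \ref{lem2_2dim_recurrent} (a conditioning argument: if the weighted sum were finite with probability $>\ep$, the conditional probabilities of the good events would still be bounded below, making the conditional expectation infinite) converts non-summable $1-\ep$ bounds plus $\sum a_n=\infty$ into $\sum\ind_{A_n}a_n=\infty$ with probability $\ge 1-\ep$, whence the result as $\ep\downarrow 0$. Note that the cheap fallback ``infinitely many good scales a.s.\ by Fatou'' does not suffice, because $\sum 1/(n\log n)$ restricted to a sparse subsequence can converge; so some substitute for Lemma \ref{lem2_2dim_recurrent} is indispensable, and your proposal does not supply one.
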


\begin{thm}
Let $(\Om,\FB,P)$ be a d-dimensional discrete point process with
$d\geq 3$ then the random walk is $\BP$ almost surely
transient.\label{tran_recu3}
\end{thm}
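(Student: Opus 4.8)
The plan is to exhibit, for $P$-almost every $\om$, a unit flow of finite energy from $0$ to infinity on the graph on which the walk lives; by the standard flow criterion for transience of reversible Markov chains this forces transience quenched, and integrating against $P$ gives the assertion $\BP$-almost surely. First observe that the coordinate-nearest-neighbour relation is symmetric: if $u=v+ke_i$ with no point of $\CP(\om)$ strictly in between, then $v=u+k(-e_i)$ with the same property, so $u\in N_v(\om)\iff v\in N_u(\om)$, and the $2d$ neighbours of a vertex are distinct. Hence the quenched walk is exactly simple random walk on the locally finite graph $G(\om)=(\CP(\om),E(\om))$, $E(\om)=\{\{u,v\}:u\in N_v(\om)\}$, which is reversible with respect to the counting measure; so $\{X_n\}$ is transient for a given $\om$ iff $R^{G(\om)}_{\mathrm{eff}}(0\leftrightarrow\infty)<\infty$, iff $G(\om)$ supports a unit flow $\theta$ from $0$ to infinity with $\sum_{e\in E(\om)}\theta(e)^2<\infty$.

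The geometric input comes from the ergodic theorem. As $Q$ is ergodic under $\gth_{e_1}$ it is ergodic under the full $\BZ^d$-action, so, writing $\rho:=Q(\Om_0)>0$ and $B_r=[-r,r]^d$, the multidimensional ergodic theorem gives $\#\big(\CP(\om)\cap B_r\big)=\rho\,|B_r|\,(1+o(1))$ for $P$-a.e.\ $\om$. Differencing over dyadic radii — where the leading terms differ by a quantity of exact order $2^{kd}$, dominating the $o(2^{kd})$ error — yields, $P$-a.s., $\#\big(\CP(\om)\cap(B_{2^{k+1}}\setminus B_{2^k})\big)\asymp 2^{kd}$ for all large $k$. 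Moreover, by Claim~\ref{The_model_claim} every $v\in\CP(\om)$ has a neighbouring occupied point in each coordinate direction; in particular, if $\|v\|_\infty=r=|v_j|$, then the corridor out of $v$ in the direction $\mathrm{sign}(v_j)\,e_j$ ends at a point of $\ell^\infty$-norm $>r$, i.e.\ it is an edge of $G(\om)$ leaving $B_r$.

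I would then build the flow scale by scale: push the unit current outward and, inside each dyadic annulus $A_k=B_{2^{k+1}}\setminus B_{2^k}$ — a positive-density $d$-dimensional shell of $\ell^\infty$-width $\asymp 2^k$ carrying $\asymp 2^{kd}$ occupied points — spread it as evenly as possible over the routes crossing from the inner to the outer boundary. The decisive point is that a corridor $\{v,v+ke_i\}$ is a single edge of conductance $\tfrac1{2d}$ however long it is, so the effective resistance of crossing $A_k$ is of order (width)/(cross-section) $\asymp 2^{k}/2^{k(d-1)}=2^{-k(d-2)}$; since the annuli are in series this gives $R^{G(\om)}_{\mathrm{eff}}(0\leftrightarrow\infty)\lesssim\sum_k 2^{-k(d-2)}$, which is finite exactly when $d\geq 3$. (For $d\le 2$ the series diverges, in agreement with Proposition~\ref{tran_recu1} and Theorem~\ref{tran_recu2}.) Consequently the walk is transient for $P$-a.e.\ $\om$, hence $\BP$-almost surely.

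The step I expect to be the real obstacle is making the bound on each $R_k$ robust, given that Assumption~\ref{Assumptions} imposes no moment control on the $f_e$: long corridors and arbitrarily large empty regions can reshape, or even locally disconnect, the subnetwork induced on a particular annulus. One must (i) upgrade the ergodic average to a lower bound on the relevant occupied-point counts (for instance on the faces of the cubes $B_r$) valid for \emph{all} large $k$ simultaneously, via a differencing or maximal-inequality argument; (ii) deduce that $\gtrsim 2^{k(d-1)}$ edge-disjoint corridors cross $A_k$ from its inner to its outer boundary — the observation at the end of the previous paragraph, that a boundary occupied point always carries an outward corridor, is the germ of this; and (iii) organise the construction so that the resulting flow is genuinely source-free at every occupied point despite corridors that skip across several annuli. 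With the uniform crossing estimate and this deterministic flow bookkeeping in place, the energy estimate closes and the theorem follows.
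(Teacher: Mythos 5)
Your strategy --- exhibit a finite-energy unit flow from $0$ to infinity on $G(\om)$ and invoke the flow criterion for reversible chains --- is legitimate and genuinely different from the paper's route (the paper proves a deterministic isoperimetric inequality for subsets of $\CP(\om)$ by a compression/projection argument, Lemma~\ref{lem_volume_surface}, notes that every fiber-maximal point carries an outward edge, feeds the resulting profile bound $\Phi(u)\gtrsim u^{-1/d}$ into the Morris--Peres evolving-set theorem to get $p^n_\om(x,y)\le K_2(n-K_1)^{-d/2}$, and sums return probabilities). But as written your argument has a genuine gap exactly at its quantitative core, and you say so yourself. The bound $R_k\lesssim 2^{-k(d-2)}$ for the crossing resistance of the annulus $A_k$ is never established: the ergodic-theorem count $\#(\CP(\om)\cap A_k)\asymp 2^{kd}$ does not produce $\gtrsim 2^{k(d-1)}$ disjoint corridor crossings, since nothing in that count prevents the occupied points of $A_k$ from lying on far fewer than $2^{k(d-1)}$ lattice lines, and the ``width over cross-section'' heuristic has no content until the disjoint crossings are exhibited. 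Moreover, even granted the crossings, adding annulus resistances ``in series'' is not an upper bound on the effective resistance to infinity; one must construct a single source-free unit flow from $0$ whose energy in each annulus is controlled (your item (iii)), and this bookkeeping --- routing the flow from the origin onto the crossing lines, switching between coordinate directions at occupied points, handling corridors that jump over several annuli --- is the substantive part of the proof and is left undone. The paper's projection lemma is precisely the deterministic substitute for this missing step, and its heat-kernel route needs no density estimates or uniformity over scales at all.

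If you want to complete your route, here is a fact you did not use that repairs step (ii) cheaply: for a fixed lattice line parallel to a coordinate direction, the event that the line meets $\CP(\om)$ is invariant under the shift along that direction, hence by Assumption~\ref{Assumptions} it has $Q$-probability $0$ or $1$; since it contains the positive-probability event that a given point of the line is occupied, it has probability $1$. So $Q$-a.s.\ \emph{every} lattice line carries a bi-infinite corridor path of $G(\om)$ (bi-infinite by Claim~\ref{The_model_claim}), crossings of $A_k$ along distinct parallel lines are automatically vertex-disjoint, and each uses at most about $2^k$ edges because every corridor advances at least one lattice unit in its direction. Even with this, step (iii) --- an explicit source-free flow, e.g.\ obtained by averaging over random staircase paths that change coordinate direction at occupied points, with the energy estimate carried out edge by edge --- still requires real work, so the proposal as it stands is an outline of a viable alternative proof rather than a proof.
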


(3) Central Limit Theorems - We prove that one-dimensional random
walks on discrete point processes satisfy a Central Limit Theorem.
We also prove that in dimension $d\geq 2$, under the additional
assumption, assumption \ref{assumption3}, the random walks on a
discrete point process satisfy a Central Limit Theorem. The precise
statements are:

\begin{thm}
Let $d=1$ and denote $e=1$ then for $P$ almost every $\om\in\Om_0$
\begin{equation}
\lim_{n\rightarrow\infty}{\frac{X_n}{\sqrt{n}}}\overset{D}{=}N(0,\BE_P^2(f_{e})).
\end{equation}
\label{CLT1}
\end{thm}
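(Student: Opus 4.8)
The plan is to recognize the one-dimensional walk on the point process as a time-change of a mean-zero random walk on a stationary environment, and to obtain the CLT via the ergodic theorem applied to the inter-point gaps. Concretely, write $\mathcal P(\omega)=\{\ldots < p_{-1}(\omega) < 0 = p_0(\omega) < p_1(\omega) < \ldots\}$ for the points of the environment seen from the origin. The walk $X_n$ moves from $p_j$ to $p_{j-1}$ or $p_{j+1}$ each with probability $1/2$, so if $S_n$ denotes the \emph{index} of the occupied point at time $n$, then $\{S_n\}$ is an ordinary simple symmetric random walk on $\BZ$, and $X_n = p_{S_n}(\omega)$.

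The key structural point is that the sequence of gaps $(p_{j+1}-p_j)_{j\in\BZ}$, viewed under the environment measure, is a stationary ergodic sequence whose marginal is the law of $f_e$ under $P$: this is exactly the point-process analogue of the Palm/stationarity identities, and Assumption \ref{Assumptions}(1) together with the definition of $P$ in \eqref{P_definition} and Definition \ref{f_e_defn} supplies both stationarity and ergodicity, while $\BE_P(f_e)<\infty$ is automatic from the construction (indeed $\BE_P(f_e)=1/Q(\Om_0)$ by the ergodic theorem, cf. Claim \ref{The_model_claim}). Hence by Birkhoff's ergodic theorem, for $P$-a.e.\ $\omega$,
\[
\frac{p_{m}(\omega)}{m}\;\xrightarrow[m\to\infty]{}\;\BE_P(f_e),
\qquad
\frac{p_{-m}(\omega)}{m}\;\xrightarrow[m\to\infty]{}\;-\BE_P(f_e).
\]
More usefully, $p_m(\omega)=\BE_P(f_e)\,m + o(m)$, and in fact one gets $\sup_{|j|\le M}|p_j(\omega)-\BE_P(f_e)\,j| = o(M)$ uniformly.

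The plan then is to combine this with Donsker's invariance principle for $S_n$. We have $S_{\lfloor nt\rfloor}/\sqrt n \Rightarrow B_t$ for a standard Brownian motion $B$, and by the above, on the (almost sure) environment we may replace $p_{S_n}$ by $\BE_P(f_e)\,S_n$ up to an error that, after dividing by $\sqrt n$, is bounded by $\sqrt n \cdot o(|S_n|/n)$; since $|S_n|=O_{\BP_\omega}(\sqrt n)$ this error tends to $0$ in $P_\omega$-probability. Therefore $X_n/\sqrt n = p_{S_n}(\omega)/\sqrt n = \BE_P(f_e)\,S_n/\sqrt n + o_{\pr}(1) \Rightarrow \BE_P(f_e)\,B_1 \overset{D}{=} N\!\bigl(0,\BE_P^2(f_e)\bigr)$, which is the claim. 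The $P$-a.s.\ quantifier is handled by noting that the ergodic-theorem statement holds for $P$-a.e.\ $\omega$, and Donsker for $S_n$ is environment-independent.

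The main obstacle is the clean justification that the gap sequence $(p_{j+1}-p_j)$ under $P$ is stationary and ergodic with the stated marginal, i.e.\ that "shifting the origin to the next point of $\mathcal P$" is a measure-preserving ergodic transformation of $(\Om_0,P)$ — this is the induced-transformation (Kakutani tower) argument, and one must check it carefully against Assumption \ref{Assumptions}(1)/(2). A secondary technical point is upgrading the pointwise gap asymptotics to the uniform control $\sup_{|j|\le M}|p_j-\BE_P(f_e)j|=o(M)$ needed to pass from $S_n$ to $X_n$; this follows from monotonicity of $j\mapsto p_j$ and the two one-sided limits, but should be stated explicitly. Everything else — Donsker for the simple random walk, the continuous-mapping/Slutsky step, and the identification of the limit variance — is routine.
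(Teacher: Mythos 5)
Your proposal is correct and follows essentially the same route as the paper: the paper also writes $X_n=t_{Y_n}$ where $Y_n$ is the embedded simple random walk on the point indices, controls the point locations $t_m$ via Birkhoff's theorem applied to the induced shift $\si_e$ (whose $P$-stationarity and ergodicity are exactly the Kakutani-tower facts established in its Theorem \ref{induced_shift_ergodicy}, and $\BE_P(f_e)<\infty$ is proved there by the same Kac-type argument you invoke), and then transfers the CLT for $Y_n$ to $X_n$. Your explicit uniform bound $\sup_{|j|\le M}|p_j-\BE_P(f_e)j|=o(M)$ combined with $|S_n|=O_{\pr}(\sqrt n)$ is a slightly cleaner way of carrying out the substitution step that the paper handles through its limit-comparison of $t_{Y_n}/\sqrt n$, but it is the same argument in substance.
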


\begin{thm}
Fix $d\geq 2$. Assume the additional assumption, assumption
\ref{assumption3}, then for $P$ almost every $\om\in\Om_0$
\begin{equation}
\lim_{n\rightarrow\infty}{\frac{X_n}{\sqrt{n}}}\overset{D}{=}N(0,D),
\end{equation}
where $N(0,D)$ is a d-dimensional normal distribution with
covariance matrix $D$ that depends only on d and the distribution of
$P$.\label{CLT2}
\end{thm}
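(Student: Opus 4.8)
The plan is to pass to the environment seen from the particle and apply the Kipnis--Varadhan / De~Masi--Ferrari--Goldstein--Wick invariance principle for additive functionals of reversible Markov chains. In dimension one Theorem~\ref{CLT1} can be proved by hand, because the ``index'' of the walk on $\CP(\om)$ performs an ordinary simple random walk on $\BZ$, so $X_n$ is a time/space change of SRW and one only superposes the SRW invariance principle and the ergodic theorem for the gap sequence. For $d\ge 2$ no such exact subordination is available, since a step in the $e_1$-direction lands the particle at a site whose $e_2$-nearest neighbours bear no relation to those of the previous site. Instead, set $\bar\om_n=\gth_{X_n}\om$; this is a Markov chain on $\Om_0$ with transition operator $(\bar S g)(\om)=\frac{1}{2d}\sum_{e\in\CE}g(T_e\om)$, where $T_e\om:=\gth_{\phi_e(\om)}\om$ and $\phi_e(\om)\in\CP(\om)$ is the coordinate nearest neighbour of $0$ in direction $e$ (so $\phi_{e_i}(\om)=f_{e_i}(\om)\,e_i$ and $\phi_{-e_i}(\om)=-f_{-e_i}(\om)\,e_i$). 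The first step is to record three structural facts: (i) the coordinate-neighbour relation is symmetric, hence the walk on $\CP(\om)$ is reversible with respect to counting measure, every site having degree $2d$; (ii) consequently each $T_e$ is a bijective point shift with $T_{e_i}^{-1}=T_{-e_i}$, so it preserves the Palm measure $P$ and $\bar S$ is self-adjoint on $L^2(\Om_0,P)$; (iii) $(\bar\om_n)$ is ergodic under $P$, which follows by a standard argument from Assumption~\ref{Assumptions} together with Claim~\ref{The_model_claim}.

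Next, decompose the displacement. Write $D_k=X_{k+1}-X_k=\phi_{\xi_{k+1}}(\bar\om_k)$, where $\{\xi_k\}$ are i.i.d.\ uniform on $\CE$ and independent of $\om$, and let $V(\om):=\frac{1}{2d}\sum_{e\in\CE}\phi_e(\om)$ be the one-step mean displacement. Assumption~\ref{assumption3} gives $\phi_e\in L^{2+\ep_0}(P)$ for every $e\in\CE$, hence $V\in L^2(P)$ and $D_0$ is $L^{2+\ep_0}$ under the annealed law; moreover $\BE_P[V]=0$, either directly from $T_{e_i}$-invariance of $P$ and the identity $\phi_{-e_i}\circ T_{e_i}=-\phi_{e_i}$, or as a consequence of Theorem~\ref{LLN}. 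Applying the Kipnis--Varadhan method to the mean-zero function $V$ of the reversible chain $(\bar\om_n)$: provided $V\in\FH_{-1}$, i.e.\ $\sup_{\lam\in(0,1)}\langle V,(I-(1-\lam)\bar S)^{-1}V\rangle_P<\infty$, the regularised correctors $\chi_\lam:=(I-(1-\lam)\bar S)^{-1}V$ converge in $L^2(P)$ to a corrector $\chi$, and
\[
M_n:=X_n+\chi(\bar\om_n)-\chi(\bar\om_0)=\sum_{k=0}^{n-1}\eta_k, \qquad \eta_k:=D_k+\chi(\bar\om_{k+1})-\chi(\bar\om_k),
\]
is a martingale whose increments $\eta_k$ are square-integrable and form a stationary ergodic sequence under the annealed law, while $X_n-M_n=\chi(\bar\om_0)-\chi(\bar\om_n)$ is $o(\sqrt n)$. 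The martingale central limit theorem for stationary ergodic differences then gives $M_n/\sqrt n\Rightarrow N(0,D)$ with $D=\BE_P[\eta_0\eta_0^{\top}]$ depending only on $d$ and the law $P$, and hence the asserted convergence, initially for the walk averaged over the environment.

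The crux is the verification of the $\FH_{-1}$ bound for $V$ using only Assumption~\ref{assumption3}. I would start from the Dirichlet-form characterisation $\langle V,(I-(1-\lam)\bar S)^{-1}V\rangle_P=\sup_g\{\,2\langle V,g\rangle_P-\lam\|g\|_P^2-\langle g,(I-\bar S)g\rangle_P\,\}$ and exploit the geometry of coordinate nearest neighbours: since one step alters the particle's ``coordinate index'' in exactly one direction, $V$ differs from an explicit coboundary $(I-\bar S)h$, with $h$ built from the gap variables $f_e$, by a term whose Dirichlet energy is controlled by $\sum_{e\in\CE}\BE_P[(f_e-\BE_P f_e)^2]<\infty$; the second-moment content of Assumption~\ref{assumption3} is precisely what renders this finite, and I expect this reduction to be the main technical obstacle. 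The surplus $\ep_0$ of integrability is then used, first, to run the martingale CLT via a Lyapunov condition, and, more substantially, to upgrade the averaged CLT to the quenched statement for $P$-almost every $\om$: one shows the corrector is sublinear along the trajectory, $\chi(\bar\om_n)/\sqrt n\to 0$ $P_\om$-a.s.\ for $P$-a.e.\ $\om$, by the ergodic theorem for $\chi\in L^2(P)$ together with the moment bounds on the walk coming from $f_e\in L^{2+\ep_0}$, and that the martingale part itself obeys a quenched CLT. Non-degeneracy of $D$, if it is wanted, would follow by testing the variational formula in the coordinate directions, since $\langle V,e_i\rangle$ is a constant plus a coboundary only when $\CP(\om)$ is $P$-trivial.
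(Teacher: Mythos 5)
You follow the same general route as the paper (environment seen from the particle, Kipnis--Varadhan corrector, martingale CLT plus control of the corrector term), but there is a genuine gap at the step that is actually the heart of the matter. You assert that the regularised solutions $\chi_\lam=(I-(1-\lam)\bar S)^{-1}V$ converge in $L^2(P)$ to a corrector $\chi$, and you then treat $\chi$ as an $L^2(\Om_0,P)$ function of the environment, so that $X_n-M_n=\chi(\bar\om_0)-\chi(\bar\om_n)$ is trivially controlled and the quenched sublinearity $\chi(\bar\om_n)/\sqrt n\to 0$ ``follows from the ergodic theorem for $\chi\in L^2(P)$''. This is not correct: under the $\FH_{-1}$ condition Kipnis--Varadhan gives only $\lam\|\chi_\lam\|_2^2\to 0$ and the $L^2$ convergence of the \emph{increments} $\chi_\lam\circ T_e-\chi_\lam$ (this is exactly the content of the paper's Lemma on $\psi_\ep$ and of Theorem \ref{corrector_thm}); the corrector is a cocycle $\chi(x,\om)$, not a square-integrable function of $\om$ alone, and in general $\chi_\lam$ does not converge. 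Consequently your argument for $\chi(X_n,\om)/\sqrt n\to 0$ evaporates, and with it the quenched statement of the theorem (an annealed CLT could still be salvaged from the $\FH_{-1}$ bound alone, but the theorem as stated is quenched, for $P$-almost every $\om$).

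In the paper this missing step occupies most of the work: one needs the heat-kernel bound $p^n_\om(x,y)\le K_2(n-K_1)^{-d/2}$ via an isoperimetric estimate (Claim \ref{bound_of_transitions}), the diffusive bound $E_\om\|X_n\|\le c\sqrt n$ (Theorem \ref{asymptotic_X_n}), whose proof is where the extra $\ep_0$ in Assumption \ref{assumption3} is genuinely used (through a Nevo--Stein ergodic theorem to bound the environment-averaged squared step lengths, since unlike percolation the steps are unbounded), then sublinearity of the corrector along coordinate directions via the induced shifts (Theorem \ref{coordinate_sublinearity}), and finally the dimension induction giving sublinearity everywhere (Theorem \ref{sub_linearity_everywhere}); these combine with the Markov inequality to give $P_\om(|\chi(X_n,\om)|>\ep\sqrt n)\to 0$. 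Your appeal to ``moment bounds on the walk coming from $f_e\in L^{2+\ep_0}$'' does not substitute for any of this. A secondary, smaller vagueness is the verification of $V\in\FH_{-1}$: your coboundary-plus-Dirichlet-energy sketch is not carried out, whereas the paper proves the spectral bound $\int(1-\lam)^{-1}\mu_V(d\lam)<\infty$ directly by a Cauchy--Schwarz argument using only second moments; that part is standard and repairable, but the quenched sublinearity of the corrector is a missing idea, not a missing detail.
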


\textbf{Structure of the paper.} Sect. 2 collects some facts about
the Markov chain on environments and some ergodic results
related to it. This section is based on previously known material.
In Sect. 3 - 4 the one
dimensional case, i.e, Law of Large Numbers and Central Limit Theorem, are introduced. The Recurrence
Transience classification is discussed in Sec. 5. The novel parts of
the high dimensional Central Limit proof - asymptotic behavior of
the random walk, construction of the corrector and sublinear bounds
on the corrector - appear in Sect. 6-9. The actual proof of the high
dimensional Central Limit Theorem is carried out in Sect. 10.
Finally Sect. 11 contains further discussion, some open questions
and conjectures.


\section{The Induced shift And The Environment Seen From The Random Walk}

$~$\\
The content of this section is a standard textbook material. The
form in which it appears here is taken from \cite{BB06}. Even though
it was all known before, \cite{BB06} is the best existing source for
our purpose.

Let us define the induced shift on $\Om_0$ as follows. Let
$f_e(\om)$ be as in definition \ref{f_e_defn}. By Claim
\ref{The_model_claim} we know that $f_e(\om)<\infty$ $~Q$ almost surely
Therefore we can define the maps $\si_e:\Om_0\rightarrow\Om_0$ by
\[\si_e(\om)=\gth^{f_e(\om)}_e\om.\]
We call $\si_e$ the induced shift.

\begin{thm}
For every $e\in\CE$, the induced shift
$\si_e:\Om_0\rightarrow\Om_0$ is $P$-preserving and ergodic with
respect to $P$. \label{induced_shift_ergodicy}
\end{thm}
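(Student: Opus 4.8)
The plan is to prove the two assertions --- measure preservation and ergodicity --- separately, since they require different tools. For measure preservation I would invoke the general theory of induced (first-return) transformations of Kac: since $Q$ is $\theta_e$-stationary and ergodic on $\Om$, and $\Om_0 = \{\om(0)=1\}$ has $Q(\Om_0)>0$, the first return time to $\Om_0$ under $\theta_e$ is $Q$-a.s. finite (this is exactly Claim \ref{The_model_claim}, or rather its proof via Birkhoff), and the induced map $\si_e(\om) = \theta_e^{f_e(\om)}\om$ preserves the conditional measure $P = Q(\cdot\,|\,\Om_0)$. This is the content of the Kac return-time construction and is exactly the ``standard textbook material'' the section header advertises; I would cite \cite{BB06} for the precise statement and give the short argument: for $B \subseteq \Om_0$ measurable, partition $\Om$ according to the value of the return time and the ``age'' since the last visit to $\Om_0$, and a telescoping/rearrangement shows $P(\si_e^{-1}B) = P(B)$.

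For ergodicity the key point is that ergodicity of the induced transformation is equivalent to ergodicity of the original one (again Kac's theory): if $A \subseteq \Om_0$ is $\si_e$-invariant, i.e. $\si_e^{-1}A = A$ up to $P$-null sets, then I would ``unfold'' $A$ to a $\theta_e$-invariant subset $\widetilde{A}$ of $\Om$ by setting $\widetilde{A} = \bigcup_{k \ge 0} \{\om : \theta_e^k \om \in A,\ \om(je)=0 \text{ for } 0 \le j < k \text{... appropriately}\}$ --- more precisely, $\widetilde A$ is the set of $\om$ whose forward $\theta_e$-orbit first hits $\Om_0$ inside $A$. One checks $\theta_e^{-1}\widetilde A = \widetilde A$ mod $Q$-null sets using that $A$ is $\si_e$-invariant, and $\widetilde A \cap \Om_0 = A$. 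By ergodicity of $Q$ with respect to $\theta_e$ (Assumption \ref{Assumptions}(1)) we get $Q(\widetilde A) \in \{0,1\}$, hence $P(A) = Q(\widetilde A \cap \Om_0)/Q(\Om_0) \in \{0,1\}$, which is ergodicity of $\si_e$.

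The main obstacle, and the place to be careful, is the ``unfolding'' step: one must define $\widetilde A$ so that it is genuinely $\theta_e$-invariant (not merely forward-invariant) and so that the $Q$-null sets are handled correctly --- in particular one needs that $Q$-a.e. orbit returns to $\Om_0$ (finiteness of $f_e$, already established) and that the decomposition of $\Om$ into orbit-segments between consecutive visits to $\Om_0$ is measurable. Concretely, invariance under $\theta_e^{-1}$ follows because applying $\theta_e$ either keeps you within the same excursion (with the first hitting point of $\Om_0$ unchanged) or, if $\om \in \Om_0$, moves you to $\si_e(\om)$, and $\om \in \widetilde A \iff \si_e(\om) \in A \iff \si_e(\om) \in \widetilde A$ by $\si_e$-invariance of $A$ --- so the two-sided invariance reduces to the identity $\si_e^{-1}A = A$ mod $P$. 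Once this bookkeeping is done, the reduction to ergodicity of $(\Om, \FB, Q, \theta_e)$ is immediate. I would present the whole argument compactly, since none of it is new, and refer to \cite{BB06} for readers wanting full detail.
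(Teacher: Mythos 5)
Your proposal is correct, but it implements the reduction to the ambient shift differently from the paper. The paper isolates an abstract lemma (Lemma \ref{lem_induced_shift_ergodicy}) about the induced map $S=T^{n(\cdot)}$ of a general \emph{invertible} ergodic measure-preserving $T$: measure preservation is proved by partitioning $A$ into the return-time level sets $A_j$, using invertibility of $T$ to see that the images $S(A_i)$ are pairwise disjoint and that $S|_{A_j}=T^j$; ergodicity is proved by contradiction, pushing a nontrivial $S$-invariant set $B\subset A$ \emph{forward} to $C=\bigcup_{k\ge 1}T^k(B)$, which is almost surely $T$-invariant with $\mu(C)\in(0,1)$. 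You instead take the Kac skyscraper/telescoping route for measure preservation, and for ergodicity you unfold an $\si_e$-invariant $A\subseteq\Om_0$ into the first-entrance set $\widetilde A=\{\om:\gth_e^{k(\om)}\om\in A\}$ (with $k(\om)$ the first hitting time of $\Om_0$, including time $0$), check $\gth_e^{-1}\widetilde A=\widetilde A$ modulo $Q$-null sets, and conclude directly that $Q(\widetilde A)\in\{0,1\}$, hence $P(A)=Q(\widetilde A\cap\Om_0)/Q(\Om_0)\in\{0,1\}$. Both arguments rest on the same inputs (a.s. finiteness of $f_e$ from Claim \ref{The_model_claim}, stationarity and ergodicity of $Q$ under $\gth_e$), but your unfolding is a direct zero-one argument built from preimages and forward hitting times, so it never uses invertibility of the shift, whereas the paper's image-based argument is shorter once invertibility is granted and is packaged as a reusable abstract lemma. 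One bookkeeping remark: for $\om\in\Om_0$ the correct chain is $\om\in\widetilde A\iff\om\in A\iff\si_e(\om)\in A\iff\gth_e\om\in\widetilde A$ (the middle equivalence being exactly $\si_e^{-1}A=A$ mod $P$), which is clearly what you intend; with that reading your sketch is complete.
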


Theorem \ref{induced_shift_ergodicy} will follow from a more general
statement. Let $(\Delta,\mathfrak{C},\mu)$ be a probability space,
and let $T:\Delta\rightarrow\Delta$ be invertible, measure
preserving and ergodic with respect to $\mu$. Let $A\in\mathfrak{C}$
be of positive measure, and define $n:A\rightarrow\BN\cup\{\infty\}$
by
\[n(x)=\min\{k>0:T^k(x)\in A\}\]\\
The Poincar\'{e} recurrence theorem tells us that $n(x)<\infty$
almost surely. Therefore we can define, up to a set of measure zero,
the map $S:A\rightarrow A$ by
\[S(x)=T^{n(x)}(x),~~~~~x\in A\]
Then we have:

\begin{lem}
S is measure preserving and ergodic with respect to $\mu(\cdot|A)$.
It is also almost surely invertible with respect to the same
measure.\label{lem_induced_shift_ergodicy}
\end{lem}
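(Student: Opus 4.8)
The plan is to establish the three assertions — measure-preservation, almost-sure invertibility, and ergodicity — separately, using the standard decomposition of $A$ into the level sets $A_k=\{x\in A:n(x)=k\}$, $k\ge 1$. These sets partition $A$ up to measure zero by the Poincar\'e recurrence theorem, and $S$ restricted to $A_k$ equals $T^k$. For measure-preservation I would compute, for $B\subseteq A$ measurable, $\mu(S^{-1}B)=\sum_{k\ge1}\mu(A_k\cap T^{-k}B)$ and compare it with $\mu(B)$; the cleanest route is to introduce the ``rising ladder'' sets and telescope. Concretely, for $j\ge 0$ let $C_j=\{x\in\Delta:T^i(x)\notin A\text{ for }1\le i\le j,\ x\ \text{arbitrary}\}$ — more precisely the sets of points whose first return to $A$ happens after step $j$ — and use that $T$ preserves $\mu$ together with the disjointness coming from first-return times; summing a telescoping identity yields $\mu(S^{-1}B)=\mu(B)$. (This is exactly the computation behind Kac's formula, and I would either cite it or reproduce the two-line telescoping argument.)

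For invertibility, since $T$ is invertible one defines the ``backward return time'' $n^-(x)=\min\{k>0:T^{-k}(x)\in A\}$, finite $\mu$-a.e. again by Poincar\'e recurrence applied to $T^{-1}$, and sets $S'(x)=T^{-n^-(x)}(x)$. A short check shows $S'\circ S=\mathrm{id}$ and $S\circ S'=\mathrm{id}$ almost everywhere on $A$: if $S(x)=T^{k}(x)$ with $n(x)=k$, then the orbit $x,Tx,\dots,T^{k}x$ meets $A$ only at its endpoints, so the first backward return of $T^{k}x$ to $A$ is at step $k$, giving $S'(S(x))=x$. Thus $S$ is a.e.\ bijective and measure-preserving, hence an automorphism of $(A,\mathfrak{C}|_A,\mu(\cdot|A))$.

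For ergodicity, suppose $B\subseteq A$ is $S$-invariant, i.e.\ $S^{-1}B=B$ up to $\mu$-null sets, with $0<\mu(B)<\mu(A)$. Form the ``saturation'' $\widehat B=\bigcup_{j\ge0}T^{j}\bigl(\{x\in B:n(x)>j\}\bigr)\subseteq\Delta$, the union over the tower of the columns sitting above $B$; equivalently $\widehat B$ is the set of points in $\Delta$ whose first entry into $A$ lands in $B$. One checks $T^{-1}\widehat B=\widehat B$ up to null sets — a point $y$ and $Ty$ have the same ``next landing point in $A$'' unless $y\in A$, and if $y\in A$ then $y\in\widehat B\iff y\in B\iff S(y)\in B$ (using $S$-invariance) $\iff Ty$'s first landing in $A$ is in $B\iff Ty\in\widehat B$. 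Ergodicity of $T$ on $\Delta$ then forces $\mu(\widehat B)\in\{0,\mu(\Delta)\}$. Since $B=\widehat B\cap A$ and $\mu(A)>0$, and since $\mu(\widehat B)<\infty$ so the tower has finite total mass, $\mu(\widehat B)=0$ gives $\mu(B)=0$ and $\mu(\widehat B)=\mu(\Delta)$ gives (because $\Delta\setminus\widehat B$ is also $T$-invariant of measure zero, so a.e.\ point eventually lands in $B$, in particular a.e.\ point of $A$) $\mu(B)=\mu(A)$, contradicting $0<\mu(B)<\mu(A)$.

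The main obstacle is bookkeeping rather than conceptual depth: one must handle the null sets carefully (the return-time functions are only a.e.\ finite, and $S$, $S'$ are only defined a.e.), and one must verify the $T$-invariance of $\widehat B$ with the case split $y\in A$ versus $y\notin A$ done cleanly. Finiteness of $\mu(\widehat B)$ — which is where Kac's formula $\mu(\widehat B)=\sum_{j}\mu(\{n>j\}\cap\dots)\le\mu(\Delta)<\infty$ enters — is what lets us pass between ``$\mu(\widehat B)=\mu(\Delta)$'' and ``$\mu(B)=\mu(A)$''; I would make sure that step is explicit. Everything else is the classical Kakutani skyscraper construction, so I would keep the exposition brief and, where convenient, cite \cite{BB06}.
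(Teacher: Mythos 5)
Your proposal is correct and follows essentially the same route as the paper's proof: the same decomposition of $A$ into the return-time level sets $A_k$, invertibility of $T$ to get the a.e.\ inverse, and a contradiction with the ergodicity of $T$ obtained by saturating an $S$-invariant set $B\subset A$ into a $T$-invariant subset of $\Delta$ of intermediate measure. The differences are only cosmetic: you check measure preservation through preimages via the Kac-type telescoping rather than the paper's disjoint-forward-images argument, and in the ergodicity step you use the skyscraper saturation $\widehat{B}$ where the paper uses $C=\cup_{k\ge 1}T^{k}(B)$; both versions are sound.
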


\begin{proof}
(1) S is measure preserving: For $j\geq 1$, let $A_j=\{x\in
A:n(x)=j\}$. Then the $A_j~'s$ are disjoint and
$\mu\left(A\backslash\cup_{j\geq 1}{A_j}\right)=0$. First we show
that
\[i\neq j \Rightarrow S(A_i)\cap S(A_j)=\emptyset.\]
To do this, we use the fact that $T$ is invertible. Indeed, if $x\in
S(A_i)\cap S(A_j)$ for $1\leq i<j$, then $x=T^i(y)=T^j(z)$ for some
$y,z\in A$ with $n(y)=i,~n(z)=j$. But the fact that $T$ is
invertible implies that $y=T^{j-i}(z)$, which means $n(z)\leq
j-i<j$, a contradiction. To see that $S$ is measure preserving, we
note that the restriction of $S$ to $A_j$ is $T^j$, which is measure
preserving. Hence, $S$ is measure preserving on $A_j$ and, since the
sets $A_i$ are disjoint, $S$ is measure preserving on the union
$\cup_{j\geq
1}{A_j}$ as well.\\
(2) $S$ is almost surely invertible:
$S^{-1}(\{x\})\cap\{S~is~well~defined\}$ is a one-point set by the
fact that $T$ is itself invertible.\\
(3) $S$ is ergodic: Let $B\in\mathfrak{C}$ be such that  $B\subset
A$ and $0<\mu(B)<\mu(A)$. Assume that $B$ is $S$-invariant. Then
$S^n(x)\notin A\backslash B$ for all $x\in B$ and all $n\geq 1$.
This means that for every $x\in B$ and every $k\geq 1$ such that
$T^k(x)\in A$, we have $T^k(x)\notin A\backslash B$. It follows that
$C=\cup_{k\geq 1}{T^k(B)}$ is (almost surely) $T$-invariant and
$\mu(C)\in(0,1)$, contradicting the ergodicity of $T$.\\
\end{proof}

\begin{proof}[Proof of Theorem (\ref{induced_shift_ergodicy})]
We know that the shift $\gth_e$ is invertible, measure preserving
and ergodic with respect to $Q$. By Lemma
(\ref{lem_induced_shift_ergodicy}) the induced shift is
$P$-preserving, almost surely invertible and ergodic
with respect to $P$.\\
\end{proof}


Under the present circumstances, Theorem \ref{induced_shift_ergodicy} has one important corollary:

\begin{lem}
Let $B\in\mathcal{B}$ be a subset of $\Om_0$ such that for almost
every $\om\in B$
\begin{equation}
P_\om(\gth_{X_1}\om\in B)=1. \label{The_Lemma_0-1_Law}
\end{equation}
Then $B$ is a zero-one event under $P$. \label{lemma_sec2}
\end{lem}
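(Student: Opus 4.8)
The plan is to deduce this 0-1 law from the ergodicity of the environment viewed from the random walk. The natural candidate for an ergodic dynamical system here is the Markov chain on environments $\bar\om_n := \gth_{X_n}\om$, whose stationary and ergodic measure is $P$ — this is the content of the "environment seen from the particle" machinery referenced in this section, and it is driven by the induced shifts $\si_e$ whose ergodicity is Theorem \ref{induced_shift_ergodicy}. Concretely, I would first observe that the hypothesis (\ref{The_Lemma_0-1_Law}) says exactly that the event $\{\om\in B\}$ is, up to $P$-null sets, invariant under one step of this chain: if $\om\in B$ then almost surely $\gth_{X_1}\om\in B$ as well.

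Next I would promote one-step invariance to full shift-invariance. Since the Markov chain $\bar\om_n$ has $P$ as an invariant measure, the stationarity lets us run the chain backwards as well as forwards; applying (\ref{The_Lemma_0-1_Law}) inductively gives that for $P$-a.e.\ $\om\in B$ the whole forward trajectory $\{\gth_{X_n}\om\}_{n\ge 0}$ stays in $B$ almost surely, and the analogous statement for the reversed chain shows $B$ is (mod $P$-null) invariant under the time-shift of the chain on environments. By the ergodicity of this chain — which follows from Theorem \ref{induced_shift_ergodicy}, since the environment chain is built by composing the ergodic induced shifts $\si_e$ according to the i.i.d.-over-the-path choice of direction — any such invariant set has $P$-measure $0$ or $1$, which is the assertion.

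A slightly more self-contained route, avoiding an explicit appeal to ergodicity of the environment chain, is to push the invariance down to $\Om$ itself. Define $\tilde B\subset\Om$ by $\tilde B = \{\om\in\Om_0 : \om\in B\}$ and consider the smallest set containing $B$ that is invariant under all the coordinate induced shifts $\si_{\pm e_i}$; relation (\ref{The_Lemma_0-1_Law}), averaged over the $2d$ equally likely first steps, forces $P(B\,\triangle\,\si_e^{-1}B)=0$ for each $e\in\CE$, because from a point of $B$ the walk lands in $N_0(\om)$, and $\gth_{X_1}\om$ is exactly $\si_e\om$ when $X_1$ is the nearest point in direction $e$. Then $B$ is invariant mod $P$-null under every $\si_e$, and since even a single $\si_e$ is ergodic with respect to $P$ by Theorem \ref{induced_shift_ergodicy}, we conclude $P(B)\in\{0,1\}$.

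The main obstacle is the passage from the one-step hypothesis (\ref{The_Lemma_0-1_Law}) to invariance under the relevant measure-preserving maps: one must be careful that $\gth_{X_1}\om$ ranges over $\{\si_e\om : e\in\CE\}$ with each $\si_e\om$ hit with probability $1/2d$ under $P_\om$, and that "$\gth_{X_1}\om\in B$ almost surely" therefore forces $\si_e\om\in B$ for \emph{every} $e$, not merely for one. Once that combinatorial/measure-theoretic point is nailed down, the conclusion is immediate from the ergodicity already established. I would also need to check the measurability of $\tilde B$ and that null sets do not accumulate under the finitely many maps $\si_{\pm e_i}$, which is routine.
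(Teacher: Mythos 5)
Your second, self-contained route is exactly the paper's argument: since $P_\om(X_1=f_e(\om)e)=\frac{1}{2d}>0$, the hypothesis that $\gth_{X_1}\om\in B$ holds $P_\om$-almost surely forces $\si_e\om\in B$ for $P$-a.e.\ $\om\in B$ and every $e\in\CE$, so $B$ is (mod null) invariant under the induced shift, and the ergodicity of $\si_e$ from Theorem \ref{induced_shift_ergodicy} gives the zero-one law. Note only that your first route should not be relied on as stated, because ergodicity of the environment chain (Proposition \ref{ergodic_particle_view}) is proved in the paper \emph{using} this very lemma, so invoking it here would be circular; your fallback argument avoids this and is essentially identical to the paper's proof.
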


\begin{proof}
The Markov property and (\ref{The_Lemma_0-1_Law}) imply that
$P_\om(\gth_{X_n}\om\in B)=1$ for all $n\geq 1$ and that $P$-almost
every $\om\in B$. We claim that $\si_e(\om)\in B$ for $P$-almost
surely $\om\in B$. Indeed, let $\om\in B$ be such that
$\gth_{X_n}\om\in B$ for all $n\geq 1$, $P_\om$-almost surely. note
that we have $f_e(\om)e\in\CP(\om)$. Therefore we have
$P_\om(X_1=f_e(\om)e)=\frac{1}{2d}>0$. This means that
$\si_e(\om)=\gth^{n(\om)}_e(\om)\in B$, i.e., $B$ is almost surely
$\si_e$-invariant. By the ergodicy of the induced shift, $B$ is a
zero-one event.
\end{proof}

Our next goal will be to prove that the Markov chain on environments
is ergodic. Let $\Xi=\Om_0^{\BZ}$ and define $\mathscr{H}$ to be the
product $\si$-algebra on $\Xi$. The space $\Xi$ is a space of
two-sided sequences - $(\ldots,\om_{-1},\om_0,\om_1,\ldots)$ - the
trajectories of the Markov chain on environments. Let $\mu$ be the
measure on $(\Xi,\mathscr{H})$ such that for any
$B\in\mathcal{B}^{2n+1}$,
\[\mu\big((\om_{-n},\ldots,\om_n)\in B\big)=\int_{B}{P(d\om_{-n})\Lambda(\om_{-n},d\om_{-n+1})\ldots\Lambda(\om_{n-1},d\om_n)},\]
where $\Lambda:\Om_0\times\mathcal{B}\rightarrow [0,1]$ is the
Markov kernel defined by
\begin{equation}
\Lambda(\om,A)=\frac{1}{2d}\sum_{x\in\BZ^d}{\big(\mathbbm{1}_{\{x\in
N_0(\om)\}}\mathbbm{1}_{\{\gth^x\om\in A\}}\big)}. \label{Lambda}
\end{equation}
Note that the sum is finite since for almost every $\om\in\Om$ there
are exactly $2d$ elements in $N_0(\om)$. $\mu$ exists and is unique by
Kolmogorov's Theorem, because $P$ is preserved by $\Lambda$, and
therefore the finite dimensional measures are consistent.
$\{\gth_{X_k}(\om)\}_{k\geq 0}$ has the same law in
$\BE_P(P_\om(\cdot))$ as $(\om_0,\om_1,\ldots)$ has in $\mu$. Let
$\widetilde{T}:\Xi\rightarrow\Xi$ be the shift defined by
$(\widetilde{T}\om)_n=\om_{n+1}$. Then $\widetilde{T}$ is measure
preserving.
\begin{prop}
$\widetilde{T}$ is ergodic with respect to $\mu$.
\label{ergodic_particle_view}
\end{prop}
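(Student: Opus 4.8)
The plan is to reduce the ergodicity of the shift $\widetilde T$ on the trajectory space $(\Xi,\mathscr H,\mu)$ to the already-established ergodicity of the induced shifts $\sigma_{e}$ on $(\Om_0,\CB,P)$, via a standard ``factor map'' argument combined with the $0$--$1$ law of Lemma \ref{lemma_sec2}. The key observation is that a $\widetilde T$-invariant event in $\Xi$ should be pulled back to a one-sided tail-type event on $\Om_0$, and then one must argue that this event satisfies the hypothesis \eqref{The_Lemma_0-1_Law} of Lemma \ref{lemma_sec2}.

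First I would recall that, by the remark following the construction of $\mu$, the law of the one-sided trajectory $(\om_0,\om_1,\ldots)$ under $\mu$ coincides with the law of $(\gth_{X_0}\om,\gth_{X_1}\om,\ldots)=(\om,\gth_{X_1}\om,\ldots)$ under $\BE_P(P_\om(\cdot))=\bold P$. Since $\widetilde T$ is invertible and $\mu$-preserving, ergodicity of $\widetilde T$ is equivalent to ergodicity of the one-sided shift on $(\Om_0^{\BN},\mu^+)$ where $\mu^+$ is the one-sided marginal (the natural extension statement). So it suffices to show: if $A\subset\Om_0^{\BN}$ is a measurable set with $\mu^+$-a.s. $\ind_A(\om_0,\om_1,\ldots)=\ind_A(\om_1,\om_2,\ldots)$, then $\mu^+(A)\in\{0,1\}$. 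Define $B=\{\om\in\Om_0:\, P_\om\big((\gth_{X_0}\om,\gth_{X_1}\om,\ldots)\in A\big)=1\}$; this is a $\CB$-measurable subset of $\Om_0$ because $\om\mapsto P_\om(G)$ is measurable for each cylinder event, hence for each event in $\mathscr H$.

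The heart of the argument is then to verify the two facts: (i) $P(B)=\mu^+(A)$, and (ii) $B$ satisfies \eqref{The_Lemma_0-1_Law}, i.e. $P_\om(\gth_{X_1}\om\in B)=1$ for $P$-a.e.\ $\om\in B$. For (ii): the shift-invariance of $A$ says that $(\om_0,\om_1,\ldots)\in A$ iff $(\om_1,\om_2,\ldots)\in A$, $\mu^+$-a.s.; translating through the Markov chain and using the Markov property of $P_\om$ (the environment chain $\gth_{X_n}\om$ is Markov with kernel $\Lambda$), one gets that for $\bold P$-a.e.\ $\om$, conditionally on the first step $X_1$, the trajectory started from $\gth_{X_1}\om$ lies in $A$ with $P_{\gth_{X_1}\om}$-probability equal to $\ind_A$ evaluated along $\om$; integrating, $\om\in B$ forces $\gth_{X_1}\om\in B$ a.s. Then Lemma \ref{lemma_sec2} applies and gives $P(B)\in\{0,1\}$, while (i) identifies $P(B)$ with $\mu^+(A)$ — for (i) one uses that $\mu^+(A)=\BE_P\big(P_\om((\gth_{X_n}\om)_n\in A)\big)$ together with the fact, forced by the invariance of $A$ and a backward martingale / reverse-Markov argument, that the conditional probability $P_\om((\gth_{X_n}\om)_n\in A)$ is $P$-a.s.\ equal to $\ind_B(\om)$, i.e.\ takes only the values $0$ and $1$. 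This last point is where the invertibility of $\widetilde T$ (equivalently of the $\sigma_e$'s, Theorem \ref{induced_shift_ergodicy}) is genuinely used, since it lets one run the argument symmetrically in both time directions.

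The main obstacle I anticipate is precisely step (i) together with the claim that $P_\om((\gth_{X_n}\om)_n\in A)$ is $\{0,1\}$-valued: one must carefully set up the tail $\sigma$-algebra of the environment chain and argue, using the $\widetilde T$-invariance of the event, that the indicator of the event is measurable with respect to the ``present'' environment $\om_0$ alone (after which Lemma \ref{lemma_sec2} finishes the job). Making this rigorous requires invoking the two-sided stationary structure on $\Xi$ and a Lévy-type $0$--$1$ argument; the rest (measurability of $B$, the Markov-property manipulations, applicability of Lemma \ref{lemma_sec2}) is routine. I would therefore structure the written proof as: construct $B$, prove $B$ satisfies \eqref{The_Lemma_0-1_Law}, invoke Lemma \ref{lemma_sec2} to get $P(B)\in\{0,1\}$, and finally show $\mu^+(A)=P(B)$, concluding that $\widetilde T$ is ergodic.
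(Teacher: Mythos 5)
Your plan is essentially the paper's own proof: the paper likewise considers $f=E_\mu(\ind_A\mid\om_0)$, shows it is $\{0,1\}$-valued by approximating the $\widetilde T$-invariant event $A$ by events $A_\pm$ in the future/past $\sigma$-algebras and applying L\'evy's martingale convergence theorem (your ``L\'evy-type $0$--$1$ argument'' using the two-sided structure), sets $B=\{\om_0:f(\om_0)=1\}$, verifies hypothesis \eqref{The_Lemma_0-1_Law} via the Markov property, and concludes with Lemma \ref{lemma_sec2} and $\mu(A)=P(B)$. Your detour through the one-sided marginal/natural extension is only a cosmetic repackaging of the same steps, and the hard point you correctly flag is exactly the one the paper settles with that martingale argument.
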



\begin{proof}
Let $E_\mu$ denote expectation with respect to $\mu$. Pick
$A\subset\Xi$ that is measurable and $\widetilde{T}$-invariant. We
need to show that $\mu(A)\in\{0,1\}$.\\
Let $f:\Om_0\rightarrow\BR$ be defined as
$f(\om_0)=E_\mu(\mathbbm{1}_A|\om_0).$ First we claim that
$f=\mathbbm{1}_A$ almost surely. Indeed, since $A$ is
$\widetilde{T}$-invariant, there exist $A_+\in\si(\om_k:k>0)$ and
$A_-\in\si(\om_k:k<0)$ such that $A$ and $A_\pm$ differ only by null
sets from one another (This follows by approximation of $A$ by
finite-dimensional events and using the $\widetilde{T}$-invariance
of $A$). Now, conditional on $\om_0$, the event $A_+$ is independent
of $\si(\om_k:k<0)$ and so L$\acute{e}$vy's Martingale Convergence
Theorem gives us
\[E_\mu(\mathbbm{1}_A|\om_0)=E_\mu(\mathbbm{1}_{A_+}|\om_0,\om_{-1},\ldots,\om_{-n})\]
\[=E_\mu(\mathbbm{1}_{A_-}|\om_0,\ldots,\om_{-n})~\overrightarrow{_{n\rightarrow\infty}}~\mathbbm{1}_{A_-}=\mathbbm{1}_A,\]
with equalities valid $\mu$-almost surely. Next let $B\subset\Om_0$
be defined by $B=\{\om_0:f(\om_0)=1\}$. Clearly $B$ is
$\mathfrak{B}$-measurable and, since the $\om_0$-marginal of $\mu$
is $P$, \[\mu(A)=E_\mu(f)=P(B)\] Hence, in order to prove that
$\mu(A)\in\{0,1\}$, we need to show that $P(B)\in\{0,1\}$ But $A$ is
$\widetilde{T}$-invariant and so, up to set of measure zero, if
$\om_0\in B$ then $\om_1\in B$. This means that $B$ satisfies the
condition of the lemma \ref{lemma_sec2}, and so $B$ is a zero-one event.\\
\end{proof}

\begin{thm}
Let $f\in L^1(\Om_0,\mathcal{B},P)$. Then for $P$-almost all
$\om\in\Om_0$
\[\lim_{n\rightarrow\infty}{\frac{1}{n}\sum_{k=0}^{n-1}{f\circ\gth_{X_k}(\om)}=\BE_P(f)}~~~~~~P_\om~almost~surely.\]
Similarly, if $f:\Om\times\Om\rightarrow\BR$ is measurable with
$\BE_P(E_\om(f(\om,\gth_{X_1}\om)))<\infty$, then
\[\lim_{n\rightarrow\infty}{\frac{1}{n}\sum_{k=0}^{n-1}{f(\gth_{X_k}\om,\gth_{X_{k+1}}\om)}=\BE_P(E_\om(f(\om,\gth_{X_1}\om)))}.\]
for $P$-almost all $\om$ and $P_\om$-almost all trajectories of
$(X_k)_{k\geq 0}$. \label{Thm_mutual_ergodic}
\end{thm}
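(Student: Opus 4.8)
The plan is to derive both assertions from Birkhoff's pointwise ergodic theorem applied to the dynamical system $(\Xi,\mathscr{H},\mu,\widetilde T)$, which is ergodic by Proposition \ref{ergodic_particle_view}, together with the identification recorded just before that proposition: under $\bold{P}$ the sequence $(\gth_{X_k}\om)_{k\ge 0}$ has the same law as $(\om_0,\om_1,\ldots)$ has under $\mu$.

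For the first statement, given $f\in L^1(\Om_0,\mathcal B,P)$ I would set $F\colon\Xi\to\BR$, $F\big((\om_n)_{n\in\BZ}\big)=f(\om_0)$. Since the $\om_0$-marginal of $\mu$ is $P$ we have $E_\mu(|F|)=\BE_P(|f|)<\infty$, so Birkhoff's theorem together with ergodicity of $\widetilde T$ gives
\[
\lim_{n\to\infty}\frac1n\sum_{k=0}^{n-1}F\circ\widetilde T^{\,k}=E_\mu(F)=\BE_P(f)\qquad\mu\text{-a.s.}
\]
Because $F\circ\widetilde T^{\,k}\big((\om_n)_n\big)=f(\om_k)$, the set on which this convergence holds belongs to $\si(\om_k:k\ge 0)$; hence its $\mu$-probability equals its probability under the law of $(\gth_{X_k}\om)_{k\ge 0}$, namely $1$. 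Reading this back through the identification yields $\tfrac1n\sum_{k=0}^{n-1}f\circ\gth_{X_k}(\om)\to\BE_P(f)$ for $\bold{P}$-almost every $(\om,(X_k)_k)$, and since $\bold{P}=P\otimes P_\om$ a $\bold{P}$-a.s.\ statement is exactly the assertion that for $P$-a.e.\ $\om$ the convergence holds $P_\om$-a.s.

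For the second statement I would repeat the argument with $F\big((\om_n)_{n\in\BZ}\big)=f(\om_0,\om_1)$. The definition of $\mu$ shows that under $\mu$ one has $\om_0\sim P$ and, conditionally on $\om_0$, $\om_1$ is distributed as $\Lambda(\om_0,\cdot)$, which by \eqref{Lambda} and \eqref{transition_probability} is precisely the law of $\gth_{X_1}\om$ under $P_\om$; hence $E_\mu(|F|)=\BE_P\big(E_\om(|f(\om,\gth_{X_1}\om)|)\big)$, finite by hypothesis, so $F\in L^1(\mu)$. Birkhoff and ergodicity again give $\tfrac1n\sum_{k=0}^{n-1}F\circ\widetilde T^{\,k}\to E_\mu(F)$ $\mu$-a.s., and since $F\circ\widetilde T^{\,k}\big((\om_n)_n\big)=f(\om_k,\om_{k+1})$ this is once more an event in $\si(\om_k:k\ge 0)$, so it transfers to $\bold{P}$ and gives $\tfrac1n\sum_{k=0}^{n-1}f(\gth_{X_k}\om,\gth_{X_{k+1}}\om)\to\BE_P\big(E_\om(f(\om,\gth_{X_1}\om))\big)$ for $P$-a.e.\ $\om$ and $P_\om$-a.e.\ trajectory.

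The one point that needs care is precisely this passage between the two-sided stationary chain on $\Xi$, where ergodicity is available, and the one-sided annealed process $(\gth_{X_k}\om)_{k\ge 0}$: one must observe that the Cesàro averages in play involve only the coordinates $\om_0,\om_1,\ldots$, so that the $\mu$-a.s.\ convergence is an event measurable with respect to $\si(\om_k:k\ge 0)$ and can therefore be evaluated on the one-sided marginal of $\mu$, which is the law of $(\gth_{X_k}\om)_{k\ge 0}$. The remaining ingredients — membership in $L^1(\mu)$ and the identification of the limiting constant via the $\om_0$-marginal of $\mu$ — are routine.
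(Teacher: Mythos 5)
Your proposal is correct and follows essentially the same route as the paper: identify the law of $(\gth_{X_k}\om)_{k\geq 0}$ under $\bold{P}$ with the one-sided marginal of $\mu$, apply Birkhoff's theorem on $(\Xi,\mathscr{H},\mu,\widetilde T)$ using the ergodicity from Proposition \ref{ergodic_particle_view}, and transfer the almost sure convergence back. Your added care about the convergence event being $\si(\om_k:k\geq 0)$-measurable and the explicit identification of $E_\mu(F)$ via the kernel $\Lambda$ in the second part just makes explicit what the paper leaves implicit.
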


\begin{proof}
Recall that $\{\gth_{X_k}(\om)\}_{k\geq 0}$ has the same law in
$\BE_P(P_\om(\cdot))$ as $(\om_0,\om_1,\ldots)$ has in $\mu$. Hence,
if $g(\ldots,\om_{-1},\om_0,\om_1,\ldots)=f(\om_0)$ then
\[\lim_{n\rightarrow\infty}{\frac {1}{n}\sum_{k=0}^{n-1}{f\circ\gth_{X_k}}}\overset{D}{=}\lim_{n\rightarrow\infty}{\frac{1}{n}\sum_{k=0}^{n-1}{g\circ
\widetilde{T}^k}}.\] The latter limit exists by Birkhoff's Ergodic
Theorem (we have already seen that $\widetilde{T}$ is ergodic) and
equals $E_\mu(g)=\BE_P(f)$ almost surely. The second part is proved
analogously.
\end{proof}


\section{Law of Large Numbers}

We turn now to prove Theorem \ref{LLN} - i.e. Law of Large Numbers
for random walks on a discrete point process. For completeness we
state the theorem again:

\begin{theo}\textsl{\textbf{\ref{LLN}}}
Define the event
\[A=\left\{\lim_{n\rightarrow\infty}{\frac{X_n}{n}}=0\right\}.\]
Then $\BP(A)=1.$
\end{theo}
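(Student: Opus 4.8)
The plan is to prove that the velocity exists and equals zero by exploiting the ergodicity of the environment seen from the random walk, established in Theorem~\ref{Thm_mutual_ergodic}, together with the symmetry built into the $2d$-point transition rule. First I would decompose the displacement into a telescoping sum of one-step increments:
\[
X_n = \sum_{k=0}^{n-1}\big(X_{k+1}-X_k\big).
\]
The increment $X_{k+1}-X_k$ is a function of the environment seen from the walker, namely $X_{k+1}-X_k = \Delta(\gth_{X_k}\om)$ for a suitable (vector-valued) local function $\Delta$, where given an environment $\om'\in\Om_0$ the walker at the origin jumps to one of the $2d$ coordinate nearest neighbors $N_0(\om')$ uniformly. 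Provided the increments are $\BP$-integrable, the second part of Theorem~\ref{Thm_mutual_ergodic} applied coordinatewise gives
\[
\lim_{n\rightarrow\infty}\frac{X_n}{n} = \BE_P\big(E_\om(\Delta(\om))\big) =: v
\]
for $P$-almost every $\om$ and $P_\om$-almost every trajectory, so the limiting velocity exists almost surely. It then remains to identify $v$ and show $v=0$.

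To show $v=0$ I would argue coordinate by coordinate and use a reversibility/stationarity cancellation. Fix a coordinate direction $e_i$. Writing $v_i = \BE_P(E_\om(\la \Delta(\om), e_i\ra))$, the contribution to $v_i$ comes only from the two neighbors of the origin in the $i$-th direction: the walker moves to the nearest point of $\CP(\om)$ in direction $+e_i$ (at distance $f_{e_i}(\om)$) with probability $\tfrac{1}{2d}$, contributing $+f_{e_i}(\om)/2d$, and to the nearest point in direction $-e_i$ with probability $\tfrac{1}{2d}$, contributing $-f_{-e_i}(\om)/2d$. Hence
\[
v_i = \frac{1}{2d}\,\BE_P\big(f_{e_i}(\om)\big) - \frac{1}{2d}\,\BE_P\big(f_{-e_i}(\om)\big),
\]
and the claim $v_i=0$ reduces to $\BE_P(f_{e_i}) = \BE_P(f_{-e_i})$. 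This last identity should follow from the $\gth_{e_i}$-stationarity of $Q$ together with the Kac-type formula for the induced shift: both $\BE_P(f_{e_i})$ and $\BE_P(f_{-e_i})$ equal $1/Q(\Om_0)$ (this is precisely the statement that under the $P$-preserving induced maps $\si_{\pm e_i}$ of Theorem~\ref{induced_shift_ergodicy}, the expected return time is the reciprocal of the density), so they coincide. Alternatively, one can observe directly that $f_{-e_i}$ has the same $P$-distribution as $f_{e_i}$ by reflecting the environment, using stationarity.

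The main obstacle is the integrability of the increments: Theorem~\ref{Thm_mutual_ergodic} requires $\BE_P(E_\om(|\Delta(\om)|))<\infty$, i.e. $\BE_P(f_{e_i})<\infty$ for each $i$, whereas Assumption~\ref{Assumptions} alone only guarantees $f_{e_i}<\infty$ $P$-almost surely, not in $L^1$. I expect the resolution to be the Kac formula again: because $\si_{e_i}$ is the first-return map of the ergodic, measure-preserving shift $\gth_{e_i}$ on $\Om$ to the set $\Om_0$ of positive $Q$-measure, the return time $f_{e_i}$ automatically has finite $P$-expectation equal to $1/Q(\Om_0)$, with no extra moment hypothesis needed. (Assumption~\ref{assumption3} asks for a $(2+\ep_0)$-th moment, which is only invoked later for the CLT; here only the first moment is required, and it comes for free.) With integrability in hand, the rest is the bookkeeping of applying the ergodic theorem to each coordinate and assembling the coordinatewise limits, which I would not belabor.
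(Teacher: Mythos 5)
Your proposal is correct and follows essentially the same route as the paper: decompose $X_n$ into one-step increments, apply the second part of Theorem~\ref{Thm_mutual_ergodic} (the ergodic theorem for functions of consecutive environments seen from the walker), and identify the annealed drift in direction $e_i$ as $\frac{1}{2d}\bigl(\BE_P(f_{e_i})-\BE_P(f_{-e_i})\bigr)=0$. The only difference is minor: where you invoke Kac's lemma to get $\BE_P(f_{e_i})=1/Q(\Om_0)<\infty$ (and thence the equality of the two expectations), the paper proves finiteness by hand via a Birkhoff-theorem counting argument and obtains $\BE_P(f_{e_i})=\BE_P(f_{-e_i})$ from the distributional identity $P(f_{-e_i}=k)=P(f_{e_i}=k)$, which is the same underlying fact.
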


\begin{proof}
Using linearity, it is enough to prove that for every  $e\in\CE$
we have $\BP(A_e)=1$, where
\[A_e=\left\{\lim_{n\rightarrow\infty}{\frac{X_n\cdot e}{n}}=0\right\}.\]
For every $e\in\CE$ let $f_e$ be as in
Definition \ref{f_e_defn}. By (\ref{The_model_claim}) $f_e$ is $P$-a.s
finite. We first prove that $\BE_P(f_e)<\infty$. Assume for contradiction that $\BE_P(f_e)=\infty$, since
$f_e$ is positive then
\begin{equation}
\lim_{n\rightarrow\infty}{\frac{1}{n}\sum_{k=0}^{n-1}{f_e(\si_e^k(\om))}}=\infty~~~~~P~a.s.
\label{lim_f}
\end{equation}
Indeed, for every $M>0$ define
\[f_e^M(\om)=
\left\{
\begin{array}{cc}
 f_e(\om) & f_e(\om)\leq M \\
 M      & f_e(\om)>M  \\
\end{array}
\right.,\]
then, since $f_e^M$ is finite by Birkhoff Ergodic Theorem
\[\lim_{n\rightarrow\infty}{\frac{1}{n}\sum_{k=0}^{n-1}{f_e(\si_e^k(\om))}}\geq\lim_{n\rightarrow\infty}{\frac{1}{n}\sum_{k=0}^{n-1}{f_e^M(\si_e^k(\om))}}=\BE_P(f_e^M),~~~~~P~a.s.\]
Taking now M to infinity we get
\[\lim_{n\rightarrow\infty}{\frac{1}{n}\sum_{k=0}^{n-1}{f_e(\si_e^k(\om))}}\geq\lim_{M\rightarrow\infty}{\BE_P(f_e^M)}=\BE_P(f_e)=\infty,~~~~~P~a.s.\]
Let $S(k)=max\left\{n\geq
0:\sum_{m=0}^{n-1}{f(\si_e^m(\om))}<k\right\}$, by (\ref{lim_f})
\[\lim_{k\rightarrow\infty}{\frac{S(k)}{k}}=0~~~~~P~a.s.\]\\
On the other hand, let $g:\Om\rightarrow\{0,1\}$ be defined by
\[g(\om)=\mathbbm{1}_{\Om_0}(\om),\]
then
\[S(k)=\sum_{j=0}^{k-1}{g(\gth_e^j(\om))},\]
and therefore by Birkhoff's Ergodic Theorem
\[Q(\Om_0)=\lim_{k\rightarrow\infty}{\frac{1}{k}\sum_{j=0}^{k-1}{g(\gth_e^j(\om))}}=\lim_{k\rightarrow\infty}{\frac{S(k)}{k}}=0,~~~~~P~a.s,\]
contradicting assumption~\ref{Assumptions}. It follow that
$\BE_P(f_e)<\infty$, and therefore by Birkhoff Ergodic Theorem
\begin{equation}
\lim_{n\rightarrow\infty}{\frac{1}{n}\sum_{k=0}^{n-1}{f_e(\si_e^k(\om))}}=\BE_P(f_e)<\infty~~~~~P~a.s.\label{Finite_sum_of_changes}
\end{equation}

Notice that
\begin{equation}
P(f_{-e}(\om)=k)=P(f_e(\si_e^{-1}(\om))=k)=P(f_e(\om)=k),\label{Symmetry}
\end{equation}
where the last equality is true since $P$ is stationary. It therefore follows that
\begin{equation}
\BE_P(f_e)=\BE_P(f_{-e}) \label{Expectation_equality}
\end{equation}

For $e\in\CE$ let $g_e:\Om\times\Om\rightarrow\BZ$ be as follows:
\[g_e(\om,\om')= \left\{
\begin{array}{cc}
f_e(\om) & \om'=\si_e(\om) \\
-f_{-e}(\om) & \om'=\si_{-e}(\om) \\
0 & otherwise \end{array}\right..\]
Now, $g_e$ is measurable and
using (\ref{Expectation_equality}) we get
\[\BE_P(E_\om(g_e(\om,\gth^{X_1}\om)))=\BE_P\left(\frac{1}{2d}f_e(\om)-\frac{1}{2d}f_{-e}(\om)\right)=0.\]
It therefore follows that for every $e\in\CE$,
for almost every $\om\in\Om_0$ and $P_\om$ almost every random walk $\{X_k\}_{k\geq
0}$, we have for $Z_k=X_k-X_{k-1},~~k\geq 1$ that
\[\lim_{n\rightarrow\infty}{\frac{X_n\cdot e}{n}}=\lim_{n\rightarrow\infty}{\frac{1}{n}\sum_{k=1}^{n}{Z_k\cdot e}},\]
and from \eqref{Thm_mutual_ergodic} this equals to
\[\lim_{n\rightarrow\infty}{\frac{1}{n}\sum_{k=0}^{n-1}{g_e(\gth_{X_k}\om,\gth_{X_{k+1}}\om)}}=\BE_P(E_\om(g_e(\om,\gth_{X_1}\om)))=0,~~~\BP~a.s.\]
\end{proof}


\section{One Dimensional Central Limit Theorem}

Here we prove Theorem \ref{CLT1} - i.e. Central Limit Theorem for
one dimensional random walks on discrete point processes. We start
by stating the theorem

\begin{theo}\textsl{\textbf{\ref{CLT1}}}
Let $d=1$ and denote $e=1$ then for $P$ almost every $\om\in\Om_0$
\begin{equation}
\lim_{n\rightarrow\infty}{\frac{X_n}{\sqrt{n}}}\overset{D}{=}N(0,\BE_P^2(f_{e})).
\end{equation}
\end{theo}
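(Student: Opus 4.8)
The plan is to reduce the CLT for $X_n$ to the classical CLT for sums of i.i.d.\ random variables via the induced shift $\sigma_e$, by inverting the relationship between the walk's position and its ``time spent''. Recall that in dimension one, with $e=1$, the point process $\CP(\om)$ is a stationary sequence of occupied sites on $\BZ$, and the walk $X_n$ moves at each step to the nearest occupied site to the right or to the left, each with probability $\tfrac12$. Enumerate the occupied sites visited, to the right of $0$, as $0 = v_0 < v_1 < v_2 < \cdots$ where $v_j - v_{j-1} = f_e(\sigma_e^{j-1}\om)$; similarly to the left. By Theorem~\ref{induced_shift_ergodicy} the gaps $\{f_e(\sigma_e^{j}\om)\}_{j\ge 0}$ form a stationary ergodic sequence under $P$, and by the argument in the proof of Theorem~\ref{LLN} we have $\BE_P(f_e) < \infty$, so $v_j / j \to \BE_P(f_e)$ $P$-a.s.\ by Birkhoff.

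Next I would introduce the ``skeleton'' random walk $Y_m$ on $\BZ$ that records the index of the occupied site the walk sits at after $m$ steps: if $X_m = v_{Y_m}$ (with the obvious two-sided indexing), then $Y_m$ is, conditionally on $\om$, a symmetric simple random walk on $\BZ$ — each step is $\pm 1$ with probability $\tfrac12$, \emph{independently} of the environment, because the transition rule (\ref{transition_probability}) always moves to the coordinate-nearest neighbor on one side or the other. Hence $Y_m/\sqrt m \overset{D}{\Rightarrow} N(0,1)$ by the ordinary CLT, and this holds for $P$-almost every $\om$ since the law of $(Y_m)$ does not depend on $\om$ at all. Now $X_n = v_{Y_n}$, and since $Y_n = O(\sqrt{n\log\log n})$ a.s.\ while $v_j \sim \BE_P(f_e)\, j$ uniformly, one gets $X_n / \sqrt n = v_{Y_n}/\sqrt n \approx \BE_P(f_e)\cdot Y_n/\sqrt n$; combining with Slutsky's theorem yields $X_n/\sqrt n \overset{D}{\Rightarrow} N(0, \BE_P(f_e)^2)$, which is the claimed $N(0,\BE_P^2(f_e))$.

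The main obstacle is making the approximation $v_{Y_n} \approx \BE_P(f_e)\, Y_n$ rigorous at the $\sqrt n$ scale, uniformly in the relevant range of indices. Birkhoff gives $v_j/j \to \BE_P(f_e)$ pointwise, but to conclude $v_{Y_n}/\sqrt n - \BE_P(f_e)\, Y_n/\sqrt n \to 0$ in probability I need that $\max_{|j| \le K\sqrt n}\, |v_j - \BE_P(f_e)\, j| = o(\sqrt n)$ for each fixed $K$, together with a tightness bound $\BP(|Y_n| > K\sqrt n) < \varepsilon$ uniformly in $n$ for large $K$. The latter is immediate from the CLT for $Y_n$; the former follows from the elementary fact that $a_j := v_j - \BE_P(f_e)\, j$ satisfies $a_j/j \to 0$ a.s., which forces $\max_{|j|\le m} |a_j| = o(m)$ as $m \to \infty$ (a standard Cesàro-type consequence), applied with $m = \lceil K\sqrt n\rceil$. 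One then writes
\[
\frac{X_n}{\sqrt n} = \BE_P(f_e)\,\frac{Y_n}{\sqrt n} + \frac{a_{Y_n}}{\sqrt n},
\]
and on the event $\{|Y_n|\le K\sqrt n\}$ the error term is bounded by $\max_{|j|\le K\sqrt n}|a_j|/\sqrt n \to 0$; letting first $n\to\infty$ and then $K\to\infty$ completes the argument. A minor additional point to check is the two-sided indexing — that the left gaps $f_{-e}(\sigma_{-e}^{j}\om)$ also obey $v_{-j}/(-j) \to -\BE_P(f_e)$, which holds because $\BE_P(f_{-e}) = \BE_P(f_e)$ by (\ref{Expectation_equality}) in the proof of Theorem~\ref{LLN}.
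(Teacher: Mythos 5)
Your proposal is correct and follows essentially the same route as the paper: decompose $X_n=t_{Y_n}$ with $Y_n$ an environment-independent simple random walk, control the point locations $t_j$ by Birkhoff's theorem applied to the induced shift (using $\BE_P(f_e)<\infty$ from the LLN proof), and combine the two limits. Your Slutsky argument with the maximal bound $\max_{|j|\le K\sqrt n}|t_j-\BE_P(f_e)j|=o(\sqrt n)$ is in fact a cleaner justification of the composition step than the paper's own treatment (the sign in your aside about $v_{-j}/(-j)$ should be $+\BE_P(f_e)$, but that is a harmless slip).
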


\begin{proof} 
We first notice that for $d=1$, a random walk on a discrete point
process is almost surely a simple one dimensional random walk with
changed distances between points. Secondly the expectation of the
distance between points, given by $\BE_P(f_{e})$, is finite.

Given an environment $\om\in\Om_0$ and a random walk $\{X_k\}_{k\geq
0}$, we define the simple one-dimensional random walk
$\{Y_k\}_{k\geq 0}$ associated with $\{X_k\}_{k\geq 0}$ as follows:
First we define $Z_k=X_k-X_{k-1}$ for every $k\geq 1$, then we
define $W_k=\frac{Z_k}{|Z_k|}$. Finally we define $Y_0=0$ and for
$k\geq 1$ we define $Y_k=\sum_{j=1}^{k}{W_k}$. Since $\{Y_k\}_{k\geq
0}$ is a simple one dimensional random walk on $\BZ$, it follows
from the Central Limit Theorem that for $P$ almost every
$\om\in\Om_0$
\begin{equation}
\lim_{n\rightarrow\infty}{\frac{1}{\sqrt{n}}\cdot
Y_n}\overset{D}{=}N(0,1).
\end{equation}

We now turn to define for every $\om\in\Om$ the points of the
environment. For every $n\in\BZ$ let $t_n$ be the $n^{\mbox{th}}$
place on the grid with a point, i.e. $t_0=0$,
\[t_n=\sum_{k=0}^{n-1}{f_{e}(\si_e^k\om)}~~~~~n>0,\]
and
\[t_n=\sum_{k=-1}^{-n}{f_{e}(\si_e^k\om)}~~~~~n<0.\]

For every $a>0$ we have
\[
\lim_{n\rightarrow\infty}{\frac{1}{\sqrt{n}}t_{\lfloor a\sqrt{n}\rfloor}}
=a\cdot\lim_{n\rightarrow\infty}{\frac{1}{a\sqrt{n}}\sum_{k=0}^{\lfloor
a\sqrt{n}\rfloor}{f_{e}(\si_e^k\om)}}=a\cdot \BE_P(f_{e}),
\]
where the last equality holds since this sequence contain the same
elements as the sequence in (\ref{Finite_sum_of_changes}) and every
element in the original sequence appears only a finite number of
times, therefore those sequences have the same partial limits, and
the original sequence (the one in \eqref{Finite_sum_of_changes}) converges.

By the same argument for every $a\in\BR$ we have that
\begin{equation}
\lim_{n\rightarrow\infty}{\frac{1}{\sqrt{n}}t_{\lfloor
a\sqrt{n}\rfloor}}=a\cdot \BE_P(f_{e}). \label{limit_CLT_constant}
\end{equation}

Using (\ref{limit_CLT_constant}) and the fact that
$\lim_{n\rightarrow\infty}{\frac{Y_n}{\sqrt{n}}}$ exists and finite
$\BP$ almost surely, we get that
\[\lim_{n\rightarrow\infty}{\frac{1}{\sqrt{n}}t_{Y_n}}=\lim_{n\rightarrow\infty}{\frac{1}{\sqrt{n}}t_{\frac{Y_n}{\sqrt{n}}\sqrt{n}}}\leq
a~~\Leftrightarrow~~\lim_{n\rightarrow\infty}{\frac{Y_n}{\sqrt{n}}\leq\frac{a}{\BE_P(f_{e})}},\]
and therefore
\[\BP\left(\lim_{n\rightarrow\infty}{\frac{1}{\sqrt{n}}t_{Y_n}}\leq a\right)
=\BP\left(\lim_{n\rightarrow\infty}{\frac{Y_n}{\sqrt{n}}\leq\frac{a}{\BE_P(f_{e})}}\right)=\Phi\left(\frac{a}{\BE_P(f_{e})}\right),\]
where $\Phi$ is the standard normal cumulative distribution
function. Finally, we notice that
\[X_n=t_{Y_n},\]
and therefore we conclude that
\[\BP\left(\lim_{n\rightarrow\infty}{\frac{X_n}{\sqrt{n}}}\leq
a\right)=\Phi\left(\frac{a}{\BE_P(f_{e})}\right),\] as required.
\end{proof}


\section{Transience and Recurrence}

Before we continue the discussion on Central Limit Theorem in higher
dimensions, we turn to deal with transience and recurrence of random
walks on discrete point processes.

\subsection{One-dimensional case}

\begin{propo}\textsl{\textbf{\ref{tran_recu1}}}
The one dimensional random walk on a discrete point process is
$\BP$-almost surely recurrent.
\end{propo}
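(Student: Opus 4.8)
The plan is to reduce the one-dimensional walk to an ordinary simple random walk on $\BZ$, for which recurrence is a classical fact, using exactly the change of coordinates already set up in the proof of Theorem \ref{CLT1}. Concretely, I would work quenched: fix $P$-almost every $\om\in\Om_0$, use the strictly increasing enumeration $n\mapsto t_n$ of the point set $\CP(\om)$ with $t_0=0$ (so $t_n=\sum_{k=0}^{n-1}f_e(\si_e^k\om)$ for $n>0$, and symmetrically for $n<0$), and observe that under this enumeration the quenched walk $(X_n)$ on $\CP(\om)$ is carried to the process $(Y_n)$ on $\BZ$ with $Y_0=0$ and $Y_{n+1}=Y_n\pm 1$ each with probability $\tfrac{1}{2d}=\tfrac12$. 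This is precisely the auxiliary walk $(Y_n)$ constructed in the proof of Theorem \ref{CLT1}, where it is noted that $X_n=t_{Y_n}$.

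First I would record that, for $P$-a.e.\ $\om$, Claim \ref{The_model_claim} guarantees that $\CP(\om)$ is unbounded above and below and that $f_e(\om),f_{-e}(\om)<\infty$, so that $t:\BZ\to\CP(\om)$ is a well-defined, strictly monotone bijection; in particular $t_n=0$ if and only if $n=0$. Next I would note that each $v\in\CP(\om)$ has exactly one coordinate-nearest neighbour to its right and one to its left, so the pushed-forward chain $(Y_n)$ is genuinely simple random walk on $\BZ$ (no laziness, no defective transitions), hence $P_\om(Y_n=0 \text{ infinitely often})=1$ by the classical recurrence of simple random walk on $\BZ$ (Chung--Fuchs, or the elementary return-probability computation). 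Since $X_n=t_{Y_n}$ and $t$ is injective with $t_0=0$, the events $\{X_n=0\}$ and $\{Y_n=0\}$ coincide, so $P_\om(X_n=0 \text{ infinitely often})=1$ for $P$-a.e.\ $\om$. Integrating this over $\om$ against $P$ yields $\BP(X_n=0 \text{ infinitely often})=1$, which is the asserted $\BP$-almost sure recurrence.

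I do not expect a genuine obstacle here: the entire content of the proposition is the reduction, and the one point that requires (routine) verification is that $t:\BZ\to\CP(\om)$ is a strictly monotone bijection and that the transported chain is exactly simple random walk on $\BZ$ rather than some perturbation of it. Both follow at once from Claim \ref{The_model_claim} and the definition of $N_v(\om)$, so the argument is short once the coordinate change of Section 4 is invoked.
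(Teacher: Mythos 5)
Your argument is correct and is essentially the paper's own proof: the paper also invokes the auxiliary simple random walk $Y_n$ from the one-dimensional CLT section, uses its classical recurrence, and concludes from $X_n=t_{Y_n}$ together with $t_0=0$ that $X_n=0$ infinitely often $\BP$-almost surely. Your added remarks on $t$ being a strictly monotone bijection are just the routine justification the paper leaves implicit.
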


\begin{proof}[Proof of Proposition \ref{tran_recu1}]
Using the notation from the previous section, since $Y_n$ is a
one-dimensional simple random walk, it is recurrent $\BP$ almost
surely. Therefore we have $\#\{n:Y_n=0\}=\infty~~\BP$ almost surely, but since
$X_n=t_{Y_n}$ and $t_0=0$ we have $\#\{n:X_n=0\}=\infty~~\BP$ almost surely,
and therefore the random walk is recurrent.
\end{proof}

\subsection{Two-dimensional case}

The theorem we wish to prove is the following:

\begin{theo}\textsl{\textbf{\ref{tran_recu2}}}
Let $(\Om,\FB,P)$ be a two dimensional discrete point process and
assume there exists a constant $C>0$ such that
\begin{equation}
\sum_{k=N}^{\infty}{\frac{k\cdot
P(f_{e_i}=k)}{\BE(f_{e_i})}}\leq\frac{C}{N},\quad\forall
i\in\{1,2\}~~\forall N\in\BN, \label{Cauchy_tail_ass}
\end{equation}
which in particular holds, whenever $f_{e_i}$ has a second moment
for $i\in\{1,2\}$. Then the random walk is $\BP$ almost surely
recurrent.
\end{theo}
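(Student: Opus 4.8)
The walk is reversible: $u\in N_v(\om)$ if and only if $v\in N_u(\om)$, so counting measure on $\CP(\om)$ is reversing, and $\{X_n\}$ is the simple random walk on the unit-conductance electrical network $\CG(\om)=(\CP(\om),E(\om))$ with $E(\om)=\{\{v,u\}:u\in N_v(\om)\}$; we work in the connected component of the origin, which is infinite because, by Claim~\ref{The_model_claim} applied to $v=0$, row $0$ and column $0$ of $\BZ^2$ each carry infinitely many points of $\CP(\om)$. By the electrical-network criterion for recurrence of reversible chains, it suffices to prove that, for $P$-almost every $\om$, $R_{\mathrm{eff}}(0\leftrightarrow\infty)=\infty$ in $\CG(\om)$. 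In contrast with the case $d=1$, the walk is not a time change of a simple random walk on $\BZ^2$ --- the point reached from $v$ by ``north, then east'' differs in general from the one reached by ``east, then north'' --- so the arguments of Proposition~\ref{tran_recu1} and Theorem~\ref{CLT1} do not apply; instead I will bound $R_{\mathrm{eff}}$ from below by a Nash--Williams cut-set estimate with box boundaries as cut-sets.

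First pass to the electrically equivalent network $\CG'(\om)$ obtained by subdividing each horizontal (resp.\ vertical) $\CP$-edge $\{v,v+\ell e\}$ of $\CG(\om)$ into a path $v,v+e,\dots,v+\ell e$ of $\ell$ unit edges, each of conductance $\ell$ (series law); this leaves $R_{\mathrm{eff}}(0\leftrightarrow\infty)$ unchanged. Write $\partial_m:=\partial\bigl([-m+\tfrac12,\,m-\tfrac12]^2\bigr)$. Every edge of $\CG'(\om)$ now has unit $\ell^\infty$-diameter, so the cut-sets $\Pi_m:=\{\,\text{edges of }\CG'(\om)\text{ crossing }\partial_m\,\}$, $m\ge1$, are pairwise disjoint and each separates $0$ from $\infty$. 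Since $Q$-almost surely every row and column of $\BZ^2$ meets $\CP(\om)$ in infinitely many points, each of the $O(m)$ edges of $\Pi_m$ carries conductance equal to the length of the $\CP$-edge straddling $\partial_m$ at that edge, so $C_m:=\sum_{e\in\Pi_m}c(e)$ is a sum of $O(m)$ such straddling lengths; the (generalized) Nash--Williams inequality then yields $R_{\mathrm{eff}}(0\leftrightarrow\infty)\ge\sum_{m\ge1}C_m^{-1}$, and we are reduced to showing that this series diverges $P$-almost surely.

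By the inspection paradox, the length of the $\CP$-edge straddling a fixed point of a given row has, under $Q$, the size-biased law $k\mapsto kP(f_{e_i}=k)/\BE_P(f_{e_i})$; hypothesis~\eqref{Cauchy_tail_ass} says precisely that this law has a ``Cauchy'' tail, $\le C/k$, and it also yields $P(f_{e_i}\ge k)\le C\,\BE_P(f_{e_i})/k^{2}$. The straddling lengths entering $C_m$ form a stationary ergodic family along the coordinate transverse to $\partial_m$; combining the ergodic theorem with the $C/k$ tail bound (truncating the lengths at level $m$ leaves them with mean $O(\log m)$) and a Borel--Cantelli argument over the scales $m=2^{j}$, one shows that, $P$-almost surely, $C_m\le C'm\log m$ for every $m$ outside a set $\CS$ of ``bad'' scales --- a scale $m$ lying in $\CS$ only if some $\CP$-edge of length $\gtrsim m$ straddles $\partial_m$. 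The $k^{-2}$ tail bound is then used to control $\CS$, estimating both the number of long $\CP$-edges relevant at a given scale and the run of scales each of them spoils, strongly enough that $\sum_{m\notin\CS}(m\log m)^{-1}=\infty$; since $\sum_{m\ge1}C_m^{-1}\ge\sum_{m\notin\CS}(C'm\log m)^{-1}$, the walk is recurrent.

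\medskip\noindent\textbf{Main obstacle.} The delicate point is the final step. When $\BE_P(f_{e_i}^{2})=\infty$ the size-biased straddling lengths have infinite mean, so Birkhoff's theorem does not control $\tfrac1m\sum_{e\in\Pi_m}c(e)$ directly, and a single long $\CP$-edge inflates $C_m$ for a whole run of consecutive scales at once; showing that the $k^{-2}$ tail makes such edges, together with the scale-runs they spoil, sparse enough that the divergent tail of $\sum(m\log m)^{-1}$ survives is the heart of the argument. (When $\BE_P(f_{e_i}^{2})<\infty$ the size-biased length has finite mean, Birkhoff gives $C_m=O(m)$ for all large $m$, and then $\sum_{m\ge1}C_m^{-1}\gtrsim\sum_m m^{-1}=\infty$ at once.)
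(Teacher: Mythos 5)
Your setup coincides with the paper's: reversibility, subdividing each $\CP$-edge of length $\ell$ into $\ell$ unit edges of conductance $\ell$, Nash--Williams over box-boundary cutsets, and the observation that the conductance of a cutset edge has the size-biased law $k\mapsto kP(f_{e_i}=k)/\BE_P(f_{e_i})$, whose tail is exactly what \eqref{Cauchy_tail_ass} controls. The gap is in the final step, which you yourself flag as ``the heart of the argument'' and only sketch. Your plan is to prove an almost sure statement of the form ``$C_m\le C'm\log m$ for all scales $m$ outside a sparse set $\CS$'' via truncation, an ergodic theorem, and Borel--Cantelli over dyadic scales, with $\CS$ controlled by the bound $P(f_{e_i}\ge k)\le C\BE_P(f_{e_i})/k^2$. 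This route does not go through under hypothesis \eqref{Cauchy_tail_ass} alone. When $\BE_P(f_{e_i}^2)=\infty$ the size-biased lengths have infinite mean, so there is no law of large numbers to invoke; truncation at level $m$ plus Markov only gives $\BP(C_m> Km\log m)\le \ep$ with $\ep$ a fixed constant (depending on $K$), and the probability that truncation matters at scale $m$ (some straddling edge of length $\ge m$ among the $O(m)$ boundary rows) is, by a union bound with the $C/m$ size-biased tail, only $O(1)$ --- not $o(1)$ and certainly not summable. Hence Borel--Cantelli cannot make the bad scales sparse, and a second-moment count of long edges (each edge of length $L$ spoiling up to $L$ consecutive scales) again only yields an expected number of bad scales of order $M$ up to scale $M$. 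In short, with \eqref{Cauchy_tail_ass} alone you cannot upgrade ``each scale is good with probability $\ge 1-\ep$'' to ``almost surely all but a sparse set of scales are good,'' which is what your plan needs. A further, related point: the straddling lengths along a single cutset need not be independent, so even in your parenthetical finite-second-moment case one must be a little careful; the paper's route never needs more than identical marginal distributions.

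The paper resolves exactly this difficulty with two lemmas imported from Berger: Lemma \ref{Cauchy_tail_lemma} (identically distributed, possibly dependent, Cauchy-tailed variables satisfy $\sum_{i\le n}f_i\le Kn\log n$ with probability $\ge 1-\ep$ for $n$ large, by truncation at $n^M$ and Markov) and, crucially, Lemma \ref{lem2_2dim_recurrent}: if $\BP(A_n)\ge 1-\ep$ for all large $n$ and $\sum_n a_n=\infty$, then $\sum_n \ind_{A_n}a_n=\infty$ with probability at least $1-\ep$. Applying this with $A_n=\{C_{\Pi_n}\le K(4n+8)\log(4n+8)\}$ and $a_n=(K(4n+8)\log(4n+8))^{-1}$, and then letting $\ep\downarrow 0$, gives $\sum_n C_{\Pi_n}^{-1}=\infty$ almost surely with no control whatsoever on how the bad scales are distributed --- this is the idea missing from your argument, and it is what allows the theorem to be proved under the Cauchy-tail hypothesis rather than under a second (or higher) moment assumption. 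If you replace your Borel--Cantelli/sparsity step with this ``$\ind_{A_n}a_n$'' lemma, your proof becomes essentially the paper's.
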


The proof is based on the connection between random walks,
electrical networks and the Nash-William criteria for recurrence of
random walks. For a proof of the Nash-William criteria and some
background on the subject see \cite{DS84} and \cite{LP97}.

We start with the following definition:
\begin{defn}
Let $(\widetilde{\Om},\widetilde{\mathfrak{B}},\widetilde{P})$ be a
probability space. We say that a random variable
$X:\widetilde{\Om}\rightarrow [0,\infty)$ has a Cauchy tail if there
exist a positive constant $C$ such that for every $n\in \BN$ we have
\begin{equation}
\widetilde{P}\left(X\geq n\right)\leq \frac{C}{n}. \nonumber
\end{equation}
Note that if $\widetilde{E}(X)<\infty$, then X has a Cauchy
tail.
\end{defn}

In order to prove theorem \ref{tran_recu2} we will need the
following lemmas taken from \cite{Be01}.

\begin{lem}[\cite{Be01} Lemma 4.1]
Let $\{f_i\}_{i=1}^{\infty}$ be identically distributed positive
random variables, on a probability space
$(\widetilde{\Om},\widetilde{\mathfrak{B}},\widetilde{P})$, that
have a Cauchy tail. Then, for every $\epsilon>0$, there exist $K>0$ and $N\in\BN$

such that for every $n>N$
\[\widetilde{P}\left(\frac{1}{n}\sum_{k=0}^{n}{f_i}>K\log n\right)<\epsilon.\]
\label{Cauchy_tail_lemma}
\end{lem}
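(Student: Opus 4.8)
The plan is to estimate the tail probability $\widetilde{P}\big(\frac1n\sum_{k=0}^n f_k > K\log n\big)$ by truncating each $f_k$ at a level of order $n$ and handling the (rare) event that some $f_k$ exceeds that level separately. First I would fix $\epsilon>0$ and write, for a truncation level $L$ to be chosen, the decomposition
\[
\widetilde{P}\Big(\sum_{k=0}^n f_k > nK\log n\Big) \le \widetilde{P}\Big(\exists\, k\le n:\ f_k > L\Big) + \widetilde{P}\Big(\sum_{k=0}^n f_k\,\mathbbm{1}_{\{f_k\le L\}} > nK\log n\Big).
\]
For the first term, a union bound together with the Cauchy tail hypothesis gives $\widetilde{P}(\exists\,k\le n:\ f_k>L)\le (n+1)\,C/L$, so choosing $L = L_n$ of order $n/\epsilon$ (say $L_n = 2C(n+1)/\epsilon$) makes this term at most $\epsilon/2$ for all $n$.

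The main work is the second term. Here I would use Markov's inequality on the truncated sum:
\[
\widetilde{P}\Big(\sum_{k=0}^n f_k\,\mathbbm{1}_{\{f_k\le L_n\}} > nK\log n\Big) \le \frac{(n+1)\,\widetilde{E}\big(f_0\,\mathbbm{1}_{\{f_0\le L_n\}}\big)}{nK\log n},
\]
using that the $f_k$ are identically distributed. The remaining point is to show $\widetilde{E}\big(f_0\,\mathbbm{1}_{\{f_0\le L_n\}}\big) = O(\log n)$, which is exactly where the Cauchy tail is used again: by the layer-cake formula,
\[
\widetilde{E}\big(f_0\,\mathbbm{1}_{\{f_0\le L_n\}}\big) \le \int_0^{L_n} \widetilde{P}(f_0 > t)\,dt \le 1 + \int_1^{L_n} \frac{C}{t}\,dt = 1 + C\log L_n,
\]
and since $L_n$ is of order $n/\epsilon$, $\log L_n \le \log n + O_\epsilon(1)$, so $\widetilde{E}\big(f_0\,\mathbbm{1}_{\{f_0\le L_n\}}\big) \le C\log n + C'_\epsilon$ for all large $n$. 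Plugging this back, the second term is bounded by
\[
\frac{(n+1)(C\log n + C'_\epsilon)}{nK\log n} \le \frac{2(C\log n + C'_\epsilon)}{K\log n},
\]
which for $K$ large enough (depending on $C$ and $\epsilon$) and $n$ large enough is at most $\epsilon/2$. Combining the two bounds yields the claim with this $K$ and with $N$ chosen large enough that all the "for large $n$" qualifications hold.

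I expect the only subtle point — barely an obstacle — is coordinating the dependence of the truncation level $L_n$ on $n$ and $\epsilon$ so that both the union-bound term and the truncated-mean term come out right simultaneously; the choice $L_n \asymp n/\epsilon$ does both because it kills the first term uniformly while only costing an additive $\log(1/\epsilon)$ inside the logarithm in the second. Everything else is a routine application of Markov's inequality and the layer-cake representation of the expectation, and no independence among the $f_k$ is needed since all estimates go through union bounds and first moments.
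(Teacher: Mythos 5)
Your proposal is correct and follows essentially the same route as the paper: truncate each $f_k$, control the event that some variable exceeds the truncation level by a union bound with the Cauchy tail, and apply Markov's inequality using the layer-cake bound $\widetilde{E}(f_0\ind_{\{f_0\le L\}})=O(\log L)$. The only difference is the truncation level ($L_n\asymp n/\epsilon$ in your version versus $n^M$ with $M\gg 1/\epsilon$ in the paper), which is an immaterial parameter choice; both yield a $K$ depending on $C$ and $\epsilon$, as the lemma permits.
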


\begin{proof}
$f_i$ has a Cauchy tail, so there exists $C_0$ such that for every $n\in\BN$
\[\widetilde{P}(f_i>n)<\frac{C_0}{n}\]
Let $M>\frac{2}{\ep}$ be a large number, and $N$ large enough that
$C_0N^{1-M}<\frac{\ep}{2}$. Fix $n>N$, and let $g_i=\min\{f_i,n^M\}$
for all $1\leq i\leq n$. Then,
\[\widetilde{P}\left(\frac{1}{n}\sum_{i=1}^{n}{f_i}\neq\frac{1}{n}\sum_{i=1}^{n}{g_i}\right)\leq\sum_{k=1}^{n}{\widetilde{P}(f_k\neq g_k)}=n\cdot \widetilde{P}(f_1\neq g_1).\]
The last term is equal to
\[n\cdot \widetilde{P}(f_1>n^M)<\frac{n\cdot C_0}{n^M}<\frac{\ep}{2}.\]
Now, since $E(g_i)\leq C_0M \log n$, and $g_i$ is positive, by
Markov's inequality, choosing $K=C_0M^2$ we get
\[\widetilde{P}\left(\frac{1}{n}\sum_{i=1}^{n}{g_i}>K\log n\right)<\frac{C_0M\log n}{C_0M^2\log n}=\frac{1}{M}<\frac{\ep}{2}\]
and so
\[\widetilde{P}\left(\frac{1}{n}\sum_{i=1}^{n}{f_i}>K\log n\right)<\ep\]\\
\end{proof}

\begin{lem}[\cite{Be01} Lemma 4.2]
Let $A_n$ be a sequence of events such that $\tilde{P}(A_n)>1-\epsilon$ for
all sufficiently large $n$, and let $\{a_n\}_{n=1}^\infty$ be a sequence such
that
\[\sum_{n=1}^{\infty}{a_n}=\infty.\]
Then, with probability of at least $1-\ep$
\[\sum_{n=1}^{\infty}{\mathbbm{1}_{A_n}\cdot a_n}=\infty.\]
\label{lem2_2dim_recurrent}
\end{lem}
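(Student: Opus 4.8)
The plan is to prove this by a blocking (Borel--Cantelli--type) argument, after noticing that the obvious first-moment estimate is useless: since $\widetilde P(A_n^c)$ is only required to be $<\epsilon$ for large $n$, the defect sum satisfies $\widetilde E\big[\sum_n \mathbbm{1}_{A_n^c}\,a_n\big]=\sum_n a_n\,\widetilde P(A_n^c)=\infty$, so Markov's inequality applied directly to $\sum_n \mathbbm{1}_{A_n^c}a_n$ yields nothing. The remedy is to group the terms into blocks carrying a bounded (but large) amount of mass, on which the first-moment bound becomes effective. As a preliminary reduction I would replace each $a_n$ by $\min(a_n,1)$; this only decreases $\sum_n \mathbbm{1}_{A_n}a_n$, while $\sum_n\min(a_n,1)$ is still $=\infty$ (clear if infinitely many $a_n\ge 1$, and otherwise $\min(a_n,1)=a_n$ eventually). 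So from now on $0\le a_n\le 1$.

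Next, fix $n_0$ with $\widetilde P(A_n)>1-\epsilon$ for all $n\ge n_0$ and a large parameter $L$. Because the $a_n$ are bounded by $1$ and $\sum_{n\ge n_0}a_n=\infty$, I can partition $\{n_0,n_0+1,\dots\}$ into consecutive finite blocks $B_1,B_2,\dots$ with $\sum_{n\in B_j}a_n\in[L,L+1)$ for every $j$ (close off a block the first time its running sum reaches $L$; the overshoot is $<1$ by the truncation). Set $E_j=\big\{\sum_{n\in B_j}\mathbbm{1}_{A_n}a_n\ge 1\big\}$. On $E_j^c$ we have $\sum_{n\in B_j}\mathbbm{1}_{A_n^c}a_n=\sum_{n\in B_j}a_n-\sum_{n\in B_j}\mathbbm{1}_{A_n}a_n>L-1$, so Markov's inequality together with $\widetilde E\big[\sum_{n\in B_j}\mathbbm{1}_{A_n^c}a_n\big]=\sum_{n\in B_j}a_n\,\widetilde P(A_n^c)<\epsilon(L+1)$ gives $\widetilde P(E_j^c)<\epsilon(L+1)/(L-1)$ for every $j$.

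Finally, on $\limsup_j E_j$ each of the infinitely many blocks whose index lies in the $\limsup$ contributes at least $1$ to $\sum_n \mathbbm{1}_{A_n}a_n$, so $\limsup_j E_j\subseteq\{\sum_n \mathbbm{1}_{A_n}a_n=\infty\}$; and $\widetilde P(\limsup_j E_j)\ge\limsup_j \widetilde P(E_j)\ge 1-\epsilon(L+1)/(L-1)$ by continuity of measure from above and monotonicity. Since the event $\{\sum_n \mathbbm{1}_{A_n}a_n=\infty\}$ does not depend on $L$ while $\epsilon(L+1)/(L-1)\downarrow\epsilon$ as $L\to\infty$, letting $L\to\infty$ yields $\widetilde P\big(\sum_n \mathbbm{1}_{A_n}a_n=\infty\big)\ge 1-\epsilon$, which is the assertion. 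The one genuine obstacle is the initial observation that a direct first-moment bound on the defect sum fails, forcing the truncation-and-blocking device (which is precisely what makes each block's defect sum have expectation of order $\epsilon$); once that is in place the rest is routine.
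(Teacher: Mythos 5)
Your proof is correct, but it follows a genuinely different route from the paper. The paper argues by contradiction with a conditional-expectation trick: fixing $N$ so that $\tilde P(A_n)>1-\epsilon$ for $n>N$ and supposing $B_M=\{\sum_{n\ge N}\mathbbm{1}_{A_n}a_n<M\}$ had probability $>\epsilon$, one gets $P(A_n\mid B_M)\ge\frac{P(B_M)-\epsilon}{P(B_M)}\ge\delta>0$ uniformly in $n$, whence $E\big[\sum_{n\ge N}\mathbbm{1}_{A_n}a_n\,\big|\,B_M\big]\ge\delta\sum_{n\ge N}a_n=\infty$, contradicting the sum being $<M$ on $B_M$; letting $M\to\infty$ gives the bound $1-\epsilon$ with no truncation or blocking. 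You instead truncate $a_n$ at $1$, partition the indices into blocks of mass roughly $L$, bound each block's defect by Markov (getting $\tilde P(E_j^c)<\epsilon(L+1)/(L-1)$), invoke the reverse Fatou inequality $\tilde P(\limsup_j E_j)\ge\limsup_j\tilde P(E_j)$, and send $L\to\infty$; all of these steps are sound (the truncation argument, the existence of the blocks, and the $L\to\infty$ limit are handled correctly), and both implicitly use $a_n\ge0$, as does the paper. Your approach is more quantitative and constructive --- it localizes where the divergence occurs and is in the spirit of a second Borel--Cantelli/Paley--Zygmund scheme --- at the cost of the auxiliary parameter $L$ and the preliminary truncation; the paper's conditioning argument is shorter and parameter-free, but less explicit about how the mass accumulates. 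One cosmetic remark: your motivating claim that the defect sum $\sum_n a_n\tilde P(A_n^c)$ is necessarily infinite is not quite right (it may converge if $\tilde P(A_n^c)$ decays), though it can be infinite, which is all you need to justify abandoning the direct first-moment bound; this does not affect the validity of the proof.
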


\begin{proof}
It is enough to show that there exists $N$ such that for any $M$,
\begin{equation}
P\left(\sum_{n=N}^{\infty}{\mathbbm{1}_{A_n}\cdot
a_n}<M\right)\leq\ep. \label{lem_claim}
\end{equation}
Define $N$ such that for every $n>N$ we have $P(A_n)>1-\epsilon$,
and assume that for some $M$ (\ref{lem_claim}) is false. Define
$B_M$ to be the event
\[B_M=\left\{\sum_{n=N}^{\infty}{\mathbbm{1}_{A_n}\cdot a_n}<M\right\}.\]
Since $P(B_M)>\ep$, we know that there exist $\delta>0$ such that
for every $n$
\[P(A_n|B_M)=\frac{P(A_n\cap B_M)}{P(B_M)}\geq\frac{P(B_M)-\ep}{P(B_M)}>\delta>0.\]
Therefore,
\[E\left[\sum_{n=N}^{\infty}{\mathbbm{1}_{A_n}\cdot a_n}\Bigg|B_M\right]\geq\delta\sum_{n=N}^{\infty}{a_n}=\infty,\]
which contradicts the definition of $B_M$.\\
\end{proof}

In the proof of Theorem \ref{tran_recu2} we will use the following
notation: Given a graph $G=(V,E)$ with $V\subset\BZ^d$, for every
$e\in E$ define $e^+\in V$ and $e^-\in V$ to be the end points of
$e$, such that if $(e^+-e^-)\cdot e_i\neq 0$ then $(e^+-e^-)\cdot
e_i>0$. In addition for every $e\in E$ we write $l(e)=|e^+-e^-|_1$.

\begin{proof}[Proof of theorem \ref{tran_recu2}]
For every $\om\in\Om$, we define the corresponding network with
conductances $G(\om)=(V(\om),E(\om),c(\om))$ as follows: First let
$G''(\om)=(V''(\om),E''(\om),c''(\om))$ be the network with
$V''(\om)=\CP(\om)$ and $E''(\om)=\{\{x,y\}\in V''\times
V'':y\in\{x\pm f_{e_1}(\om)e_1,x\pm f_{e_2}(\om)e_2\}\}$, i.e. the
set of edges from each point to its four "nearest neighbors". we
also define the conductance $c''(\om)(e)=1$ for every $e\in
E''(\om)$. We now define $G'(\om)$ to be the network generated from $G''(\om)$ by "cutting" every
edge of length $k$ into $k$ edges of length $1$, each cut with conductance
$k$. Formally we define $V'(\om)=V^1(\om)\biguplus
V^2(\om)\subset\BZ^2\times\{0,1\}$ where

\[V^i(\om)=\left\{(x,i):\begin{array}{cc} \exists~e\in E''(\om)
~\exists~0\leq k\leq l(e)\\~such~that~(e^+-e^-)\cdot e_i\neq
0~\wedge~x=e^-+ke_i\end{array}\right\},\]

and we define $E'(\om)=E^1(\om)\cup E^2(\om)$ by

\[E^i(\om)=\left\{\left\{(v,i),(w,i)\right\}:\begin{array}{cc} \exists~e\in E''(\om)~\exists~0\leq k<
l(e)~~such~that~\\~(e^+-e^-)\cdot e_i\neq 0~\wedge
v=e^-+ke_i,w=e^-+(k+1)e_i\end{array}\right\}.\]
We also define the
conductance $c'(\om)(e)$ of an edge $e\in E'(\om)$ to be $k$, given
that the length of the original edge it was part of was $k$. Finally
we define $G(\om)$ to be the graph generated from $G'(\om)$ by
identifying every $v\in V''(\om)$ on both levels i.e, we take the
graph $G'(\om)$ modulo the equivalence relations
$(v,1)=(v,2)~~\forall~v\in V''(\om)$. We now turn to prove the
recurrence using the Nash-Williams Criteria. Let $\Pi_n$ be the set
of edges exiting the box $([-n,n]\times [-n,n],[1,2])$ in the graph
$G(\om)$. Then $\Pi_n$ defines a sequence of pairwise disjoint
cutsets in the network $G(\om)$. Let $e\in\Pi_n$ be such that
$(e^+-e^-)\cdot e_i\neq 0$ then
\[
\BP(c(e)=k)=\BP\left(\begin{array}{cc}\text{the original edge that contained}\\
e \text{ is of length } k\end{array}\right)=\frac{k\cdot
\BP(f_{e_i}=k)}{\BE(f_{e_i})}.
\]
Indeed, the probability that the edge e was part of an edge of
length $k$ in the original graph, needs to be multiplied by $k$, since
it can be in any part of the edge. From assumption \eqref{Cauchy_tail_ass} it follows that $c(e)$ has a
Cauchy tail. In $\Pi_n$ there are $2n+4$ edges in the first level
and $2n+4$ in the second level, all of them with the same
distribution (and by \eqref{Cauchy_tail_ass} a Cauchy tail), though
they may be dependent. By Lemma \ref{Cauchy_tail_lemma}, for every
$\ep>0$ there exist $K>0$ and $N\in\BN$ such that for every $n>N$, we have
\begin{equation}
\BP\left(\sum_{e\in\Pi_n}{C(e)}\leq K(4n+8)\log 4n+8\right)>1-\ep.
\label{Cauchy_event}
\end{equation}

Define $A_n$ to be the event in equation (\ref{Cauchy_event}), and set
$a_n=(K(4n+8)\log (4n+8))^{-1}$ for $n\geq N$. Now,
\[\sum_{n=1}^{\infty}{{C_{\Pi_n}}^{-1}}\geq
\sum_{n=N}^{\infty}{\mathbbm{1}_{A_n}\cdot a_n}.\] By the definition
of $\{a_n\}$,
\[\sum_{n=N}^{\infty}{a_n}=\infty.\]
On the other hand, $\BP(A_n)>1-\ep$ for all n. So by Lemma
\ref{lem2_2dim_recurrent},
\[\BP\left(\sum_{n=1}^{\infty}{{C_{\Pi_n}}^{-1}}=\infty\right)\geq 1-\ep.\]
Since $\ep$ is arbitrary, we get that $\BP$ a.s.
\[\sum_{n=1}^{\infty}{{C_{\Pi_n}}^{-1}}=\infty.\]
Therefore by the Nash-Williams criteria, the random walk is $\BP$
almost surely recurrent on $G(\om)$.
\end{proof}

\subsection{Higher dimensions ($d\geq 3$)}

$~$\\

We start by stating the theorem:

\begin{theo}\textsl{\textbf{\ref{tran_recu3}}}
Let $(\Om,\FB,P)$ be a $d$-dimensional discrete point process $d\geq
3$ then the random walk is $\BP$ almost surely transient.\\
\end{theo}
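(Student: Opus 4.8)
\textbf{Proof proposal for Theorem \ref{tran_recu3}.}

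The plan is to prove transience for $d \geq 3$ by comparison with a genuinely $d$-dimensional simple random walk, using the flow criterion for transience (a network carries a finite-energy unit flow to infinity iff the associated random walk is transient; see \cite{DS84}, \cite{LP97}). As in the two-dimensional argument, I would first reduce to a $\{0,1\}$-valued combinatorial problem by cutting each edge of length $k$ (joining a point of $\CP(\om)$ to a coordinate-nearest neighbor) into $k$ unit edges with conductance $k$, and then observe that it suffices to construct a unit flow of finite energy \emph{on the subnetwork consisting only of the original ``long'' edges with conductance $1$ each}, since adding edges (or raising conductances) only decreases energy, hence preserves transience.

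The key step is to transport a finite-energy flow from the standard lattice $\BZ^d$ onto the random environment. On $\BZ^d$, $d \geq 3$, a simple random walk is transient, so there is a unit flow $\psi$ from the origin to infinity with $\sum_{e \in E(\BZ^d)} \psi(e)^2 < \infty$. I would set up a map sending each vertex $v \in \BZ^d$ to a genuine point of $\CP(\om)$ — roughly, by following, coordinate by coordinate, the ``nearest-point'' structure: the point of $\CP(\om)$ reached from $0$ by taking $v_i$ steps in the $i$-th coordinate direction through successive coordinate-nearest neighbors. This map pushes forward each unit edge $\{v, v + e_i\}$ of $\BZ^d$ to an edge of $G''(\om)$; the flow value is carried unchanged, so the pushed-forward flow is again a unit flow to infinity, and it has exactly the same energy $\sum \psi(e)^2 < \infty$ with respect to the unit-conductance network of long edges. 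By the ergodic theorem (Birkhoff / Claim \ref{The_model_claim}) this combinatorial correspondence is well defined $P$-almost surely, and the image exhausts $\CP(\om)$, so the flow is supported on the actual vertex set.

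The main obstacle, and the point requiring care, is that the naive ``follow the coordinate-nearest-neighbor in each direction'' map need not be injective, nor need it respect commutativity of the coordinate shifts: the point obtained by first moving in direction $e_1$ then $e_2$ may differ from the one obtained in the opposite order, because the induced shifts $\sigma_{e_1}$ and $\sigma_{e_2}$ do not commute on $\Om_0$ in general. So the honest construction must either (i) fix a canonical order of applying the induced shifts (say lexicographic) to define the map $\Phi_\om : \BZ^d \to \CP(\om)$, and then bound the energy of the image of $\psi$ after this possibly-non-injective identification — merging several $\BZ^d$-edges into one environment-edge can only \emph{lower} energy if one is careful about how flow is summed, and at worst one pays a bounded multiplicative constant because each environment vertex has exactly $2d$ incident long edges; or (ii) work directly with the network $G'(\om)$ of unit edges and define a flow by pushing mass along the unit-edge chains, checking that the total energy contributed by a chain of length $k$ with conductance $k$ is $k \cdot (\psi(e)/k)^2 = \psi(e)^2/k \leq \psi(e)^2$, so again summable. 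I would pursue route (ii): build the unit flow on $G(\om)$ directly from $\psi$ by distributing each $\psi(e)$ evenly along the corresponding unit-edge path, verify the finite-energy bound $\sum_e \psi(e)^2 < \infty$ carries over (with the conductance-$k$ factors only helping), confirm via the ergodic theorem that the construction is a.s.\ well defined and that flow conservation holds at every vertex, and conclude transience from the flow criterion.
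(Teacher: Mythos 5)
There is a genuine gap, and it is in the central step of your construction: the claim that the coordinate-by-coordinate map $\Phi_\om:\BZ^d\to\CP(\om)$ ``pushes forward each unit edge $\{v,v+e_i\}$ of $\BZ^d$ to an edge of $G''(\om)$'' is false for a general stationary ergodic point process. The environment has no product structure: the coordinate-nearest-point chains along two distinct parallel lines are completely unrelated. If you fix, say, the lexicographic order of applying the induced shifts, then $\Phi_\om(v)$ and $\Phi_\om(v+e_i)$ agree only up to the first stage in which they differ, and from that stage on the two trajectories run along different lines of the configuration; the resulting points typically differ in several coordinates and are far apart in $G''(\om)$, so they are not adjacent (adjacency is preserved only for edges in the last coordinate applied). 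The non-commutativity of $\si_{e_1},\si_{e_2}$ that you flag is a symptom of this, but choosing a canonical order does not cure it, and your route (ii) does not address it either: subdividing a long edge into unit edges with conductance $k$ presupposes that the pushed-forward flow already lives on edges of $G''(\om)$. To salvage a flow/embedding argument one would have to map each lattice edge to a path in $G''(\om)$ and control both the path lengths and the multiplicity with which environment edges are used; under Assumption \ref{Assumptions} alone (no moment condition, which is all Theorem \ref{tran_recu3} assumes) there is no control on these lengths. Two smaller points: merging several same-signed flow values onto one edge \emph{increases} energy ($(a+b)^2\geq a^2+b^2$ for $a,b\geq 0$), so the ``merging can only lower energy'' remark is backwards; and in a series chain the same flow $\psi(e)$ must pass through every unit edge (you cannot ``distribute it evenly''), giving energy $k\cdot\psi(e)^2/k=\psi(e)^2$, not $\psi(e)^2/k$ --- harmless numerically, but it signals the same confusion about flow conservation.

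For contrast, the paper avoids any comparison map with $\BZ^d$. It proves a deterministic isoperimetric inequality (Lemma \ref{lem_volume_surface}): every finite $S\subset\CP(\om)$ has some coordinate projection of size at least $C|S|^{(d-1)/d}$, and each fiber-maximal point contributes a boundary edge, so the isoperimetric profile of the environment graph satisfies $\Phi(u)\geq C_0 u^{-1/d}$. Feeding this into the Morris--Peres heat-kernel estimate (\cite{MP08}) gives $p^n_\om(x,y)\leq K_2(n-K_1)^{-d/2}$ (Claim \ref{bound_of_transitions}), and summing return probabilities yields transience for $d\geq 3$. This route is insensitive to the lack of product structure, which is exactly where your proposal breaks down.
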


The main idea beyond the proof is as follows: first we show that the
boundary of every set of volume $n$ in $\BZ^d$ is at least a positive
constant times $n^{\frac{d-1}{d}}$, then we will use the known fact
that for every graph $G=(V,E)$ with bounded degree, such that for
every set of vertices of volume $n$, the boundary is at least a
constant times $n^\alpha$, with $\alpha>\frac{1}{2}$, a simple
random walk on G is transient.

We start by proving an isoperimetric inequality.

\begin{lem}
Let $A=\{x^i=(x_1^i,x_2^i,\ldots,x_d^i)\}_{i=1}^n$ be a finite subset of
$\BZ^d$. We define $\Pi^j:\BZ^d\rightarrow\BZ^{d-1}$
to be the projection on all but the $j^{th}$ coordinate, i.e,
$\Pi^j(x)=\Pi^j((x_1,x_2,\ldots,x_d))=(x_1,x_2,\ldots,x_{j-1},x_{j+1},\ldots,x_d)$.
Define $A_j=\Pi^j(A)$. Then there exists $C>0$ such that
\begin{equation}
\max_{1\leq j\leq d}\{|A_j|\}\geq C\cdot |A|^{\frac{d-1}{d}},
\end{equation}
where $|\cdot|$ denotes the cardinality of the set.
\label{lem_volume_surface}
\end{lem}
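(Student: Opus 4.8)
The plan is to prove the discrete Loomis–Whitney inequality: for a finite set $A\subset\BZ^d$, the cardinality $|A|$ is controlled by the product of the cardinalities of its coordinate projections, namely $|A|^{d-1}\leq\prod_{j=1}^d|A_j|$. Once this is established, the geometric–mean / arithmetic–mean type bound finishes the job: since the right-hand side is at most $\left(\max_{1\leq j\leq d}|A_j|\right)^d$, we get $|A|^{d-1}\leq\left(\max_j|A_j|\right)^d$, hence $\max_j|A_j|\geq|A|^{(d-1)/d}$, which is exactly the claim with $C=1$ (so the constant can even be taken to be $1$).

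\textbf{Proof of the Loomis–Whitney step.} I would argue by induction on $d$. The base case $d=2$ is immediate: $A$ is contained in the product $A_2\times A_1$ of the projection onto the first coordinate and the projection onto the second coordinate (here $A_1=\Pi^1(A)$ records the allowed second-coordinate values and $A_2=\Pi^2(A)$ the allowed first-coordinate values), so $|A|\leq|A_1|\cdot|A_2|$, which is the statement $|A|^{d-1}\leq\prod_j|A_j|$ for $d=2$. For the inductive step, fix the last coordinate: write $A=\bigsqcup_{t}A^{(t)}$ where $A^{(t)}=\{x\in A: x_d=t\}$, and let $\overline{A^{(t)}}\subset\BZ^{d-1}$ be its image after deleting the last coordinate. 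Apply the inductive hypothesis in dimension $d-1$ to each slice $\overline{A^{(t)}}$, getting $|A^{(t)}|^{d-2}\leq\prod_{j=1}^{d-1}|\Pi^j(\overline{A^{(t)}})|$. Also $|A^{(t)}|\leq|A_d|$ for every $t$, since every slice projects into $A_d=\Pi^d(A)$. Combining, $|A^{(t)}|^{d-1}\leq|A_d|\cdot\prod_{j=1}^{d-1}|\Pi^j(\overline{A^{(t)}})|$; taking the $(d-1)$-st root and summing over $t$, then applying Hölder's inequality with exponents $d-1$ (to the $d-1$ factors coming from the slices) gives
\[
|A|=\sum_t|A^{(t)}|\leq|A_d|^{1/(d-1)}\sum_t\prod_{j=1}^{d-1}|\Pi^j(\overline{A^{(t)}})|^{1/(d-1)}\leq|A_d|^{1/(d-1)}\prod_{j=1}^{d-1}\Bigl(\sum_t|\Pi^j(\overline{A^{(t)}})|\Bigr)^{1/(d-1)}.
\]
Finally observe that $\sum_t|\Pi^j(\overline{A^{(t)}})|\leq|A_j|\cdot(\text{number of nonempty slices})$ is the wrong direction; instead one notes that $\sum_t|\Pi^j(\overline{A^{(t)}})|$ counts pairs, and this sum is at most $|A_j|$ only when... — more carefully, $\bigcup_t\Pi^j(\overline{A^{(t)}})$, with multiplicity, has total size $\sum_t|\Pi^j(\overline{A^{(t)}})|$, and each such contribution corresponds to a distinct point of $\Pi^j(A)$ paired with a value of $x_j$; the standard bookkeeping gives $\sum_t|\Pi^j(\overline{A^{(t)}})|\leq|A_j|$ fails in general, so one actually keeps $A_j$ as $\Pi^j(A)$ and uses that the $t$-th slice's $j$-projection, re-including the $d$-th coordinate, lands in $\Pi^j(A)$, yielding after raising to the power $d-1$ the bound $|A|^{d-1}\leq|A_d|\cdot\prod_{j=1}^{d-1}|A_j|=\prod_{j=1}^d|A_j|$.

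\textbf{Where the difficulty lies.} The genuinely delicate point is the last inequality in the induction — correctly identifying which set the slice-projections inject into so that the sum over slices telescopes to $|A_j|$. The clean way to see it, which I would write out carefully, is: for each $j<d$ and each slice value $t$, the projection $\Pi^j$ applied to the full slice $A^{(t)}\subset\BZ^d$ (\emph{not} to $\overline{A^{(t)}}$) lands inside $A_j=\Pi^j(A)$; but $\Pi^j$ of the slice $A^{(t)}$ retains the last coordinate $t$, so as $t$ ranges over all slice values these images $\Pi^j(A^{(t)})$ are \emph{disjoint} subsets of $A_j$, hence $\sum_t|\Pi^j(A^{(t)})|=\sum_t|\Pi^j(\overline{A^{(t)}})|\leq|A_j|$. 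Plugging this into the Hölder estimate above gives precisely $|A|^{d-1}\leq\prod_{j=1}^d|A_j|$, and the isoperimetric statement follows with $C=1$. The only other thing to check is the trivial reduction: the base case and the arithmetic–geometric passage to $\max_j|A_j|$ are routine.
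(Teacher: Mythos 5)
Your proof is correct, but it takes a genuinely different route from the paper. You establish the discrete Loomis--Whitney inequality $|A|^{d-1}\leq\prod_{j=1}^{d}|A_j|$ by induction on $d$, slicing along the last coordinate and combining the inductive hypothesis with the generalized H\"older inequality; the delicate bookkeeping step $\sum_t|\Pi^j(\overline{A^{(t)}})|\leq|A_j|$ for $j<d$ is justified correctly in your closing paragraph, since the images $\Pi^j(A^{(t)})$ retain the last coordinate $t$ and are therefore pairwise disjoint subsets of $A_j$ (in fact their union is exactly $A_j$, so your mid-proof remark that this bound ``fails in general'' is itself inaccurate, but it is properly retracted and does not affect the argument). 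From Loomis--Whitney the lemma follows immediately with the explicit constant $C=1$, because $\prod_{j}|A_j|\leq\bigl(\max_{j}|A_j|\bigr)^{d}$. The paper argues differently, by compression: it repeatedly replaces each one-dimensional fiber of $A$ by an initial segment of minimal ``energy,'' shows the procedure stabilizes at a compressed set $\widehat{A}$ whose boundary is comparable to $2\sum_{i}|\Pi^i(\widehat{A})|$, and then invokes an external discrete isoperimetric inequality (boundary of a set of size $n$ is at least a constant times $n^{(d-1)/d}$) together with the fact that compression does not increase projections. Your approach buys self-containedness, a sharp explicit constant, and avoidance of both the compression machinery and the appeal to the cited isoperimetric theorem; the paper's approach packages the geometry into the standard compression technique, at the cost of an unspecified constant and reliance on the external result.
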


\begin{proof}
Using translation, we can assume without loss of generality that
$x_j^i>0$ for every $1\leq i\leq n$ and $1\leq j \leq d$. For every
point $x$ in the quadrat, where all coordinates are positive, we define the energy of a point $\mathcal{E}(x)$ by
\begin{equation}
\mathcal{E}(x)=x\cdot (1,1,\ldots,1)=\sum_{j=1}^d x_j.
\end{equation}
In addition we define the energy of a finite set A in this quadrat as
\begin{equation}
\mathcal{E}(A)=\sum_{x\in A}{\mathcal{E}(x)}.
\end{equation}
For each point $(x_2,x_3,\ldots,x_d)$ in $\BZ^{d-1}$ with positive
entries we define the set
$A_{(x_2,x_3,\ldots,x_d)}=\{x_1:(x_1,x_2,\ldots,x_d)\in A\}$, which
we will call the $(x_2,x_3,\ldots,x_d)$ fiber of $A$. We now define
a new set $A^1$, with the following property: For each point
$(x_2,x_3,\ldots,x_d)$ in $\BZ^{d-1}$ the $(x_2,x_3,\ldots,x_d)$
fiber of $A$ as the same size as the $(x_2,x_3,\ldots,x_d)$ fiber of
$A^1$, and in addition the $(x_2,x_3,\ldots,x_d)$ fiber of $A^1$ is
the one with least energy (when thought as a set in $\BZ$). We claim
that the following set fulfills this property:
\begin{equation}
A^1=\bigcup_{x_2\in\BN}\bigcup_{x_3\in\BN}\ldots\bigcup_{x_d\in\BN}{\{(a,x_2,x_3,\ldots,x_d):a\in\BN~\wedge~1\leq
a\leq |A_{(x_2,x_3,\ldots,x_d)}|\}}.
\end{equation}

Indeed, the $(x_2,x_3,\ldots,x_d)$ fiber of $A^1$ is
$\{(a,x_2,x_3,\ldots,x_d):a\in\BN~\wedge~1\leq a\leq
|A_{(x_2,x_3,\ldots,x_d)}|\}$ which has the same size as the
$(x_2,x_3,\ldots,x_d)$ fiber of $A$. In addition, for any fixed
$m\in\BN$, the unique set $B\subset\BN$ of size $m$ and minimal energy is $B=\{1,2,\ldots,m\}$.

Therefore the set $A^1$ has the following properties:
\begin{enumerate}

\item $|A^1|=n$.\\

\item $|\Pi^j(A^1)|\leq |\Pi^j(A)|$ for every $1\leq j \leq d$.\\

\item $\mathcal{E}(A^1)\leq\mathcal{E}(A)$, and equality holds if and only if $A^1=A$. \\

\end{enumerate}

Indeed,
\begin{enumerate}

\item This follows from the fact that the size of the fibers don't change in the process, and that the fibers are disjoint.
\begin{equation}
|A|=\sum_{x_2\in\BN}{\sum_{x_3\in\BN}{\ldots\sum_{x_d\in\BN}{|A_{(x_2,x_3,\ldots,x_d)}|}}}
=\sum_{x_2\in\BN}{\sum_{x_3\in\BN}{\ldots\sum_{x_d\in\BN}{|A^1_{(x_2,x_3,\ldots,x_d)}|}}}
=|A^1|. \nonumber
\end{equation}

\item For $j=1$ this is true since
\begin{align}
(x_2,x_3,\ldots,x_d)\in\Pi^1(A)  \Leftrightarrow  \exists ~
a\in\BN~such~that~(a,x_2,x_3,\ldots,x_d)\in A  \Leftrightarrow \nonumber\\
\exists ~b\in\BN ~such~that~ (b,x_2,x_3,\ldots,x_d)\in A^1
 \Leftrightarrow (x_2,x_3,\ldots,x_d)\in\Pi^1(A^1),  ~\nonumber
\end{align}
and therefore  $|\Pi^1(A^1)|=|\Pi^1(A)|$.
For $2\leq j\leq d$, we assume for contradiction
that, $|\Pi^j(A^1)|> |\Pi^j(A)|$. Then there exist
$(x_1,x_2,\ldots,x_{j-1},x_{j+1},\ldots,x_d)\in \Pi^j(A^1)$ such
that
\begin{equation}
|\Pi^j(A^1)_{(x_2,\ldots,x_{j-1},x_{j+1},\ldots,x_d)}|>
|\Pi^j(A)_{(x_2,\ldots,x_{j-1},x_{j+1},\ldots,x_d)}|. \nonumber
\end{equation}
From the definition of $A^1$ there exists $m\in\BN$ such that
\begin{equation}
|\Pi^j(A^1)_{(x_2,\ldots,x_{j-1},x_{j+1},\ldots,x_d)}|=|A^1_{(x_2,\ldots,x_{j-1},m,x_{j+1},\ldots,x_d)}|,
\nonumber
\end{equation}
and since for every $k\in\BN$
\begin{equation}
|A_{(x_2,\ldots,x_{j-1},k,x_{j+1},\ldots,x_d)}|\leq
|\Pi^j(A)_{(x_2,\ldots,x_{j-1},x_{j+1},\ldots,x_d)}|. \nonumber
\end{equation}
It follows that
\begin{equation}
|A^1_{(x_2,\ldots,x_{j-1},m,x_{j+1},\ldots,x_d)}|>
|A_{(x_2,\ldots,x_{j-1},m,x_{j+1},\ldots,x_d)}|,
\end{equation}
which contradicts the fact that the size of fibers in $A$ and $A^1$
is the same.\\

\item By definition
\begin{equation}
\begin{aligned}
\mathcal{E}(A^1)&=\sum_{x_2\in\BN}{\sum_{x_3\in\BN}{\ldots\sum_{x_d\in\BN}{~~\sum_{x_1\in
A^1_{(x_2,x_3,\ldots,x_d)}}{(x_1+x_2+\ldots+x_d)}}}} \nonumber\\
&=\sum_{x_2\in\BN}{\sum_{x_3\in\BN}{\ldots\sum_{x_d\in\BN}{\left[|A^1_{(x_2,x_3,\ldots,x_d)}|(x_2+x_3+\ldots+x_d)+\mathcal{E}(A^1_{(x_2,x_3,\ldots,x_d)})\right]}}}
\nonumber\\
&=\sum_{x_2\in\BN}{\sum_{x_3\in\BN}{\ldots\sum_{x_d\in\BN}{\left[|A_{(x_2,x_3,\ldots,x_d)}|(x_2+x_3+\ldots+x_d)+\mathcal{E}(A^1_{(x_2,x_3,\ldots,x_d)})\right]}}}
\nonumber\\
&\leq\sum_{x_2\in\BN}{\sum_{x_3\in\BN}{\ldots\sum_{x_d\in\BN}{\left[|A_{(x_2,x_3,\ldots,x_d)}|(x_2+x_3+\ldots+x_d)+\mathcal{E}(A_{(x_2,x_3,\ldots,x_d)})\right]}}}
\nonumber\\
&=\mathcal{E}(A) \nonumber\\
\end{aligned}
\end{equation}
where the inequality is true since the energy of the
$(x_2,x_3,\ldots,x_d)$ fiber of $A^1$ is the one with least energy from all $(x_2,x_3,\ldots,x_d)$ fibers of $A$. In addition equality
holds if and only if for every $(x_2,x_3,\ldots,x_d)$ fiber of $A$
we have
$\mathcal{E}(A_{(x_2,x_3,\ldots,x_d)})=\mathcal{E}(A^1_{(x_2,x_3,\ldots,x_d)})$
which is possible if and only if
$A_{(x_2,x_3,\ldots,x_d)}=A^1_{(x_2,x_3,\ldots,x_d)}$, since $A^1$
fibers were chosen to be with minimal energy.
\end{enumerate}

Repeating the last procedure for the set $A^i$ with the $i+1^{\mbox{th}}$
coordinate instead of the first one we obtain the sets
$A^2,\ldots,A^d$, with the same number of point, decreasing
energy and decreasing size of projections. Let
$\widetilde{A}_0\equiv A$, and define by induction
$\widetilde{A}_{n+1}=\widetilde{A}_n^d$ be the set generated from
$\widetilde{A}_n$ by repeating the last procedure. It follows that
sequence of sets $\{\widetilde{A}_n\}_{n=0}^{\infty}$ contains only
finite number of sets. Indeed since the energy of a set is an
natural number, and the energy can only decrease as $n$ increases,
there exist N such that for every $n\geq N$ the energy is constant.
Using  now property (3) it follows that
$\widetilde{A}_n=\widetilde{A}_{n+1}$ for every $n\geq N$ and
therefore there is only finite number of sets in the sequence. Let
$\widehat{A}$ be the limiting set of the sequence. Note that the boundary of
$\widehat{A}$ is exactly
$2\sum_{i=1}^{d}{\Pi^i\big(\widehat{A}\big)}$, because otherwise one
can decrease the energy. Using the fact that the boundary of every
set of size $n$ in $d$ dimensions is at least $C_0\cdot
n^{\frac{d-1}{d}}$ for some positive constant $C_0$, see
\cite{DP96}, we get that there exist a positive constant $C$ and at
least one $i_0\in\{1,2,\ldots,d\}$ such that
$\Pi^{i_0}\big(\widehat{A}\big)\geq C\cdot n^{\frac{d-1}{d}}$, and
therefore $\Pi^{i_0}(A)\geq C_0n^{\frac{d-1}{d}}$ for the original
set $A$ too, as required.
\end{proof}

We now turn to define the isoperimetric profile of a graph. Let
$\{p(x,y)\}_{x,y\in V}$ be transition probabilities for an irreducible Markov
chain on a countable state space V (we will think about this Markov
chain as the random walk on a weighted graph $G=(V,E,C)~$, with
$\{x,y\}\in E$ if and only if $p(x,y)>0$ and for every $\{x,y\}\in
E$ we define the conductance $C(x,y)=p(x,y)$. For $S\subset V$, the
"boundary size" of S is measured by $|\partial S|=\sum_{s\in
S}{\sum_{a\in S^c}{p(s,a)}}$. We define $\Phi_S$, the conductance of
S, by $\Phi_S:=\frac{|\partial S|}{|S|}$. Finally, define the
isoperimetric profile of the graph G,  with vertices V and
conductances induced from the transition probabilities by:
\begin{equation}
\Phi(u)=\inf\{\Phi_S:S\subset V,~|S|\leq u\}.
\end{equation}

We can now state Theorem 1 of \cite{MP08}.

\begin{thm}[\cite{MP08} Theorem 1]
Let $G=(V,E)$ be a graph with countable vertices and bounded degree.
Suppose that $0<\gamma\leq\frac{1}{2}$ and $p(x,x)\geq\gamma$ for
all $x\in V$. If
\begin{equation}
n\geq
1+\frac{(1-\gamma)^2}{\gamma^2}\int_{4}^{4/\ep}{\frac{4du}{u\Phi^2(u)}},
\label{integral_condition}
\end{equation}
then
\begin{equation}
|p^n(x,y)|\leq\ep.
\end{equation}
\label{MP_theorem}
\end{thm}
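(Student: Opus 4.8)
We only indicate the idea of the proof, which is Theorem~1 of \cite{MP08} and proceeds by the \emph{evolving set} method of Morris and Peres, a device that converts isoperimetric information into heat-kernel decay. Let $\pi$ denote the reversing measure of the chain (in the graph normalization of the statement, $\pi$ is counting measure, so $\pi(S)=|S|$), and for a finite $S\subset V$ set $Q(S,y)=\sum_{x\in S}\pi(x)p(x,y)$. The plan is first to introduce the evolving set process $(S_n)_{n\ge0}$ on finite subsets of $V$: given $S_n$, draw $U$ uniform on $(0,1]$ and put $S_{n+1}=\{y\in V:Q(S_n,y)\ge U\pi(y)\}$. Since $\mathbb P(y\in S_{n+1}\mid S_n)=Q(S_n,y)/\pi(y)$, the sequence $\{\pi(S_n)\}$ is a nonnegative martingale, and one tilts by it, defining $\widehat{\mathbb P}_{\{x\}}$ through $d\widehat{\mathbb P}_{\{x\}}/d\mathbb P_{\{x\}}\big|_{\mathcal{F}_n}=\pi(S_n)/\pi(x)$. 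An induction on $n$ then produces the identity
\[
\frac{p^n(x,y)}{\pi(y)}=\widehat{\mathbb E}_{\{x\}}\!\left[\frac{\mathbbm{1}\{y\in S_n\}}{\pi(S_n)}\right],\qquad S_0=\{x\}.
\]
Squaring this, using reversibility to write $p^{2n}(x,x)/\pi(x)=\sum_y\pi(y)\big(p^n(x,y)/\pi(y)\big)^2$, expanding with two independent copies of the evolving set and bounding $\pi(S_n^{(1)}\cap S_n^{(2)})\le\sqrt{\pi(S_n^{(1)})\pi(S_n^{(2)})}$, one arrives at
\[
\frac{p^{2n}(x,x)}{\pi(x)}\le\frac1{\pi(x)^2}\,\Big(\mathbb E_{\{x\}}\big[\sqrt{\pi(S_n)}\big]\Big)^2,
\]
and routine Cauchy--Schwarz and reversibility arguments promote this to a bound on $|p^n(x,y)|$ for all $x,y$ and all $n$. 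Thus the problem reduces to showing that $\psi_n:=\mathbb E_{\{x\}}[\sqrt{\pi(S_n)}]$ falls below the relevant power of $\epsilon$ once $n$ exceeds the stated integral.

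The core estimate I would establish next is a one-step contraction of $\psi_n$ fuelled by laziness and isoperimetry: conditionally on $S_n=S$,
\[
\mathbb E\big[\sqrt{\pi(S_{n+1})}\ \big|\ S_n=S\big]\le\sqrt{\pi(S)}\,\Big(1-c\,\frac{\gamma^2}{(1-\gamma)^2}\,\Phi_S^2\Big)
\]
for an absolute constant $c>0$, where $\Phi_S=|\partial S|/|S|$ is the conductance of $S$. One proves this by writing $\pi(S_{n+1})$ as an integral in the level variable $U$, applying concavity of $t\mapsto\sqrt{t}$, and using the laziness $p(x,x)\ge\gamma$ to compare the mass of the ``flow leaving $S$'' with that of the ``flow retained in $S$''; this is exactly the place where the profile enters \emph{squared} and where the factor $\gamma^2/(1-\gamma)^2$ is born. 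Passing to a monotone majorant of $u\mapsto1/\Phi^2(u)$ allows $\Phi_S$ to be replaced by $\Phi(\pi(S))$, so that the right-hand side depends on $S$ only through $\pi(S)$; taking expectations and combining with the martingale identity $\mathbb E\,\pi(S_n)=\pi(x)$ and Jensen's inequality then yields a closed recursive inequality for $\psi_n$, in which the effective ``size'' at step $n$ is of order $1/\psi_n^2$.

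The last step is to integrate this recursion: one compares the discrete inequality with the differential inequality $\frac{d}{dn}\log\frac1{\psi_n^2}\ \gtrsim\ \frac{\gamma^2}{(1-\gamma)^2}\,\Phi^2\!\big(\tfrac1{\psi_n^2}\big)$ and changes variables to $u=1/\psi_n^2$. Since $\psi_0=\sqrt{\pi(x)}$ is of order $1$, the number of steps needed to bring $\psi_n$ below the target value is at most $1+\frac{(1-\gamma)^2}{\gamma^2}\int_4^{4/\epsilon}\frac{4\,du}{u\,\Phi^2(u)}$, which is precisely the hypothesis on $n$; the numerical constants $4$ are exactly what is absorbed in matching the discrete and continuous estimates. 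Feeding the resulting bound on $\psi_n$ back into the first step gives $|p^n(x,y)|\le\epsilon$, as claimed.

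I expect the main obstacle to be the one-step contraction estimate of the second paragraph: forcing the decrement of $\mathbb E[\sqrt{\pi(S_{n+1})}]$ to be of order $\gamma^2(1-\gamma)^{-2}\Phi_S^2$---quadratic in the isoperimetric profile, and with the sharp dependence on the laziness---calls for a careful Cauchy--Schwarz/convexity analysis of the flow $Q(S,\cdot)$, and it is what makes the laziness hypothesis indispensable. By comparison, the heat-kernel identity of the first paragraph and the ODE comparison of the third are, once that estimate is in hand, essentially bookkeeping.
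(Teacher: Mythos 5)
This statement is not proved in the paper at all: it is quoted verbatim as Theorem 1 of \cite{MP08} and used as a black box, so there is no internal proof to compare against. Your sketch correctly reproduces the evolving-set argument of Morris and Peres from that source: the Doob transform by the martingale $\pi(S_n)$, the identity $p^n(x,y)/\pi(y)=\widehat{\mathbb{E}}_{\{x\}}[\mathbbm{1}_{\{y\in S_n\}}/\pi(S_n)]$, the one-step contraction of $\mathbb{E}[\sqrt{\pi(S_{n+1})}\mid S_n=S]$ by a factor $1-c\,\gamma^2(1-\gamma)^{-2}\Phi_S^2$ (this is where the laziness hypothesis and the $(1-\gamma)^2/\gamma^2$ prefactor in the integral criterion originate), and the comparison of the resulting recursion with the integral $\int_4^{4/\epsilon}\frac{4\,du}{u\Phi^2(u)}$. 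It is of course only an outline --- the contraction lemma, which you rightly identify as the crux, is asserted rather than proved, and the promotion from the on-diagonal bound to $|p^n(x,y)|\leq\epsilon$ for all $x,y$ and all parities needs the standard Cauchy--Schwarz/reversibility bookkeeping spelled out --- but the route is exactly the one in \cite{MP08}, and for the purposes of this paper the theorem is simply imported, not reproved.
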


Next we will prove the following claim:

\begin{clm}
Let $p_\om^n(x,y)$ be the probability that the random walk moves
from $x$ to $y$ in $n$ steps in the environment $\om$. Then there exist
positive constants $K_1,K_2$ depending only on $d$, and a natural
number $N$ such that for every $n>N$ and every $x,y\in\CP(\om)$
\begin{equation}
p_\om^{n}(x,y)\leq\frac{K_2}{(n-K_1)^{d/2}},~~~~~~P~a.s.
\end{equation}
\label{bound_of_transitions}
\end{clm}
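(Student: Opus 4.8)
The plan is to deduce the on-diagonal heat-kernel bound from the isoperimetric inequality of Lemma \ref{lem_volume_surface} via Theorem \ref{MP_theorem}. First I would fix a typical environment $\om$ and work with the graph $G(\om)=(\CP(\om),E(\om))$ together with the (lazy) transition probabilities of the random walk. Observe that the walk as defined in \eqref{transition_probability} is not lazy, but it is easy to introduce laziness artificially: consider $\widetilde{p}_\om(x,y)=\frac12 p_\om(x,y)$ for $x\neq y$ and $\widetilde{p}_\om(x,x)=\frac12$, so that $\widetilde{p}_\om(x,x)\geq\gamma:=\frac12$ for all $x$; since $p_\om^{2n}(x,x)\leq 2^{2n}\widetilde{p}_\om^{2n}(x,x)$ would be useless, instead one uses the standard fact that the lazy chain satisfies $\widetilde{p}_\om^{\,n}(x,y)\geq \binom{n}{\lfloor n/2\rfloor}2^{-n}p_\om^{\lfloor n/2\rfloor}(x,y)\gtrsim n^{-1/2}p_\om^{\lfloor n/2\rfloor}(x,y)$, so an on-diagonal bound of order $n^{-d/2}$ for the lazy chain gives one of order $n^{-d/2}$ (up to a constant and a shift $n\mapsto n/2$) for the original chain; absorbing this into $K_1,K_2$ is exactly why the statement is phrased with a shift $(n-K_1)^{-d/2}$. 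The graph $G(\om)$ has bounded degree (exactly $2d$, by Claim \ref{The_model_claim}) and $\widetilde{p}_\om(x,x)=\frac12$, so the hypotheses of Theorem \ref{MP_theorem} are met with $\gamma=\frac12$.

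Next I would establish the isoperimetric profile bound $\Phi(u)\geq c\, u^{-1/d}$ for $G(\om)$, uniformly in $\om$. Given a finite $S\subset\CP(\om)$ with $|S|=m$, I want to bound $|\partial S|=\sum_{s\in S}\sum_{a\notin S}\widetilde p_\om(s,a)$ from below. The point is that $S$, viewed as a subset of $\BZ^d$, has, by Lemma \ref{lem_volume_surface}, some coordinate direction $i_0$ in which the projection $\Pi^{i_0}(S)$ has cardinality at least $C m^{(d-1)/d}$; this forces at least $\sim m^{(d-1)/d}$ distinct "fibers" of $S$ in direction $e_{i_0}$, and in each such fiber the walk, restricted to the points of $\CP(\om)$ lying on that line, is a one-dimensional nearest-coordinate-point walk, so each fiber contributes at least one edge from $S$ to $S^c$ (the maximal point of the fiber has its forward coordinate-neighbor outside $S$, or one of its endpoints does). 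Each such edge carries mass $\widetilde p_\om\geq\frac{1}{4d}$. Hence $|\partial S|\geq \frac{1}{4d}\cdot C m^{(d-1)/d}$ and $\Phi_S\geq \frac{C}{4d} m^{-1/d}$, giving $\Phi(u)\geq c_d\, u^{-1/d}$ with $c_d$ depending only on $d$.

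Finally I would plug this into \eqref{integral_condition}: since $\Phi^2(u)\geq c_d^2 u^{-2/d}$, we have
\[
\int_4^{4/\ep}\frac{4\,du}{u\Phi^2(u)}\leq \frac{4}{c_d^2}\int_4^{4/\ep}u^{2/d-1}\,du
=\frac{4}{c_d^2}\cdot\frac{d}{2}\Big[(4/\ep)^{2/d}-4^{2/d}\Big]\leq C_d\,\ep^{-2/d},
\]
using $d\geq 3$ only to keep constants tidy (the exponent $2/d<1$ makes the integral converge at $0$, but that endpoint does not even occur here). Therefore Theorem \ref{MP_theorem} gives $p_\om^{\,n}(x,y)\leq\ep$ (for the lazy chain) as soon as $n\geq 1+4 C_d\,\ep^{-2/d}$, i.e. solving for $\ep$, $\widetilde p_\om^{\,n}(x,y)\leq K\,(n-1)^{-d/2}$ for all $n>1$ and some $K=K(d)$. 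Translating back from the lazy chain to the original walk via the laziness comparison above introduces a multiplicative constant and replaces $n$ by a quantity comparable to $n/2$, which can be absorbed into constants $K_1,K_2$ depending only on $d$ and a threshold $N$; this yields $p_\om^{\,n}(x,y)\leq K_2(n-K_1)^{-d/2}$ for all $n>N$, $P$-a.s. The main obstacle is the fiber-counting argument in the isoperimetric step: one must be careful that Lemma \ref{lem_volume_surface} is a statement about $S$ as a subset of $\BZ^d$ with its small projection, and then argue that a large projection in direction $e_{i_0}$ genuinely produces proportionally many boundary edges of $G(\om)$ — this uses that distinct points of $\CP(\om)$ on a common coordinate line are joined consecutively by edges of $G(\om)$, so each occupied line meeting $S$ contributes a boundary edge regardless of the (random) gaps $f_{e_{i_0}}$.
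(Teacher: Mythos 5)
Your isoperimetric step and the final integral computation are essentially the paper's own argument: Lemma \ref{lem_volume_surface} plus the ``maximal point in each fiber'' count gives $\Phi(u)\geq c_d\,u^{-1/d}$, and this is fed into Theorem \ref{MP_theorem}. The genuine gap is in how you meet the laziness hypothesis $p(x,x)\geq\gamma$. You lazify artificially and propose to transfer the bound back through $\widetilde p_\om^{\,n}(x,y)\geq\binom{n}{\lfloor n/2\rfloor}2^{-n}p_\om^{\lfloor n/2\rfloor}(x,y)\geq c\,n^{-1/2}p_\om^{\lfloor n/2\rfloor}(x,y)$. That inequality is true, but read in the direction you need it gives only $p_\om^{\lfloor n/2\rfloor}(x,y)\leq C\sqrt{n}\,\widetilde p_\om^{\,n}(x,y)\leq C'\,n^{1/2}\,n^{-d/2}=C'\,n^{-(d-1)/2}$: you lose a factor $\sqrt n$ and do not obtain the claimed $(n-K_1)^{-d/2}$ decay. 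This is not a cosmetic loss — the exponent $d/2$ is exactly what is used later (summability of $p_\om^n(0,0)$ already at $d=3$ in Theorem \ref{tran_recu3}, and the entropy lower bound $Q(n)\geq c_1+\frac d2\log(n-K_1)$ in the proof of Theorem \ref{asymptotic_X_n}). Moreover the defect cannot be repaired by a sharper one-line comparison: an upper bound on the lazy kernel never controls the non-lazy kernel in general, because the original chain may carry spectral mass near $-1$ (near-bipartiteness), which lazification suppresses; any honest transfer has to address that.

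The paper avoids the issue without introducing laziness at all: for every $x\in\CP(\om)$ one has $p_\om^2(x,x)=\frac1{2d}$, so Theorem \ref{MP_theorem} is applied with $\gamma=\frac1{2d}$ to the two-step walk (whose spectrum lies in $[0,1]$, so the $-1$ problem disappears), giving $p_\om^{2n}(x,y)\leq K_2(2n-\widetilde K_1-1)^{-d/2}$ for large $n$; odd times are then handled by a single Chapman--Kolmogorov step, $p^{2n+1}_\om(x,y)=\sum_{z}p_\om(x,z)p^{2n}_\om(z,y)$, which merely shifts the constant, yielding $K_1=\widetilde K_1+2$. If you want to keep your route, you must replace the binomial-mixture comparison by an argument at the level of the spectral measure or Dirichlet form, which in effect reduces to working with the two-step chain anyway; your fiber-counting lower bound on the boundary (which is the same as the paper's set $\widetilde S$) can be kept verbatim.
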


\begin{proof}
We start by dealing with even steps of the Markov chain, and at the
end extend the argument to the odd ones. Since
$p^2(x,x)=\frac{1}{2d}$, we can use Theorem \ref{MP_theorem} with
$\gamma=\frac{1}{2d}$. Let $\om\in\Om_0$ and $S\subset\CP(\om)$
such that $|S|=n$. By Lemma \ref{lem_volume_surface} there exists a
positive constant $C$, such that at least one of the projections $\{\Pi^i(S)\}_{i=1}^d$ satisfy
$\Pi^{i}(S)\geq C\cdot n^{\frac{d-1}{d}}$. We will assume without
loss of generality that this holds for $i=1$. We now look at the set
\begin{equation}
\widetilde{S}=\big\{(x_1,x_2,\ldots,x_d):~(x_2,\ldots,x_d)\in\Pi^1(S),~~x_1=\max\{a:(a,x_2,x_3,\ldots,x_d)\in
S\}\big\}.
\end{equation}
We note that $|\widetilde{S}|=|\Pi^1(s)|\geq C n^{(d-1)/d}$.
In addition since $|\partial S|$ equals in our case to
$\frac{1}{2d}$ times the number of edges $e\in E$ with one end point in $S$ and the other in $S^c$, then $|\partial S|\geq \frac{1}{2d}|\widetilde{S}|$.
This is true since every element in $\widetilde{S}$ contributes at least one edge to the boundary. Using these two properties it follows that
there exists a positive constant $C_0$ such that
\begin{equation}
\Phi(u)\geq  C_0\frac{1}{u^{1/d}}, \label{Phi_estimation}
\end{equation}
and therefore
\begin{equation}
\begin{aligned}
1+(2d-1)^2\int_{4}^{4/\ep}{\frac{4du}{u\Phi^2(u)}} & \leq
1+(2d-1)^2\int_{4}^{4/\ep}{\frac{4u^{\frac{2}{d}-1}du}{C_0^2}}  \nonumber\\
&\leq  \left\lceil
1-\frac{2d(2d-1)^2}{c_0^2}4^{\frac{2}{d}}+\frac{2d(2d-1)^2}{c_0^2}4^{\frac{2}{d}}\ep^{-\frac{2}{d}}
\right\rceil . \nonumber \label{isoperimetric}
\end{aligned}
\end{equation}
Notice that $1-\frac{2d(2d-1)^2}{c_0^2}4^{\frac{2}{d}}$ is negative
for all but a finite number of dimensions, and therefore we can find a natural
number $\widetilde{K}_1(d)$ such that the last term in
\eqref{isoperimetric} is less than or equal to
\begin{equation}
n(\ep)\equiv\lceil
\widetilde{K}_1+\widetilde{K}_2\ep^{-\frac{2}{d}}\rceil,
\end{equation}
where
$\widetilde{K}_2=\widetilde{K}_2(d)=\frac{2d(2d-1)^2}{c_0^2}4^{\frac{2}{d}}$.
It therefore follows that
\begin{equation}
\ep\leq
\left(\frac{n(\ep)-\widetilde{K}_1-1}{\widetilde{K}_2}\right)^{-\frac{d}{2}}.
\end{equation}

Let $K_2=(\widetilde{K}_2)^{-\frac{d}{2}}$, since the condition in
Theorem \ref{MP_theorem} is fulfilled, for $P$ almost every
environment $\om$, for every $n>N$ and every $x,y\in\CP(\om)$
\begin{equation}
p_\om^{2n}(x,y)\leq
\frac{K_2}{(2n-\widetilde{K}_1-1)^{\frac{d}{2}}}.
\end{equation}

Moving to deal with transition probabilities for odd times, if
$n>N+1$ we have for $P$ almost every environment $\om$
\begin{equation}
\begin{aligned}
p^{2n+1}_\om(x,y)&=\sum_{z\in\CP(\om)}{p_\om(x,z)p^{2n}_\om(z,y)}\nonumber\\
&\leq \sum_{z\in\CP(\om)}{p_\om(x,z)\frac{K_2}{(2n-\widetilde{K}_1-1)^\frac{d}{2}}}\\
&= \frac{K_2}{(2n+1-\widetilde{K}_1-2)^\frac{d}{2}}.
\end{aligned}
\end{equation}
Taking $K_1=\widetilde{K}_1+2$ we get the desired inequality both
for even times and odd ones.
\end{proof}

We are now ready to prove Theorem \ref{tran_recu3}.

\begin{proof}[Proof of Theorem \ref{tran_recu3}]
Since our graph is connected, it is enough to show that
\begin{equation}
\sum_{n=0}^{\infty}{p^n(0,0)}<\infty.
\end{equation}
Using claim \ref{bound_of_transitions}, we get that for $P$ almost
every environment $\om\in\Om_0$
\begin{equation}
\sum_{n=0}^{\infty}{p_\om^n(0,0)}\leq\sum_{n=0}^{N-1}{p_\om^n(0,0)}+\sum_{n=N}^{\infty}{\frac{2K_2}{(2n-K_1)^{\frac{d}{2}}}}<\infty.
\end{equation}
\end{proof}


\section{Asymptotic behavior of the random walk}

In this section we prove asymptotic behavior of $\BE(\|X_n\|)$. This will be used in section 10 to prove the high
dimensional Central Limit Theorem. Therefore we assume here the
additional assumption, assumption \ref{assumption3}. The estimation
follows closely \cite{Ba03} with the following changes:
\begin{itemize}
\item The minor change is that we work in discrete time setting and not in continuous time.\\

\item The major change is that the average variance of the distance at the $n^{\mbox{th}}$ step of the
random walk is not bounded by 1 as in the percolation case.
Nevertheless we can show that if we assume in addition assumption
\ref{assumption3}, it is still bounded.
\end{itemize}

Other than that problem, in which we deal in part $(3)$ of Theorem \ref{asymptotic_X_n}, the rest of the proof
doesn't contain new ideas and follows \cite{Ba03}

\begin{thm}
Assuming assumption \ref{assumption3}, there exists a random
variable $c:\Om_0\rightarrow [0,\infty]$ which is finite almost
surely such that for $P$ almost every $\om\in\Om_0$
\begin{equation}
\BE_\om(\|X_n\|)\leq c\sqrt{n}~~~\forall n\in\BN.
\end{equation}
\label{asymptotic_X_n}
\end{thm}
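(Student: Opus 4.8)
The plan is to follow the heat--kernel scheme of \cite{Ba03}, the only genuinely new point being the treatment of Assumption \ref{assumption3}. Since $\BE_\om(\|X_n\|)\le\big(\BE_\om(\|X_n\|^2)\big)^{1/2}$ by Jensen's inequality, it suffices to produce an a.s.\ finite $c:\Om_0\to[0,\infty)$ with $\BE_\om(\|X_n\|^2)\le c(\om)\,n$ for every $n$ and $P$-a.e.\ $\om$. The on--diagonal bound $p^n_\om(x,y)\le K_2(n-K_1)^{-d/2}$ of Claim \ref{bound_of_transitions} --- which itself rests on the isoperimetric inequality of Lemma \ref{lem_volume_surface} --- is exactly the ingredient that feeds the Nash/Moser iteration of \cite{Ba03} and yields, for $P$-a.e.\ $\om$, a quenched Gaussian upper bound $p^n_\om(0,y)\le C(\om)n^{-d/2}\exp(-c\|y\|^2/n)$ in a suitable range of $\|y\|$; combined with the reversibility of the walk (for a Carne--Varopoulos estimate) this shows that the contribution to $\BE_\om(\|X_n\|^2)=\sum_{y}p^n_\om(0,y)\|y\|^2$ of displacements $\|y\|\lesssim n$ is $O(n)$, uniformly in $n$.

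\textbf{The new ingredient (part (3) of Theorem \ref{asymptotic_X_n}).} The remaining, large--displacement part of $\BE_\om(\|X_n\|^2)$ must be controlled through the size of the individual jumps, and here the model differs from supercritical percolation, where every edge has length one. Writing $Z_k=X_k-X_{k-1}$, the conditional second moment of a step is
\[
\BE_\om\big(\|Z_{k+1}\|^2\,\big|\,X_k=v\big)=g(\gth_v\om),\qquad g(\om):=\frac1{2d}\sum_{e\in\CE}f_e(\om)^2,
\]
with $f_e$ as in Definition \ref{f_e_defn}. By Assumption \ref{assumption3}, $g\in L^{1+\ep_0/2}(\Om_0,\mathcal B,P)$, in particular $g\in L^1$. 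Since $\BE_\om\big(g(\gth_{X_k}\om)\big)=(\Lambda^kg)(\om)$ for the Markov kernel $\Lambda$ of \eqref{Lambda}, and $\Lambda$ is a $P$-preserving Markov operator that is ergodic by Proposition \ref{ergodic_particle_view}, the Dunford--Schwartz pointwise ergodic theorem (equivalently: lift $g(\om_0)$ to the trajectory space $\Xi$ of Section 2 and combine Birkhoff's theorem with the $L^{1+\ep_0/2}$ maximal ergodic inequality, which is where the extra integrability of Assumption \ref{assumption3} is used) gives
\[
\frac1n\sum_{k=0}^{n-1}\BE_\om\big(g(\gth_{X_k}\om)\big)=\frac1n\sum_{k=0}^{n-1}(\Lambda^kg)(\om)\ \longrightarrow\ \BE_P(g)<\infty\qquad(n\to\infty)
\]
for $P$-a.e.\ $\om$, so that $\gamma(\om):=\sup_{n\ge1}\frac1n\sum_{k=0}^{n-1}\BE_\om\big(g(\gth_{X_k}\om)\big)$ is finite $P$-a.s. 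This says that the time--averaged variance of the walk's steps is almost surely bounded --- the substitute for ``all jumps have length one'' --- and is the step in which Assumption \ref{assumption3} enters.

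\textbf{Putting it together and the main obstacle.} With $\gamma$ in hand one follows \cite{Ba03}. A convenient way to organise the remaining estimate is the Doob decomposition $X_n=M_n+A_n$, where $M_n=\sum_{k=1}^n\big(Z_k-b(\gth_{X_{k-1}}\om)\big)$ is a martingale, $b(\om):=\BE_\om(Z_1)=\tfrac1{2d}\sum_{i=1}^d\big(f_{e_i}(\om)-f_{-e_i}(\om)\big)e_i$, and $A_n=\sum_{k=0}^{n-1}b(\gth_{X_k}\om)$; orthogonality of the martingale increments together with the previous paragraph gives $\BE_\om(\|M_n\|^2)=\sum_{k=0}^{n-1}\BE_\om\big(g(\gth_{X_k}\om)-\|b(\gth_{X_k}\om)\|^2\big)\le\gamma(\om)\,n$. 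For the drift term $A_n$ one uses that the coordinates of $b$ have mean zero under $P$ (by the symmetry \eqref{Symmetry}) and that the walk is reversible, so that a Kipnis--Varadhan variance estimate --- once one checks, using Assumption \ref{assumption3}, that $b$ lies in the $H_{-1}$ space associated with $(\Lambda,P)$ --- yields $\BE_\om(\|A_n\|^2)\le c(\om)\,n$; alternatively one may skip the decomposition and bound the large--displacement part of $\sum_y p^n_\om(0,y)\|y\|^2$ directly from the quenched Gaussian bound, the layer--cake formula and the $(2+\ep_0)$-th moment of the jumps. The main obstacle throughout is exactly this last point: unlike in supercritical percolation, graph distance on $\CP(\om)$ is not comparable to Euclidean distance --- a single edge may be arbitrarily long --- so there is no Antal--Pisztora input, and the contribution of far--away points (equivalently, of the long edges traversed along the path) has to be absorbed solely by the moment bound of Assumption \ref{assumption3}. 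Once this is carried out, adding the near and far estimates gives $\BE_\om(\|X_n\|^2)\le c(\om)\,n$, and hence the theorem.
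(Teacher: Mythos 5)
There is a genuine gap. Your reduction is to the quenched second--moment bound $\BE_\om(\|X_n\|^2)\le c(\om)n$, and the two routes you offer for it both rest on steps that are asserted rather than proved and that do not go through in this model. First, the claimed quenched Gaussian off--diagonal bound $p^n_\om(0,y)\le C(\om)n^{-d/2}\exp(-c\|y\|^2/n)$ cannot be extracted from Claim \ref{bound_of_transitions} plus Carne--Varopoulos: the Carne--Varopoulos estimate is Gaussian in the \emph{graph} metric of $\CP(\om)$, and, as you yourself note, graph distance and Euclidean distance are not comparable here (a single edge may be arbitrarily long), so it yields no control of $\sum_y p^n_\om(0,y)\|y\|^2$; nor does a Nash--Moser iteration apply, since the jumps are unbounded and only a $(2+\ep_0)$--moment is available. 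Second, in the Doob decomposition route the martingale part is fine (and, note, needs only $g\in L^1$, i.e.\ second moments --- the Hopf/Dunford--Schwartz theorem for the Markov operator $\Lambda$ requires no extra integrability, so this is \emph{not} where Assumption \ref{assumption3} enters), but the drift part is exactly the hard point: the Kipnis--Varadhan $H_{-1}$ estimate gives an \emph{annealed} bound $\BE_P\big[E_\om(\|A_n\|^2)\big]\le Cn$, and no argument is given to upgrade this to a quenched bound $E_\om(\|A_n\|^2)\le c(\om)n$ holding simultaneously for all $n$ for $P$--a.e.\ $\om$ (a Chebyshev/Borel--Cantelli argument does not close, and invoking the corrector instead would require an $L^1$ bound on $\chi(X_n,\om)$, which is not available at this stage). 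As it stands the proposal therefore does not close.

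For comparison, the paper never bounds the second moment of $X_n$ at all; it runs the entropy (Nash--type) argument of \cite{Ba03} directly on the first moment. One sets $g_n=\frac12(p^n(0,\cdot)+p^{n-1}(0,\cdot))$, $M(n)=\sum_y\|y\|g_n(y)$ and $Q(n)=-\sum_y g_n(y)\log g_n(y)$, and proves four inequalities: $Q(n)\ge c_1+\frac d2\log(n-K_1)$ (from the on--diagonal bound of Claim \ref{bound_of_transitions}), $M(n)\ge c_2e^{Q(n)/d}$, a uniform--in--$n$ bound on $\sum_{x,y}\ind_{\{y\in N_x(\om)\}}(g_n(x)+g_n(y))\|x-y\|^2$, and $(M(n+1)-M(n))^2\le c_3(Q(n+1)-Q(n))$; summing the increments and comparing with the lower bound forces $M(n)\le c\sqrt{n-K_1}$. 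The only place Assumption \ref{assumption3} is used is the third inequality: there one must bound $E_\om(f_e^2\circ\gth_{X_n})$ at \emph{each} time $n$, not merely in Ces\`aro average, and this is done via the Nevo--Stein pointwise ergodic theorem for the (spherical) averages along the free group generated by the induced shifts $\si_{\pm e_i}$, which requires $f_e^2\in L^p$ for some $p>1$ --- precisely the role of the exponent $2+\ep_0$. So the paper's mechanism needs only the on--diagonal heat--kernel bound you already cite, and sidesteps both of the estimates on which your proposal founders.
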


We begin with a few definitions

\begin{defn}
Fix $\om\in\Om_0$. For $n\in\BN$ we denote $p^n(x,y)=P_\om(X_n=y|X_0=x)$ and
introduce the following functions, with the understanding that
$0\cdot\log(0)=0$:

\begin{enumerate}

\item $g_n:\CP(\om)\rightarrow\BR$, given by
\begin{equation}
g_n(x)=\frac{1}{2}\left(p^n(0,x)+p^{n-1}(0,x)\right).
\end{equation}

\item We define $M:\BN\rightarrow\BR^+$ by $M(0)=0$ and for
$n>0$ by:
\begin{equation}
M(n)=\frac{1}{2}\BE_\om(\|X_n\|+\|X_{n-1}\|):=\sum_{y\in\CP(\om)}{\|y\|g_n(y)}.
\end{equation}

\item We define $Q:\BN\rightarrow\BR^+$ by $Q(0)=0$ and for $n>0$
by:
\begin{equation}
Q(n)=-\sum_{y\in\CP(\om)}{g_n(y)\log(g_n(y))},
\end{equation}
i.e. $Q$ is the entropy of $g_n$.
\end{enumerate}
\end{defn}

In order to prove Theorem \ref{asymptotic_X_n}, we will prove some
inequalities introduced in the following proposition:

\begin{prop}
There exists $N=N(\om)\in\BN$ and constants $c_1,c_2,c_3,K_1<\infty$
such that for every $n>N$ we have

\begin{enumerate}

\item
\begin{equation}
Q(n)\geq c_1+\frac{d}{2}\log{(n-K_1)}, \label{distance_prop_1}
\end{equation}

\item
\begin{equation}
M(n)\geq c_2\cdot e^{\frac{Q(n)}{d}}, \label{distance_prop_2}
\end{equation}

\item
\begin{equation}
\sum_{x\in\CP(\om)}{\sum_{y\in\CP(\om)}{\ind_{\{y\in
N_x(\om)\}}{(g_n(x)+g_n(y))\|x-y\|^2}}}<\infty,
\label{distance_prop_3}
\end{equation}

\item
\begin{equation}
(M(n+1)-M(n))^2\leq c_3(Q(n+1)-Q(n)). \label{distance_prop_4}
\end{equation}

\end{enumerate}
\end{prop}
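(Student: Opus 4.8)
The plan is to establish \eqref{distance_prop_1}--\eqref{distance_prop_4} one at a time, following the scheme of \cite{Ba03}; the one place at which the present model genuinely leaves that track is \eqref{distance_prop_3}, since here the coordinate distances $f_e$ are unbounded. For \eqref{distance_prop_1} I would quote Claim \ref{bound_of_transitions}: it furnishes $K_1,K_2,N$ with $p^m(0,y)\le K_2(m-K_1)^{-d/2}$ for all $m>N$ and all $y$, so $\sup_y g_n(y)\le K_2(n-1-K_1)^{-d/2}$, and since $-t\log t\ge 0$ on $[0,1]$ with $\sum_y g_n(y)=1$,
\[
Q(n)=\sum_{y\in\CP(\om)}g_n(y)\log\frac{1}{g_n(y)}\ \ge\ \log\frac{1}{\sup_y g_n(y)}\ \ge\ \frac d2\log(n-1-K_1)-\log K_2 ,
\]
which is \eqref{distance_prop_1} after renaming the constants. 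For \eqref{distance_prop_2} I note first that $g_n$ is finitely supported (the walk visits at most $(2d)^n$ points in $n$ steps, so $Q(n)<\infty$), and then compare $g_n$ with the probability measure $\nu_\rho(y)=Z_\rho^{-1}e^{-\|y\|/\rho}$ on $\BZ^d$, for which $Z_\rho\le(C\rho)^d$ with $C$ dimensional; Gibbs' inequality gives $Q(n)\le-\sum_y g_n(y)\log\nu_\rho(y)=d\log(C\rho)+\rho^{-1}M(n)$, and choosing $\rho=\max(M(n),1)$ yields $Q(n)\le d\log M(n)+c$ for $n$ large, i.e.\ \eqref{distance_prop_2}.

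For \eqref{distance_prop_3} the starting point is the identity
\[
\sum_{x\in\CP(\om)}\sum_{y\in N_x(\om)}\bigl(g_n(x)+g_n(y)\bigr)\|x-y\|^2=2\sum_{x\in\CP(\om)}g_n(x)\,F(\gth_x\om),\quad F(\om):=\sum_{i=1}^d\bigl(f_{e_i}(\om)^2+f_{-e_i}(\om)^2\bigr),
\]
obtained by interchanging the summations and using that $y\in N_x(\om)\iff x\in N_y(\om)$ and that every point has exactly $2d$ such neighbours; by the definition of $g_n$ the right-hand side equals $E_\om\bigl(F(\gth_{X_n}\om)\bigr)+E_\om\bigl(F(\gth_{X_{n-1}}\om)\bigr)$. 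For each fixed $n$ this is a finite sum of $P$-a.s.\ finite terms (recall $f_e<\infty$ a.s.\ by Claim \ref{The_model_claim}), which already proves \eqref{distance_prop_3} as stated. The substantive content, which is what supplies the constant $c_3$ in \eqref{distance_prop_4}, is that this quantity stays bounded as $n\to\infty$: writing $E_\om(F(\gth_{X_n}\om))=\Lambda^n F(\om)$ for the iterates of the Markov kernel $\Lambda$ of \eqref{Lambda} --- which is reversible with respect to $P$ --- one has, using Jensen and the $(2+\ep_0)$-th moment bound of Assumption \ref{assumption3}, that $F\in L^{1+\ep_0/2}(P)$ with $\BE_P(F)=2\sum_i\BE_P(f_{e_i}^2)<\infty$; a maximal ergodic inequality for the reversible kernel $\Lambda$ (legitimate since $F\in L^{1+\ep_0/2}(P)$) then gives $\sup_n E_\om(F(\gth_{X_n}\om))<\infty$ for $P$-a.e.\ $\om$, so the left-hand side of \eqref{distance_prop_3} is bounded by a finite ($\om$-dependent) constant for all $n$.

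For \eqref{distance_prop_4} I would use the one-step evolution $g_{n+1}(\cdot)=\sum_x g_n(x)p(x,\cdot)$, with $p(x,y)=p(y,x)=\tfrac1{2d}\ind_{\{y\in N_x(\om)\}}$, together with $g_{n+1}(x)-g_n(x)=\sum_{y\in N_x(\om)}p(x,y)(g_n(y)-g_n(x))$ and $\bigl|\,\|y\|-\|x\|\,\bigr|\le\|x-y\|$, to write
\[
M(n+1)-M(n)=\tfrac12\sum_{x\in\CP(\om)}\sum_{y\in N_x(\om)}p(x,y)\bigl(\|y\|-\|x\|\bigr)\bigl(g_n(x)-g_n(y)\bigr),
\]
so that Cauchy--Schwarz with weights $g_n(x)+g_n(y)$ bounds $(M(n+1)-M(n))^2$ by a constant times the product of the left-hand side of \eqref{distance_prop_3} and the Dirichlet-type sum $\sum_{x}\sum_{y\in N_x(\om)}p(x,y)(g_n(x)-g_n(y))^2/(g_n(x)+g_n(y))$. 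The first factor is bounded uniformly in $n$ by the previous paragraph; the second is $\le c\,(Q(n+1)-Q(n))$ by the entropy-production inequality for reversible Markov chains --- convexity of $t\mapsto t\log t$ applied across each $2d$-point neighbourhood, the averaging $g_n=\tfrac12(p^n(0,\cdot)+p^{n-1}(0,\cdot))$ supplying the needed laziness --- exactly as in \cite{Ba03}. Multiplying the two bounds yields \eqref{distance_prop_4}.

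I expect the main obstacle to be the uniform-in-$n$ control of the first Cauchy--Schwarz factor, i.e.\ of $E_\om(F(\gth_{X_n}\om))$: in the percolation setting all jumps have length $1$, so that factor is $\le 1$ for free, whereas here it is pointwise unbounded, and extracting a single finite constant valid for all $n$ and $P$-a.e.\ $\om$ is precisely where the strengthened moment condition of Assumption \ref{assumption3} (a mere second moment would not close the maximal-inequality argument) is needed; this is the only ingredient that does not already appear in \cite{Ba03}, and everything else is routine manipulation of these Nash-type inequalities.
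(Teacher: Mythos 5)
Your proposal is correct and, for parts (1), (2) and (4), essentially reproduces the paper's argument: (1) is the same entropy lower bound from the heat-kernel estimate of Claim \ref{bound_of_transitions}; your Gibbs-inequality comparison of $g_n$ with $\nu_\rho(y)=Z_\rho^{-1}e^{-\|y\|/\rho}$ in (2) is the same computation the paper performs via the pointwise inequality $u(\log u+\lambda)\geq-e^{-1-\lambda}$ together with the appendix bound $\sum_y e^{-a\|y\|}\leq c\,a^{-d}$ (note you still need the paper's easy lower bound $M(n)\geq 1-g_n(0)\geq\tfrac12$ for large $n$ to make the choice $\rho=\max(M(n),1)$ close the case $M(n)<1$); and (4) is the same summation-by-parts plus weighted Cauchy--Schwarz plus entropy-production chain, which the paper writes out via $\frac{(u-v)^2}{u+v}\leq(u-v)(\log u-\log v)$ and a second application of the discrete Gauss--Green formula rather than citing \cite{Ba03}. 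The genuine divergence is in the uniform-in-$n$ control of \eqref{distance_prop_3} (you correctly observe that the literal statement, finiteness for each fixed $n$, is trivial from the finite support of $g_n$, and that what part (4) really consumes is boundedness in $n$). The paper gets this by dominating $E_\om\bigl(f_e^2\circ\gth_{X_n}\bigr)$ by sphere averages $\tau_n'$ for the free group generated by the induced shifts $\si_{\pm e_i}$ and invoking the Nevo--Stein pointwise ergodic theorem \cite{NS94}; you instead note that $E_\om\bigl(F(\gth_{X_n}\om)\bigr)=(\Lambda^nF)(\om)$ exactly, with $\Lambda$ the reversible environment kernel of \eqref{Lambda} and $F=\sum_i(f_{e_i}^2+f_{-e_i}^2)\in L^{1+\ep_0/2}(P)$ by Assumption \ref{assumption3}, and appeal to a maximal ergodic inequality for $\Lambda$. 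This is legitimate and arguably cleaner --- no domination of the walk's law by free-group sphere averages is needed --- but you should name the precise result: what you need is Stein's (or Rota's) maximal ergodic theorem for \emph{self-adjoint} Markov operators, giving $\|\sup_n\Lambda^nF\|_p\leq C_p\|F\|_p$ for $p>1$; a generic maximal inequality for non-reversible Markov operators fails near $L^1$, so reversibility of $\Lambda$ (proved in Section 7) and the strict-moment gain $p=1+\ep_0/2>1$ are exactly what make the step work, in the same way they power the paper's Nevo--Stein route.
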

We note that we don't have any estimation on the tail of $N(\om)$.\\

\begin{proof}
$~$\\
\begin{enumerate}

\item From the definition of $Q(n)$ we have that
\begin{equation}
Q(n)\geq \inf_{y\in\CP(\om)}(-\log(g_n(y)))=-\sup_{y\in\CP(\om)}{(\log(g_n(y)))}.
\nonumber
\end{equation}
Using now Claim \ref{bound_of_transitions}, for sufficiently large $n$
we have $\forall y\in\CP(\om)$ that $g_n(y)\leq \frac{K_2}{(n-K_1)^{\frac{d}{2}}}$ and
therefore
\begin{equation}
Q(n)\geq -\log\left(
\frac{K_2}{(n-K_1)^\frac{d}{2}}\right)=-\log(K_2)+\frac{d}{2}\log(n-K_1).
\end{equation}
Taking $c_1=-\log(2K_2)$ we get the desired inequality.\\

\item Let $D_n=B_{2^n}(0)\backslash B_{2^{n-1}}(0)$ for $n>0$ and
$D(0)=\{0\}$, where $B_n(0)=\{x\in\BZ^d ~:~|x|\leq n\}$. Then for $0\leq a\leq 2$ we have:
\begin{equation}
\sum_{y\in\CP(\om)}{e^{-a\|y\|}} \leq \sum_{n=0}^{\infty}{\sum_{y\in
D_n}{e^{-a\cdot 2^n}}}\leq \sum_{n=0}^{\infty}{e^{-a\cdot 2^n}\cdot
c_{2.1}\cdot 2^{nd}}\leq c_{2.2}\cdot a^{-d},
\label{distance_inequality_1}
\end{equation}
where $c_{2.2}=c_{2.2}(d)$ depends on d. Indeed, the first
inequality is true since $a\leq 2$, the second inequality follows
from the fact that the set of points in $\CP(\om)$ with distance
greater than $2^{n-1}$ and less than $2^n$ is bounded by the number
of points in $\BZ^d$ with those properties, which is less than a
constant times $2^{nd}$. The proof of the last inequality follows by
separating the series into two parts, up to some $n_0$ and starting
from $n_0$, and then bounding the second one by a geometric series.
The proof of it can be found in the Appendix.

Since for every $u>0$ and every $\lambda\in\BR$ we have
$u(\log(u)+\lambda)\geq -e^{-1-\lambda}$, by taking
$\lambda=a\|y\|+b$ with $a\leq 2$ and $u=g_n(y)$ we get
\begin{equation}\label{eq:one_more}
\begin{aligned}
-Q(n)+aM(n)+b&=\sum_{y\in\CP(\om)}{g_n(y)\left(\log(g_n(y))+a\|y\|+b\right)}\\
&\geq-\sum_{y\in\CP(\om)}{e^{-1-a\|y\|-b}=-e^{-1-b}\sum_{y\in\CP(\om)}{e^{-a\|y\|}}}.
\end{aligned}
\end{equation}

Note that we actually used the last inequality only for those
$y\in\CP(\om)$ such that $g_n(y)>0$, and for $y\in\CP(\om)$ such
that $g_n(y)=0$ we used the fact that $0\geq -e^{-1-a\|y\|-b}$. Combining
\eqref{eq:one_more} and \eqref{distance_inequality_1} we get that
\begin{equation}
-Q(n)+aM(n)+b\geq -e^{-1-b} c_{2.2} a^{-d}.
\label{ditance_inequality_2.1}
\end{equation}
But for sufficiently large $n$ we have
\begin{equation}
\begin{aligned}
M(n)&=0\cdot g_n(0)+\sum_{y\in\CP(\om),y\neq 0}{d(0,y)g_n(y)}\\
\nonumber &\geq \sum_{y\in\CP(\om),y\neq 0}{g_n(y)}=1-g_n(0)\geq
\frac{1}{2}. \label{distance_inequality_2.2}
\end{aligned}
\end{equation}
Taking now $a=\frac{1}{M(n)}$ and $b=d\cdot \log\left(M(n)\right)$,
by \eqref{ditance_inequality_2.1} (and since by
\eqref{distance_inequality_2.2} we have $a\leq 2$) it follows that
\begin{equation}
-Q(n)+1+d\cdot\log(M(n))\geq -e^{-1} c_{2.2}=-c_{2.3}. \nonumber
\end{equation}
Note that $c_{2.3}=c_{2.3}(d)$ also depend on $d$. Rearranging the last inequality we
get that there exists a constant $c_2=c_2(d)$ such that
\begin{equation}
M(n)\geq c_2\cdot e^\frac{Q(n)}{d}. \nonumber
\end{equation}

\item We start by rearranging the sum as
\begin{equation}
\begin{aligned}
\sum_{x,y\in\CP(\om)}{\ind_{\{y\in
N_x(\om)\}}(g_n(x)+g_n(y))\|x-y\|^2}
&=2\sum_{x\in\CP(\om)}{g_n(x)\sum_{y\in N_x(\om)}{\|x-y\|^2}}\\
&= 2\sum_{e\in\{\pm
e_i\}_{i=1}^d}{\sum_{x\in\CP(\om)}{g_n(x)f_e^2(\gth^{x}\om)}}\\
&=2\sum_{e\in\{\pm e_i\}_{i=1}^d}{\left(E_\om(f_e^2\circ
\gth^{X_n})+E_\om(f_e^2\circ \gth^{X_{n-1}})\right)}. \nonumber
\end{aligned}
\end{equation}

In order to show that this sum is finite, we will use a theorem
taken from \cite{NS94}. Before we can  state the theorem we need the
following definitions:

Given a countable group $\Gamma$ we define
$l^1(\Gamma)=\{\mu=\sum_{\gamma\in\Gamma}\mu(\gamma)\gamma:\sum_{\gamma\in\Gamma}|\mu(\gamma)|<\infty\}$.
Let $(X,\mathcal{B},m)$ be a standard Lebesgue probability space,
and assume $\Gamma$ acts on $X$ by measurable automorphisms preserving
the probability measure $m$. This action induces a representation of
$\Gamma$ by isometries on the $L^p(X)$ spaces, $1\leq p\leq \infty$,
and this representation can be extended to $l^1(\Gamma)$ by $(\mu
f)(x)=\sum_{\gamma\in\Gamma}{\mu(\gamma)f(\gamma^{-1}x)}$. Let
$\mathcal{B}_1=\{A\in\mathcal{B}:m(\gamma A\bigtriangleup A)=0~\forall
\gamma\in\Gamma\}$ denote  the sub $\si$-algebra of invariant sets,
and denote by $E_1$ the conditional expectation with respect to
$\mathcal{B}_1$. We call a sequence $\nu_n\in l^1(\Gamma)$ a
pointwise ergodic sequence in $L^p$ if, for any action of $\Gamma$
on a Lebesgue space X which preserves a probability measure and for
every $f\in L^p(X)$, $\nu_nf(x)\rightarrow E_1f(x)$ for almost all
$x\in X$, and in the norm of $L^p(X)$. If $\Gamma$ is finitely
generated, let S be a finite generating symmetric set. S induces a
length function on $\Gamma$, given by
$|\gamma|=|\gamma|_S=\min\{n:\gamma=s_1s_2\ldots s_n,s_i\in S\}$,
and $|e|=0$. We can therefore define the following sequences:

\begin{defn}
$~$\\
\begin{enumerate}
\item[(i.)] $\tau_n=(\# S_n)^{-1}\sum_{w\in S_n} w$, where
$S_n=\{w:|w|=n\}$.

\item[(ii.)] $\tau '_n=\frac{1}{2}(\tau_n+\tau_{n+1})$.

\item[(iii.)] $\mu_n=\frac{1}{n+1}\sum_{k=0}^{n}{\tau_k}$.

\item[(iv.)] $\beta_n=(\# B_n)^{-1}\sum_{w\in B_n}w$, where $B_n=\{w:|w|\leq
n\}$.

\end{enumerate}
\end{defn}

We can now state the theorem:

\begin{thm} [Nevo, Stein 94]
Consider the free group $F_r$, $r\geq 2$. Then:
\begin{enumerate}
\item[1.] The sequence $\mu_n$ is a pointwise ergodic sequence in $L^p$,
for all $1\leq p< \infty$.

\item[2.] The sequence $\tau '_n$ is a pointwise ergodic sequence in
$L^p$, for $1<p<\infty$.

\item[3.] $\tau_{2n}$ converges to an operator of conditional
expectation with respect to an $F_r$-invariant sub $\si$-algebra.
$\beta_{2n}$ converges to the operator $E_1+((r-1)/r)E$, where E is
a projection disjoint from $E_1$. Given $f\in L^p(X)$, $1<p<\infty$,
the convergence is pointwise almost everywhere, and in the $L^p$
norm.
\end{enumerate}
\label{Birkhoff-generalization}
\end{thm}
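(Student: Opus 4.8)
This is the Nevo--Stein pointwise ergodic theorem for spherical averages on the free group; the plan is to reconstruct its proof from the two standard reductions of ergodic theory combined with the spectral theory of $F_r$. Fix a measure preserving action of $F_r$ on a standard Lebesgue probability space $(X,\CB,m)$ and let $\pi$ be the induced Koopman representation, $\pi(g)f(x)=f(g^{-1}x)$, extended to $\ell^1(F_r)$ by $\pi(\nu)=\sum_\gamma\nu(\gamma)\pi(\gamma)$, so that $\pi(\mu)\pi(\nu)=\pi(\mu*\nu)$. By the Banach principle, to prove that $\pi(\nu_n)f\to E_1 f$ pointwise $m$-a.e.\ and in $L^p$-norm for every $f\in L^p(X)$ it suffices to establish (i) the maximal inequality $\big\|\sup_n|\pi(\nu_n)f|\big\|_{L^p}\le C_p\|f\|_{L^p}$ (a weak type $(1,1)$ estimate when $p=1$), and (ii) the a.e.\ convergence on a subspace dense in $L^p(X)$. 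I will produce both ingredients, for each of $\mu_n,\tau'_n,\tau_{2n},\beta_{2n}$, from the functional calculus of a single operator.

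\textbf{The sphere recursion and functional calculus.} The combinatorial core is that the Cayley graph of $F_r$ is the $2r$-regular tree, so applying a uniformly random generator to a reduced word of length $n\ge1$ sends it, uniformly, onto the sphere of radius $n+1$ with probability $\tfrac{2r-1}{2r}$ and onto the sphere of radius $n-1$ with probability $\tfrac1{2r}$ (uniformity on spheres being preserved because the automorphism group of the tree is transitive on each sphere about a fixed vertex). Hence
\[
\tau_1*\tau_n=\tfrac{2r-1}{2r}\,\tau_{n+1}+\tfrac1{2r}\,\tau_{n-1}\quad(n\ge1),\qquad\tau_1*\tau_0=\tau_1 .
\]
Applying $\pi$, the self-adjoint contraction $A:=\pi(\tau_1)$ (with $\mathrm{spec}(A)\subset[-1,1]$) satisfies $\pi(\tau_n)=P_n(A)$, where $P_0\equiv1$, $P_1(x)=x$ and $P_{n+1}=\tfrac{2r}{2r-1}xP_n-\tfrac1{2r-1}P_{n-1}$; these are the spherical functions of the tree, essentially rescaled Chebyshev polynomials, and also $\pi(\mu_n)=\tfrac1{n+1}\sum_{k\le n}P_k(A)$, $\pi(\beta_n)=|B_n|^{-1}\sum_{k\le n}|S_k|P_k(A)$. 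Solving the recursion one checks: $\sup_n\|P_n\|_{\infty,[-1,1]}<\infty$; $P_n(x)\to0$ for every $x\in(-1,1)$, with $|P_n(x)|\lesssim(1+n)(2r-1)^{-n/2}$ on the tempered band $|x|\le\tfrac{2\sqrt{2r-1}}{2r}$; while $P_n(1)=1$ and $P_n(-1)=(-1)^n$. The alternating sign at $x=-1$ is the only obstruction to convergence of $\pi(\tau_n)$ itself, which is exactly why the theorem is phrased for the smoothed sequences $\tau'_n=\tfrac12(\tau_n+\tau_{n+1})$ and $\tau_{2n}$ (whose polynomials vanish at $-1$), and why the Cesàro sequence $\mu_n$ --- which likewise damps the alternation --- is admissible down to $p=1$.

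\textbf{Convergence on a dense class and the maximal inequality.} On the spectral subspace where $\mathrm{spec}(A)$ avoids $\{-1,1\}$, the increments $P_{n+1}(A)f-P_n(A)f$ (resp.\ their smoothed or Cesàro versions) are summable in $L^2$-norm by the geometric decay of $P_n$, so $\pi(\nu_n)f$ converges a.e.; on $\pi$-invariant vectors $\pi(\nu_n)f=f=E_1f$, and on the $(-1)$-eigenspace of $A$ the smoothed/Cesàro averages give $0=E_1f$; these three spaces span a dense subspace, giving (ii). The maximal inequality is the technically heaviest step: on $L^2$ one bounds a Littlewood--Paley--Stein square (or oscillation) function $\big(\sum_n|q_n(A)f|^2\big)^{1/2}$ built from the increments $q_n$ of $\tau'_n$, using a uniform estimate $\sum_n|q_n(x)|^2\le C$ on $[-1,1]$ (here $q_n$ vanishes at $\pm1$, decays geometrically on the tempered band, and has controlled behaviour near the two turning points and in the complementary band --- this is the delicate point); one then passes to $1<p<\infty$ by Stein's analytic interpolation applied to a suitable analytic family of Riesz-type means in $A$, equivalently by Calderón transference from the left-regular action on $F_r$. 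For the averaging sequences $\mu_n$ and $\beta_{2n}$ (and $\beta_n$ is a geometrically weighted mixture dominated by $\tau_n,\tau_{n-1}$) the same scheme additionally produces a weak $(1,1)$ bound, which is what lets $\mu_n$ reach $p=1$.

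\textbf{Identifying the limit operators, and the main obstacle.} The limit operators are read off spectrally. Since $P_{2n}\to\ind_{\{1,-1\}}$ pointwise on $[-1,1]$, $\pi(\tau_{2n})\to$ the spectral projection of $A$ onto $\{1,-1\}$; and because $A^2f=f\iff\pi(\tau_2)f=f\iff f$ is invariant under the index-two --- hence normal, hence $F_r$-invariant --- subgroup $H$ of even-length words, this projection equals the conditional expectation onto the $F_r$-invariant sub-$\sigma$-algebra of $H$-invariant sets. The weighted sums $|B_{2n}|^{-1}\sum_{k\le 2n}|S_k|P_k(x)$ converge to $1$ at $x=1$, to $\tfrac{r-1}{r}$ at $x=-1$, and to $0$ on $(-1,1)$, whence $\pi(\beta_{2n})\to E_1+\tfrac{r-1}{r}E$, where $E$ is the spectral projection of $A$ at $-1$, disjoint from $E_1$ (the projection at $1$). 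The main obstacle throughout is the $L^p$ ($p\ne2$) maximal inequality for the exponentially growing, non-doubling spheres of $F_r$: the Vitali/Hardy--Littlewood covering machinery is unavailable, so one is forced to rely on the harmonic analysis of the free group --- the square-function bound, with its delicate uniform control near the edges of the tempered spectrum, together with Stein-type interpolation --- which is the heart of the Nevo--Stein argument.
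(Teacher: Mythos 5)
The paper offers no proof of this statement to compare against: it is the Nevo--Stein theorem, imported verbatim from \cite{NS94} and used as a black box (indeed only part 2 is invoked, to bound \eqref{distance_prop_3} in Section 6). So the only meaningful benchmark for your write-up is the original Nevo--Stein argument, and measured against that your outline is faithful and the concrete computations you carry out are correct: the convolution recursion $\tau_1\ast\tau_n=\frac{2r-1}{2r}\tau_{n+1}+\frac{1}{2r}\tau_{n-1}$, the resulting polynomials $P_n$ in $A=\pi(\tau_1)$ with their decay on the tempered band and values $(\pm1)^n$ at $\pm1$, the identification of the $\{\pm1\}$ spectral projection with the conditional expectation onto the $\sigma$-algebra of sets invariant under the (normal, index-two) subgroup of even-length words, and the limiting value $\frac{r-1}{r}$ of the $\beta_{2n}$-symbol at $-1$ all check out, and they correctly explain why the smoothed sequences $\tau_n'$, $\tau_{2n}$, $\beta_{2n}$ and the Ces\`aro sequence $\mu_n$ are the right objects.

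That said, what you have is a roadmap rather than a proof: the two analytically heavy ingredients --- the $L^p$ maximal inequality for $p\neq 2$ (the square-function bound with uniform control near the spectral edges, plus Stein's analytic interpolation) and the weak-type bound needed at $p=1$ --- are asserted, not established, as you yourself flag. Two attributions are also off. First, the $L^1$ statement for $\mu_n$ is not obtained in \cite{NS94} by ``the same scheme'' yielding a weak $(1,1)$ bound; square-function/interpolation arguments do not produce weak type $(1,1)$. It comes instead from dominating $\sup_n \mu_n|f|$ by the Ces\`aro averages of the convolution powers $\tau_1^{\ast k}$ (the distance of the walk from the origin has positive drift, so each sphere is visited a bounded expected number of times, making the two averaging families comparable) and then applying the Hopf--Dunford--Schwartz theorem to the Markov operator $\pi(\tau_1)$. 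Second, the parenthetical claim that the maximal inequality can ``equivalently'' be obtained by Calder\'on transference from the left-regular representation is doubtful, since the standard transference principle requires amenability, which $F_r$ lacks; Nevo--Stein work spectrally in the given action precisely to avoid this. None of this affects the paper, where the theorem is correctly cited and only the $\tau_n'$ statement for some $p>1$ is used.
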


We actually only need the second part of Theorem
\ref{Birkhoff-generalization}. Taking $S=\{\si_{\pm
e_i}\}_{i=1}^{d}$, we get that
\begin{equation}
2\sum_{e\in\{\pm e_i\}_{i=1}^d}{\left(E_\om(f_e^2\circ
\gth^{X_n})+E_\om(f_e^2\circ \gth^{X_{n-1}})\right)} \leq 4\sum_{e\in\{\pm e_i\}_{i=1}^d}{\tau_n'(f_e^2)}.\nonumber
\end{equation}
Using the additional assumption, we get that there exists
$1<p<\infty$ such that for every coordinate direction $e$, $f_e^2\in
L^p(\Om_0)$. Therefore by Theorem \ref{Birkhoff-generalization}
\begin{equation}
\lim_{n\rightarrow\infty}{4\sum_{e\in\{\pm
e_i\}_{i=1}^d}{\tau_n'(f_e^2)}}=E_1\left(4\sum_{e\in\{\pm
e_i\}_{i=1}^d}{f_e^2}\right), \nonumber
\end{equation}
exists. In addition, since $P$ is ergodic with respect to $\si_e$
for every coordinate direction $e$, there exists a constant $C$ such
that $4\sum_{e\in\{\pm e_i\}_{i=1}^d}{E_1(f_e^2)}=C$ $P$-almost
surely. Consequently, the original sequence converges to $C$ $P$-almost
surely, and therefore in particular it is $P$-almost surely bounded.

\item
\begin{equation}
M(n+1)-M(n)=\sum_{y\in\CP(\om)}{(g_{n+1}(y)-g_n(y))\|y\|}. \nonumber
\end{equation}
Using the discrete Gauss Green formula, this term equals to
\begin{equation}\label{Green_formula}
-\frac{1}{4d}\sum_{x,y\in\CP(\om)}{\ind_{\{y\in
N_x(\om)\}}(\|y\|-\|x\|)(g_n(y)-g_n(x))}.
\end{equation}
Indeed,rearranging the sums we get that
$\sum_{y\in\CP(\om)}{(g_{n+1}(y)-g_n(y))\|y\|}$ equals to
\begin{equation}
-\frac{1}{4d}\left[2d\sum_{y\in\CP(\om)}{\|y\|g_n(y)}+2d\sum_{x\in\CP(\om)}{\|x\|g_n(x)}\right.
\left.-2d\sum_{y\in\CP(\om)}{\|y\|g_{n+1}(y)}-2d\sum_{x\in\CP(\om)}{\|x\|g_{n+1}(x)}\right].
\nonumber
\end{equation}
Since all sums are finite and for every point in $x\in\CP(\om)$ we
have $|N_x(\om)|=2d<\infty$ we get that the last term is equal to
\begin{equation}
\begin{aligned}
-\frac{1}{4d}\left[\sum_{y\in\CP(\om)}{\|y\|g_n(y)\sum_{x\in\CP(\om)}{\ind_{y\in
N_x(\om)}}}+\sum_{x\in\CP(\om)}{\|x\|g_n(x)\sum_{y\in\CP(\om)}{\ind_{y\in
N_x(\om)}}} \right.\nonumber\\
\left.- \sum_{y\in\CP(\om)}{\|y\|\sum_{x\in\CP(\om)}{\ind_{y\in
N_x(\om)}g_n(x)}}-\sum_{x\in\CP(\om)}{\|x\|\sum_{y\in\CP(\om)}{\ind_{y\in
N_x(\om)}g_n(y)}}\right].\nonumber
\end{aligned}
\end{equation}
But again all sums are finite and therefore we can change the order
of summation getting the following presentation
\begin{equation}
\begin{aligned}
&-\frac{1}{4d}\sum_{x,y\in\CP(\om)}{\left[\ind_{y\in
N_x(\om)}\|y\|g_n(y)-\ind_{y\in N_x(\om)}\|x\|g_n(y)-\ind_{y\in
N_x(\om)}\|y\|g_n(x)+\ind_{y\in N_x(\om)}\|x\|g_n(x)\right]}
\nonumber\\
=&-\frac{1}{4d}\sum_{x,y\in\CP(\om)}{\ind_{\{y\in
N_x(\om)\}}(\|y\|-\|x\|)(g_n(y)-g_n(x))}.
\end{aligned}
\end{equation}

Using \eqref{Green_formula} and the triangle inequality we get that
$M(n+1)-M(n)$ is less or equal than
\begin{equation}
\frac{1}{4d}\sum_{x,y\in\CP(\om)}{\ind_{\{y\in
N_x(\om)\}}\|x-y\|\left|g_n(y)-g_n(x)\right|}.\nonumber
\end{equation}
Therefore by the Cauchy Schwartz inequality
\begin{equation}
\begin{aligned}
M(n+1)-M(n)\leq
\frac{1}{4d}&\left(\sum_{x,y\in\CP(\om)}{\ind_{\{y\in
N_x(\om)\}}(g_n(x)+g_n(y))\|x-y\|^2}\right)^\frac{1}{2}\nonumber\\
\cdot &\left(\sum_{x,y\in\CP(\om)}{\ind_{\{y\in
N_x(\om)\}}\frac{(g_n(y)-g_n(x))^2}{g_n(y)+g_n(x)}}\right)^\frac{1}{2}.\nonumber
\label{distance_inequality_3.1}
\end{aligned}
\end{equation}

The first sum here is exactly the same sum from (\ref{distance_prop_3}) and therefore
is finite, so there exists a positive constant $c_{3.1}=c_{3.1}(d)$ such that
$M(n+1)-M(n)$ is less or equal to
\begin{equation}
c_{3.1}\left(\sum_{x,y\in\CP(\om)}{\ind_{\{y\in
N_x(\om)\}}\frac{(g_n(y)-g_n(x))^2}{g_n(y)+g_n(x)}}\right)^\frac{1}{2}.
\nonumber
\end{equation}
Using the fact that for every $u,v>0$
\begin{equation}
\frac{(u-v)^2}{u+v}\leq(u-v)\left(\log(u)-\log(v)\right). \nonumber
\end{equation}
We get that $M(n+1)-M(n)$ is less or equal than
\begin{equation}
c_{3.1}\left(\sum_{x,y\in\CP(\om)}{\ind_{\{y\in
N_x(\om)\}}\Big(g_n(y)-g_n(x)\Big)\Big(\log(g_n(y))-\log(g_n(x))\Big)}\right)^\frac{1}{2}.
\nonumber
\end{equation}
Using the discrete Gauss Green formula in the other direction, the
last term equals to
\begin{equation}
\sqrt{4d}c_{3.1}\left(-\sum_{y\in\CP(\om)}{\Big(\log(g_n(y))+1\Big)\Big(g_{n+1}(y)-g_n(y)\Big)}\right)^\frac{1}{2}.
\nonumber
\end{equation}
Since $1-x+\log(x)\leq 0$ for all $x>0$ we get that the last term is
less or euqal to
\begin{equation}
\sqrt{4d}c_{3.1}\left(-\sum_{y\in\CP(\om)}{\Big(g_{n+1}(y)-g_n(y)\Big)\log(g_n(y))+g_{n+1}(y)\log\left(\frac{g_{n+1}(y)}{g_n(y)}\right)}\right)^\frac{1}{2}.\nonumber
\end{equation}
But this is exactly
\begin{equation}
\sqrt{4d}c_{3.1}\Big(Q(n+1)-Q(n)\Big)^\frac{1}{2}. \nonumber
\end{equation}
By taking $c_3=(\sqrt{4d}c_{3.1})^2$ gives the desired inequality.
\end{enumerate}
\end{proof}

\begin{proof}[Proof of Theorem \ref{asymptotic_X_n}]
Let $R(n):\BN\rightarrow\BR$ be defined by
\begin{equation}
R(n)=\frac{1}{d}\Big(Q(n)-c_1-\frac{d}{2}\log(n-K_1)\Big),
\end{equation}
for $n>\lceil K_1\rceil+1$ and $R(n)=0$ for $n\leq\lceil
K_1\rceil+1$. By \eqref{distance_prop_2} for sufficiently large $n$ we
have
\begin{equation}
M(n)\geq c_2\cdot e^\frac{Q(n)}{d}=c_2\cdot
e^{R(n)+\frac{c_1}{d}+\frac{1}{2}\log(n-K_1)}=c_{4.1}e^{R(n)}\sqrt{n-K_1}.
\label{distance_inequality_last1}
\end{equation}

On the other hand, let $N\in\BN$ be such that for all $n>N$
inequalities (\ref{distance_prop_1}-\ref{distance_prop_4}) hold,
then for every $n>N$ we have (set $c_{4.3}=\sqrt{c_3}$)
\begin{equation}
\begin{aligned}
M(n) & = \sum_{k=1}^{N}{M(k)-M(k-1)}+\sum_{k=N+1}^{n}{M(k)-M(k-1)}
\nonumber \\
& \leq  c_{4.2}+c_{4.3}\cdot
\sum_{k=N+1}^{n}{\Big(Q(k)-Q(k-1)\Big)^\frac{1}{2}} \nonumber \\
& =
c_{4.2}+c_{4.3}\sqrt{d}\sum_{k=N+1}^{n}{\left(R(k)-R(k-1)+\frac{1}{2}\log\left(\frac{k-K_1}{k-1-K_1}\right)\right)^{\frac{1}{2}}}.
\nonumber
\end{aligned}
\end{equation}
Using the inequality $(a+b)^\frac{1}{2}\leq
b^\frac{1}{2}+\frac{a}{(2b)^\frac{1}{2}}$, we find that this is less
than or equal to
\begin{equation}
c_{4.2}+c_{4.3}\sum_{k=N+1}^{n}{\left[\frac{1}{\sqrt{2}}\log^\frac{1}{2}\left(\frac{k-K_1}{k-1-K_1}\right)+
\frac{R(k)-R(k-1)}{\log^\frac{1}{2}\left(\frac{k-K_1}{k-1-K_1}\right)}\right]},
\nonumber
\end{equation}
which can be written (using discrete integration by parts) as
\begin{equation}
\begin{aligned}
&c_{4.2}+c_{4.3}\sum_{k=N+1}{n}{\frac{1}{\sqrt{2}}\log^\frac{1}{2}\left(\frac{k-K_1}{k-1-K_1}\right)}+
c_{4.3}\sum_{k=N+1}^{n}{\left[\frac{R(k)}{\log^\frac{1}{2}\left(\frac{k+1-K_1}{k-K_1}\right)}-\frac{R(k-1)}{\log^\frac{1}{2}\left(\frac{k-K_1}{k-1-K_1}\right)}\right]}
\nonumber\\
&-c_{4.3}\sum_{n=N+1}^{n}{R(k)\left[\frac{1}{\log^\frac{1}{2}\left(\frac{k+1-K_1}{k-K_1}\right)}-\frac{1}{\log^\frac{1}{2}\left(\frac{k-K_1}{k-1-K_1}\right)}\right]}.
\nonumber
\end{aligned}
\end{equation}
Since \eqref{distance_prop_1} holds $R(k)$ is non negative and
therefore the last sum is positive. Consequently we get
\begin{equation}
\label{distance_long_calc_1}
\begin{aligned}
M(n)\leq
c_{4.2}&+c_{4.3}\sum_{k=N+1}^{n}{\frac{1}{\sqrt{2}}\log^\frac{1}{2}\left(\frac{k-K_1}{k-1-K_1}\right)}\\
&+
c_{4.3}\sum_{k=N+1}^{n}{\left[\frac{R(k)}{\log^\frac{1}{2}\left(\frac{k+1-K_1}{k-K_1}\right)}-\frac{R(k-1)}{\log^\frac{1}{2}\left(\frac{k-K_1}{k-1-K_1}\right)}\right]}.
\end{aligned}
\end{equation}
Using the fact that
\begin{equation}
\log\left(\frac{k-K_1}{k-1-K_1}\right)=\log\left(1+\frac{1}{k-1-K_1}\right)<\frac{1}{k-1-K_1}.
\nonumber
\end{equation}
The first sum in \eqref{distance_long_calc_1} is less than
\begin{equation}
\sum_{k=N+1}^{n}{\frac{1}{(k-1-K_1)^\frac{1}{2}}}\leq
c_{4.4}\sqrt{n-K_1}. \nonumber
\end{equation}
Therefore we find that
\begin{equation}
\begin{aligned}
M(n) & \leq  c_{4.2}+c_{4.3} c_{4.4}\sqrt{n-K_1}+c_{4.3}
\sum_{k=N+1}^{n}{\left[\frac{R(k)}{\log^\frac{1}{2}\left(\frac{k+1-K_1}{k-K_1}\right)}-\frac{R(k-1)}{\log^\frac{1}{2}\left(\frac{k-K_1}{k-1-K_1}\right)}\right]}
\nonumber\\
& =  c_{4.2}+c_{4.3}
c_{4.4}\sqrt{n-K_1}+c_{4.3}\left[\frac{R(n)}{\log^\frac{1}{2}\left(\frac{n+1-K_1}{n-K_1}\right)}-\frac{R(N+1)}{\log^\frac{1}{2}\left(\frac{N+1-K_1}{N-K_1}\right)}\right]
\nonumber\\
& \leq  c_{4.2}+c_{4.3} c_{4.4}\sqrt{n-K_1}+c_{4.3}
\frac{R(n)}{\log^\frac{1}{2}\left(\frac{n+1-K_1}{n-K_1}\right)}
\nonumber\\
& \leq  c_{4.2}+c_{4.3} c_{4.4}\sqrt{n-K_1}+c_{4.3}\cdot
c_{4.5} R(n)\sqrt{n-K_1}. \nonumber
\end{aligned}
\end{equation}
We can thus find a constant $c_{4.6}$ such that for all sufficiently
large $n$
\begin{equation}
M(n)\leq c_{4.6}[1+R(n)]\sqrt{n-K_1}.
\label{distance_inequality_last2}
\end{equation}
So by \eqref{distance_inequality_last1} and
\eqref{distance_inequality_last2} we have that for sufficiently
large $n$
\begin{equation}
c_{4.1}e^{R(n)}\sqrt{n-K_1}\leq M(n)\leq c_{4.6}[1+R(n)]\sqrt{n-K_1}.
\nonumber
\end{equation}
It follows that $R(n)$ must be a bounded function, and therefore we can
find constants $c_{4.7},c_{4.8}$ such that for sufficiently large n
\begin{equation}
c_{4.7}\sqrt{n-K_1} \leq M(n) \leq c_{4.8}\sqrt{n-K_1}. \nonumber
\end{equation}
Consequently, since
\begin{equation}
M(n)=\frac{1}{2}\Big[E_\om(\|X_n\|)+E_\om(\|X_{n-1}\|) \nonumber\Big],
\end{equation}
it follows that there exists a constant $c>0$ such that for $P$ almost every $\om\in\Om_0$
\begin{equation}
E_\om(\|X_n\|)\leq c\sqrt{n}~~~~\forall n\in\BN. \nonumber
\end{equation}
\end{proof}


\section{Corrector - Construction and harmonicity}

In this section, we adapt the construction presented in \cite{BB06}
(which in turn adapts the construction of Kipnis and Varadhan
\cite{KV86}) into our analysis.

We start with the following observation concerning the Markov chain
"on environments".

\begin{lem}
For every bounded measurable function $f:\Om_0\rightarrow \BR$ and
every $x\in N_0(\om)$ we have
\begin{equation}
\BE_P\left[(f\circ \gth_x)\ind_{\{x\in
N_0(\om)\}}\right]=\BE_P[f\ind_{\{-x\in N_0(\om)\}}].
\label{lem_chain_environments}
\end{equation}
As a consequence, $P$ is reversible and, in particular, stationary
for the Markov kernel $\Lambda$ defined in (\ref{Lambda}).
\end{lem}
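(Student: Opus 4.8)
The plan is to reduce the identity to a single change of variables under the unconditioned measure $Q$. Unfolding $\BE_P[\,\cdot\,]=Q(\Om_0)^{-1}\BE_Q[\,\cdot\,\ind_{\Om_0}]$, the left‑hand side becomes $Q(\Om_0)^{-1}\BE_Q\!\left[(f\circ\gth_x)\,\ind_{A}\right]$ with $A:=\{x\in N_0(\om)\}\cap\Om_0$. Write $x=ke$ with $e\in\CE$, $k\ge 1$ (for $x$ not of this form both sides vanish, since $N_0(\om)$ consists of $2d$ points, each a positive multiple of some $\pm e_i$); note also that $\gth_x\om\in\Om_0$ on $\{x\in N_0(\om)\}$, since $\om(x)=1$ there, so $f\circ\gth_x$ is well defined where it matters. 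Since $\gth_{e_i}$ is invertible and $Q$‑preserving for every $i$, $Q$ is preserved by $\gth_{-x}=\gth_{-e}^{\,k}$; substituting $\om\mapsto\gth_{-x}\om$ leaves $\BE_Q$ unchanged, turns $f\circ\gth_x$ into $f$ (as $\gth_x\circ\gth_{-x}=\mathrm{id}$), and turns $\ind_A(\om)$ into $\ind_A(\gth_{-x}\om)$.

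The crux is to identify $\ind_A(\gth_{-x}\om)$. Here $A=\{\om:\om(0)=1,\ \om(je)=0\ (1\le j\le k-1),\ \om(ke)=1\}$, i.e. $0$ is occupied and the nearest point of $\CP(\om)$ from $0$ in direction $e$ sits at $ke$. Evaluating the defining constraints of $A$ at $\gth_{-x}\om$, i.e. replacing $\om(\cdot)$ by $\om(\cdot-ke)$, yields $\om(0)=1$, $\om(-ke)=1$, and $\om(-le)=0$ for $1\le l\le k-1$ (the ``no point in between'' stretch reverses under the reindexing $j\mapsto k-j$); since $-x=k(-e)$, this is precisely $\{-x\in N_0(\om)\}\cap\Om_0$. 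Hence the left‑hand side equals $Q(\Om_0)^{-1}\BE_Q[f\,\ind_{\{-x\in N_0(\om)\}\cap\Om_0}]=\BE_P[f\,\ind_{\{-x\in N_0(\om)\}}]$, as claimed. I expect this indicator bookkeeping — especially verifying that the empty stretch between $0$ and the nearest point transforms correctly under the shift — to be the only step needing care; the rest is formal.

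For the consequences, I would run the same computation with a joint function. For bounded measurable $F:\Om_0\times\Om_0\to\BR$, the measure $\Pi(d\om,d\om'):=P(d\om)\Lambda(\om,d\om')$ satisfies, by the definition \eqref{Lambda} of $\Lambda$ (the inner sum being $P$‑a.s. finite),
\[
\int F\,d\Pi=\frac{1}{2d}\sum_{e\in\CE}\sum_{k\ge 1}\BE_P\!\left[\ind_{\{ke\in N_0(\om)\}}\,F(\om,\gth_{ke}\om)\right].
\]
Applying verbatim the change of variables above to each summand — now carrying $F(\cdot,\gth_{ke}\cdot)$ in place of $f\circ\gth_x$ — gives $\BE_P[\ind_{\{ke\in N_0(\om)\}}F(\om,\gth_{ke}\om)]=\BE_P[\ind_{\{-ke\in N_0(\om)\}}F(\gth_{-ke}\om,\om)]$; summing over $e\in\CE$, $k\ge1$ and reindexing $e\mapsto -e$ yields $\int F\,d\Pi=\int F(\om',\om)\,\Pi(d\om,d\om')$. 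Thus $\Pi$ is symmetric, i.e. $P$ is reversible for $\Lambda$; choosing $F(\om,\om')=h(\om')$ and using $\Lambda(\om,\Om_0)=1$ gives $\BE_P[\Lambda h]=\BE_P[h]$, the stationarity of $P$ under $\Lambda$.
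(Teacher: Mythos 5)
Your proof is correct and follows essentially the same route as the paper: you unfold $\BE_P$ into $\BE_Q$ and use shift invariance of $Q$ together with the explicit identity $\ind_{\{x\in N_0(\om)\}}\ind_{\Om_0}=\big(\ind_{\{-x\in N_0(\om)\}}\ind_{\Om_0}\big)\circ\gth_x$, which is exactly the paper's key observation (you just verify the occupied/empty pattern explicitly, which the paper asserts). Your derivation of reversibility via symmetry of the joint measure $P(d\om)\Lambda(\om,d\om')$ is an equivalent repackaging of the paper's computation $\BE_P[f\cdot(\Lambda g)]=\BE_P[g\cdot(\Lambda f)]$, so no substantive difference there either.
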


\begin{proof}
We will first prove (\ref{lem_chain_environments}). Up to the factor
$\BP(\Om_0)$, we need to show that
\begin{equation}
\BE_Q[f\circ\gth_x\ind_{\Om_0}\ind_{\{x\in
N_0(\om)\}}]=\BE_Q[f\ind_{\Om_0}\ind_{\{-x\in N_0(\om)\}}].
\label{lem_chain_environments2}
\end{equation}
This will follow from the fact that $\ind_{\{x\in
N_0(\om)\}}\ind_{\Om_0}=\left(\ind_{\{-x\in
N_0(\om)\}}\ind_{\Om_0}\right)\circ\gth_x$. This observation implies
that
\begin{equation}
f\circ\gth_x\ind_{\Om_0}\ind_{\{x\in
N_0(\om)\}}=\left(f\ind_{\Om_0}\ind_{\{-x\in
N_0(\om)\}}\right)\circ\gth_x, \label{lem_chain_environments3}
\end{equation}
and (\ref{lem_chain_environments2}) follows from
(\ref{lem_chain_environments3}) by the shift invariance of $Q$. From
(\ref{lem_chain_environments}) we deduce that for any bounded
measurable functions $f,g:\Om\rightarrow\BR$,

\begin{equation}
\BE_P[f\cdot (\Lambda g)]=\BE_P[g\cdot (\Lambda f)],
\label{inner_product_property}
\end{equation}
where $\Lambda f:\Om_0\rightarrow\BR$ is the function
\begin{equation}
(\Lambda f)(\om)=\frac{1}{2d}\sum_{x\in\BZ^d}{\left(\ind_{\{x\in
N_0(\om)\}}f(\gth_x\om)\right)}. \label{Lambda_function}
\end{equation}
Indeed
\[\BE_P[f\cdot (\Lambda g)]=\frac{1}{2d}\sum_{x\in\BZ^d}\BE_P[f\cdot g\circ\gth_x\ind_{\{x\in N_0(\om)\}}].\]
Applying (\ref{lem_chain_environments}) we get
\[\BE_P[f\cdot (\Lambda g)]=\frac{1}{2d}\sum_{x\in\BZ^d}\BE_P[f\circ \gth_{-x}\ind_{\{-x\in N_0(\om)\}}\cdot g]=\BE_P[(\Lambda f)\cdot g], \]
where we replaced the sign in the sum in order to cancel the
negative sign inside the sum. But (\ref{inner_product_property}) is
the definition of reversibility. Setting $f=1$ and noting that
$\Lambda f=1$, we get that for every bounded measurable function
$g:\Om\rightarrow\BR$
\[\BE_P[\Lambda g]=\BE_P[g],\]
and therefore $P$ is stationary with respect to the Markov kernel
$\Lambda$.
\end{proof}

\subsection{The Kipnis-Varadhan Construction}
$~$\\

Next we will adapt the construction of Kipnis and Varadhan
\cite{KV86} cited from \cite{BB06} to the present analysis. Let
$L^2=L^2(\Om_0,\FB,P)$ be the space of all Borel-measurable square
integrable functions on $\Om_0$. We will use the notation $L^2$ both
for $\BR$-valued functions as well as for $\BR^d$-valued functions.
We equip $L^2$ with the inner product $(f,g)=\BE_P[fg]$, when for
vector valued functions on $\Om$ we interpret $"fg"$ as the scalar
product of $f$ and $g$. Let $\Lambda$ be the operator defined by
(\ref{Lambda_function}), and we expand the definition to vector
valued functions by letting $\Lambda$ act like a scalar, i.e.,
independently for each component. From
(\ref{inner_product_property}) we get that
\begin{equation}
(f,\Lambda g)=(\Lambda f,g),
\end{equation}
and so $\Lambda$ is symmetric. In addition, for every $f\in L^2$ we
have
\[|(f,\Lambda f)|\leq\frac{1}{2d}\sum_{x\in\BZ^d}{|(f,\ind_{\{x\in N_0(\om)\}}f\circ\gth_x)|}=
\frac{1}{2d}\sum_{x\in\BZ^d}{|(f\ind_{\{x\in
N_0(\om)\}},\ind_{\{x\in N_0(\om)\}}f\circ\gth_x)|}.\]
Using the Cauchy-Schwartz inequality this is less than or equal to
\[\frac{1}{2d}\sum_{x\in\BZ^d}{(f\ind_{\{x\in N_0(\om)\}},f\ind_{\{x\in N_0(\om)\}})^{1/2}\cdot(\ind_{\{x\in N_0(\om)\}}f\circ\gth_x,\ind_{\{x\in N_0(\om)\}}f\circ\gth_x)^{1/2}},\]
which equals
\[\frac{1}{2d}\sum_{x\in\BZ^d}{(f,f\ind_{\{x\in N_0(\om)\}})^{1/2}\cdot(1,\ind_{\{x\in N_0(\om)\}}f^2\circ\gth_x)^{1/2}}.\]
Using (\ref{lem_chain_environments}) we find that this this equals
\[\frac{1}{2d}\sum_{x\in\BZ^d}{(f,f\ind_{\{x\in N_0(\om)\}})^{1/2}\cdot(f,f\ind_{\{-x\in N_0(\om)\}})^{1/2}}\leq\frac{1}{2d}\sum_{x\in\BZ^d}{(f,f\ind_{\{x\in N_0(\om)\}})}=(f,f),\]
and so $~\|\Lambda\|_{L^2}\leq 1$. In particular, $\Lambda$ is self
adjoint and $sp(\Lambda)\subseteq [-1,1]$.

Let $V:\Om_0\rightarrow\BR^d$ be the local drift at the origin i.e,
\begin{equation}
V(\om)=\frac{1}{2d}\sum_{x\in\BZ^d}{x\ind_{\{x\in N_0(\om)\}}}.
\label{drift_equation}
\end{equation}

If the second moment of $f_e$ exists for every $e\in\CE$, then $V\in L^2$. Indeed
\[(V,V)=\sum_{e\in\CE}{(V\cdot e,V\cdot e)},\]
and
\[(V\cdot e,V\cdot e)=\frac{1}{2d}\BE_P[(V\cdot e)^2]=\frac{1}{2d}\BE_P[(f_e)^2+(f_{-e})^2],\]
which is finite if the second moments exist. For each $\ep>0$, let
$\psi_\ep:\Om_0\rightarrow\BR^d$ be the solution of
\begin{equation}
(1+\ep-\Lambda)\psi_\ep=V. \label{defining_psi_ep}
\end{equation}
This is well defined since $sp(\Lambda)\subset[-1,1]$, so for every
$\ep>0$ we get $sp(1+\ep+\Lambda)\subset[\ep,2+\ep]$. In addition we
get that $\psi_\ep\in L^2$ for all $\ep>0$. The following theorem is
the main result concerning the corrector:

\begin{thm}
There is a function $\chi:\BZ^d\times\Om_0\rightarrow\BR^d$ such
that for every $x\in\BZ^d$,
\begin{equation}
\lim_{\ep\searrow
0}{\ind_{\{x\in\CP(\om)\}}(\psi_\ep\circ\gth_x-\psi_\ep)}=\chi(x,\cdot),~~~~~in~
L^2.
\end{equation}
Moreover, the following properties hold:
\begin{enumerate}

\item (Shift invariance) For $P$-almost every $\om\in\Om_0$
\begin{equation}
\chi(x,\om)-\chi(y,\om)=\chi(x-y,\gth_y(\om)),
\label{shift_invariance}
\end{equation}
for all $x,y\in\CP(\om)$.

\item (Harmonicity) For $P$-almost every $\om\in\Om_0$, the function
\begin{equation}
x\mapsto\chi(x,\om)+x,
\end{equation}
is harmonic with respect to the transition probability given in
(\ref{transition_probability})

\item (Square integrability) There exists a constant $C<\infty$ such that
\begin{equation}
\|[\chi(x+y,\cdot)-\chi(x,\cdot)]\ind_{\{x\in\CP(\om)\}}(\ind_{\{y\in
N_0(\om)\}}\circ\gth_x)\|_2<C, \label{square_integrability}
\end{equation}
for all $x,y\in\BZ^d$.
\end{enumerate}
\label{corrector_thm}
\end{thm}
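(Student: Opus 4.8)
The plan is to run the Kipnis--Varadhan scheme as adapted in \cite{BB06}, the one new ingredient being an a priori $L^2$-bound on the gradients of $\psi_\ep$ which in the percolation setting of \cite{BB06} comes for free from boundedness of the drift. Throughout write $\nabla_x f:=\ind_{\{x\in\CP(\om)\}}(f\circ\gth_x-f)$ for the discrete gradient of $f\in L^2$ (extend $f$ by $0$ off $\Om_0$), and let
\[
\mathfrak{E}(f):=\frac{1}{2d}\sum_{x\in\BZ^d}\BE_P\!\left[\ind_{\{x\in N_0(\om)\}}\,|\nabla_x f|^2\right]
\]
be the Dirichlet form of the environment chain. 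Expanding the square and using $\frac{1}{2d}\sum_x\ind_{\{x\in N_0(\om)\}}=1$ together with the reversibility identity \eqref{lem_chain_environments} (exactly as in the displayed computation showing $\|\Lambda\|_{L^2}\le1$) gives $\mathfrak{E}(f)=2(f,(I-\Lambda)f)$. I will also use the finite positive spectral measure $\mu_V$ of $\Lambda$ associated with $V$ (i.e.\ $\mu_V(\cdot)=\langle E(\cdot)V,V\rangle$, summed over the $d$ coordinates of $V$), so that $\psi_\ep=(1+\ep-\Lambda)^{-1}V$ and $\mathfrak{E}(\psi_\ep)=2\int_{-1}^{1}(1-\lambda)(1+\ep-\lambda)^{-2}\mu_V(d\lambda)$.

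The crucial step is the bound $\sup_{\ep>0}\mathfrak{E}(\psi_\ep)<\infty$. Applying \eqref{lem_chain_environments} to each scalar function $\om\mapsto\psi_\ep(\om)\cdot x$ and summing, one rewrites the pairing $(\psi_\ep,V)$ as a gradient pairing,
\[
(\psi_\ep,V)=-\frac{1}{4d}\sum_{x\in\BZ^d}\BE_P\!\left[\ind_{\{x\in N_0(\om)\}}\,(\nabla_x\psi_\ep)\cdot x\right],
\]
so that Cauchy--Schwarz, together with $\sum_{x}\BE_P[\ind_{\{x\in N_0\}}|x|^2]=\sum_{e\in\CE}\BE_P[f_e^2]=:2d\,\sigma^2$ (finite under Assumption \ref{assumption3}, since $f_e\ge1$ forces $f_e^2\le f_e^{2+\ep_0}$), yields $|(\psi_\ep,V)|\le\tfrac12\sigma\,\mathfrak{E}(\psi_\ep)^{1/2}$. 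Combining this with $\mathfrak{E}(\psi_\ep)=2(\psi_\ep,(I-\Lambda)\psi_\ep)=2(\psi_\ep,V)-2\ep\|\psi_\ep\|_2^2\le2(\psi_\ep,V)$ gives $\mathfrak{E}(\psi_\ep)\le\sigma\,\mathfrak{E}(\psi_\ep)^{1/2}$, hence $\mathfrak{E}(\psi_\ep)\le\sigma^2$ for all $\ep>0$. Passing to the spectral side and letting $\ep\downarrow0$ by monotone convergence, this reads $\int(1-\lambda)^{-1}\mu_V(d\lambda)<\infty$, i.e.\ $V\in\mathrm{Dom}\big((I-\Lambda)^{-1/2}\big)$. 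From here the two standard consequences follow: $\ep\|\psi_\ep\|_2^2\to0$ (since $\mu_V(\{1\})=0$ --- a $\Lambda$-fixed function is harmonic for the environment chain, hence constant by ergodicity, Proposition \ref{ergodic_particle_view}, while $\BE_P[V]=0$ by the symmetry \eqref{Expectation_equality} --- and $\int(1-\lambda)^{-1}\mu_V<\infty$), whence $\ep\psi_\ep\to0$ in $L^2$; and, for $\ep,\ep'\downarrow0$,
\[
\mathfrak{E}(\psi_\ep-\psi_{\ep'})=2\int_{-1}^{1}(1-\lambda)\Big(\tfrac{1}{1+\ep-\lambda}-\tfrac{1}{1+\ep'-\lambda}\Big)^{2}\mu_V(d\lambda)\;\longrightarrow\;0
\]
by dominated convergence (the integrand is bounded by $4(1-\lambda)^{-1}\in L^1(\mu_V)$ and tends to $0$ pointwise). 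Hence $\ind_{\{x\in N_0(\om)\}}(\psi_\ep\circ\gth_x-\psi_\ep)$ is Cauchy in $L^2$ as $\ep\downarrow0$, for every $x$; denote its limit by $\chi(x,\cdot)$ on $\{x\in N_0(\om)\}$. For unit vectors $x\in\CE$ this already proves the asserted convergence, since $x\in\CP(\om)\iff f_x(\om)=1\iff x\in N_0(\om)$.

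For a general $x\in\CP(\om)$ one telescopes along a nearest-neighbour path $0=z_0,z_1,\dots,z_m=x$ inside $\CP(\om)$ (such a path exists $P$-a.s.\ when $x$ lies in the origin's cluster, the only case the walk ever sees): using that each $\gth_{z_j}$ is $Q$-measure preserving, $\nabla_x\psi_\ep$ decomposes as $\sum_{j}\big(\ind_{\{z_{j+1}-z_j\in N_0(\om)\}}\nabla_{z_{j+1}-z_j}\psi_\ep\big)\circ\gth_{z_j}$, each term of which converges in $L^2$; this both extends $\chi(x,\cdot)$ to all $x$ and, read for two neighbouring shifts, gives the cocycle relation \eqref{shift_invariance}. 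For harmonicity, rewrite $(1+\ep-\Lambda)\psi_\ep=V$ as
\[
\frac{1}{2d}\sum_{x\in\BZ^d}\ind_{\{x\in N_0(\om)\}}\big(\nabla_x\psi_\ep+x\big)=\ep\,\psi_\ep(\om);
\]
letting $\ep\downarrow0$ (using $\ep\psi_\ep\to0$ and $L^2$-convergence of the gradients, with Jensen to control the sum) gives $\frac{1}{2d}\sum_{x}\ind_{\{x\in N_0(\om)\}}(\chi(x,\om)+x)=0$ for $P$-a.e.\ $\om$, and evaluating this at $\gth_v\om$ for $v\in\CP(\om)$ and inserting \eqref{shift_invariance} turns it into the mean-value property of $x\mapsto\chi(x,\om)+x$ at every $v\in\CP(\om)$. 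Finally, \eqref{square_integrability} is the a priori bound passed to the limit: by \eqref{shift_invariance} its left-hand side equals $\big\|\ind_{\{x\in\CP(\om)\}}(\ind_{\{y\in N_0\}}\circ\gth_x)\,\chi(y,\gth_x\cdot)\big\|_2$, which by measure preservation of $\gth_x$ and $P(B)=Q(B\cap\Om_0)/Q(\Om_0)$ is at most a constant times $\|\ind_{\{y\in N_0(\om)\}}\chi(y,\cdot)\|_2=\lim_{\ep}\|\ind_{\{y\in N_0(\om)\}}\nabla_y\psi_\ep\|_2$, and $\sum_{y}\|\ind_{\{y\in N_0\}}\nabla_y\psi_\ep\|_2^2=2d\,\mathfrak{E}(\psi_\ep)\le2d\sigma^2$.

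The main obstacle is precisely the a priori estimate $\sup_\ep\mathfrak{E}(\psi_\ep)<\infty$: without it the gradients need not converge (their $L^2$-norm is $\mathfrak{E}(\psi_\ep)^{1/2}$, unbounded exactly when $V\notin\mathrm{Dom}((I-\Lambda)^{-1/2})$), and here boundedness of the drift is not available, so one must exploit reversibility of $\Lambda$ in the self-bounding Cauchy--Schwarz argument above --- and it is exactly this that requires the second moment of $f_e$ supplied by Assumption \ref{assumption3}. The remaining points --- the measure-theoretic bookkeeping between $Q$ on $\Om$ and $P$ on $\Om_0$ when composing with shifts, and the passage from unit vectors to general $x$ via a nearest-neighbour path in $\CP(\om)$ --- are routine but should be carried out with care.
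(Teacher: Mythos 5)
Your proposal is correct and follows essentially the same Kipnis--Varadhan route, adapted from \cite{BB06}, as the paper: reversibility of $\Lambda$, the key spectral bound $\int_{-1}^{1}(1-\lambda)^{-1}\mu_V(d\lambda)<\infty$ coming from the second moments of $f_e$, the resulting $L^2$-Cauchyness of the gradients $\ind_{\{x\in N_0(\om)\}}(\psi_\ep\circ\gth_x-\psi_\ep)$ together with $\ep\|\psi_\ep\|_2^2\to 0$, and then path-telescoping to define $\chi$, harmonicity from $(1-\Lambda)\psi_\ep=V-\ep\psi_\ep$, and square integrability from the uniform gradient bound. The only (harmless) deviation is in how the spectral bound is obtained: you use a self-bounding Dirichlet-form estimate applied directly to $\psi_\ep$, which then needs your separate ergodicity argument that $\mu_V(\{1\})=0$, whereas the paper tests the reversibility inequality against the truncated functions $\min\{\ep^{-1},(1-\lambda)^{-1}\}$ and obtains the absence of an atom at $\lambda=1$ automatically.
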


The rest of this section deals with proving Theorem
\ref{corrector_thm}. The proof is based on spectral calculus and
closely follows the corresponding arguments from \cite{BB06} and
\cite{KV86}.


%

\subsection{Spectral calculation}
$~$\\

Let $\mu_{\Lambda,V}=\mu_V$ denote the spectral measure of $\Lambda:
L^2\rightarrow L^2$ associated with the function $V$. i.e, for every
bounded, continuous function $\Phi:[-1,1]\rightarrow\BR$, we have
\begin{equation}
(V,\Phi(\Lambda)V)=\int_{-1}^{1}{\Phi(\lambda)\mu_V(d\lambda)}.
\end{equation}
Since $\Lambda$ acts as a scalar, $\mu_V$ is the sum of the "usual"
spectral measures for the Cartesian components of $V$. In the
integral, we used the fact that $sp(\Lambda)\subset[-1,1]$, and
therefore the measure $\mu_V$ is supported entirely on $[-1,1]$. The
first observation, made already by Kipnis and Varadhan, is stated as
follows:

\begin{lem}
Assume that the second moments of $\{f_{\pm e_i}\}_{i=1}^{d}$ are
finite, then
\begin{equation}
\int_{-1}^{1}{\frac{1}{1-\lambda}\mu_V(d\lambda)}<\infty.
\label{spectral_measure}
\end{equation}
\end{lem}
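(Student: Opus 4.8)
The plan is to run the Kipnis--Varadhan argument: bound the resolvents $(V,\psi_\ep)$ uniformly in $\ep>0$ and then pass to the limit. By the spectral theorem, for each $\ep>0$ the function $\lambda\mapsto(1+\ep-\lambda)^{-1}$ is bounded and continuous on $sp(\Lambda)\subseteq[-1,1]$, so
\[
\int_{-1}^{1}\frac{\mu_V(d\lambda)}{1+\ep-\lambda}=\big(V,(1+\ep-\Lambda)^{-1}V\big)=(V,\psi_\ep),
\]
the last equality by the definition \eqref{defining_psi_ep} of $\psi_\ep$ (recall $V\in L^2$ and $\psi_\ep\in L^2$ under the standing hypothesis). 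Since $(1+\ep-\lambda)^{-1}$ increases, as $\ep\searrow0$, to $(1-\lambda)^{-1}$ for every $\lambda\in[-1,1]$ (with value $+\infty$ at $\lambda=1$), the monotone convergence theorem reduces \eqref{spectral_measure} to the uniform bound $\sup_{\ep>0}(V,\psi_\ep)<\infty$. The heart of the matter is an $H_{-1}$-type estimate: there is a constant $C<\infty$, depending only on $d$ and on $\sum_{e\in\CE}\BE_P[f_e^2]$, such that
\[
|(V,\phi)|\le C\,\mathcal{D}(\phi,\phi)^{1/2},\qquad\mathcal{D}(\phi,\phi):=(\phi,(1-\Lambda)\phi)\ge0,
\]
for every (vector-valued) $\phi\in L^2$, where $\mathcal{D}(\phi,\phi)\ge0$ because $\|\Lambda\|_{L^2}\le1$.

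To prove this estimate I would first rewrite $\mathcal{D}$ geometrically. Using the reversibility identity \eqref{lem_chain_environments} with $f=\phi^2$ (applied componentwise) and the fact that $N_0(\om)$ has exactly $2d$ elements for $P$-almost every $\om\in\Om_0$, a symmetrization gives
\[
\mathcal{D}(\phi,\phi)=\frac{1}{4d}\sum_{x\in\BZ^d}\BE_P\big[\mathbbm{1}_{\{x\in N_0(\om)\}}\,\|\phi\circ\gth_x-\phi\|^2\big]=\frac{1}{4d}\sum_{e\in\CE}\BE_P\big[\|\phi\circ\si_e-\phi\|^2\big],
\]
since $x\in N_0(\om)$ forces $x=f_e(\om)e$ for a unique $e\in\CE$, and then $\gth_x\om=\si_e\om$. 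Next I would antisymmetrize the drift: inserting $\phi=\tfrac12(\phi-\phi\circ\gth_x)+\tfrac12(\phi+\phi\circ\gth_x)$ into $(V,\phi)=\frac{1}{2d}\sum_x\BE_P[(x\cdot\phi)\mathbbm{1}_{\{x\in N_0(\om)\}}]$ and using \eqref{lem_chain_environments} together with the substitution $x\mapsto-x$, one finds $\sum_x\BE_P[(x\cdot(\phi\circ\gth_x))\mathbbm{1}_{\{x\in N_0(\om)\}}]=-2d\,(V,\phi)$, so the symmetric part cancels and
\[
(V,\phi)=-\frac{1}{4d}\sum_{x\in\BZ^d}\BE_P\big[\mathbbm{1}_{\{x\in N_0(\om)\}}\,x\cdot(\phi\circ\gth_x-\phi)\big]=-\frac{1}{4d}\sum_{e\in\CE}\BE_P\big[f_e\,\big(e\cdot(\phi\circ\si_e-\phi)\big)\big].
\]
As $|e|=1$ we have $|e\cdot(\phi\circ\si_e-\phi)|\le\|\phi\circ\si_e-\phi\|$, so by Cauchy--Schwarz inside each expectation and then over the finite set $\CE$,
\[
|(V,\phi)|\le\frac{1}{4d}\Big(\sum_{e\in\CE}\BE_P[f_e^2]\Big)^{1/2}\Big(\sum_{e\in\CE}\BE_P[\|\phi\circ\si_e-\phi\|^2]\Big)^{1/2}=\frac{1}{2\sqrt d}\Big(\sum_{e\in\CE}\BE_P[f_e^2]\Big)^{1/2}\mathcal{D}(\phi,\phi)^{1/2},
\]
which is the desired bound with $C^2=\tfrac{1}{4d}\sum_{e\in\CE}\BE_P[f_e^2]$, finite precisely by the second-moment hypothesis on $\{f_{\pm e_i}\}$.

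To conclude, I would specialize to $\phi=\psi_\ep$. From $(1+\ep-\Lambda)\psi_\ep=V$ and self-adjointness of $\Lambda$ one gets $(V,\psi_\ep)=\ep\|\psi_\ep\|^2+\mathcal{D}(\psi_\ep,\psi_\ep)\ge\mathcal{D}(\psi_\ep,\psi_\ep)\ge0$; combined with the $H_{-1}$ bound this yields $(V,\psi_\ep)\le C\,\mathcal{D}(\psi_\ep,\psi_\ep)^{1/2}\le C\,(V,\psi_\ep)^{1/2}$, hence $(V,\psi_\ep)\le C^2$ for all $\ep>0$. Feeding this into the monotone convergence step gives $\int_{-1}^{1}(1-\lambda)^{-1}\mu_V(d\lambda)\le C^2<\infty$, which is \eqref{spectral_measure}. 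I expect the only delicate steps to be the two bookkeeping computations above — the geometric form of $\mathcal{D}$ and the cancellation of the symmetric part of the drift, both resting on \eqref{lem_chain_environments} and on $|N_0(\om)|=2d$ — and checking that the finiteness of $C$ uses exactly the hypothesis of the lemma; the remaining manipulations are the standard resolvent estimates.
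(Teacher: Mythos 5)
Your proof is correct and follows essentially the same route as the paper: both rest on the Kipnis--Varadhan $H_{-1}$ estimate $|(V,\phi)|\le C\,(\phi,(1-\Lambda)\phi)^{1/2}$ obtained from the reversibility identity \eqref{lem_chain_environments}, Cauchy--Schwarz, and the second-moment hypothesis, followed by a self-improving quadratic inequality and monotone convergence. The only (cosmetic) difference is that you apply the bound to the resolvent $\psi_\ep=(1+\ep-\Lambda)^{-1}V$, whereas the paper applies it to $\psi(\Lambda)V$ with the truncated spectral function $\psi_\ep(\lambda)=\min\{\ep^{-1},(1-\lambda)^{-1}\}$; both yield the same uniform bound and the same conclusion.
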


\begin{proof}
The proof follows the proof of Lemma 2.3 in \cite{BB06}. Let $f\in
L^2$ be a bounded real-valued function. Using
\eqref{lem_chain_environments} we get
\begin{equation}
\sum_{x\in\BZ^d}{x\BE_P[f\ind_{\{x\in
N_0(\om)\}}]}=\frac{1}{2}\sum_{x\in\BZ^d}x\BE_P[(f-f\circ\gth_x)\ind_{\{x\in
N_0(\om)\}}]. \label{symmetry_for_lem}
\end{equation}
Hence, for every $a\in\BZ^d$ we get
\begin{equation}
\begin{aligned}
(f,a\cdot V)& =  \frac{1}{2d}\sum_{x\in\BZ^d}{x\cdot a}\BE_P[f\ind_{\{x\in N_0(\om)\}}]\nonumber\\
& =  \frac{1}{2}\frac{1}{2d}\sum_{x\in\BZ^d}{x\cdot a}\BE_P[(f-f\circ\gth_x)\ind_{\{x\in N_0(\om)\}}]\nonumber\\
& \leq  \frac{1}{2}\left(\frac{1}{2d}\sum_{x\in\BZ^d}{(x\cdot a)^2 P(x\in N_0(\om))}\right)^{1/2} \nonumber\\
& \cdot
\left(\frac{1}{2d}\sum_{x\in\BZ^d}{\BE_P[(f-f\circ\gth_x)^2\ind_{\{x\in
N_0(\om)\}}]}\right)^{1/2}, \nonumber
\end{aligned}
\end{equation}
where we used \eqref{symmetry_for_lem} in the second equality, and
the Cauchy-Schwartz inequality for the inequality. Using the
assumption that the second moments of $f_e$ exist for every
$e\in\CE$, the first term on the right hand side is less than a
finite constant times $|a|$. On the other hand, the second term,
using \eqref{lem_chain_environments}, can be written as follows:
\begin{equation}
\begin{aligned}
\frac{1}{2d}\sum_{x\in\BZ^d}{\BE_P((f-f\circ\gth_x)^2\ind_{\{x\in
N_0(\om)\}})}&\\
&=2\frac{1}{2d}\sum_{x\in\BZ^d}{\BE_P(f(f-f\circ\gth_x)\ind_{\{x\in
N_0(\om)\}})} \nonumber\\
&= 2(f,(1-\Lambda)f).\nonumber
\end{aligned}
\end{equation}
From the assumption that the second moments exist, there exists a
constant $C_0<\infty$ such that for all bounded $f\in L^2$,
\begin{equation}
|(f,a\cdot V)|\leq C_0|a|\left(f,(1-\Lambda)f\right)^{1/2}.
\label{bound_for_(f,aV)}
\end{equation}
Applying \eqref{bound_for_(f,aV)} for $f$ of the form
$f=a\cdot\psi(\Lambda)V$,  where $a\in\BR^d$, and
$\Psi:[-1,1]\rightarrow\BR$ is a bounded continuous function,
summing over coordinate vectors in $\BR^d$ and invoking
\eqref{spectral_measure}, we get that
\begin{equation}
\begin{aligned}
\left|\int_{-1}^{1}{\psi(\lambda)\mu_V(d\lambda)}\right|
&=  \left|\sum_{i=1}^{d}{(V\cdot e_i,\psi(\Lambda)V\cdot e_i)}\right| \nonumber\\
&\leq  \sum_{i=1}^{d}{\big|(V\cdot e_i,\psi(\Lambda)V\cdot e_i)\big|} \nonumber\\
&\leq  C_0\sum_{i=1}^{d}{\big(V\cdot e_i,\psi(\Lambda)^2(1-\Lambda)V\cdot e_i\big)^{1/2}} \nonumber\\
&\leq  C_0\sqrt{d}\left(\sum_{i=1}^{d}{\big(V\cdot e_i,\psi(\Lambda)^2(1-\Lambda)V\cdot e_i\big)}\right)^{1/2}  \nonumber\\
&=
C_0\sqrt{d}\left(\int_{-1}^{1}{\psi(\lambda)^2(1-\lambda)\mu_V(d\lambda)}\right)^{1/2}.
\nonumber
\end{aligned}
\end{equation}
Substituting
$\psi_\ep(\lambda)=\min{\{\frac{1}{\ep},\frac{1}{1-\lambda}\}}$ for
$\psi$ and noting that $(1-\lambda)\psi_\ep(\lambda)\leq 1$, we get
\begin{equation}
\int_{-1}^{1}{\psi_\ep(\lambda)\mu_V(d\lambda)}\leq
C_0\sqrt{d}\left(\int_{-1}^{1}{\psi_\ep(\lambda)\mu_V(d\lambda)}\right)^{1/2},
\end{equation}
and therefore
\begin{equation}
\int_{-1}^{1}{\psi_\ep(\lambda)\mu_V(d\lambda)}\leq d\cdot C_0^2.
\end{equation}
Now, the Monotone Convergence Theorem implies that
\begin{equation}
\int_{-1}^{1}{\frac{1}{1-\lambda}\mu_V(d\lambda)}=\lim_{\ep\searrow
0}{\int_{-1}^{1}{\psi_\ep(\lambda)\mu_V(d\lambda)}}=\sup_{\ep>0}{\int_{-1}^{1}{\psi_\ep(\lambda)\mu_V(d\lambda)}}\leq
d\cdot C_0^2<\infty,
\end{equation}
proving the desired claim.
\end{proof}

We now turn to prove the following lemma, also taken from
\cite{BB06}:

\begin{lem}
Let $\psi_\ep$ be defined as in \eqref{defining_psi_ep}, i.e, the
solution of $(1+\ep-\Lambda)\psi_\ep=V$. Then
\begin{equation}
\lim_{\ep\searrow 0}{\ep\|\psi_\ep\|_2^2}=0.
\label{psi_squared_limit}
\end{equation}
In addition, for every $x\in\BZ^d$ let
\begin{equation}
G_x^{(\ep)}(\om)=\ind_{\Om_0}(\om)\cdot\ind_{\{x\in
N_0(\om)\}}(\om)\cdot (\psi_\ep\circ\gth_x(\om)-\psi_\ep(\om)).
\label{G_definition}
\end{equation}
Then for all $x,y\in \BZ^d$,
\begin{equation}
\lim_{\ep_1,\ep_2\searrow
0}{\|G_x^{(\ep_1)}\circ\gth_y-G_x^{(\ep_2)}\circ\gth_y\|_2}=0.
\label{lim_of_G_exist}
\end{equation}
\end{lem}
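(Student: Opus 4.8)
The plan is to deduce both statements from the functional calculus for the self-adjoint contraction $\Lambda$ on $L^2$, using the integrability \eqref{spectral_measure} established in the previous lemma (which applies since, under assumption \ref{assumption3}, the second moments of the $f_{\pm e_i}$ are finite and hence $V\in L^2$). A preliminary observation is that \eqref{spectral_measure} forces $\mu_V(\{1\})=0$, for otherwise $\int_{-1}^{1}(1-\lambda)^{-1}\mu_V(d\lambda)$ would be infinite. Since $\psi_\ep=(1+\ep-\Lambda)^{-1}V$, the spectral theorem gives
\[
\ep\,\|\psi_\ep\|_2^2=\ep\,\big(V,(1+\ep-\Lambda)^{-2}V\big)=\int_{-1}^{1}\frac{\ep}{(1+\ep-\lambda)^2}\,\mu_V(d\lambda).
\]
For $\lambda\in[-1,1]$ we have $1+\ep-\lambda\ge\ep$ and $1+\ep-\lambda\ge 1-\lambda\ge 0$, so the integrand is bounded by $(1-\lambda)^{-1}$, which is $\mu_V$-integrable and independent of $\ep$, and it tends to $0$ as $\ep\searrow 0$ for every $\lambda<1$. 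Since $\mu_V(\{1\})=0$, dominated convergence gives $\ep\|\psi_\ep\|_2^2\to0$, which is \eqref{psi_squared_limit}.

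For \eqref{lim_of_G_exist} I would first strip off the shift by $\gth_y$: as $G_x^{(\ep_1)}\circ\gth_y-G_x^{(\ep_2)}\circ\gth_y=(G_x^{(\ep_1)}-G_x^{(\ep_2)})\circ\gth_y$ and $\gth_y$ preserves $Q$, while the difference $G_x^{(\ep_1)}-G_x^{(\ep_2)}$ is supported on $\Om_0$ (both terms carry the factor $\ind_{\Om_0}$), it suffices by \eqref{P_definition} to prove that $\{G_x^{(\ep)}\}_{\ep>0}$ is Cauchy in $L^2(P)$ as $\ep\searrow 0$. Fix $\ep_1,\ep_2$, write $h=\psi_{\ep_1}-\psi_{\ep_2}\in L^2$, and note $G_x^{(\ep_1)}-G_x^{(\ep_2)}=\ind_{\Om_0}\,\ind_{\{x\in N_0(\om)\}}\,(h\circ\gth_x-h)$; the factor $\ind_{\{x\in N_0(\om)\}}$ forces $\gth_x\om\in\Om_0$, so $\psi_\ep\circ\gth_x$ is well defined wherever it contributes. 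Summing the squared $L^2(P)$-norms over $x\in\BZ^d$ --- a finite sum for $P$-a.e.\ $\om$, since $|N_0(\om)|=2d$ --- and expanding $|h\circ\gth_x-h|^2$, the two diagonal terms each contribute $2d\,\BE_P[|h|^2]$ by \eqref{lem_chain_environments} together with $\sum_x\ind_{\{x\in N_0(\om)\}}=2d$, while the cross term contributes $2d\,(h,\Lambda h)$ by the definition \eqref{Lambda_function} of $\Lambda$; hence
\[
\big\|G_x^{(\ep_1)}-G_x^{(\ep_2)}\big\|_{L^2(P)}^2\;\le\;\sum_{x\in\BZ^d}\big\|G_x^{(\ep_1)}-G_x^{(\ep_2)}\big\|_{L^2(P)}^2\;=\;4d\,\big(h,(1-\Lambda)h\big).
\]
Here \eqref{lem_chain_environments}, stated for bounded $f$, is used with $f=\min\{|h|^2,M\}$ and then $M\uparrow\infty$ by monotone convergence; equivalently it is the pointwise identity $\ind_{\{x\in N_0\}}\ind_{\Om_0}=(\ind_{\{-x\in N_0\}}\ind_{\Om_0})\circ\gth_x$ combined with the shift invariance of $Q$.

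It remains to check $(h,(1-\Lambda)h)\to 0$ as $\ep_1,\ep_2\searrow 0$. Since $h=\big[(1+\ep_1-\Lambda)^{-1}-(1+\ep_2-\Lambda)^{-1}\big]V$, the spectral theorem gives
\[
\big(h,(1-\Lambda)h\big)=\int_{-1}^{1}(1-\lambda)\left(\frac{1}{1+\ep_1-\lambda}-\frac{1}{1+\ep_2-\lambda}\right)^{2}\mu_V(d\lambda).
\]
For $\lambda\in[-1,1]$ both bracketed terms are positive, so their difference is at most their maximum, which is at most $(1-\lambda)^{-1}$; thus the integrand is dominated by the fixed $\mu_V$-integrable function $(1-\lambda)^{-1}$, and it tends to $0$ for every $\lambda<1$ as $\ep_1,\ep_2\searrow0$. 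Dominated convergence (again using $\mu_V(\{1\})=0$) finishes the proof of \eqref{lim_of_G_exist}. The argument is the classical Kipnis--Varadhan resolvent estimate, so I expect no conceptual obstacle; the points requiring care are the two dominated-convergence steps --- in particular the remark that \eqref{spectral_measure} excludes an atom of $\mu_V$ at $\lambda=1$ --- and the bookkeeping behind $\sum_x\|G_x^{(\ep_1)}-G_x^{(\ep_2)}\|_{L^2(P)}^2=4d(h,(1-\Lambda)h)$, where one must legitimately extend \eqref{lem_chain_environments} from bounded functions to $|h|^2\in L^1(P)$.
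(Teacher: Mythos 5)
Your proposal is correct and follows essentially the same route as the paper: the spectral representation of $\ep\|\psi_\ep\|_2^2$ and of $(\psi_{\ep_1}-\psi_{\ep_2},(1-\Lambda)(\psi_{\ep_1}-\psi_{\ep_2}))$, the reduction of $\|G_x^{(\ep_1)}-G_x^{(\ep_2)}\|_2^2$ to $4d\,(h,(1-\Lambda)h)$ via \eqref{lem_chain_environments}, and dominated convergence with the dominating function $(1-\lambda)^{-1}$ supplied by \eqref{spectral_measure}. Your added remarks (no atom of $\mu_V$ at $\lambda=1$, truncation to extend \eqref{lem_chain_environments} to $|h|^2$, and the inequality rather than equality when stripping off $\gth_y$) are minor refinements of the same argument.
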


\begin{proof}
The proof follows the proof in \cite{BB06}. From the definition of
$\psi_\ep$ we have,
\begin{equation}
\ep\|\psi_\ep\|_2^2=\int_{-1}^{1}{\frac{\ep}{(1+\ep-\lambda)^2}\mu_V(d\lambda)}.
\end{equation}
The integrand is dominated by $\frac{1}{1-\lambda}$ and in addition
tends to zero as $\ep\searrow 0$ in the support of $\mu_V$. Then
\eqref{psi_squared_limit} follows by the Dominated Convergence
Theorem. The second part of the claim is proved similarly: First we
get rid of the $y$-dependence by noting the following. Due to the
fact that $G_x^{\ep}\circ\gth^y\neq 0$ ensure that $y\in\CP(\om)$,
and since $P$ is invariant under translation of the form
$\gth^z~~z\in\CP(\om)$ we get that:
\begin{equation}
\|G_x^{(\ep_1)}\circ\gth_y-G_x^{(\ep_2)}\circ\gth_y\|_2=\|G_x^{(\ep_1)}-G_x^{(\ep_2)}\|_2.
\label{norm_equality_G}
\end{equation}
Therefore, averaging the square of \eqref{norm_equality_G} over
$x\in N_0(\om)$ we find that
\begin{equation}
\begin{aligned}
\frac{1}{2d}\sum_{x\in
N_0(\om)}{\|G_x^{(\ep_1)}\circ\gth_y-G_x^{(\ep_2)}\circ\gth_y\|_2^2}
& = \frac{1}{2d}\sum_{x\in N_0(\om)}{\|G_x^{(\ep_1)}-G_x^{(\ep_2)}\|_2^2}\nonumber\\
&=\frac{1}{2d}\sum_{x\in
N_0(\om)}{\BE_P\Big[(G_x^{(\ep_1)}-G_x^{(\ep_2)})^2\Big]}  \\
&= \frac{1}{2d}\sum_{x\in \BZ^d}{\BE_P\Big[\ind_{\Om_0}\ind_{\{x\in
N_0{(\om)}\}}(\Psi\circ\gth_x-\Psi)^2\Big]},
\end{aligned}
\end{equation}
where $\Psi=\psi_{\ep_1}-\psi_{\ep_2}$. Expanding the last
expression we see that it equals to:
\begin{equation}
\frac{1}{2d}\sum_{x\in \BZ^d}{\BE_P[\ind_{\Om_0}\ind_{\{x\in
N_0{(\om)}\}}(\Psi^2\circ\gth_x+\Psi^2-2\Psi\cdot\Psi\circ\gth_x)]}.
\end{equation}
Since $P$ is stationary under translation $\gth_x$ when $x\in
N_0(\om)$, we get that it can be written as
\begin{equation}
2(\Psi,\Psi)-2\left(\Psi,\frac{1}{2d}\sum_{x\in\BZ^d}{\BE_P(\ind_{\Om_0}\ind_{\{x\in
N_0{(\om)}\}}\Psi\circ\gth_x)}\right)=2(\Psi,(1-\Lambda)\Psi).
\end{equation}
Finally we evaluate $(\Psi,(1-\Lambda)\Psi)$:
\begin{equation}
\begin{aligned}
\left(\psi_{\ep_1}-\psi_{\ep_2},(1-\Lambda)(\psi_{\ep_1}-\psi_{\ep_2})\right)&
=\int_{-1}^{1}{\left(\frac{1}{(1+\ep_1-\lambda)^2}-\frac{1}{(1+\ep_2-\lambda)^2}\right)(1-\lambda)\mu_v(d\lambda)}\nonumber\\
&=\int_{-1}^{1}{\frac{(\ep_1-\ep_2)^2(1-\lambda)}{(1+\ep_1-\lambda)^2(1+\ep_2-\lambda)^2}\mu_V(d\lambda)}.\nonumber
\end{aligned}
\end{equation}
The integrand here is again bounded by $\frac{1}{1-\lambda}$ for all
$\ep_1,\ep_2>0$, and it tends to zero as $\ep_1,\ep_2\searrow 0$.
The claim now follows by the Dominated Convergence Theorem.\\
\end{proof}

Now we are finally ready to prove Theorem \ref{corrector_thm}.\\

\begin{proof}[Proof of Theorem \ref{corrector_thm}]
Again we closely follow the proof of Theorem 2.2 in \cite{BB06}. Let
$G^{\ep}_x\circ\gth_y$ be as in \eqref{G_definition}. Using
\eqref{lim_of_G_exist} we know that $G^{\ep}_x\circ\gth_y$ converges
in $L^2$ as $\ep\searrow 0$. We denote the limit by
$G_{y,y+x}=\lim_{\ep\searrow 0}{G_x^\ep\circ\gth_y}$. Since
$G^{\ep}_x\circ\gth_y$ is a gradient field on $\CP(\om)$, we have
$G_{y,y+x}(\om)+G_{y+x,y}(\om)=0$ and, more generally
$\sum_{k=0}^{n-1}{G_{x_k,x_{k+1}}}=0$ whenever $(x_0,x_1,\ldots,x_n)$
is a closed loop on $\CP(\om)$. Thus we may define
\begin{equation}
\chi(x,\om):=\sum_{k=0}^{n-1}{G_{x_k,x_{k+1}}(\om)},
\label{chi_definition}
\end{equation}
where $(x_0,x_1,\ldots,x_n)$ is a "nearest neighbor" (in the sense
of $x_i\in N_{x_{i-1}}(\om)$) path on $\CP(\om)$ connecting $x_0=0$
to $x_n=x$. By the above "loop" conditions, the definition is
independent of this path for almost every $\om\in\Om_0\cap\{\om:x\in\CP(\om)\}$.
The shift invariance \eqref{shift_invariance} will now follow from
the definition of $\chi$ and the fact that $G_{x,x+y}=G_{0,y}\circ\gth_x$.
In light of the shift invariance, to prove the harmonicity of
$x\mapsto x+\chi(x,\om)$ it is sufficient to show that, almost
surely,
\begin{equation}
\frac{1}{2d}\sum_{x\in
N_0(\om)}{\left[x+\chi(x,\cdot)\right]=\chi(0,\cdot)},
\end{equation}
which can be written as:
\begin{equation}
\frac{1}{2d}\sum_{x\in
N_0(\om)}{\left[\chi(0,\cdot)-\chi(x,\cdot)\right]}=V(\om).
\end{equation}
By the definition of $\chi$ we have for $x\in N_0(\om)$ that
$\chi(x,\cdot)-\chi(0,\cdot)=G_{0,x}$, therefore the left hand side
is the $\ep\searrow 0$ limit of
\begin{equation}
-\frac{1}{2d}\sum_{x\in\BZ^d}{G_x^\ep}=\frac{1}{2d}\sum_{x\in\BZ^D}{\ind_{\Om_0}\ind_{\{x\in
N_0(\om)\}}(\psi_e-\psi_\ep\circ\gth_x)}=(1-\Lambda)\psi_\ep.
\end{equation}
Using the definition of $\psi_\ep$ \eqref{defining_psi_ep}, we get
that $(1-\Lambda)\psi_\ep=V-\ep\psi_\ep$. From here, using
\eqref{psi_squared_limit}, we get that the $\ep\searrow 0$ limit is
indeed V in $L^2$.

Finally, we need to show the square integrability
\eqref{square_integrability}. We note that, by the construction of
the corrector,
\begin{equation}
\left[\chi(x+y,\cdot)-\chi(x,\cdot)\right]\ind_{\{x\in\CP(\om)\}}\ind_{\{y\in
N_0(\om)\}}\circ\gth_x=G_{x,x+y}.
\end{equation}
But $G_{x,x+y}$ is the $L^2$ limit of $L^2$-functions
$G_y^{(\ep)}\circ\gth_x$ whose $L^2$ norm is bounded by that of
$G_y^{\ep}$. Hence \eqref{square_integrability} follows with
$C=\max_{\{x:x\in N_0{\om}\}}{\|G_{0,x}\|_2}$.
\end{proof}



\section{Sublinearity along coordinate directions}

We are now ready to start treating the main difficulty of the high
dimensional Central limit theorem proof: the sublinearity of the
corrector. In this section, we treat the sublinearity along the
coordinate directions in $\BZ^d$. Fix $e\in\CE$. We define a sequence $n^e_k(\om)$
inductively by $n^e_1(\om)=f_e(\om)$ and
$n^e_{k+1}=n^e_k(\si_e(\om))$ where $\si_e$ is the induced
translation defined by $\si_e=\gth^{f_e(\om)}_e$. The numbers
$n^e_k$ are well-defined and finite almost surely. Let $\chi$ be the
corrector defined in Theorem \ref{corrector_thm}. The main goal
of this section is to prove the following theorem:

\begin{thm}
For $P$-almost all $\om\in\Om_0$
\begin{equation}
\lim_{k\rightarrow\infty}{\frac{\chi(n^e_k(\om)e,\om)}{k}}=0.
\end{equation}
\label{coordinate_sublinearity}
\end{thm}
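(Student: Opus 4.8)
The plan is to rewrite $\chi\big(n_k^e(\om)e,\om\big)$ as an ergodic sum for the induced shift $\si_e$ and then apply Birkhoff's theorem. Write $p_j$ for the $j$-th point of $\CP(\om)$ lying on the ray $\{me:m\ge0\}$; by construction $p_j=n_j^e(\om)e$, $p_0=0$, $p_{j+1}-p_j=f_e(\si_e^j\om)\,e$, and $\gth_{p_j}\om=\si_e^j(\om)$ (here I use that $\gth$ is additive). Put $\Psi(\om):=\chi\big(f_e(\om)e,\om\big)$. Using the shift--invariance \eqref{shift_invariance} of the corrector together with $\chi(0,\cdot)=0$, the sum telescopes:
\[
\chi\big(n_k^e(\om)e,\om\big)=\sum_{j=0}^{k-1}\big(\chi(p_{j+1},\om)-\chi(p_j,\om)\big)=\sum_{j=0}^{k-1}\chi\big(f_e(\si_e^j\om)e,\si_e^j\om\big)=\sum_{j=0}^{k-1}\Psi(\si_e^j\om).
\]
By Theorem \ref{induced_shift_ergodicy} the map $\si_e$ is $P$-preserving and ergodic, so once $\Psi\in L^1(\Om_0,P)$ is known, Birkhoff's ergodic theorem gives $k^{-1}\chi(n_k^e(\om)e,\om)\to\BE_P(\Psi)$ for $P$-a.e.\ $\om$. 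Hence the statement reduces to checking that $\Psi$ is integrable and that $\BE_P(\Psi)=0$.

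For this I would pass through the regularized correctors $\psi_\ep$ of \eqref{defining_psi_ep} and the edge--gradients $G_x^{(\ep)}$ of \eqref{G_definition}. Because $f_e(\om)e\in N_0(\om)$, we have $\Psi(\om)=G_{0,f_e(\om)e}(\om)$, where $G_{0,x}$ is the $L^2$-limit of $G_x^{(\ep)}$ as $\ep\searrow0$. On $\Om_0$ the functions $\{G_{ke}^{(\ep)}\}_{k\ge1}$ have pairwise disjoint supports (the events $\{ke\in N_0(\om)\}$), and for $\om\in\Om_0$ only the term $k=f_e(\om)$ survives, with $\gth_{f_e(\om)e}\om=\si_e(\om)$; therefore
\[
\Psi_\ep:=\sum_{k\ge1}G_{ke}^{(\ep)}=\psi_\ep\circ\si_e-\psi_\ep\qquad\text{on }\Om_0.
\]
Disjointness of supports gives $\|\Psi_{\ep_1}-\Psi_{\ep_2}\|_2^2=\sum_{k\ge1}\|G_{ke}^{(\ep_1)}-G_{ke}^{(\ep_2)}\|_2^2\le\sum_{x\in\BZ^d}\|G_x^{(\ep_1)}-G_x^{(\ep_2)}\|_2^2$, and the last sum equals $4d\big(\psi_{\ep_1}-\psi_{\ep_2},(1-\Lambda)(\psi_{\ep_1}-\psi_{\ep_2})\big)$, which is precisely the quantity shown to tend to $0$ as $\ep_1,\ep_2\searrow0$ in the proof of \eqref{lim_of_G_exist} (by dominated convergence against $\tfrac1{1-\lambda}\,\mu_V$, integrable by \eqref{spectral_measure}, valid under Assumption \ref{assumption3}). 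So $(\Psi_\ep)$ is Cauchy in $L^2$; matching its limit fiberwise with $\lim_{\ep\searrow0}G_{ke}^{(\ep)}=G_{0,ke}$ identifies this limit as $\Psi$, whence $\Psi\in L^2(\Om_0,P)\subset L^1(\Om_0,P)$. Finally, since $\si_e$ preserves $P$ and $\psi_\ep\in L^1$, $\BE_P(\Psi_\ep)=\BE_P(\psi_\ep\circ\si_e)-\BE_P(\psi_\ep)=0$ for every $\ep>0$, so $\BE_P(\Psi)=\lim_{\ep\searrow0}\BE_P(\Psi_\ep)=0$, which completes the proof.

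The step needing genuine care is the uniform-in-$k$ $L^2$ control of the edge--gradients $G_{ke}^{(\ep)}$: the convergence $G_{ke}^{(\ep)}\to G_{0,ke}$ holds for each fixed $k$ by the construction of the corrector, but upgrading this to convergence of the series $\Psi_\ep=\sum_{k\ge1}G_{ke}^{(\ep)}$ is what forces one to invoke the spectral bound $\sum_{x\in\BZ^d}\|G_x^{(\ep_1)}-G_x^{(\ep_2)}\|_2^2=4d(\psi_{\ep_1}-\psi_{\ep_2},(1-\Lambda)(\psi_{\ep_1}-\psi_{\ep_2}))$ from the proof of \eqref{lim_of_G_exist}, which controls all the gradients simultaneously. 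By contrast, the vanishing of $\BE_P(\Psi)$ is essentially automatic once $\Psi_\ep$ is recognized as a gradient along the measure-preserving map $\si_e$.
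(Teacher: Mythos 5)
Your proof is correct and follows essentially the same route as the paper: write $\chi(n^e_k(\om)e,\om)$ as a telescoping ergodic sum of $g(\om)=\chi(f_e(\om)e,\om)$ over the induced shift $\si_e$, verify via the regularized correctors $\psi_\ep$ that $g\in L^1$ with $\BE_P(g)=0$ (using that $\si_e$ preserves $P$), and conclude with Birkhoff's theorem and Theorem \ref{induced_shift_ergodicy}. Your only addition is a more explicit justification, via disjoint supports and the spectral bound from the proof of \eqref{lim_of_G_exist}, of the identification of $\lim_{\ep\searrow 0}(\psi_\ep\circ\si_e-\psi_\ep)$ with $\chi(f_e(\om)e,\om)$, a step the paper's Proposition \ref{chi_moment_prop} states more briefly.
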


The proof of this theorem is based on the following properties of
$\chi(n^e_k(\om)e,\om)$:

\begin{prop}
~\\
\begin{enumerate}

\item $\BE_P\big[|\chi(n^e_1(\om)e,\cdot)|\big]<\infty.$

\item $\BE_P\big[\chi(n^e_1(\om)e,\cdot)\big]=0.$

\end{enumerate}
\label{chi_moment_prop}
\end{prop}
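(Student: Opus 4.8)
The plan is to express $\chi(n_1^e(\om)e,\om)$ as a telescoping sum of the one-step increments of the corrector along the coordinate ray through the origin, and then estimate this sum using the square integrability property \eqref{square_integrability} from Theorem \ref{corrector_thm} together with the moment assumption, Assumption \ref{assumption3}. Concretely, writing $n_1^e(\om)=f_e(\om)$, the point $f_e(\om)e$ is the first point of $\CP(\om)$ on the ray $\{ke:k>0\}$, hence $0=0\cdot e,\,1\cdot e,\dots$ does not give a nearest-neighbor path in the sense of $N_{\cdot}(\om)$, but rather $0$ and $f_e(\om)e$ are themselves nearest neighbors in direction $e$: that is, $f_e(\om)e\in N_0(\om)$. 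So in fact $\chi(f_e(\om)e,\om)=G_{0,f_e(\om)e}(\om)$ is already a single corrector increment, and property \eqref{square_integrability} applied with $x=0$, $y=f_e(\om)e$ gives directly that $\|\chi(n_1^e(\om)e,\cdot)\,\ind_{\{f_e(\om)e\in N_0(\om)\}}\|_2<C$. Since $\ind_{\{f_e(\om)e\in N_0(\om)\}}=1$ $P$-almost surely (by Claim \ref{The_model_claim} and the definition of $f_e$), this already gives $\BE_P[|\chi(n_1^e(\om)e,\cdot)|^2]<\infty$, and in particular $\BE_P[|\chi(n_1^e(\om)e,\cdot)|]<\infty$, which is part (1). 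I would double-check whether the paper intends $n_1^e e$ to be a single edge or a longer path built from the intermediate lattice points; if the latter, one instead writes $\chi(n_1^e(\om)e,\om)=\sum_{j=0}^{f_e(\om)-1}G_{je,(j+1)e}$ only along points that lie in $\CP(\om)$, and the bound follows by summing finitely many $L^2$-bounded terms controlled via Assumption \ref{assumption3} to get the required integrability; but the cleaner reading is the single-edge one, and I will use that.

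For part (2), the vanishing of the expectation, the plan is to use the reversibility/stationarity of $P$ under the Markov kernel $\Lambda$ together with the antisymmetry $G_{y,y+x}+G_{y+x,y}=0$ of the corrector increments. Since $\chi(f_e(\om)e,\om)=G_{0,f_e(\om)e}(\om)$, and since $G_{0,x}(\om)=\lim_{\ep\searrow 0}G_x^{(\ep)}(\om)$ with $G_x^{(\ep)}=\ind_{\Om_0}\ind_{\{x\in N_0(\om)\}}(\psi_\ep\circ\gth_x-\psi_\ep)$, I would first write
\[
\BE_P\big[\chi(f_e(\om)e,\cdot)\big]=\lim_{\ep\searrow 0}\BE_P\big[\ind_{\{f_e(\om)e\in N_0(\om)\}}(\psi_\ep\circ\gth_{f_e(\om)e}-\psi_\ep)\big],
\]
justified by the $L^2$-convergence. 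Then I apply Lemma \ref{lem_chain_environments} (more precisely the identity $\BE_P[(f\circ\gth_x)\ind_{\{x\in N_0(\om)\}}]=\BE_P[f\ind_{\{-x\in N_0(\om)\}}]$ suitably summed over the relevant $x$), which exchanges the two terms and, after using the symmetry $P(f_e=k)=P(f_{-e}=k)$ from \eqref{Symmetry}, forces the $\ep$-expectation to vanish. Equivalently, one can note that averaging $G_x^{(\ep)}$ over $x\in N_0(\om)$ gives $(1-\Lambda)\psi_\ep=V-\ep\psi_\ep$, and the analogous identity for the single direction $e$ together with $\BE_P[\psi_\ep\circ\gth_x\ind_{\{x\in N_0(\om)\}}]=\BE_P[\psi_\ep\ind_{\{-x\in N_0(\om)\}}]$ kills the mean.

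The main obstacle I anticipate is a bookkeeping one rather than a conceptual one: being careful about whether $f_e(\om)e$ is literally an element of $N_0(\om)$ (so $\chi(n_1^e e,\cdot)$ is a single increment) versus needing to split it along intermediate non-lattice-process points, and making sure the $\ep\searrow 0$ limit commutes with $\BE_P$ — this last point is exactly what $L^2$-convergence of $G_x^{(\ep)}$ from \eqref{lim_of_G_exist} buys us, so I would invoke that explicitly. A secondary subtlety is confirming that the relevant indicator $\ind_{\{f_e(\om)e\in N_0(\om)\}}$ equals $1$ $P$-a.s., which follows from the definition of $f_e$ as the minimal positive $k$ with $ke\in\CP(\om)$ combined with the fact that we are conditioning on $\om\in\Om_0$; once that is in hand, parts (1) and (2) are short.
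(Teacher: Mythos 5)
Your identification of $\chi(n^e_1(\om)e,\om)$ with the single increment $G_{0,f_e(\om)e}$ (since $f_e(\om)e\in N_0(\om)$ $P$-a.s.\ on $\Om_0$) is exactly the paper's starting point, and your part (2) is sound and only mildly different from the paper's: both reduce, via the $L^2$ convergence \eqref{lim_of_G_exist}, to showing $\BE_P\big[\ind_{\Om_0}\ind_{\{n^e_1(\om)e\in N_0(\om)\}}(\psi_\ep\circ\gth_e^{f_e(\om)}-\psi_\ep)\big]=0$ for each fixed $\ep>0$. The paper then finishes in one line by recognizing the first term as $\BE_P\big[(\ind_{\Om_0}\ind_{\{n^e_1(\om)e\in N_0(\om)\}}\psi_\ep)\circ\si_e\big]$ and invoking that the induced shift $\si_e$ is $P$-preserving (Theorem \ref{induced_shift_ergodicy}); your route through the identity \eqref{lem_chain_environments} works as well, and in fact more simply than you describe: summing \eqref{lem_chain_environments} over $x=ke$, $k\geq 1$, both resulting sums equal $\BE_P[\psi_\ep]$ because $\sum_{k\geq1}\ind_{\{ke\in N_0(\om)\}}=\sum_{k\geq1}\ind_{\{-ke\in N_0(\om)\}}=1$ a.s.\ on $\Om_0$, so they cancel and the symmetry \eqref{Symmetry} is not needed. (Two small points to add if you go this way: \eqref{lem_chain_environments} is stated for bounded $f$, so extend it to $\psi_\ep\in L^2$ by truncation and dominated convergence, and justify the interchange of the sum over $k$ with $\BE_P$.)

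Part (1), however, has a genuine gap as written. The square-integrability bound \eqref{square_integrability} is a statement for each \emph{fixed, deterministic} pair $x,y\in\BZ^d$, with a constant uniform in $(x,y)$; it cannot be applied ``directly'' with the random index $y=f_e(\om)e$. Decomposing over the disjoint events $\{f_e(\om)=k\}$, all it gives is $\|\chi(ke,\cdot)\ind_{\{ke\in N_0(\om)\}}\|_2\leq C$ for every $k$, and summing these uniform bounds over the infinitely many $k$ yields no bound at all on $\|\chi(n^e_1(\om)e,\cdot)\|_2$ (the squared norms could a priori each be close to $C^2$). The conclusion is true, but it is obtained in the paper differently: $\chi(n^e_1(\om)e,\cdot)$ is the $\ep\searrow0$ limit in $L^2$ of $G^{(\ep)}_{n^e_1(\om)e}=\ind_{\Om_0}(\psi_\ep\circ\si_e-\psi_\ep)$, and the Cauchy estimate behind \eqref{lim_of_G_exist}, namely $\sum_{x\in\BZ^d}\|G^{(\ep_1)}_x-G^{(\ep_2)}_x\|_2^2=2\big(\Psi,(1-\Lambda)\Psi\big)\to0$ with $\Psi=\psi_{\ep_1}-\psi_{\ep_2}$, controls all $x$ simultaneously and hence the random-index object too; thus the limit lies in $L^2\subset L^1(P)$. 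Alternatively, since only $L^1$ is claimed in part (1), you could salvage your route by Cauchy--Schwarz on each event: $\BE_P\big[|\chi(ke,\cdot)|\ind_{\{f_e=k\}}\big]\leq C\,P(f_e=k)^{1/2}$ and $\sum_k P(f_e=k)^{1/2}\leq\big(\BE_P[f_e^2]\big)^{1/2}\big(\sum_k k^{-2}\big)^{1/2}<\infty$ by the moment assumption; but as stated, the step ``gives directly that $\|\chi(n^e_1(\om)e,\cdot)\|_2<C$'' is not justified.
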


\begin{proof}
Using the definition of the corrector \eqref{chi_definition}, it
follows that
\begin{equation}
\chi(n^e_1(\om)e,\om)=G_{0,n^e_1(\om)e}(\om).
\end{equation}
By \eqref{lim_of_G_exist}, and since $G_{0,n^e_1(\om)e}(\om)$ is
the $\ep\searrow 0$ limit of $G^{(\ep)}_{n^e_1(\om)e}$ in $L^2$, it
follows that $G_{0,n^e_1(\om)e}(\om)\in L^2$. Since $P$ is a
probability measure, it is in particular a finite measure, and
therefore for every $1\leq r<2$ it is also true that
$G_{0,n^e_1(\om)e}(\om)\in L^r$. Taking $r=1$ we find:
\begin{equation}
\BE_P\big[|\chi(n^e_1(\om)e,\cdot)|\big]=\BE_P\big[|G_{0,n^e_1(\om)e}(\om)|\big]<\infty.
\end{equation}

In order to prove part (2), we again use the fact that
$G_{0,n^e_1(\om)e}(\om)$ is the $\ep\searrow 0$ limit in $L^2$ of
$G^{(\ep)}_{n^e_1(\om)e}$, and therefore it's enough to show that
for every $\ep>0$
\begin{equation}
\BE_P\big[G^{(\ep)}_{n^e_1(\om)e}\big]=0.
\end{equation}
and indeed
\begin{equation}
\begin{aligned}
\BE_P\big[G^{(\ep)}_{n^e_1(\om)e}\big]&=\BE_P\big[\ind_{\Om_0}\ind_{\{n^e_1(\om)e\in
N_0(\om)\}}(\psi_\ep\circ\gth^{n^e_1(\om)}_e-\psi_\ep)\big]\nonumber\\
&=\BE_P\big[\ind_{\Om_0}\ind_{\{n^e_1(\om)e\in
N_0(\om)\}}\psi_\ep\circ\gth^{n^e_1(\om)}_e\big]-\BE_P\big[\ind_{\Om_0}\ind_{\{n^e_1(\om)e\in
N_0(\om)\}}\psi_\ep\big]\nonumber\\
&=\BE_P\big[(\ind_{\Om_0}\ind_{\{n^e_1(\om)e\in
N_0(\om)\}}\psi_\ep)\circ\si_e\big]-\BE_P\big[\ind_{\Om_0}\ind_{\{n^e_1(\om)e\in
N_0(\om)\}}\psi_\ep\big],
\end{aligned}
\end{equation}
which equals zero by Theorem \ref{induced_shift_ergodicy} and the
fact that $\psi_\ep$ is absolutely integrable since it is in
$L^2$.
\end{proof}

\begin{proof}[Proof of Theorem \ref{coordinate_sublinearity}]
Let $g:\Om\rightarrow\BR^d$ be defined by
$g(\om)=\chi(n^e_1(\om)e,\om)$, and let $\si_e$ be the induced shift
in direction e. Then
\begin{equation}
\chi(n^e_k(\om)e,\om)=\sum_{i=0}^{k-1}{g\circ\si_e^i(\om)}.
\end{equation}
Using Proposition \ref{chi_moment_prop} we have that $g\in L^1$ and
$\BE_P[g]=0$. Since Theorem \ref{induced_shift_ergodicy} ensures
$\si_e$ is $P$-preserving and ergodic, the claim follows from
Birkhoff's Ergodic Theorem.
\end{proof}


\section{Sublinearity everywhere}

\begin{defn}
Given $K > 0$ and $\ep > 0$, we say that a site $x\in\BZ^d$ is $K,
\ep$-good in configuration $\om\in\Om$ if $x\in\CP(\om)$ and
\begin{equation}
|\chi(y,\om)-\chi(x,\om)|<K+\ep |x-y|,
\end{equation}
holds for every $y\in\CP(\om)$ of the form $y=le$, where $l\in\BZ$
and e is a unit coordinate vector. We will use
$\mathcal{G}_{K,\ep}=\mathcal{G}_{K,\ep}(\om)$ to denote the set of
$K,\ep$-good sites in configuration $\om$.
\end{defn}

\begin{thm}
For every $\ep>0$ and $P$-almost every $\om\in\Om_0$
\begin{equation}
\limsup_{n\rightarrow\infty}{\frac{1}{(2n+1)^d}\sum_{x\in\CP(\om),~|x|\leq
n}\ind_{\{|\chi(x,\om)|\geq \ep n\}}}\leq \ep.
\label{sublinearity_formula}
\end{equation}
\label{sub_linearity_everywhere}
\end{thm}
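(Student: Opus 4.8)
The plan is to follow the standard ``averaged sublinearity from directional sublinearity'' argument of Berger--Biskup, adapted to the discrete point process setting. The key point is that Theorem \ref{coordinate_sublinearity} gives sublinearity of the corrector only along the $2d$ coordinate directions, sampled at the points of $\CP(\om)$; we must upgrade this to an averaged statement over the whole box. The bridge is the notion of $K,\ep$-good sites: first I would show that for every $\ep>0$ one can choose $K$ large so that the density of sites in $\CP(\om)$ that are \emph{not} $K,\ep$-good is as small as we like, and then a combinatorial/percolation-type argument shows that from a density-$1-\delta$ set of good sites one controls $|\chi(x,\om)|$ at \emph{every} site of $\CP(\om)$ in the box up to $\ep n$, for all but an $\ep$-fraction of sites.

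First I would introduce, for each coordinate direction $e$ and each $\om$, the quantity $\chi(n^e_k(\om)e,\om)$ and invoke Theorem \ref{coordinate_sublinearity}: $\chi(n^e_k(\om)e,\om)/k\to 0$ almost surely. Translating by $\gth_x$ and using the shift invariance \eqref{shift_invariance} together with the ergodic theorem for $\si_e$ (Theorem \ref{induced_shift_ergodicy}) and the Markov-chain ergodic theorem (Theorem \ref{Thm_mutual_ergodic}), I would deduce that for $P$-a.e.\ $\om$, for every $\ep>0$ there is a (random) $K_0(\om)$ and a density statement:
\begin{equation}
\limsup_{n\to\infty}\frac{1}{(2n+1)^d}\sum_{x\in\CP(\om),|x|\le n}\ind_{\{x\notin\CG_{K_0,\ep}(\om)\}}\le\ep.
\end{equation}
This is the ``local'' input. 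The point is that being non-good at $x$ is a shift of a fixed event whose $P$-probability tends to $0$ as $K\to\infty$ (because each increment $\chi((l+n^e_1\circ\si_e^l)e,\cdot)-\chi(le,\cdot)$ is in $L^1$ with mean zero, so partial sums are $o(k)$ and the $\sup$ over $l\le k$ is $o(k)$ too), hence the ergodic theorem along $\gth_{X_k}$ or directly along $\BZ^d$-shifts gives the density bound.

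Next comes the geometric upgrade, which is the heart of the matter. Given a site $x\in\CP(\om)$ with $|x|\le n$ that I want to control, I would walk from the origin (or from a nearby good site) to $x$ along coordinate directions, hopping between consecutive points of $\CP(\om)$. Along such a path of length $O(n)$, as long as the sites visited are $K_0,\ep$-good, the triangle inequality gives $|\chi(x,\om)-\chi(x_0,\om)|\le (\text{number of hops})\cdot K_0 + \ep\sum|x_{i+1}-x_i| = O(K_0 n)+\ep\cdot O(n)$; after dividing by $n$ and using the density of good sites to ensure such a path exists for all but an $\ep$-fraction of target sites, one gets $|\chi(x,\om)|\le C\ep n$ for such $x$. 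Then one absorbs the constant by re-choosing $\ep$, which yields \eqref{sublinearity_formula}. Concretely I expect to follow the Berger--Biskup scheme: show first a one-dimensional (along a single axis) version of sublinearity for the restriction of $\chi$ to lines through good sites, then bootstrap dimension by dimension.

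The main obstacle is this last geometric step: unlike the lattice percolation case, here the ``nearest neighbor'' structure is irregular, the jump lengths $f_e$ are unbounded, and a coordinate line through a point of $\CP(\om)$ need not contain many good sites in a controlled pattern. One must handle the possibility of long jumps (where a single hop contributes a large $|x_{i+1}-x_i|$ to the $\ep\sum|x_{i+1}-x_i|$ term) and must ensure that the fraction of target sites $x$ that cannot be reached from the origin through good sites along coordinate-direction paths is small. The tail assumption \ref{assumption3} ($\BE_P(f_e^{2+\ep_0})<\infty$) is exactly what is needed to control the total contribution of long jumps and to keep the density of bad sites summable along lines; I would use it to show that, along a typical coordinate line, the sum of jump lengths up to position $n$ is $O(n)$ and the number of bad sites is $o(n)$, so the path-based bound closes. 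I would also need to show that one can get from $0$ to a good site near $0$ in $O(1)$ hops $P$-a.s., which again follows from ergodicity of $\si_e$ and positivity of the good-site density.
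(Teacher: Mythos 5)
Your overall scheme (good sites plus a dimension-by-dimension bootstrap, as in Berger--Biskup) is the same as the paper's, and your first step -- that the density of non-$K,\ep$-good sites can be made small by taking $K$ large, via Theorem \ref{coordinate_sublinearity} and the ergodic theorem -- matches the paper's (parenthetical) input. The gap is in the geometric upgrade. You propose to reach a target site $x$ by a path that hops between \emph{consecutive} points of $\CP(\om)$ along coordinate directions, paying $K_0+\ep|x_{i+1}-x_i|$ per hop; but such a path has order $n$ hops, so the additive constants accumulate to $O(K_0 n)$, and dividing by $n$ leaves $O(K_0)+O(\ep)$, not $O(\ep)$. Your claimed conclusion $|\chi(x,\om)|\le C\ep n$ does not follow: since $K_0$ must be taken large to make the good-site density high, $C K_0 n$ is a genuinely linear error. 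The whole point of the $K,\ep$-goodness definition is that a \emph{single} good anchor controls $\chi$ along an entire coordinate line in one shot, so a correct argument must use only a bounded number of coordinate-direction legs, independent of $n$. This is exactly what the paper's induction (Lemma \ref{induction_lemma}) arranges: at each dimensional step only three legs appear, as in \eqref{rs-inequality} and \eqref{induction_inequality}, so the total additive constant is $O(K)$ and is absorbed once $\ep n>3K+\ep L$. The second structural ingredient you are missing is the ``stack of $L$ translates'' of $\Lambda_n^\nu$: because $\CP(\om)$ has density $P(0\in\CP(\om))<1$, a single $\nu$-dimensional layer of good sites only covers that fraction of the columns in direction $e_{\nu+1}$, and the paper must work with $L$ parallel layers (linked pairwise through good sites $z_i\in\Delta_i$, $z_j\in\Delta_j$ on a common coordinate line) to cover a $(1-\delta)$-fraction; your vague ``reachability'' remark does not supply this. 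Finally, the infimum over anchor sites in $\Lambda_n^1$ in \eqref{Q_nu_definition} is removed at the very end using Theorem \ref{coordinate_sublinearity} again, which is how the statement gets anchored at the origin.

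A smaller point: your appeal to Assumption \ref{assumption3} to control ``long jumps'' in this step is misplaced. Inside the box, the coordinate-line increments telescope, so the $\ep|x-y|$ terms are automatically $O(\ep n)$ without any moment hypothesis; the moment assumption enters the CLT proof elsewhere (boundedness of \eqref{distance_prop_3} and square integrability of $V$), not in the proof of Theorem \ref{sub_linearity_everywhere}. As written, then, the proposal's key step would fail, and repairing it essentially forces you back to the paper's induction with the quantity $\CQ_\nu$ and the $L$-layer covering argument.
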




Before stating the proof, we give a short introduction of the basic
idea. This proof is a light modification of the proof from \cite{BB06}.

Fix the dimension $d$, and for each $\nu = 1,2,\ldots, d$ let
$\Lambda_n^\nu$ be the $\nu$-dimensional box
\begin{equation}
\Lambda_n^\nu=\{k_1 e_1+\ldots+k_\nu e_\nu:k_i\in\BZ,|k_i|\leq
n,\forall i=1,2,\ldots,\nu\}.
\end{equation}
We will run an induction over $\nu$-dimensional sections of the
$d$-dimensional box $\{x\in\BZ^d:|x|\leq n\}$. The induction
eventually gives Theorem \ref{sub_linearity_everywhere} for $\nu=d$
thus proving it. Since it is not advantageous to assume that
$0\in\CP(\om)$, we will carry out the proof for differences of the
form $\chi(x,\om)-\chi(y,\om)$ with $x,y\in\CP(\om)$. For each
$\om\in\Om$, we thus consider the (upper) density
\begin{equation}
\CQ_\nu(\om)=\lim_{\ep\downarrow
0}{\lim_{n\rightarrow\infty}{\inf_{y\in
\CP(\om)\cap\Lambda_n^1}{\frac{1}{|\Lambda_n^1|}{\sum_{x\in\CP(\om)\cap\Lambda_n^\nu}{\ind_{\{|\chi(x,\om)-\chi(y,\om)|\geq
\ep n\}}}}}}}. \label{Q_nu_definition}
\end{equation}
Note that the infimum is taken only over sites in the
one-dimensional box $\Lambda_n^1$. Our goal is to show by induction
that $\CQ_\nu=0$ almost surely for all $\nu=1,\ldots,d$. The
induction step is given by the following lemma:

\begin{lem}
Let $1\leq \nu < d$. If $\CQ_\nu=0$ $P$-almost surely, then also
$\CQ_{\nu+1}=0$ $P$-almost surely. \label{induction_lemma}
\end{lem}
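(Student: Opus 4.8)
The plan is to carry out the dimension‑by‑dimension induction of Berger--Biskup \cite{BB06} in the present setting. Write $e=e_{\nu+1}$ and view $\Lambda_n^{\nu+1}$ as the stack of $\nu$‑dimensional layers $L_n^k:=\Lambda_n^{\nu}+ke$, $|k|\le n$, with central layer $L_n^0=\Lambda_n^{\nu}$. Given $x\in\CP(\om)\cap\Lambda_n^{\nu+1}$, set $k=x\cdot e$, so $x\in L_n^k$. The aim is to bound $|\chi(x,\om)-\chi(y,\om)|$ for a well‑chosen reference $y\in\CP(\om)\cap\Lambda_n^1$ in the central layer, via the split
\[
\chi(x,\om)-\chi(y,\om)=\bigl[\chi(x,\om)-\chi(x',\om)\bigr]+\bigl[\chi(x',\om)-\chi(w,\om)\bigr]+\bigl[\chi(w,\om)-\chi(y,\om)\bigr],
\]
where $x'\in\CP(\om)$ is a point reached from $x$ by moving in the $e$‑direction into the central layer, $w\in\CP(\om)\cap\Lambda_n^{\nu}$ is a reference for the central layer, and $y$ is taken to be $w$ (so the last bracket vanishes). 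The three contributions will be controlled by the inductive hypothesis (for $x'$), by Theorem~\ref{coordinate_sublinearity} (for the $e$‑direction increment $x\to x'$), and — to glue the pieces — by the spatial ergodic theorem.

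\textbf{Step 1 (the inductive hypothesis on a layer).} Since $\si_e$ is $P$‑preserving and ergodic (Theorem~\ref{induced_shift_ergodicy}) and $\si_e^j\om=\gth_{n^e_j(\om)e}\om$, the event that $\CQ_\nu(\gth_{z}\om)=0$ for every $z\in\CP(\om)$ on the $e$‑axis has full $P$‑measure; by the shift invariance \eqref{shift_invariance} of $\chi$, the smallness of the bad‑site density afforded by $\CQ_\nu=0$ is thereby available, at all large scales, in the $\nu$‑box around each such $z$. To propagate this to arbitrary layers — whose indices $k$ need not be of the form $n^e_j(\om)$, and whose intersection with the axis need not lie in $\CP(\om)$ — I would apply a multiparameter (spatial) Birkhoff ergodic theorem to the stationary, bounded random field sending an environment to the normalized bad count of its own $\nu$‑box, using that differences of $\chi$ depend on $\om$ only through translations by $\CP(\om)$‑points. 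The output sought is: for $P$‑almost every $\om$ and every $\delta>0$, for all large $n$, all but a $o(1)$‑fraction of the layers $|k|\le n$ carry a reference $w_k\in\CP(\om)\cap(\Lambda_n^1+ke)$ with $\#\{x\in\CP(\om)\cap L_n^k:|\chi(x,\om)-\chi(w_k,\om)|\ge\delta n\}=o(|\Lambda_n^{\nu}|)$.

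\textbf{Steps 2--3 (gluing along $e$, and assembly).} Theorem~\ref{coordinate_sublinearity} together with \eqref{shift_invariance} — which turns sublinearity of $\chi$ along the $e$‑axis through the origin into sublinearity along every line in direction $e$ — and a further use of the ergodic theorem over the $\nu$‑dimensional family of such lines yields: for $P$‑almost every $\om$ and all large $n$, all but a $o(1)$‑fraction of the $x\in\CP(\om)\cap\Lambda_n^{\nu+1}$ satisfy $|\chi(x,\om)-\chi(x',\om)|\le\tfrac{\ep}{3}n$, where $x'\in\CP(\om)$ is the point of $\CP(\om)$ on the $e$‑line through $x$ lying in the central layer and nearest to it (the increment is along a single coordinate direction, over displacement $O(n)$, the gaps near the central hyperplane being negligible by $\BE_P(f_e)<\infty$). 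The same estimate, applied on the axis, compares the layer references: $|\chi(w_k,\om)-\chi(w_0,\om)|\le\tfrac{\ep}{3}n$ for all but a $o(1)$‑fraction of $k$. Choosing $\delta=\ep/3$ in Step~1 and $y=w_0$, the displayed three‑term split gives $|\chi(x,\om)-\chi(w_0,\om)|<\ep n$ for every $x$ outside the exceptional sets; summing the bad indicators over layers and invoking the three "$o(1)$‑fraction of layers / $o(|\Lambda_n^\nu|)$ per good layer" bounds shows that the normalized bad count of $\Lambda_n^{\nu+1}$ relative to the single reference $w_0$ is $\le\ep+o(1)$. Hence the $\limsup$ over $n$ is $\le\ep$, and letting $\ep\downarrow0$ in \eqref{Q_nu_definition} gives $\CQ_{\nu+1}(\om)=0$ for $P$‑almost every $\om$.

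\textbf{Main obstacle.} The substance is not the triangle inequality but the \emph{uniformization} in Steps 1--2: upgrading the pointwise‑a.s.\ facts — the inductive hypothesis on a single environment, and sublinearity along a single coordinate line — into statements holding simultaneously for the $\asymp n^{\nu}$ layers and lines that fill $\Lambda_n^{\nu+1}$, and with the normalization matching \eqref{Q_nu_definition}; this is where the multiparameter ergodic theorem and a careful handling of the nested limits $\lim_{\ep\downarrow0}\lim_{n\to\infty}$ become unavoidable. A persistent secondary nuisance is that intermediate lattice points need not belong to $\CP(\om)$, which forces every reference $w_k$ and every "vertical foot" $x'$ to be chosen as a genuine point of $\CP(\om)$, makes all comparisons relative to $\CP(\om)$, and is precisely why the translations entering the ergodic theorem are translations by $\CP(\om)$‑points (equivalently, iterates of the induced shifts $\si_{\pm e_i}$) rather than by arbitrary vectors.
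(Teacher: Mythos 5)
Your overall blueprint---induction over the dimension of the section, the inductive hypothesis applied layerwise, vertical gluing via sublinearity in the $e_{\nu+1}$-direction, everything upgraded to density statements by the ergodic theorem---is the same as the paper's (both follow \cite{BB06}). But the two steps you yourself flag as the ``main obstacle'' are not mere technicalities, and as written they do not go through. First, the cross-layer comparison is unjustified: your references $w_k\in\CP(\om)\cap(\Lambda_n^1+ke_{\nu+1})$ and $w_0\in\CP(\om)\cap\Lambda_n^1$ generically differ in the $e_1$-coordinate as well as in the $e_{\nu+1}$-coordinate, so $|\chi(w_k,\om)-\chi(w_0,\om)|$ is not an increment along a single coordinate line and Theorem \ref{coordinate_sublinearity}, even combined with \eqref{shift_invariance}, says nothing about it. In the other reading of your three-term split, the ``foot'' $x'$ is the $\CP(\om)$-point on the vertical line through $x$ nearest the central hyperplane, hence generically \emph{not} in $\Lambda_n^\nu$, so the $\nu$-dimensional inductive hypothesis does not apply to the middle bracket $\chi(x',\om)-\chi(w,\om)$; finiteness of $\BE_P(f_e)$ controls the size of the overshoot but not this mismatch, since the inductive hypothesis lives on a single $\nu$-dimensional section while the points you land on lie on other sections.

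Second, your Step 2 bound $|\chi(x,\om)-\chi(x',\om)|\le\tfrac{\ep}{3}n$ for all but an $o(1)$-fraction of $x$ cannot be extracted from Theorem \ref{coordinate_sublinearity} plus ``a further use of the ergodic theorem'' without an additive constant: pointwise sublinearity along a line gives, for fixed $\ep$, only a bound of the form $K+\ep|x-y|$ with a random $K=K(x,\om)$, and what the ergodic theorem controls is the density of sites at which $K$ is below a fixed level. This is exactly the paper's device of $K,\ep$-good sites: fix $K$ so large that $P(0\in\mathcal{G}_{K,\ep})$ is close to $P(0\in\CP(\om))$, use the spatial ergodic theorem to get the corresponding density in boxes, and pay $K+\ep n$ per vertical jump from a good site, as in \eqref{bound_sublinearity}. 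The paper then resolves the first problem by working not with all $2n+1$ layers but with a deterministic bounded stack of $L$ translates $\Lambda_{n,j}^\nu$, chosen so that their $\CP(\om)$-points project onto a $(1-\delta)$-fraction of the base; in each translate it selects $\Delta_j\subset\mathcal{G}_{K,\ep}$ of nearly full density satisfying the inductive bound, and the quantitative condition $L\ep+\delta<\tfrac{1}{2}P(0\in\CP(\om))^2$ in \eqref{delta_epsilon_assumption} guarantees that any two $\Delta_i,\Delta_j$ contain good sites on a common vertical line at distance at most $L$, so the cross-layer cost is only $K+\ep L$, as in \eqref{induction_inequality}; general points of $\Lambda_n^{\nu+1}\cap\CP(\om)$ are then reached by a single vertical jump from a good site of the stack. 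Without some version of this stack-plus-good-pairs construction, the assembly in your Step 3 does not close, and the nested limits in \eqref{Q_nu_definition} cannot be taken.
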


Before we start the formal proof, we give the main idea: Suppose
that $\CQ_\nu=0$ for some $\nu<d$ $P$-almost surely. Pick $\ep>0$.
Then for $P$-almost every $\om$ and all sufficiently large $n$,
there exists a set of sites $\Delta\subset\Lambda_n^\nu\cap\CP(\om)$
such that
\begin{equation}
|(\Lambda_n^\nu\cap\CP(\om))\backslash\Delta|\leq
\ep|\Lambda_n^\nu|, \label{Lambda_1+property}
\end{equation}
and
\begin{equation}
|\chi(x,\om)-\chi(y,\om)|\leq\ep n~~~~\forall ~x,y\in\Delta.
\label{induction_hypothesis}
\end{equation}
Moreover, for $n$ sufficiently large, $\Delta$ could be picked so
that $\Delta\cap\Lambda_n^1\neq\emptyset$ and, assuming $K\gg 1$ the
non-$K,\ep$-good sites could be pitched out with little loss of
density to achieve even
\begin{equation}
\Delta\subset \mathcal{G}_{K,\ep}. \label{Lambda_3_property}
\end{equation}

(All these claims are direct consequences of the Pointwise ergodic
Theorem and the fact that $P(0\in\mathcal{G}_{K,\ep})$ converges to
$P(0\in \Om)$ as $k\rightarrow\infty$.)\\

As a result of this construction we have
\begin{equation}
|\chi(z,\om)-\chi(x,\om)|\leq K+\ep n, \label{bound_sublinearity}
\end{equation}

for any $x\in\Delta$ and any $z\in \Lambda_n^{\nu+1}\cap\CP(\om)$ of
the form $x+je_{\nu+1}$. Thus, if
$r,s\in\CP(\om)\cap\Lambda_n^{\nu+1}$ are of the form,
$r=x+je_{\nu+1}$ and $s=y+ke_{\nu+1}$, then
\eqref{bound_sublinearity} implies
\begin{equation}
\begin{aligned}
|\chi(r,\om)-\chi(s,\om)|&\leq
|\chi(r,\om)-\chi(x,\om)|+|\chi(x,\om)-\chi(y,\om)|+|\chi(y,\om)-\chi(s,\om)|\\
&\leq 2K+2\ep n + |\chi(x,\om)-\chi(y,\om)|.
\label{rs-inequality}
\end{aligned}
\end{equation}

Invoking the induction hypothesis \eqref{induction_hypothesis}, the
right hand side is less than $2K+3\ep n$, implying a bound of the
type \eqref{induction_hypothesis} but one dimension higher.
Unfortunately, the above is not sufficient to prove
\eqref{induction_hypothesis} for all but a vanishing fraction of
sites in $\Lambda_n^{\nu+1}$. The reason is that the $r's$ and $s's$
for which \eqref{rs-inequality} holds, need to be of the form
$x+je_{\nu+1}$ for some $x\in \Delta\cap\CP(\om)$. But $\CP(\om)$
will occupy only about a $P(0\in\CP(\om))$ fraction of all sites in
$\Lambda_n^\nu$, and so this argument does not permit to control
more than a fraction of about $P(0\in \CP(\om))$ of
$\Lambda_n^{\nu+1}\cap\CP(\om)$.

To fix this problem, we will have to work with a "stack" of
translates of $\Lambda_n^\nu$ simultaneously. Explicitly, consider
the collection of $\nu$-boxes
\begin{equation}
\Lambda_{n,j}^\nu=\gth^j_{e_{\nu+1}}(\Lambda_n^\nu)~~~~j=1,2\ldots,L.
\end{equation}
Here $L$ is a deterministic number chosen so that, for a given
$\delta>0$, the set
\begin{equation}
\Delta_0=\{x\in\Lambda_n^\nu:\exists
j\in\{0,1,\ldots,L-1\},~x+je_{\nu+1}\in\Lambda_{n,j}^\nu\cap\CP(\om)\},
\end{equation}
is so large that for sufficiently large $n$
\begin{equation}
|\Delta_0|\geq (1-\delta)|\Lambda_n^\nu|.
\end{equation}
These choices ensure that $(1-\delta)$-fraction of $\Lambda_n^\nu$ is now "covered" which, by
repeating the above argument, gives us control over $\chi(r,\om)$
for nearly the same fraction of all sites $r\in
\Lambda^{\nu+1}\cap\CP(\om)$.

\begin{proof}[Proof of Lemma \ref{induction_lemma}]
Let $\nu<d$ and suppose that $\mathcal{Q}_\nu=0$ $P$-almost surely.
Fix $\delta>0$ with $0<\delta<\frac{1}{2}P(0\in\CP(\om))^2$ and let
$L$ be as defined above. Choose $\ep>0$ so that
\begin{equation}
L\ep+\delta<\frac{1}{2}P(0\in\CP(\om))^2.
\label{delta_epsilon_assumption}
\end{equation}
For a fixed but large $K$, $P$-almost every $\om$ and $n$ exceeding
an $\om$-dependent quantity, for each $j=1,2,\ldots,L$, we can find
$\Delta_j\subset \Lambda_{n,j}^\nu\cap\CP(\om)$ satisfying the
properties (\ref{Lambda_1+property}-\ref{Lambda_3_property}) - with
$\Lambda_n^\nu$ replaced by $\Lambda_{n,j}^\nu$. Given
$\Delta_1,\ldots,\Delta_L$, let $\Lambda$ be the set of sites in
$\Lambda_n^{\nu+1}\cap\CP(\om)$ whose projection onto the linear
subspace $\mathbb{H}=\{k_1 e_1+\ldots+k_\nu e_\nu:k_i\in\BZ\}$
belongs to the corresponding projection of
$\Delta_1\cup\ldots\cup\Delta_L$. Note that the $\Delta_j$ could be
chosen so that $\Lambda\cap\Lambda_n^1\neq\emptyset$. By their
construction, the projections of the $\Delta_j's$, $j=1,\ldots,L$
onto $\mathbb{H}$ "fail to cover" at most $L\ep|\Lambda_n^\nu|$
sites in $\Delta_0$, and so at most $(\delta+L\ep)|\Lambda_n^\nu|$
sites in $|\Lambda_n^\nu|$ are not of the form $x+ie_{\nu+1}$ for
some $x\in\bigcup_j{\Delta_j}$. It follows that
\begin{equation}
|(\Lambda_n^{\nu+1}\cap\CP(\om))\backslash\Lambda|\leq(\delta+L\ep)|\Lambda_n^{\nu+1}|,
\label{set_bound}
\end{equation}
i.e, $\Lambda$ contains all except at most $(\delta+L\ep)$-fraction
of all sites in $\Lambda_n^{\nu+1}$ that we care about. Next we note
that if $K$ is sufficiently  large, then for every $1\leq i<j\leq
L$, the set $\mathbb{H}$ contains
$\frac{1}{2}P(0\in\CP(\om))$-fraction of sites such that
\begin{equation}
z_i\overset{def}{=}x+ie_\nu\in\mathcal{G}_{K,\ep},~~~~~~z_j=x_je_\nu\in\mathcal{G}_{K,\ep}.
\end{equation}
Since we assumed \eqref{delta_epsilon_assumption}, once $n\gg 1$,
for each pair $(i,j)$ with $1\leq i<j\leq L$ such $z_i$ and $z_j$
can be found so that $z_i\in\Delta_i$ and $z_j\in\Delta_j$. But the
$\Delta_j's$ were picked to make \eqref{induction_hypothesis} true
and so using these pairs of sites we now show that
\begin{equation}
\begin{aligned}
|\chi(y,\om)-\chi(x,\om)|&\leq
|\chi(y,\om)-\chi(z_j,\om)|+|\chi(z_j,\om)-\chi(z_i,\om)|+|\chi(z_i,\om)-\chi(x,\om)|\\
&\leq \ep n + K + \ep L + \ep n = K+\ep L+2\ep n,
\label{induction_inequality}
\end{aligned}
\end{equation}
for every $x,y\in\Delta_1\cup\ldots\cup\Delta_L$.
From \eqref{induction_hypothesis} and \eqref{induction_inequality},
we now conclude that for all $r,s\in\Lambda$,
\begin{equation}
|\chi(r,\om)-\chi(s,\om)|\leq 3K+\ep L + 4\ep n< 5\ep n,
\label{chi_bound}
\end{equation}
assuming that $n$ is so large that $\ep n>3K+\ep L$. If
$\CQ_{\nu,\ep}$ denotes the right-hand side of
\eqref{Q_nu_definition} before taking $\ep\searrow 0$, the bounds
\eqref{set_bound} and \eqref{chi_bound} and the fact that
$\Lambda\cap \Lambda_n^1\neq\emptyset$ yield
\begin{equation}
\CQ_{\nu+1,5\ep}(\om)\leq \delta+L\ep,
\end{equation}
for $P$-almost every $\om$, But the left-hand side of this
inequality increases as $\ep\searrow 0$ while the right hand side
decreases. Thus, taking $\ep\searrow 0$ and $\delta\searrow 0$
proves that $\CQ_{\nu+1}=0$ holds $P$-almost surely.
\end{proof}

\begin{proof}[Proof of Theorem \ref{sub_linearity_everywhere}]
The proof is an easy consequence of Lemma \ref{induction_lemma}.
First, by Theorem \ref{coordinate_sublinearity} we know that
$\CQ(\om)=0$ for $P$-almost every $\om$. Invoking appropriate
shifts, the same conclusion applies $Q$ almost surely. Using
induction on dimension, Lemma \ref{induction_lemma} then tells us
that $\CQ_d(\om)=0$ for $P$ almost every $\om$. Let $\om\in\Om_0$.
By Theorem \ref{coordinate_sublinearity}, for each $\ep>0$ there is
$n_0=n_0(\om)$ with $P(n_0<\infty)=1$ such that for all $n\geq
n_0(\om)$, we have $|\chi(x,\om)|\leq \ep n$ for all $x\in
\Lambda_n^1\cap \CP(\om)$. Using this to estimate away the infimum
in \eqref{Q_nu_definition}, the fact that $\CQ_d=0$ now immediately
implies \eqref{sublinearity_formula} for all $\ep>0$.
\end{proof}



\section{High dimensional Central Limit Theorem}

The theorem we wish to prove in this section is the following:

\begin{thm}
Fix $d\geq 2$. Assume the additional assumption, assumption
\ref{assumption3}, then for $P$ almost every $\om\in\Om_0$
\begin{equation}
\lim_{n\rightarrow\infty}{\frac{X_n}{\sqrt{n}}}\overset{D}{=}N(0,D),
\end{equation}
where $N(0,D)$ is a $d$-dimensional multivariate normal distribution with
covariance matrix $D$ that depends only on $d$ and the distribution of $P$.
\end{thm}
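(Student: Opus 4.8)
The plan is to run the corrector (harmonic coordinates) method of Kipnis--Varadhan in the form used in \cite{BB06}. Let $\chi$ be the corrector constructed in Theorem \ref{corrector_thm} and set $\Phi(x,\om)=x+\chi(x,\om)$. Since $\chi(0,\om)=0$ and, by the harmonicity part of Theorem \ref{corrector_thm} together with the shift invariance \eqref{shift_invariance}, the map $x\mapsto\Phi(x,\om)$ is harmonic for the transition probabilities \eqref{transition_probability}, for $P$-almost every $\om\in\Om_0$ the process
\[
M_n:=\Phi(X_n,\om)=X_n+\chi(X_n,\om),\qquad M_0=0 ,
\]
is an $\BR^d$-valued martingale under $P_\om$ with respect to the natural filtration of $(X_k)_{k\ge0}$. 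Writing $X_n/\sqrt n=M_n/\sqrt n-\chi(X_n,\om)/\sqrt n$, the theorem reduces to proving, for $P$-almost every $\om$: (a) $M_n/\sqrt n$ converges in distribution under $P_\om$ to $N(0,D)$ for a deterministic matrix $D$; and (b) $\chi(X_n,\om)/\sqrt n\to0$ in $P_\om$-probability. Once both hold, Slutsky's lemma gives the claim, and the formula for $D$ below exhibits it as a function of $d$ and the law of $P$ alone.

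For (a), the shift invariance \eqref{shift_invariance} gives $M_{k+1}-M_k=\Phi(X_{k+1}-X_k,\gth_{X_k}\om)$, which equals $H(\gth_{X_k}\om,\gth_{X_{k+1}}\om)$ for the fixed measurable function $H$ with $H(\om,\gth_z\om)=z+\chi(z,\om)$ for $z\in N_0(\om)$; in particular $E_\om\big[|H(\om,\gth_{X_1}\om)|^2\big]=\tfrac1{2d}\sum_{z\in N_0(\om)}|z+\chi(z,\om)|^2$. Assumption \ref{assumption3} forces $\BE_P(f_e^2)<\infty$ for all $e\in\CE$ (as $P$ is a probability measure and $2+\ep_0>2$), and this together with \eqref{square_integrability} and the $L^2$-integrability of $\chi(n^e_1(\om)e,\cdot)$ established in the proof of Proposition \ref{chi_moment_prop} yields $\BE_P\big(E_\om[\,|H(\om,\gth_{X_1}\om)|^2]\big)<\infty$. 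Since the chain $\{\gth_{X_k}\om\}_{k\ge0}$ is stationary and ergodic under $P\otimes P_\om$ (Proposition \ref{ergodic_particle_view}), Theorem \ref{Thm_mutual_ergodic} applied to $\om\mapsto E_\om[H(\om,\gth_{X_1}\om)^{\otimes2}]$ shows that $\tfrac1n\sum_{k=0}^{n-1}E_{\gth_{X_k}\om}[H^{\otimes2}]$ converges to
\[
D:=\BE_P\big(E_\om[H(\om,\gth_{X_1}\om)^{\otimes2}]\big)
\]
for $P$-almost every $\om$ and $P_\om$-almost every trajectory, and the same theorem applied to $\om\mapsto E_\om[\,|H|^2\ind_{\{|H|>A\}}]$, followed by letting $A\to\infty$, verifies the conditional Lindeberg condition for the array $\{(M_{k+1}-M_k)/\sqrt n\}_{0\le k<n}$. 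The martingale central limit theorem then gives (a).

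For (b), fix $\delta>0$ and $\eta>0$. By Theorem \ref{asymptotic_X_n} there is $c=c(\om)<\infty$ with $E_\om(\|X_n\|)\le c\sqrt n$, whence $P_\om(\|X_n\|>A\sqrt n)\le c/A$; fix $A$ large enough that $c/A<\eta$. Apply Theorem \ref{sub_linearity_everywhere} at scale $m=m(n):=\lceil A\sqrt n\rceil$: for every $\ep>0$ and $P$-almost every $\om$, for all large $n$ the set $\CB_n(\om):=\{x\in\CP(\om):|x|\le m,\ |\chi(x,\om)|\ge\ep m\}$ has at most $\ep(2m+1)^d$ elements. On $\{\|X_n\|\le A\sqrt n\}\setminus\{X_n\in\CB_n(\om)\}$ one has $X_n\in\{|x|\le m\}$, hence $|\chi(X_n,\om)|<\ep m\le2\ep A\sqrt n<\delta\sqrt n$ provided $\ep<\delta/(2A)$. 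Finally, by the on-diagonal heat-kernel bound of Claim \ref{bound_of_transitions},
\[
P_\om\big(X_n\in\CB_n(\om)\big)\le\#\CB_n(\om)\cdot\frac{K_2}{(n-K_1)^{d/2}}\le\ep\,(2m+1)^d\cdot\frac{K_2}{(n-K_1)^{d/2}}\le C(d)\,K_2\,A^d\,\ep
\]
for all large $n$. Choosing $\ep<\min\{\delta/(2A),\ \eta/(C(d)K_2A^d)\}$ we obtain $\limsup_{n\to\infty}P_\om(|\chi(X_n,\om)|\ge\delta\sqrt n)\le2\eta$; as $\eta$ is arbitrary, (b) follows.

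The main obstacle is (b), and inside it the sublinearity input. The quenched heat-kernel bound (Claim \ref{bound_of_transitions}) and the diffusive bound $E_\om(\|X_n\|)\le c\sqrt n$ (Theorem \ref{asymptotic_X_n}, the one place where Assumption \ref{assumption3} is genuinely used beyond ensuring $V\in L^2$) serve only to transfer a statement about a density-one set of sites to the random position $X_n$; the substantive work is Theorem \ref{sub_linearity_everywhere}, whose proof proceeds through the delicate stack-of-boxes induction of Lemma \ref{induction_lemma}, bootstrapping the one-dimensional sublinearity of Theorem \ref{coordinate_sublinearity}. Part (a), by contrast, is the standard stationary–ergodic martingale CLT and is routine once \eqref{square_integrability} and Proposition \ref{ergodic_particle_view} are available.
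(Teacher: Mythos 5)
Your proposal is correct and follows essentially the same route as the paper: the decomposition $X_n=M_n^{(\om)}-\chi(X_n,\om)$ with the martingale CLT for $M_n^{(\om)}$ proved via Theorem \ref{Thm_mutual_ergodic} and Lindeberg--Feller, and the corrector killed along the walk by combining Theorem \ref{asymptotic_X_n} (Markov bound on $\|X_n\|$), the heat-kernel bound of Claim \ref{bound_of_transitions}, and the sublinearity of Theorem \ref{sub_linearity_everywhere}. Your version merely makes the $\ep$, $A$ bookkeeping in step (b) slightly more explicit than the paper's.
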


We start with the following lemma:

\begin{lem}
Fix $\om\in\Om_0$ and let $x\mapsto \chi(x,\om)$ be the corrector as
defined in Theorem \ref{corrector_thm}. given a path of a random
walk $\{X_n\}_{n=0}^{\infty}$ on $\CP(\om)$ with transition
probabilities \eqref{transition_probability} let
\begin{equation}
M_n^{(\om)}=X_n+\chi(X_n,\om),~~~~\forall n\geq 0.
\label{deformed_random_walk}
\end{equation}
Then $\{M_n^{(\om)}\}_{n=0}^{\infty}$ is an $L^2$-martingale for the
filtration $\{\si(X_0,X_1,\ldots,X_n)\}_{n=0}^{\infty}$. Moreover,
conditional on $X_{k_0}=x$, the increments
$\{M_{k+k_0}^{(\om)}-M_{k_0}^{(\om)}\}_{k=0}^{\infty}$ have the same
law as $\{M_k^{(\gth_x\om)}\}_{k=0}^{\infty}$.
\end{lem}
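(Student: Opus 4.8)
The plan is to deduce both assertions from results already in hand: the harmonicity of $x\mapsto x+\chi(x,\om)$ (Theorem~\ref{corrector_thm}(2)) for the martingale property, and the cocycle identity \eqref{shift_invariance} for the statement about increments. I would begin by fixing $\om$ in the full-measure set on which $\chi(\cdot,\om)$ is defined, path-independent, and finite at every point of $\CP(\om)$, and on which the harmonicity statement of Theorem~\ref{corrector_thm}(2) holds: this set has full measure because, by \eqref{chi_definition}, $\chi(x,\om)$ is a finite sum of functions of the form $G_{y,y+z}$, each of which lies in $L^2(P)$ (hence is $P$-a.s. finite), and there are only countably many pairs to exhaust. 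On such an $\om$, under $P_\om$ the walk started at $0$ visits only finitely many sites during its first $n$ steps (at most $\sum_{k=0}^{n}(2d)^k$, since $|N_v(\om)|=2d$ for every $v$), so $X_n$ takes finitely many values; therefore $M_n^{(\om)}=X_n+\chi(X_n,\om)$ is bounded, and in particular $M_n^{(\om)}\in L^2(P_\om)$ for every $n$.

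For the martingale property, write $M_{n+1}^{(\om)}=h(X_{n+1})$ with $h(u)=u+\chi(u,\om)$. The Markov property of $\{X_n\}$ under $P_\om$ yields $E_\om[\,M_{n+1}^{(\om)}\mid\si(X_0,\ldots,X_n)\,]=E_\om[\,h(X_{n+1})\mid X_n\,]$, and on the event $\{X_n=v\}$ with $v\in\CP(\om)$ the right-hand side is $\frac{1}{2d}\sum_{u\in N_v(\om)}(u+\chi(u,\om))$. By the harmonicity of $u\mapsto u+\chi(u,\om)$ with respect to the transition probabilities \eqref{transition_probability} (Theorem~\ref{corrector_thm}(2)), this sum equals $v+\chi(v,\om)=M_n^{(\om)}$, so $E_\om[\,M_{n+1}^{(\om)}\mid\si(X_0,\ldots,X_n)\,]=M_n^{(\om)}$, as required.

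For the claim about increments, condition on $\{X_{k_0}=x\}$ (so that $x\in\CP(\om)$) and set $\tilde X_k:=X_{k+k_0}-x$. Using the covariance relation $N_y(\om)=y+N_0(\gth_y\om)$, valid for $y\in\CP(\om)$, one checks directly from \eqref{transition_probability}, \eqref{inital_condition} and the Markov property that, under $P_\om(\,\cdot\mid X_{k_0}=x)$, the process $\{\tilde X_k\}_{k\ge0}$ has exactly the law of the walk $\{X_k\}_{k\ge0}$ in the environment $\gth_x\om$. Since $X_{k+k_0},x\in\CP(\om)$, the shift invariance \eqref{shift_invariance} gives $\chi(X_{k+k_0},\om)-\chi(x,\om)=\chi(\tilde X_k,\gth_x\om)$, while $\chi(X_{k_0},\om)=\chi(x,\om)$; hence
\[
M_{k+k_0}^{(\om)}-M_{k_0}^{(\om)}=\tilde X_k+\chi(\tilde X_k,\gth_x\om),
\]
which is precisely the quantity \eqref{deformed_random_walk} defining $M_k^{(\gth_x\om)}$, now built from the walk $\{\tilde X_k\}$. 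As $\{\tilde X_k\}$ has the law of a walk in $\gth_x\om$, the processes $\{M_{k+k_0}^{(\om)}-M_{k_0}^{(\om)}\}_{k\ge0}$ and $\{M_k^{(\gth_x\om)}\}_{k\ge0}$ have the same law. The only point needing genuine care is the use of \eqref{shift_invariance} with base point $x$ rather than $0$ — exactly where the cocycle structure of the corrector enters — together with the identification of the law of the translated walk $\{\tilde X_k\}$; the integrability bookkeeping is routine, and I foresee no serious obstacle.
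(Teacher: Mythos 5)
Your proposal is correct and follows essentially the same route as the paper: square integrability from the fact that $X_n$ ranges over finitely many sites for fixed $\om$ and $n$, the martingale property from the harmonicity statement of Theorem \ref{corrector_thm}, and the increment identity from the shift invariance \eqref{shift_invariance} together with identifying the translated walk as a walk in the environment $\gth_x\om$. You merely spell out in detail the steps the paper leaves implicit.
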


\begin{proof}
Since $X_n$ is bounded, $\chi(X_n,\om)$ is bounded and so
$M_n^{(\om)}$ is square integrable with respect to $P_\om$. Since
$x\mapsto x+\chi(x,\om)$ is harmonic with respect to the transition
probabilities of the random walk $(X_n)$ with law $P_\om$ we have
\begin{equation}
E_\om[M_{n+1}^{(\om)}|\si(X_n)]=M_n^{(\om)}~~~\forall n\geq 0, P_\om a.s.
\end{equation}
Since $M_n^{(\om)}$ is $\si(X_n)$-measurable,
$(M_n^{(\om)})$ is a martingale. The stated relation between the
laws of $(M_{k+k_0}^{(\om)}-M_{k_0}^{(\om)})_{k\geq 0}$ and
$(M_k^{(\gth_x \om)})_{k\geq 0}$ is implied by the shift invariance
proved in Theorem \ref{shift_invariance} and the fact that
$(M_n^{(\om)})$ is a simple
random walk on the deformed graph.
\end{proof}

\begin{thm}[The Modified random walk CLT]
Fix $d\geq 2$, and assume in addition, assumption \ref{assumption3}.
For $\om\in\Om_0$ let $\{X_n\}_{n=0}^\infty$ be random walk with
transition probabilities \eqref{transition_probability} and let
$\{M_n^{(\om)}\}_{n=0}^{\infty}$ be as defined in
\eqref{deformed_random_walk}. Then for $P$ almost every $\om\in\Om_0$
we have
\begin{equation}
\lim_{n\rightarrow\infty}\frac{M_n^{(\om)}}{\sqrt{n}}\overset{D}{=}
N(0,D),
\end{equation}
where $N(0,D)$ is a $d$-dimensional multivariate normal distribution
with covariance matrix $D$ which depends only on $d$ and the
distribution $P$, and is given by
$D_{i,j}=\BE\left[cov(M_1^{(\om)}\cdot e_i,M_1^{(\om)}\cdot
e_j)\right]$ \label{Modified_CLT}
\end{thm}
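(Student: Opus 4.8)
The plan is to read off the assertion from the martingale central limit theorem applied to $\{M_n^{(\om)}\}_{n\ge0}$, whose $L^2$-martingale property and the ``renewal'' structure of its conditional increments are supplied by the preceding lemma, and then to identify the limiting covariance through the environment-viewed-from-the-particle ergodic theorem, Theorem \ref{Thm_mutual_ergodic}. Fix $\om$ in the full-$P$-measure set on which the corrector of Theorem \ref{corrector_thm} is defined and on which all the ergodic averages below converge, and put $\xi_k=M_k^{(\om)}-M_{k-1}^{(\om)}$, $\CF_k=\si(X_0,\dots,X_k)$, so that $M_n^{(\om)}=\sum_{k=1}^n\xi_k$ is a $P_\om$-martingale with $M_0^{(\om)}=0$ (since $\chi(0,\om)=0$). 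By the Markov property and the lemma, the conditional law of $\xi_k$ given $\CF_{k-1}$ equals that of $M_1^{(\gth_{X_{k-1}}\om)}$; hence, writing $\Psi_{ij}(\om')=E_{\om'}\big[(M_1^{(\om')}\cdot e_i)(M_1^{(\om')}\cdot e_j)\big]$, we get $E_\om[(\xi_k\cdot e_i)(\xi_k\cdot e_j)\mid\CF_{k-1}]=\Psi_{ij}(\gth_{X_{k-1}}\om)$, and similarly with any bounded measurable functional of $\xi_k$ in place of the quadratic one.

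The first thing to check is that $\Psi_{ij}\in L^1(\Om_0,P)$; by Cauchy--Schwarz it suffices to bound $\BE_P\big[E_\om\|M_1^{(\om)}\|^2\big]$. Under $P_\om$ the first step $X_1$ is uniform on $N_0(\om)=\{f_e(\om)\,e:e\in\CE\}$, and on $\{X_1=f_e(\om)e\}$ the shift invariance \eqref{shift_invariance} gives $\chi(X_1,\om)=\chi(f_e(\om)e,\om)$, so
\[
E_\om\|M_1^{(\om)}\|^2=\frac1{2d}\sum_{e\in\CE}\big\|f_e(\om)e+\chi(f_e(\om)e,\om)\big\|^2\le\frac1{d}\sum_{e\in\CE}\Big(f_e(\om)^2+\big\|\chi(f_e(\om)e,\om)\big\|^2\Big).
\]
Assumption \ref{assumption3} gives $\BE_P[f_e^2]<\infty$, while the argument of Proposition \ref{chi_moment_prop} in fact shows $\chi(f_e(\cdot)e,\cdot)\in L^2(P)$ (it is an $L^2$-limit of the $L^2$-functions $G^{(\ep)}_{f_e(\cdot)e}$). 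Hence $\BE_P[E_\om\|M_1^{(\om)}\|^2]<\infty$; in particular each $\xi_k$ has $P_\om$-mean zero.

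Next I verify the two hypotheses of the $\BR^d$-valued martingale CLT, $P_\om$-a.s.\ for $P$-a.e.\ $\om$. For the conditional covariance, the first part of Theorem \ref{Thm_mutual_ergodic} applied to $\Psi_{ij}\in L^1$ gives
\[
\frac1n\sum_{k=1}^n E_\om[(\xi_k\cdot e_i)(\xi_k\cdot e_j)\mid\CF_{k-1}]=\frac1n\sum_{k=0}^{n-1}\Psi_{ij}(\gth_{X_k}\om)\ \longrightarrow\ \BE_P[\Psi_{ij}]=:D_{ij}\qquad(n\to\infty),
\]
and $D_{ij}=\BE_P\big[\cov(M_1^{(\om)}\cdot e_i,M_1^{(\om)}\cdot e_j)\big]$ since the increments are centered; $D$ depends only on $d$ and $P$. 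For the conditional Lindeberg condition, fix $\delta>0$, set $h_R(\om')=E_{\om'}\big[\|M_1^{(\om')}\|^2\ind_{\{\|M_1^{(\om')}\|>R\}}\big]$ for $R\in\BN$, and note $0\le h_R\le E_{\om'}\|M_1^{(\om')}\|^2\in L^1(P)$. Whenever $n$ is large enough that $\delta\sqrt n>R$,
\[
\frac1n\sum_{k=1}^n E_\om\big[\|\xi_k\|^2\ind_{\{\|\xi_k\|>\delta\sqrt n\}}\mid\CF_{k-1}\big]\ \le\ \frac1n\sum_{k=0}^{n-1}h_R(\gth_{X_k}\om)\ \longrightarrow\ \BE_P[h_R]\qquad(n\to\infty),
\]
again by Theorem \ref{Thm_mutual_ergodic}; letting $R\to\infty$ with $\BE_P[h_R]\downarrow0$ (dominated convergence, envelope $E_\om\|M_1^{(\om)}\|^2$) gives the Lindeberg condition. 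The martingale CLT then yields $M_n^{(\om)}/\sqrt n\overset{D}{\to}N(0,D)$ under $P_\om$ for $P$-a.e.\ $\om$, with $D$ as stated.

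The main obstacle is precisely this conditional Lindeberg step: the truncation level $\delta\sqrt n$ grows with $n$, so one cannot feed a single function into the ergodic theorem; the remedy is to dominate by the $n$-independent truncation at level $R$, pass $n\to\infty$ first, and only then send $R\to\infty$. This is exactly where the second moment of $f_e$ (from assumption \ref{assumption3}) together with the square-integrability of the corrector (Theorem \ref{corrector_thm}(3) and Proposition \ref{chi_moment_prop}) is used, namely to produce the integrable envelope. A routine bookkeeping point is that all the ergodic limits above must hold on a single $\om$-set of full $P$-measure; since only countably many $L^1$-functions are involved ($\Psi_{ij}$ for $1\le i,j\le d$ and $h_R$ for $R\in\BN$), this is automatic, and one may then invoke the Cram\'er--Wold device (or directly the vector-valued martingale CLT) to obtain the stated convergence.
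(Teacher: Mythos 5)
Your proposal is correct and follows essentially the same route as the paper: the Lindeberg--Feller martingale CLT applied to $M_n^{(\om)}$, with the conditional covariances and the Lindeberg term identified as ergodic averages along the environment chain via Theorem \ref{Thm_mutual_ergodic}, and the growing truncation level handled by dominating with a fixed-level truncation, sending $n\to\infty$ first and then the level to infinity by dominated convergence. Your explicit verification that $\BE_P\big[E_\om\|M_1^{(\om)}\|^2\big]<\infty$ (via assumption \ref{assumption3} and the $L^2$-bound on the corrector increments) is a point the paper only gestures at, but otherwise the two arguments coincide.
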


\begin{proof}
Let
\begin{equation}
V_n^{(\om)}(\ep)=\frac{1}{n}\sum_{k=0}^{n-1}{E_\om\left[D_k^{(\om)}
\ind_{\{\min_{i,j}|(D_k^{(\om)})_{i,j}|\geq \ep \sqrt{n}
\}}\Big|X_0,X_1,\ldots,X_k\right]},
\end{equation}
where $D_k^{(\om)}$ is the covariance matrix for
$M_{k+1}^{(\om)}-M_k^{(\om)}$. By the Lindeberg-Feller Central Limit
Theorem (see for example \cite{Du96}),  it is enough to show that\\
\begin{enumerate}
\item $\lim_{n\rightarrow\infty}{V_n^{(\om)}(0)}=D$ in
$P_\om$-probability. \\

\item $\lim_{n\rightarrow\infty}{V_n^{(\om)}(\ep)}=0$ in $P_\om$-probability for all $\ep>0$.\\
\end{enumerate}
Both conditions are implied from Theorem \ref{Thm_mutual_ergodic}.
Indeed
\begin{equation}
V_n^{(\om)}(0)=\frac{1}{n}\sum_{k=0}^{n-1}{h_0\circ\gth_{X_k}(\om)},
\nonumber
\end{equation}
where
\begin{equation}
h_K(\om)=E_\om\left[D_1^{(\om)}
\ind_{\{\min_{i,j}|(D_1^{(\om)})_{i,j}|\geq K \}}\right].
\end{equation}
Therefore by Theorem \ref{Thm_mutual_ergodic} we have for $P$ almost
every $\om\in\Om_0$
\begin{equation}
\lim_{n\rightarrow\infty}{V_n^{(\om)}(0)}=\BE\left[h_0(\om)\right]=D.
\end{equation}
On the other hand, for every $K\in\BR$ and every $\ep>0$ we have
$\ep\sqrt{n}>K$ for sufficiently large $n$, and therefore
$f_{\ep\sqrt{N}}\leq f_K$. So $P$-almost surely
\begin{equation}
\limsup_{n\rightarrow\infty}{V_n^{(\om)}(\ep)}\leq
\BE\left[D_1^{(\om)} \ind_{\{\min_{i,j}|(D_1^{(\om)})_{i,j}|\geq K
\}}\right]\underset{_{K\rightarrow\infty}}{\longrightarrow}~ 0.
\end{equation}
Where in order to apply the Dominated Convergence, we used the fact
that
$M_1^{(\om)}\in L^2$.
\end{proof}

We are now ready to prove the high dimensional Central Limit Theorem

\begin{proof}[Proof of Theorem \ref{CLT2}]
Due to Theorem \ref{Modified_CLT} it is enough to prove that for
$P$-almost every $\om\in\Om_0$
\begin{equation}
\lim_{n\rightarrow\infty}{\frac{\chi(X_n,\om)}{\sqrt{n}}}=0~~~P_\om~a.s.
\end{equation}
This will follow if we will show that there exists a constant $K>0$ such
that for every $\ep >0$ and for $P$-almost every $\om\in\Om_0$
\begin{equation}
\lim_{n\rightarrow\infty}{P_\om\{|\chi(X_n,\om)|>\ep\sqrt{n}\}}<K\ep.
\end{equation}
By Theorem \ref{asymptotic_X_n} and the Markov inequality, there exists a random $c=c(\om>0$, $P$ almost surely finite, such that
that for $P$-almost every $\om\in\Om_0$
\begin{equation}
P_\om\left[\|X_n\|>\frac{1}{\ep}\sqrt{n}\right]\leq
\ep\frac{\BE_\om(\|X_n\|)}{\sqrt{n}}\leq c\ep .\label{Markov_X_n}
\end{equation}
We therefore get
\begin{equation}
\begin{aligned}
P_\om\left(|\chi(X_n,\om)|>\ep\sqrt{n}\right) \nonumber
\leq P_\om\left(\|X_n\|>\frac{\sqrt{n}}{\ep}\right)+
P_\om\left(\chi(X_n,\om)>\ep\sqrt{n},\|X_n\|\leq\frac{\sqrt{n}}{\ep}\right).\nonumber
\end{aligned}
\end{equation}
By \eqref{Markov_X_n} we find that this is less or equal than
\begin{equation}
c\ep + \sum_{x\in
\CP(\om)}P_\om^n(0,x)\ind_{\left\{|\chi(x,\om)|>\ep\sqrt{n},~x\in\left[-\frac{\sqrt{n}}{\ep},\frac{\sqrt{n}}{\ep}\right]\right\}}.
\nonumber
\end{equation}
Using now Theorem \ref{bound_of_transitions} for sufficiently $n$ if
follows that
\begin{equation}
P_\om\left(|\chi(X_n,\om)|>\ep\sqrt{n}\right)\leq c\ep
+\frac{K_1}{(n-K_2)^\frac{d}{2}}\sum_{x\in \CP(\om)\cap
\left[-\frac{\sqrt{n}}{\ep},\frac{\sqrt{n}}{\ep}\right]}{\ind_{\left\{\chi(x,\om)>\ep\sqrt{n}\right\}}}.
\nonumber
\end{equation}
Therefore by Theorem \ref{sub_linearity_everywhere} we get that
there exist constants $c_0,K$ such that
\begin{equation}
\lim_{n\rightarrow\infty}{P_\om(|\chi(X_n,\om)|>\ep\sqrt{n})}\leq c\ep
+c_0\ep^2\leq K\ep \nonumber
\end{equation}
As required.

\end{proof}


\section{Some Conjectures And Questions}

While we have full classification of transience recurrence of random
walks on discrete point processes in dimensions $d=1$ and $d\geq 3$,
we only have a partial classification in dimension 2. We therefore
give the following two conjectures:

\begin{conj}
There are transient two dimensional random walks on discrete point processes.
\end{conj}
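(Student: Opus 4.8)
The plan is to exhibit a single admissible two-dimensional environment whose coordinate-nearest-neighbour network is transient and then to promote this to an almost-sure statement exactly as elsewhere in the paper. Since $G(\om)$ is a.s.\ connected (as in the proof of Theorem \ref{tran_recu3}), transience of the walk is a function of $\om$ alone; re-rooting a transient network at a neighbour leaves it transient, so the event $B=\{\om\in\Om_0:\text{the walk on }\om\text{ is transient}\}$ satisfies $P_\om(\gth_{X_1}\om\in B)=1$ for $P$-a.e.\ $\om\in B$, hence is a zero-one event by Lemma \ref{lemma_sec2}. It therefore suffices to construct a stationary ergodic $P$ for which the walk is transient with positive probability; any one good configuration of positive probability then forces $\BP$-almost sure transience.

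Next I would pin down the regime. Theorem \ref{tran_recu2} rules out recurrence being avoided when the size-biased law $k\,P(f_{e_i}=k)$ has a Cauchy tail (in particular when $\BE(f_{e_i}^2)<\infty$), while the proof of Theorem \ref{LLN} forces $\BE_P(f_{e_i})<\infty$ in every admissible model, so $\CP(\om)$ always has a fixed positive density along each axis. Thus a transient example must satisfy $\BE_P(f_{e_i})<\infty$ but $\BE_P(f_{e_i}^2)=\infty$, say with $P(f_{e_i}=k)\sim k^{-\alpha}$ for $2<\alpha<3$; then the conductances $c(e)$ built by the ``cutting'' construction in the proof of Theorem \ref{tran_recu2} are heavy-tailed with $\BE[c(e)]=\infty$, the cutset sums $\sum_n C_{\Pi_n}^{-1}$ converge, and the Nash--Williams obstruction disappears. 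To prove transience, rather than merely to remove its obstruction, I would construct a unit flow from the origin to infinity on $G(\om)$ of finite energy and invoke the flow (``T.\ Lyons'') criterion for transience, see \cite{LP97}; equivalently one may try to show that $G''(\om)$ contains a spanning transient subnetwork, or that it satisfies $|\partial S|\geq c|S|^{1-\gamma}$ with $\gamma<\tfrac12$ and feed this into Theorem \ref{MP_theorem}---the latter being a priori possible because $G''(\om)$, though drawn in $\BZ^2$, need not be planar as an abstract graph and so is not obstructed by a Lipton--Tarjan separator.

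For the construction I would use a dyadically renormalised, hierarchical point process: at scale $2^k$ designate a sparse family of ``highway'' rows and columns in which consecutive points of $\CP(\om)$ are spaced at distance of order $2^{\beta k}$ for a parameter $\beta\in(0,1)$ tuned to $\alpha$, while off the highways points are dense so that every line retains positive density and the law is admissible; the hierarchical pattern is randomised in the standard way to make $P$ stationary and ergodic. A unit flow is then produced scale by scale, each step pushing the current flow onto the highways of the next scale. Since a highway edge of Euclidean length $2^{\beta k}$ carries conductance of order $2^{\beta k}$ in $G'(\om)$, the energy spent at scale $k$ is of order $N_k\phi_k^2\,2^{-\beta k}$, where $N_k$ is the number of highway edges used at scale $k$ and $\phi_k$ the flow through each; choosing $\beta$ and $\alpha$ appropriately makes $\sum_k N_k\phi_k^2\,2^{-\beta k}<\infty$, and summing over scales gives a flow of finite energy, hence transience.

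The main obstacle is the tension between the forced bound $\BE_P(f_{e_i})<\infty$---which keeps the density of $\CP(\om)$ positive on every axis, so the bulk of $G''(\om)$ genuinely looks like a two-dimensional lattice and is recurrent on its own---and the need for a finite-energy flow, whose existence requires super-diffusive large-scale geometry created solely by the rare long edges. Bounding the energy of the flow while it crosses the ``typical'', locally $\BZ^2$-like regions between consecutive highways, and at the same time verifying stationarity and ergodicity of the hierarchical environment, is where the real difficulty lies; a possibly cleaner alternative would be to transport a known transient long-range-percolation configuration (cf.\ \cite{Be01}) onto $\BZ^2$ in a manner compatible with the coordinate-nearest-neighbour adjacency.
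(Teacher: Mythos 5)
You are trying to prove something the paper itself leaves open: this statement appears in Section 11 as a conjecture, with no proof given, so there is nothing to compare your argument to except its own internal coherence. Judged on that basis, what you have written is a programme, not a proof: the reduction via Lemma \ref{lemma_sec2} to a zero-one event and the observation that any example must have $\BE_P(f_{e_i})<\infty$ but must violate the size-biased tail condition \eqref{Cauchy_tail_ass} are both sound, but the actual construction (a stationary ergodic hierarchical environment satisfying Assumption \ref{Assumptions} for both coordinate shifts) and the finite-energy flow are only sketched, and you yourself flag the crossing of the ``typical'' regions as unresolved. Escaping the hypothesis of Theorem \ref{tran_recu2} removes an obstruction to transience; it does not produce transience.

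More importantly, the one quantitative step you do write down rests on a miscomputation of the conductances. In this model the walk jumps to each coordinate-nearest neighbour with probability $\tfrac{1}{2d}$ regardless of distance, so the reversible network is $G''(\om)$ with conductance $1$ on every edge; the conductances $k$ in the paper's refined network $G'(\om)$ are attached to the $k$ unit segments of a subdivided edge, and in series they recombine to resistance $k\cdot\tfrac1k=1$. Hence a highway edge of Euclidean length $2^{\beta k}$ has resistance $1$, not $2^{-\beta k}$: long edges buy geometric reach per unit of resistance, never enhanced conductance. Your scale-$k$ energy is therefore $N_k\phi_k^2$, not $N_k\phi_k^2\,2^{-\beta k}$, and with this correction the claimed summability $\sum_k N_k\phi_k^2\,2^{-\beta k}<\infty$ no longer supports finiteness of the flow energy. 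Whether a stationary ergodic point process with finite-mean, heavy-tailed spacings can supply enough long, unit-resistance shortcuts (in the spirit of long-range percolation \cite{Be01}, but with the severe constraints of degree exactly $2d$ and edges confined to coordinate lines) to carry a finite-energy unit flow in the sense of \cite{LP97} is exactly the content of the conjecture, and your argument does not settle it.
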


\begin{conj}
The condition given in Theorem \ref{tran_recu2}, for recurrence of
2-dimensional random walk on discrete point process, i.e, the
existence of a constant $C>0$ such that
\begin{equation}
\sum_{k=N}^{\infty}{\frac{k\cdot
P(f_{e_i}=k)}{\BE(f_{e_i})}}\leq\frac{C}{N}~~~i\in\{1,2\}~~N\in\BN
\end{equation}
is not necessary.
\end{conj}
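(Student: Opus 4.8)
The plan is to exhibit an explicit two‑dimensional discrete point process which is $\BP$‑almost surely recurrent and yet violates the tail bound \eqref{Cauchy_tail_ass}. Fix a law $\mu$ on $\BN^{+}$ with $\gcd\{k:\mu(k)>0\}=1$, finite mean $m=\sum_{k}k\mu(k)$, and infinite second moment, chosen so that the size‑biased tail is only barely summable; e.g. $\mu(k)\asymp k^{-3}\log k$, for which $\sum_{k\ge N}k\mu(k)\asymp N^{-1}\log N$. Let $(\xi^{(c)})_{c\in\BZ}$ be independent stationary renewal point processes on $\BZ$ with inter‑arrival law $\mu$, and set $\CP(\om)=\{(c,h)\in\BZ^{2}:h\in\xi^{(c)}\}$; this defines a measure $Q$ on $\Om=\{0,1\}^{\BZ^{2}}$. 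Stationarity is immediate, $\theta_{e_{1}}$‑ergodicity holds because $Q$ is a product over columns (a Bernoulli shift), and $\theta_{e_{2}}$‑ergodicity holds because the diagonal shift on a countable product of mixing systems is mixing while a renewal process with aperiodic inter‑arrival law is mixing; Assumption~\ref{Assumptions}(2) holds since the intensity $1/m$ is positive.

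Under $P=Q(\cdot\mid\Om_{0})$ the column $\xi^{(0)}$ is conditioned to contain the origin, i.e. it becomes the Palm version of the renewal process, whose gap to its next point has law $\mu$; thus $P(f_{e_{2}}=k)=\mu(k)$, $\BE_{P}(f_{e_{2}})=m<\infty$, and $\sum_{k\ge N}k\,P(f_{e_{2}}=k)/\BE_{P}(f_{e_{2}})\asymp N^{-1}\log N$, which is not $O(1/N)$; hence \eqref{Cauchy_tail_ass} fails (for $i=2$; the other direction $f_{e_{1}}$ is geometric and harmless), so this process lies genuinely outside the scope of Theorem~\ref{tran_recu2}.

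For recurrence, the point is that the ``coordinate‑nearest‑neighbour'' graph $G(\om)$ on $\CP(\om)$ is a genuinely two‑dimensional object: it is $4$‑regular, it is the union of the pairwise disjoint bi‑infinite row paths $R_{h}=\{c:h\in\xi^{(c)}\}$ and column paths $\xi^{(c)}$, any row meets any column in at most one vertex, and for the Euclidean squares $S_{n}=\CP(\om)\cap[-n,n]^{2}$ one has $|S_{n}|\sim 4n^{2}/m$ while $|\partial_{E}S_{n}|=O(n)$ almost surely — each of the $O(n)$ rows and columns meeting the square contributes a bounded number of boundary edges, once a Borel--Cantelli argument has ruled out rows or columns of $[-n,n]$ left unoccupied inside $[-n,n]$. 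Since $\BE_{P}(f_{e})<\infty$ is forced (as in the proof of Theorem~\ref{LLN}), both coordinates of the walk spread at most diffusively, so heuristically $X_{n}$ behaves like a two‑dimensional random walk and should be recurrent. I would try to make this rigorous from the isoperimetric estimate $\Phi_{G(\om)}(u)\le C(\om)\,u^{-1/2}$ just obtained, combined with the criteria relating an (anchored) isoperimetric profile of this order to recurrence, or equivalently with an on‑diagonal heat‑kernel lower bound $p^{n}_{\om}(0,0)\gtrsim (n\,\mathrm{polylog}\,n)^{-1}$ obtained by adapting the $d$‑dimensional estimates behind Theorem~\ref{tran_recu3} to the present infinite‑variance step distribution.

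The main obstacle — and presumably the reason the statement appears as a conjecture — is that the Nash--Williams argument used for Theorem~\ref{tran_recu2} provably does not suffice here. To turn the cutsets $\partial_{E}S_{n}$ into a pairwise disjoint family one must, as in that proof, subdivide each edge of length $k$ into $k$ unit edges of conductance $k$; then $\sum_{e\in\partial_{E}S_{n}}c(e)$ becomes a sum of $\Theta(n)$ i.i.d. size‑biased gaps, and because $\mu$ has infinite second moment — precisely the regime in which \eqref{Cauchy_tail_ass} can fail — this sum is of order $n\cdot\mathrm{polylog}(n)$, so $\sum_{n}\bigl(\sum_{e\in\partial_{E}S_{n}}c(e)\bigr)^{-1}<\infty$ and the criterion is silent. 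Hence the real work is a finer recurrence argument for $G(\om)$: either an adaptive family of ``wiggly'' cutsets that pass only through short edges while still bounding the number of row‑crossings, or a local‑limit‑theorem/heat‑kernel analysis of the environment seen from the walk (which is ergodic by Proposition~\ref{ergodic_particle_view}). This is the step I expect to require genuinely new input.
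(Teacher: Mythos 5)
This statement is a conjecture in the paper, so there is no proof of it there to compare with; your proposal must therefore be judged on whether it actually settles the statement, and it does not. The first half is in reasonable shape: the columnwise-independent renewal construction with interarrival law $\mu(k)\asymp k^{-3}\log k$ is a legitimate two-dimensional discrete point process (stationarity, ergodicity in both coordinate directions, and Assumption \ref{Assumptions}(2) hold for the reasons you give), and under the Palm measure $P$ the vertical gap $f_{e_2}$ has finite mean with $\sum_{k\ge N} k\,P(f_{e_2}=k)/\BE_P(f_{e_2})\asymp N^{-1}\log N$, so \eqref{Cauchy_tail_ass} indeed fails while the model hypotheses still hold. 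That correctly produces a process outside the scope of Theorem \ref{tran_recu2}, which is the right first move.

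The genuine gap is the recurrence of the walk on this example, and you acknowledge it yourself; without it the conjecture is untouched, since the conjecture asserts the existence of a \emph{recurrent} process violating \eqref{Cauchy_tail_ass}, and exhibiting a process that merely violates the condition proves nothing. The routes you sketch do not close the gap. An upper bound on the isoperimetric profile, $\Phi(u)\lesssim u^{-1/2}$, does not by itself imply recurrence: to convert small edge boundaries of Euclidean boxes into a Nash--Williams estimate one must make the cutsets pairwise disjoint, which is exactly the edge-subdivision step whose conductances become size-biased gaps; in the infinite-variance regime their sums are of order $n\,\mathrm{polylog}(n)$, so the criterion is silent --- as you observe. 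The heat-kernel route is equally inconclusive in the form stated: an on-diagonal lower bound $p^n_\om(0,0)\gtrsim (n\,\mathrm{polylog}\,n)^{-1}$ gives $\sum_n p^n_\om(0,0)=\infty$ only if the polylogarithmic correction is at most of order $\log n$, and nothing in the sketch controls it to that precision. What is missing is precisely the new recurrence argument (adapted cutsets threading only short edges, a sharp local limit theorem, or an effective-resistance lower bound along rows and columns), i.e. the heart of the matter; as written, the proposal is a plausible program for a counterexample, not a proof of the conjecture.
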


In Theorem \ref{CLT2} we gave conditions for the random walk on
discrete point processes to satisfy a Central Limit Theorem.
However, we didn't give any example for a random walk without
a Central Limit Theorem. We therefore give the following conjecture:

\begin{conj}
There are random walks on discrete point processes in high
dimensions
that don't satisfy a Central Limit Theorem.
\end{conj}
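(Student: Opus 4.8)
The plan is to construct, for every $d\ge2$, a point process satisfying Assumption \ref{Assumptions} whose walk is superdiffusive along one coordinate, so that $X_n/\sqrt n$ is not tight under the annealed law $\BP$ and hence converges to no Gaussian; since the quenched CLT of Theorem \ref{CLT2} forces its annealed counterpart, this refutes the CLT. (By the argument of Section 3, $\BE_P(f_e)<\infty$ is automatic, so the mean of a gap cannot be made infinite; but an infinite \emph{second} moment is exactly what Assumption \ref{assumption3} forbids and what we exploit.)

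\emph{The environment.} Fix $\alpha\in(1,2)$ and let $\mu$ be the probability law on $\BN^+$ with $\mu(k)=c_\alpha\,k^{-(1+\alpha)}$; then $m:=\sum_k k\,\mu(k)<\infty$, $\sum_k k^2\,\mu(k)=\infty$, and $\gcd\{k:\mu(k)>0\}=1$. Writing $\BZ^d=\BZ\times\BZ^{d-1}$ and a site as $(x,\vec y)$, let $Q$ be the law under which the lines $\ell_{\vec y}=\{(x,\vec y):x\in\BZ\}$ carry independent copies of the stationary renewal process on $\BZ$ with inter-arrival distribution $\mu$. I would check Assumption \ref{Assumptions}: $Q$ is $\gth_{e_1}$-stationary since each line process is shift-stationary, and $\gth_{e_i}$-stationary for $i\ge2$ since $\gth_{e_i}$ permutes i.i.d.\ lines; it is $\gth_{e_i}$-ergodic for $i\ge2$ (a shift on an i.i.d.\ family), and $\gth_{e_1}$-ergodic because $(\Om,Q,\gth_{e_1})$ is an infinite product of copies of the single renewal system, which (having aperiodic jumps) is mixing by standard renewal theory, and an infinite product of mixing systems is mixing; finally $Q(\CP(\om)=\emptyset)=0$ and $Q(0\in\CP(\om))=1/m>0$. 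Under the Palm measure $P$ the line through the origin becomes its Palm renewal process, so $f_{e_1}$ has law $\mu$, whence $\BE_P(f_{e_1}^{2})=\infty$: Assumption \ref{assumption3} fails and Theorem \ref{CLT2} does not apply (for $i\ge2$, $f_{e_i}$ has exponential tails).

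\emph{A single big jump.} It suffices to exhibit $c_0>0$ with $\sup_n\BP(|X_n\cdot e_1|>s\sqrt n)\ge c_0$ for every $s>0$, which contradicts tightness of $\{X_n/\sqrt n\}$. Fix a large constant $t$ and put $L=L(n)=2t\,n^{1/\alpha}$. Because the Markov kernel $\Lambda$ preserves $P$ (Lemma \ref{lem_chain_environments}) and is ergodic (Proposition \ref{ergodic_particle_view}, Theorem \ref{Thm_mutual_ergodic}), $\gth_{X_{k-1}}\om$ has law $P$ for every $k$ under $\BP$, so the expected number $N_n$ of steps $k\le n$ at which the walk traverses an $e_1$-gap of length $>L$ equals exactly $\tfrac1d\,n\,P(f_{e_1}>L)\asymp n\,L^{-\alpha}\asymp t^{-\alpha}$, a positive constant not depending on $n$. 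A second-moment bound on $N_n$ (its variance is also $O(1)$) gives, with probability bounded below, at least one such step $\tau\le n$, at which $X_\tau-X_{\tau-1}=\pm L'e_1$ with $L'>L$; on the further positive-probability event that exactly one occurs, the net contribution of big-gap crossings to $X_n\cdot e_1$ equals $\pm L'$. The remaining increments all traverse $e_1$-gaps of length $\le L$, and their sum is a lower-order perturbation: re-running the estimates of Section 6 for the lazy walk obtained by freezing the walk whenever it would traverse an $e_1$-gap longer than $L$ — for which the relevant mean squared $e_1$-step is $\asymp\BE_P[f_{e_1}^2\wedge L^2]\asymp L^{2-\alpha}$ — bounds this bulk displacement by $O\!\big(L^{1-\alpha/2}\sqrt n\big)=O\!\big(t^{1-\alpha/2}n^{1/\alpha}\big)$, hence by $t\,n^{1/\alpha}$ with probability $\ge\tfrac12$ once $t$ is large. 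Combining, on an event of probability at least a fixed $c_0=c_0(t,\alpha)>0$ we get $|X_n\cdot e_1|\ge L'-t\,n^{1/\alpha}\ge t\,n^{1/\alpha}$; since $t\,n^{1/\alpha}=\big(t\,n^{1/\alpha-1/2}\big)\sqrt n$ and $1/\alpha-1/2>0$, choosing $n$ with $t\,n^{1/\alpha-1/2}>s$ gives the claim. In fact the same analysis ought to show $n^{-1/\alpha}X_n\cdot e_1$ converges to a symmetric $\alpha$-stable law, a strong quantitative form of CLT failure.

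\emph{The main obstacle.} The work lies in the two decoupling statements above: (i) that $\BP(N_n\ge1)$ and $\BP(N_n=1)$ are bounded below, which needs control of the correlations between big-gap crossings at different times (the walk may revisit the same rare long gap); and (ii) that the bulk displacement — after conditioning on the unique big jump — is genuinely of order $n^{1/\alpha}$ and roughly independent of that jump, i.e.\ that the walk does not systematically undo its large excursions; one must also remember that the constant in Theorem \ref{asymptotic_X_n} (and in its lazy-walk analogue) is only almost surely finite, so the argument is run on a set of environments of $P$-measure close to $1$ on which it is bounded. All of this reduces to the increments being weakly enough dependent. It is transparent for $d\ge3$, where the walk is transient (Theorem \ref{tran_recu3}): a long gap is re-crossed only finitely often and the walk couples easily across the big step. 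For $d=2$ the walk may be recurrent, so one must organize the argument around a regeneration structure or an exit-time decomposition of excursions near a long gap — the point I expect to be hardest, being of the same nature as the known difficulties in establishing stable scaling limits for random walks in media with heavy-tailed disorder.
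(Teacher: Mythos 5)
This statement appears in the paper only as a conjecture, with no proof offered, so there is no argument of the paper to compare yours against; I can only judge the proposal on its own terms. Your environment (independent lines in the $e_1$-direction carrying stationary renewal processes with inter-arrival law $\mu(k)\asymp k^{-(1+\alpha)}$, $\alpha\in(1,2)$) is a reasonable candidate, and your verification of Assumption \ref{Assumptions}, of $\BE_P(f_{e_1}^2)=\infty$, and of the fact that quenched convergence to $N(0,D)$ would force annealed tightness, are all essentially fine. But the proposal is not a proof, and the gap is exactly at the decisive step, which you yourself flag as ``the main obstacle.''

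The sanity check that shows the gap is genuine: every ingredient you actually use -- stationarity of the environment seen from the walker, the computation $\BE[N_n]=\tfrac1d\,nP(f_{e_1}>L)$, a second-moment bound on $N_n$, and a truncated-walk bulk estimate -- is equally available in $d=1$, yet Theorem \ref{CLT1} shows that in $d=1$ the CLT holds with variance $\BE_P(f_e)^2$ no matter how heavy the tail of $f_e$: there the walk recrosses each long gap and the signed crossings cancel, so the displacement is governed by the first moment only. Hence any correct argument must rest entirely on the cancellation/decorrelation structure that you defer: the assertion that the variance of $N_n$ is $O(1)$ is unjustified (a walk that finds one long gap may recross it many times, inflating the second moment, especially in $d=2$ where the transversal coordinate is recurrent and the walk returns to the exceptional line infinitely often); the event that exactly one \emph{net} signed crossing occurs is not the same as $N_n=1$ and is not shown to have probability bounded below; and the statement that the net contribution of big-gap crossings equals $\pm L'$ presupposes precisely the non-cancellation you need to prove. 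The bulk bound is also not available as claimed: Theorem \ref{asymptotic_X_n} requires Assumption \ref{assumption3} and yields only an almost surely finite, environment-dependent constant with no tail control and no uniformity in the truncation level, while your truncation $L=L(n)$ grows with $n$ and the ``frozen'' walk has different dynamics, so the uniform $O(L^{1-\alpha/2}\sqrt n)$ estimate does not follow from the paper and would need an independent proof together with a coupling to the true walk. In short, the construction is plausible and the heuristic ($n^{1/\alpha}$, stable-like scaling in $d\ge2$ thanks to line-to-line independence) is believable, but the conjecture remains unproved by this proposal: what is written is an outline whose unproved steps are the entire mathematical content.
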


In the proof of Theorem \ref{CLT2}, we used the additional
assumption that there exists $\ep_0>0$ such that for every
coordinate direction $e$ $E_P[f_e^{2+\ep_0}]<\infty$. The assumption
that the second moments are finite, is fundamental in our proof in
order to build the corrector, and seems to be necessary for the CLT
to hold. On the other hand, existence of such $\ep_0>0$ though
needed in our proof, was used only in order to bound
\eqref{distance_prop_3}. We therefore give the following condition:

\begin{conj}
Theorem \ref{CLT2} is true even with the weak assumption that only the second moments are finite.
\end{conj}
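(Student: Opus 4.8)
The plan is to pinpoint exactly where Assumption \ref{assumption3} enters the proof of Theorem \ref{CLT2} and to replace that one step by an argument using only the finiteness of the second moments of $\{f_{\pm e_i}\}_{i=1}^d$, i.e.\ $f_e\in L^2(P)$ for every $e\in\CE$. Inspecting Sections 6--10, the construction of the corrector $\chi$ (Theorem \ref{corrector_thm}) and the spectral lemmas need only $V\in L^2$, which holds as soon as the second moments are finite; the coordinate sublinearity (Theorem \ref{coordinate_sublinearity}), the everywhere-sublinearity (Theorem \ref{sub_linearity_everywhere}), the heat-kernel bound (Claim \ref{bound_of_transitions}), and the Modified-walk CLT (Theorem \ref{Modified_CLT}, whose only moment input is $M_1^{(\om)}\in L^2$, already implied by the second moments together with \eqref{square_integrability}) are all insensitive to the extra hypothesis. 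The only place where $\ep_0>0$ is genuinely used is part $(3)$ of the Proposition preceding Theorem \ref{asymptotic_X_n}, namely the uniform-in-$n$ finiteness of $\sum_{x,y\in\CP(\om)}\ind_{\{y\in N_x(\om)\}}(g_n(x)+g_n(y))\|x-y\|^2$, which is what upgrades the $M_n$-level diffusivity to the bound $\BE_\om(\|X_n\|)\le c\sqrt n$ of Theorem \ref{asymptotic_X_n}; that bound is in turn used in the proof of Theorem \ref{CLT2} (through \eqref{Markov_X_n}) to confine $X_n$ to a ball of radius $O(\sqrt n)$. Thus it suffices to prove Theorem \ref{asymptotic_X_n} assuming only $f_e\in L^2(P)$.

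By the identity derived in that proof, the quantity in question equals $2\sum_{e\in\CE}\big(E_\om[f_e^2\circ\gth^{X_n}]+E_\om[f_e^2\circ\gth^{X_{n-1}}]\big)$, and since the chain on environments has transition operator $\Lambda$ we have $E_\om[f_e^2\circ\gth^{X_n}]=(\Lambda^n f_e^2)(\om)$. So the task is to show that $\sup_{n\ge0}(\Lambda^n f_e^2)(\om)<\infty$ for $P$-almost every $\om$, with $f_e^2\in L^1(P)$ but not necessarily in $L^2$. Here one should exploit the fact, established in Section 7, that $\Lambda$ is a symmetric (self-adjoint on $L^2(P)$), positivity-preserving, $P$-preserving Markov operator with $\operatorname{sp}(\Lambda)\subseteq[-1,1]$, i.e.\ the environment chain is reversible. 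For such operators the relevant tool is the discrete symmetric-Markov maximal ergodic theorem: applying Stein's (or Rota's ``alternierende Verfahren'') maximal inequality to the positive self-adjoint contraction $\Lambda^2$ and composing once more with the contraction $\Lambda$, one obtains $\|\sup_n|\Lambda^n h|\|_p\le C_p\|h\|_p$ for every $1<p\le\infty$; taking $h=f_e^2$ then gives $\sup_n(\Lambda^n f_e^2)(\om)<\infty$ a.e.\ as soon as $f_e^2\in L^p(P)$ for some $p>1$, which is precisely Assumption \ref{assumption3} and identifies it as the natural hypothesis behind the Nevo--Stein step used in the paper. To push down toward $L^1$ one would subtract the mean, $f_e^2=\BE_P[f_e^2]+\bar\phi$ with $\bar\phi\in L^1_0(P)$ (using that $\Lambda$ is ergodic, Proposition \ref{ergodic_particle_view}), handle a bounded truncation $\bar\phi_L$ harmlessly, and try to control $\sup_n|\Lambda^n r_L|$ for the small-$L^1$-norm tail $r_L=\bar\phi-\bar\phi_L$. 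This forces one to use structure specific to the environment chain: because $\si_{-e_i}=\si_{e_i}^{-1}$ and there are no $\CP(\om)$-points strictly between consecutive coordinate-nearest-neighbours, the chain $\gth_{X_n}\om$ is a measure-preserving factor of the simple random walk on the Cayley graph of the free group $F_d$ generated by $\{\si_{\pm e_i}\}_{i=1}^d$ (a $2d$-regular tree), which is transient with heat kernel decaying like $n^{-3/2}(\sqrt{2d-1}/d)^n$; the hope is to transfer a better-than-$L^1$ maximal estimate, or a spectral bound $\|\Lambda\|_{L^2_0(P)}<1$ valid when the factor action is tempered, from the tree to $\Lambda$.

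The main obstacle is exactly this endpoint: for a generic symmetric Markov operator the maximal inequality $\|\sup_n|\Lambda^n h|\|_1\lesssim\|h\|_1$ is \emph{false}, even in weak form, and correspondingly spherical/power averages on a free group are not pointwise ergodic in $L^1$ (this is why one cannot simply invoke Theorem \ref{Birkhoff-generalization} with $p=1$; the $L^1$-valid C\'esaro averages $\mu_m$ only yield $E_\om[f_e^2\circ\gth^{X_n}]=o(\sqrt n)$, which after rerunning the telescoping of Theorem \ref{asymptotic_X_n} with the entropy bound $Q(n)\le\log(n+1)$ gives the super-diffusive $\BE_\om(\|X_n\|)=o(n^{3/4}\sqrt{\log n})$, not enough). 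What is available without new ideas is the intermediate result: Rota's maximal theorem yields convergence, hence boundedness, of $(\Lambda^2)^n h$ for $h\in L\log L$, so the whole argument and therefore Theorem \ref{CLT2} go through verbatim under the strictly weaker hypothesis $\BE_P\big[f_e^2\log^+ f_e\big]<\infty$ in place of Assumption \ref{assumption3}. Reaching bare $\BE_P[f_e^2]<\infty$ is the genuinely hard part; the route I would pursue seriously is to establish a quantitative decay estimate for $\Lambda^n$ on mean-zero functions coming from nonamenability of $F_d$ and combine it, via a careful truncation, with the on-diagonal heat-kernel bound of Claim \ref{bound_of_transitions} to obtain $\sup_n(\Lambda^n f_e^2)(\om)<\infty$ a.e.\ directly from $f_e^2\in L^1(P)$. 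Proving such an estimate for this particular family of environments appears to be precisely what is missing, which is why the statement is posed only as a conjecture.
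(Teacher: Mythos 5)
This statement is posed in the paper as a conjecture and the paper contains no proof of it, so there is nothing to match your argument against; the only question is whether your proposal actually closes the gap, and by your own account it does not. Your diagnosis of where Assumption \ref{assumption3} enters is accurate and agrees with the paper's own remark in the final section: the exponent $\ep_0>0$ is used only to bound \eqref{distance_prop_3}, i.e.\ to get $\sup_n \big(E_\om[f_e^2\circ\gth_{X_n}]+E_\om[f_e^2\circ\gth_{X_{n-1}}]\big)<\infty$ via the Nevo--Stein theorem (Theorem \ref{Birkhoff-generalization}), which needs $f_e^2\in L^p(P)$ for some $p>1$; everything else (corrector construction, sublinearity, Claim \ref{bound_of_transitions}, Theorem \ref{Modified_CLT}) only needs $V\in L^2$, hence finite second moments. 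But the conjecture asks precisely for the endpoint case $f_e^2\in L^1(P)$, and your proposal concedes that the maximal/pointwise-ergodic machinery you invoke (Stein--Rota for self-adjoint Markov operators, spherical averages on $F_d$) breaks down at $p=1$. The substitute routes you sketch are not arguments: the hoped-for spectral bound $\|\Lambda\|_{L^2_0(P)}<1$ is in general false for environment chains of this type (if the spectral measure $\mu_V$ were supported away from $1$ there would be no need for the Kipnis--Varadhan construction at all; the paper only proves the integrability \eqref{spectral_measure}, not a gap), and transferring tree heat-kernel decay to $\Lambda$ is exactly the unproved step. So as a proof of the conjecture the proposal has a genuine, self-acknowledged gap: a.e.\ boundedness in $n$ of $\Lambda^n f_e^2$ under only $\BE_P[f_e^2]<\infty$ is never established, and without it Theorem \ref{asymptotic_X_n}, hence \eqref{Markov_X_n}, hence the confinement step in the proof of Theorem \ref{CLT2}, is not available.

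Two further cautions about the partial claims you do make. First, the asserted intermediate result that Theorem \ref{CLT2} holds under $\BE_P[f_e^2\log^+ f_e]<\infty$ is plausible (Rota/Stein-type maximal theorems do extend to $L\log L$ for powers of a positive self-adjoint Markov contraction), but it is a new statement requiring a careful proof, in particular handling odd times and verifying that a.e.\ convergence of $(\Lambda^2)^n f_e^2$ really yields the uniform-in-$n$ bound needed in part (3), so it should not be presented as established "without new ideas." Second, your claim that Ces\`aro averages would still give $\BE_\om(\|X_n\|)=o(n^{3/4}\sqrt{\log n})$ is a heuristic that you have not substantiated (the telescoping in the proof of Theorem \ref{asymptotic_X_n} uses the bound \eqref{distance_prop_4} at every step $n$, not an averaged version). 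None of this detracts from the correct core observation, which is the same one the paper makes: the obstruction is exactly the $p=1$ endpoint of the ergodic-maximal input, and that is why the statement remains a conjecture.
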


Even if the theorem is true with the weak assumption that only the
second moment of the distances between points is finite, we can still ask the following question:

\begin{conj}
Is the condition given in Theorem \ref{CLT2} also necessary, or can
one find examples for random walks on discrete point processes that
satisfy a Central Limit Theorem but don't have all of their second
moments finite? We conjecture that such examples exist, but didn't verified it.
\end{conj}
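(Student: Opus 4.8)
\textbf{The plan} is to confirm the conjecture by exhibiting, for every $d\geq 1$, a discrete point process satisfying Assumption \ref{Assumptions} whose random walk obeys a non-degenerate Central Limit Theorem while $\BE_P(f_{e_1}^2)=\infty$. For $d=1$ nothing new is needed: Theorem \ref{CLT1} gives $X_n/\sqrt n\overset{D}{=}N(0,\BE_P^2(f_e))$ under the sole hypothesis $\BE_P(f_e)<\infty$, so it suffices to take $Q$ to be the law of a stationary renewal process on $\BZ$ whose inter-point distance has an aperiodic law $\nu$ on $\BN$ with $\sum_k k\,\nu(\{k\})<\infty$ but $\sum_k k^2\,\nu(\{k\})=\infty$ (e.g. $\nu(\{k\})\asymp k^{-5/2}$); then Assumption \ref{Assumptions} holds, the CLT holds, and $\BE_P(f_e^2)=\sum_k k^2\nu(\{k\})=\infty$. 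The substance of the conjecture is therefore the case $d\geq 2$, in which the proof of Theorem \ref{CLT2} genuinely uses finite second moments, and it is there that a new construction is required.

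\textbf{A layered construction for $d\geq 2$.} Fix $\nu$ as above and let $\{S^{(y)}\}_{y\in\BZ^{d-1}}$ be i.i.d.\ stationary renewal processes on $\BZ$ with gap law $\nu$; set
\[
\CP(\om)=\{(a,y)\in\BZ\times\BZ^{d-1}:a\in S^{(y)}\},\qquad\text{i.e.}\qquad\om(a,y)=\ind_{\{a\in S^{(y)}\}}.
\]
I would verify Assumption \ref{Assumptions} as follows. Stationarity under $\gth_{e_1}$ holds because $\gth_{e_1}$ shifts every layer by $1$ and each layer is stationary, and under $\gth_{e_i}$ ($i\geq2$) because these shifts merely permute the i.i.d.\ layers; $Q(\CP(\om)=\emptyset)=0$ since every layer is a.s.\ infinite. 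Ergodicity under $\gth_{e_i}$, $i\geq2$, is the ergodicity of a Bernoulli shift on the i.i.d.\ field $(S^{(y)})_y$. Ergodicity under $\gth_{e_1}$ is the only subtle point: $\gth_{e_1}$ acts as the infinite diagonal product over $y\in\BZ^{d-1}$ of the shift on a single renewal configuration; a stationary renewal process with aperiodic gap law is mixing (via the renewal theorem), hence weakly mixing, and an infinite diagonal power of a weakly mixing system is ergodic (approximate an invariant function by one depending on finitely many layers and use ergodicity of the corresponding finite diagonal power). Finally, under $P$ the origin $(0,0)\in\BZ\times\BZ^{d-1}$ lies in $\CP(\om)$, so $0\in S^{(0)}$ and $f_{e_1}(\om)$ is the first gap of $S^{(0)}$ to the right of $0$; hence $\BE_P(f_{e_1}^2)=\sum_k k^2\nu(\{k\})=\infty$, so neither Assumption \ref{assumption3} nor the weaker requirement that all second moments be finite is met.

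\textbf{Proving the CLT.} Write $X_n=(A_n,Y_n)\in\BZ\times\BZ^{d-1}$; I would establish, for $P$-a.e.\ $\om$, a quenched joint CLT $X_n/\sqrt n\overset{D}{=}N(0,D)$ with $D$ block-diagonal, by treating the two groups of coordinates separately. The $\BZ^{d-1}$-coordinate $Y_n$ changes only on $\pm e_i$-steps, $i\geq2$, when the walk jumps to the nearest layer (in the relevant coordinate direction of $\BZ^{d-1}$) containing the current value $A_n$; since the layers are i.i.d., for each fixed $a$ the events $\{a\in S^{(y)}\}$ are i.i.d.\ over $y$, so these jump lengths are stochastically dominated by geometric variables and have all moments finite. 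Consequently the drift components $V\cdot e_i$ ($i\geq2$) lie in $L^2$, the spectral estimate behind Theorem \ref{corrector_thm} and the square-integrability and mean-zero bounds for the corrector along the coordinate directions all hold using only the light directions, and the argument leading to Theorem \ref{CLT2} goes through for the $Y$-part, yielding a corrector $\chi'$ for the $\BZ^{d-1}$-coordinates, its sublinearity along the walk, and a quenched CLT for $Y_n/\sqrt n$ — provided we also have the a priori bound $\BE_\om(\|X_n\|)\leq c\sqrt n$, which I expect to obtain from the $A$-analysis below rather than from Theorem \ref{asymptotic_X_n}. For $A_n$ the corrector is unavailable because $V\cdot e_1\notin L^2$; here I would substitute a time-change argument generalizing the proof of Theorem \ref{CLT1}. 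Let $\tau_1<\tau_2<\cdots$ list the $\pm e_1$-steps; by Theorem \ref{Thm_mutual_ergodic}, $\tau_j/j\to d$ a.s., and the skeleton $B_j:=A_{\tau_j}$ satisfies $B_{j+1}=$ the nearest point of the layer $S^{(Y_{\tau_j})}$ to the left or to the right of $B_j$, each with probability $\tfrac12$. One then seeks an identity of the form $B_j=t^{(Y_{\tau_j})}_{Z_j}$, where $Z_j$ is the signed number of skeleton steps and $t^{(y)}_{\cdot}$ counts the points of the layer $S^{(y)}$ from a local root, and combines (i) the simple-random-walk CLT for $Z_j$, (ii) the law of large numbers $t^{(y)}_k/k\to\BE_P(f_{e_1})$ holding \emph{simultaneously over all layers} — which needs only the \emph{first} moment of $\nu$ — and (iii) sublinearity of the re-rooting mismatch produced each time the walk switches layers, to obtain $A_n/\sqrt n\overset{D}{=}N\big(0,\BE_P^2(f_{e_1})/d\big)$. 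Joint convergence and the vanishing of the off-diagonal blocks of $D$ then follow because $e_1$-increments and $e_i$-increments ($i\geq2$) never occur on the same step.

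\textbf{The main obstacle} is entirely the $e_1$-component. Unlike in dimension one there is no single renewal profile the walk explores: the skeleton $B_j$ roams among infinitely many distinct i.i.d.\ layers, and at every layer switch the correspondence between a value $a\in\BZ$ and its index in the current layer jumps discontinuously; controlling the cumulative size of these jumps — equivalently, showing that the heavy tail of $\nu$ affects only an $o(\sqrt n)$ portion of $A_n$ — appears to require either a regeneration structure for the pair (walk, layer environment) or a quantitative comparison with the fixed-layer one-dimensional walk of Theorem \ref{CLT1}, and this is presumably exactly the point at which the authors stopped and recorded a conjecture. I would begin with $d=2$, where $Y_n$ is a recurrent one-dimensional walk and the stack of layers is simplest, and only afterwards address $d\geq4$, where $Y_n$ is transient and the skeleton meets essentially fresh layers, since the recurrent and transient regimes may well need to be handled by different arguments.
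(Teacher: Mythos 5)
This statement is one of the open problems in Section 11; the paper offers no proof of it, so your proposal has to stand on its own, and as it stands it does not: by your own admission the decisive step — a CLT for the $e_1$-coordinate $A_n$ in the layered construction — is left unresolved, and that is precisely the content of the conjecture for $d\geq 2$ (the $d=1$ observation is already implicit in Theorem \ref{CLT1} and does not touch the condition of Theorem \ref{CLT2}). Worse, there is a concrete reason to doubt that your i.i.d.-layers construction satisfies a CLT at all. The mechanism that saves the one-dimensional case despite $\BE_P(f_e^2)=\infty$ is exact cancellation: the walk is the deterministic embedding $t_{Y_n}$ of a simple random walk, so every heavy gap that is crossed is recrossed through the \emph{same} gap, and the heavy tail never accumulates. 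Your construction deliberately makes the layers independent, which destroys this mechanism. Each time the walk arrives at a value $a$ in a freshly visited layer $y'$ (conditioned on $a\in S^{(y')}$, the forward and backward gaps at $a$ are $\nu$-distributed by the renewal property), the next $\pm e_1$-step is a symmetric increment with law $\nu$, essentially independent of all previous $e_1$-increments taken in other layers. In $d\geq 3$ the walk is transient (Theorem \ref{tran_recu3}) and visits mostly fresh (value, layer) pairs, so $A_n$ behaves like a sum of order $n$ nearly independent symmetric steps in the domain of attraction of a stable law of index $3/2$ for your choice $\nu(\{k\})\asymp k^{-5/2}$; that gives fluctuations of order $n^{2/3}\gg\sqrt{n}$, i.e. plausibly a stable, not Gaussian, limit. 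So the "main obstacle" you flag is not merely a technical re-rooting estimate: the claimed identity $B_j=t^{(Y_{\tau_j})}_{Z_j}$ is false across layer switches, and the mismatch you need to be $o(\sqrt n)$ is exactly where the infinite variance re-enters.

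Two further gaps are internal to the outline even granting the construction. First, your treatment of the $Y$-coordinates via a partial corrector presupposes the a priori bound $\BE_\om(\|X_n\|)\leq c\sqrt n$; Theorem \ref{asymptotic_X_n} is unavailable (its part (3) uses Assumption \ref{assumption3} for \emph{all} directions), and you defer this bound to the very $A$-analysis that is missing — a circularity. Second, the "simultaneous over all layers" law of large numbers in step (ii) holds almost surely layer by layer, but with no uniformity in the rate over the $\asymp\sqrt n$ (or more) layers actually visited, which is what a quantitative comparison with the fixed-layer walk would require. If you want a construction with a fighting chance, you should keep the heavy gaps \emph{aligned across layers} (e.g. delete entire hyperplanes $\{a\}\times\BZ^{d-1}$ according to a one-dimensional heavy-tailed stationary ergodic set, and superimpose light i.i.d. thinning inside layers), so that the gap-recrossing cancellation of the one-dimensional proof survives layer switches; but then stationarity/ergodicity in the transverse directions and the independence structure you rely on for the $e_i$-jumps ($i\geq 2$) must be re-examined from scratch. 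As written, the proposal is a reasonable research plan, not a proof, and the specific example it proposes may well be a counterexample to its own goal in $d\geq 3$.
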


We also have the following conjecture about the Central Limit
Theorem:

\begin{conj}
Under assumptions \ref{Assumptions} and \ref{assumption3}, The
Central Limit Theorem, \ref{CLT2}, can be strengthened as follows:
Random walk on discrete point process under appropriate scaling
converges to Brownian motion.
\end{conj}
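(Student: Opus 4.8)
The plan is to upgrade the one-time convergence of Theorem \ref{CLT2} to a path-level (Donsker-type) invariance principle by combining a \emph{functional} martingale central limit theorem for the harmonic deformation $M_n^{(\om)}=X_n+\chi(X_n,\om)$ with a path-uniform estimate on the corrector. For $P$-almost every $\om\in\Om_0$ one introduces, as elements of $D([0,\infty),\BR^d)$, the rescaled processes
\[
B^{(n)}_t=\frac{1}{\sqrt n}\,X_{\lfloor nt\rfloor},\qquad \widetilde M^{(n)}_t=\frac{1}{\sqrt n}\,M_{\lfloor nt\rfloor}^{(\om)},\qquad t\ge 0,
\]
and shows $B^{(n)}\Rightarrow W$ under $P_\om$, where $W$ is a $d$-dimensional Brownian motion with covariance matrix $D$ (the matrix of Theorem \ref{Modified_CLT}); the "appropriate scaling" is thus the diffusive one. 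Since $B^{(n)}_t=\widetilde M^{(n)}_t-\tfrac1{\sqrt n}\chi(X_{\lfloor nt\rfloor},\om)$ and the limit is continuous, it is enough to prove (i) the invariance principle $\widetilde M^{(n)}\Rightarrow W$, and (ii) $\sup_{t\le T}\tfrac1{\sqrt n}\,|\chi(X_{\lfloor nt\rfloor},\om)|\to 0$ in $P_\om$-probability for every $T<\infty$: once (ii) holds, $d_{J_1}(B^{(n)},\widetilde M^{(n)})\le\sup_{t\le T}|B^{(n)}_t-\widetilde M^{(n)}_t|\to0$, so $B^{(n)}$ and $\widetilde M^{(n)}$ have the same weak limit.

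For step (i) I would invoke the martingale functional central limit theorem (the Lindeberg--Feller version on $D([0,\infty))$; see e.g.\ \cite{Du96} and Ethier--Kurtz). By the lemma preceding Theorem \ref{Modified_CLT}, $\{M_n^{(\om)}\}$ is a zero-mean $L^2$-martingale for the natural filtration $\CF_n=\si(X_0,\ldots,X_n)$, and through the environment-seen-from-the-walk correspondence its increments are "stationary". The two hypotheses of the functional CLT are then precisely the pointwise facts already established in the proof of Theorem \ref{Modified_CLT}, now read as functions of $t$: convergence of the conditional quadratic variation $\frac1n\sum_{k=0}^{\lfloor nt\rfloor-1}D_k^{(\om)}\to tD$ in $P_\om$-probability, and the conditional Lindeberg condition $\frac1n\sum_{k=0}^{\lfloor nt\rfloor-1}E_\om\!\big[\,|M_{k+1}^{(\om)}-M_k^{(\om)}|^2\ind_{\{|M_{k+1}^{(\om)}-M_k^{(\om)}|>\ep\sqrt n\}}\mid\CF_k\big]\to0$ for every $\ep>0$; each follows from the pointwise ergodic theorem for the particle process, Theorem \ref{Thm_mutual_ergodic}, applied to $h_0(\om)=E_\om[D_1^{(\om)}]$ and to the truncations $h_K$, using that $M_1^{(\om)}=Z_1+G_{0,X_1}$ satisfies $\BE_P\big[E_\om[|M_1^{(\om)}|^2]\big]<\infty$ (finiteness of the second moments of $f_e$ under assumption \ref{assumption3}, together with the square-integrability clause of Theorem \ref{corrector_thm}). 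This gives $\widetilde M^{(n)}\Rightarrow W$ in $D([0,\infty),\BR^d)$ with $W$ continuous.

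Step (ii) is the heart of the matter: it is the passage from the \emph{density}-level sublinearity of the corrector, Theorem \ref{sub_linearity_everywhere}, to the \emph{path}-uniform bound $\max_{k\le nT}|\chi(X_k,\om)|=o(\sqrt n)$, and I expect it to be the main obstacle. The route is the quenched version of the argument used for random walk among random conductances and on percolation clusters. First one confines the walk: Doob's $L^2$-inequality for $M^{(\om)}$ (whose $L^2$-norm at time $m$ is $O(\sqrt m)$ by the above), the bound $E_\om(\|X_n\|)\le c\sqrt n$ of Theorem \ref{asymptotic_X_n}, and the fact that the jumps $Z_k$ have a finite $(2+\ep_0)$-moment (assumption \ref{assumption3}, whence $\max_{k\le n}|Z_k|=o(\sqrt n)$ in $P_\om$-probability) together yield, for every $\eta>0$, a constant $A=A(\eta)$ with $P_\om(\max_{k\le nT}\|X_k\|>A\sqrt n)<\eta$ for all large $n$. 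On the complement the walk can only reach "$\ep$-bad" sites $x\in\CP(\om)$ with $|x|\le A\sqrt n$ and $|\chi(x,\om)|>\ep\sqrt n$, whose expected number of visits before time $nT$ is at most $\sum_{x\ \mathrm{bad}}\sum_{k=0}^{nT}p^k_\om(0,x)$; here the on-diagonal heat-kernel bound $p^k_\om(0,x)\le K_2(k-K_1)^{-d/2}$ of Claim \ref{bound_of_transitions}, a dyadic-annulus decomposition in $x$, and the density bound of Theorem \ref{sub_linearity_everywhere} (applied at scale $\asymp A\sqrt n$ with threshold parameter $\asymp\ep/A$) are assembled to force this quantity to be small. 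The delicate point, and the place where something beyond the earlier sections is genuinely needed, is that here — unlike the bounded-conductance setting — the walk has unbounded jumps and we possess only an on-diagonal heat-kernel estimate; one therefore most likely has to feed in the Carne--Varopoulos bound (legitimate since the walk is reversible) to obtain Gaussian off-diagonal decay, hence Green's-function decay of the form $\sum_{k}p^k_\om(0,x)\lesssim|x|^{2-d}$ when $d\ge3$ (with the logarithmic-in-$n$ analogue when $d=2$, which needs extra care), and one must check via the ergodic theorem along coordinate lines that the graph distance on $\CP(\om)$ is comparable to the $\ell^1$-distance on large scales, so that far-away bad sites are charged the small Green's-function weight.

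Granting (i) and (ii), the finite-dimensional distributions of $B^{(n)}$ converge to those of $W$ (this already follows from (i) and the single-time negligibility $\tfrac1{\sqrt n}\chi(X_{\lfloor nt\rfloor},\om)\to0$, which is contained in the proof of Theorem \ref{CLT2} and is in any case implied by (ii)), and the uniform bound (ii) together with $\widetilde M^{(n)}\Rightarrow W$ upgrades this to $B^{(n)}\Rightarrow W$ in $D([0,\infty),\BR^d)$; an alternative packaging is to prove tightness of $B^{(n)}$ directly via Aldous's criterion, but this again runs into the local drift $V(\gth_{X_k}\om)$ and so ultimately relies on the same corrector control. Finally one records that the limiting covariance is exactly the matrix $D$ of Theorems \ref{CLT2} and \ref{Modified_CLT}, so the diffusively rescaled trajectory $t\mapsto n^{-1/2}X_{\lfloor nt\rfloor}$ converges in law to Brownian motion with covariance $D$, which is the asserted strengthening.
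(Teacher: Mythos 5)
First, a point of comparison: the paper does not prove this statement at all --- it appears in Section 11 as an open conjecture --- so there is no proof of record to measure your argument against. What you have written is the natural programme (functional martingale CLT for $M_n^{(\om)}=X_n+\chi(X_n,\om)$ plus negligibility of the corrector uniformly along the path), and your step (i) is indeed essentially routine given Theorem \ref{Thm_mutual_ergodic} and the lemma preceding Theorem \ref{Modified_CLT}. But the proposal does not close the conjecture, because step (ii) as you argue it contains a genuine gap, and it is exactly the point for which the paper's machinery is insufficient.

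The flaw is in the ``expected number of visits to bad sites'' bound. Theorem \ref{sub_linearity_everywhere} only says that the $\ep$-bad sites have small \emph{density} in a box of side $\asymp\sqrt{n}$, i.e.\ their number is of order $\ep n^{d/2}$. A walk run for time $nT$ visits on the order of $n$ sites (and spends order $n$ time units in the box), so the expected number of visits to a set of density $\ep$ is of order $\ep n$, not $o(1)$: carrying out your own computation with Green-function weights $\sum_k p^k_\om(0,x)\lesssim |x|^{2-d}$ and $\asymp\ep r^d$ bad sites at dyadic scale $r\leq A\sqrt n$ gives $\sum_r \ep r^{2}\asymp\ep A^2 n$, so Markov's inequality tells you nothing about the probability of \emph{ever} hitting a bad site --- and indeed the walk will typically visit many of them. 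The single-time argument of Theorem \ref{CLT2} survives only because the on-diagonal factor $n^{-d/2}$ exactly cancels the volume $n^{d/2}$ at one fixed time; a union over $nT$ times, or the visit-counting you propose, destroys this balance. What is actually needed is the much stronger \emph{pointwise} (maximal) sublinearity $\max_{x\in\CP(\om),\,|x|\leq n}|\chi(x,\om)|=o(n)$, in the spirit of Berger--Biskup's Theorem 5.4 for percolation, and that does not follow from the density statement alone: its known proofs use harmonicity of $x\mapsto x+\chi(x,\om)$ together with heat-kernel and expected-displacement bounds that are \emph{uniform in the starting point} of the walk inside the box. Here that uniformity is precisely what is missing: the constants in Theorem \ref{asymptotic_X_n} and Claim \ref{bound_of_transitions} are random, with $c(\gth_x\om)$, $N(\gth_x\om)$ admitting no tail control (the paper says so explicitly), so applying them simultaneously at all $\asymp n^{d/2}$ starting points of a box is not justified. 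The same issue infects your confinement step: Theorem \ref{asymptotic_X_n} controls $E_\om(\|X_n\|)$ at a fixed time only, and passing to $\max_{k\leq nT}\|X_k\|$ via $M^{(\om)}$ requires a bound on $\max_k|\chi(X_k,\om)|$ --- the very quantity under proof --- or a stopping-time argument needing the uniform-in-$x$ estimates just mentioned. Finally, $d=2$, which the conjecture includes, is waved at but not handled: there the Green function diverges and even the (already insufficient) visit-counting bound degenerates further, so a separate argument (as in the percolation literature) would be required. In short, the proposal is a sensible reduction to a corrector estimate, but the corrector estimate it needs is strictly stronger than anything proved in Sections 8--9, and the sketch offered for it does not work as stated; this is the substantive obstacle that leaves the statement a conjecture.
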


Our model describes non nearest neighbors random walk on random
subset of $\BZ^d$ with uniform transition probabilities. We suggest
the following generalization of the model:

\begin{ques}
Fix $\alpha\in\BR$. We look on the same model for the environments
with transition probabilities as follows: for $\om\in\Om_0$
\begin{equation}
P_\om(X_{n+1}=u|X_n=v)=\left\{
\begin{array}{cc}
0&~~~u\notin N_v(\om) \\
\frac{1}{Z(v)}\|u-v\|^\alpha&~~~u\in N_v(\om)
\end{array}
\right. ,
\end{equation}
where $Z(v)$ is normalization constant (The case $\alpha=0$ is the uniform distribution case). What can be proved about the extended model?
\end{ques}


\section*{Appendix}

In this Appendix we prove there exists a constant $c=c(d)>0$ such that for every $0<a\leq 2$
\begin{equation}
\sum_{n=0}^{\infty}e^{-a\cdot 2^n} \leq ca^{-d}
\end{equation}

\begin{proof}
First, we can restrict ourselves to $0<a<\ep$ for any fixed $\ep>0$. This follows from the fact that both expressions are monotonic in $a$. Next we note that:
\begin{equation}
\begin{aligned}
\sum_{n=0}^{\infty}{e^{-a\cdot 2^n}\cdot 2^{nd}}& \leq 1+\sum_{n=1}^\infty \frac{1}{1-2^{-d}}\sum_{k=2^{n-1}d}^{2^n d}e^{-a2^n}
\leq \frac{1}{1-2^{-d}} \sum_{k=0}^\infty e^{-ak^{1/d}}\\
&=\frac{1}{1-2^{-d}} \sum_{j=0}^\infty e^{-aj}\#\{j<k^{1/d}\leq j+1\}
=\frac{1}{1-2^{-d}} \sum_{j=0}^\infty e^{-aj}[(j+1)^d-j^d].\nonumber
\end{aligned}
\end{equation}
Since there exists a constant $c=c(d)>0$ such that for every $j\geq 0$ we have $(j+1)^d-j^d\leq cj^{d-1}$ the last term is less than or equal to
\[\label{appendix_a1}
\frac{c}{1-2^{-d}}\sum_{j=0}^\infty e^{-aj}j^{d-1}.
\]
For $j\geq 0$ denote $\alpha_j=e^{-aj}j^{d-1}$ and define
$j_0=\min\left\{j\geq 0 ~:~ \forall i\geq j~~\frac{a_{i+1}}{a_i}<e^{-a/2}\right\}.$
From the definition of $j_0$ it follows that \eqref{appendix_a1} is less than
\begin{equation}\label{appendix_a2}
\begin{aligned}
\frac{c}{1-2^{-d}}\left[\sum_{j=0}^{j_0-1} a_j+\sum_{j=j_0}^\infty a_j \right]&\leq \frac{c}{1-2^{-d}}\left[\sum_{j=0}^{j_0-1} a_j+\sum_{j=j_0}^\infty a_{j_0}e^{-a(j-j_0)/2} \right].\\
&\leq \frac{c}{1-2^{-d}}\left[j_0^{d-1}+\frac{\alpha_{j_0}}{1-e^{-a/2}}\right].
\end{aligned}
\end{equation}
From the definition of $j_0$ one can see that $j_0=\left\lceil\frac{1}{e^{\frac{a}{2d}}-1}\right\rceil\leq \left\lceil\frac{2d}{a}\right\rceil$, and therefore \eqref{appendix_a1} equals to
\begin{equation}
\frac{c}{1-2^{-d}}\left(1+\frac{e^{-a \left\lceil\frac{2d}{a}\right\rceil}}{\frac{a}{2}e^{-a/2}}\right)\left\lceil\frac{2d}{a}\right\rceil^{d-1}
\end{equation}
which for an appropriate constant $c=c(d)>0$ is less than $ca^{-d}$, as required.
\end{proof}


\bibliography{pointproc}

\begin{thebibliography}{BBHK08}

\bibitem[Bar04]{Ba03}
Martin~T. Barlow.
\newblock Random walks on supercritical percolation clusters.
\newblock {\em Ann. Probab.}, 32(4):3024--3084, 2004.

\bibitem[BB07]{BB06}
Noam Berger and Marek Biskup.
\newblock Quenched invariance principle for simple random walk on percolation
  clusters.
\newblock {\em Probab. Theory Related Fields}, 137(1-2):83--120, 2007.

\bibitem[BBHK08]{BBHK}
N.~Berger, M.~Biskup, C.~E. Hoffman, and G.~Kozma.
\newblock Anomalous heat-kernel decay for random walk among bounded random
  conductances.
\newblock {\em Ann. Inst. Henri Poincar\'e Probab. Stat.}, 44(2):374--392,
  2008.

\bibitem[Ber02]{Be01}
Noam Berger.
\newblock Transience, recurrence and critical behavior for long-range
  percolation.
\newblock {\em Comm. Math. Phys.}, 226(3):531--558, 2002.

\bibitem[BG08]{BG08}
Erwin Bolthausen and Ilya Goldsheid.
\newblock Lingering random walks in random environment on a strip.
\newblock {\em Comm. Math. Phys.}, 278(1):253--288, 2008.

\bibitem[BP07]{BP07}
Marek Biskup and Timothy~M. Prescott.
\newblock Functional {CLT} for random walk among bounded random conductances.
\newblock {\em Electron. J. Probab.}, 12:no. 49, 1323--1348 (electronic), 2007.

\bibitem[Br{\'e}02]{Br02}
Julien Br{\'e}mont.
\newblock On some random walks on {$\Bbb Z$} in random medium.
\newblock {\em Ann. Probab.}, 30(3):1266--1312, 2002.

\bibitem[BS02]{Sz00}
Erwin Bolthausen and Alain-Sol Sznitman.
\newblock {\em Ten lectures on random media}, volume~32 of {\em DMV Seminar}.
\newblock Birkh\"auser Verlag, Basel, 2002.

\bibitem[CFG08]{CFG8}
P.~Caputo, A.~Faggionato, and A.~Gaudilliere.
\newblock Recurrence and transience for long-range reversible random walks on a
  random point process.
\newblock 2008.

\bibitem[CFP09]{caputo2009invariance}
P.~Caputo, A.~Faggionato, and T.~Prescott.
\newblock Invariance principle for mott variable range hopping and other walks
  on point processes.
\newblock {\em Arxiv preprint arXiv:0912.4591}, 2009.

\bibitem[CS09]{CS09}
Nicholas Crawford and Allan Sly.
\newblock Heat kernel upper bounds on long range percolation clusters.
\newblock 2009.

\bibitem[DP96]{DP96}
Jean-Dominique Deuschel and Agoston Pisztora.
\newblock Surface order large deviations for high-density percolation.
\newblock {\em Probability Theory and Related Fields}, 104:467--482, 1996.

\bibitem[DS84]{DS84}
Peter~G. Doyle and J.~Laurie Snell.
\newblock {\em Random walks and electric networks}, volume~22 of {\em Carus
  Mathematical Monographs}.
\newblock Mathematical Association of America, Washington, DC, 1984.

\bibitem[Dur96]{Du96}
Richard Durrett.
\newblock {\em Probability: theory and examples}.
\newblock Duxbury Press, Belmont, CA, second edition, 1996.

\bibitem[Hug96]{Hu96}
Barry~D. Hughes.
\newblock {\em Random walks and random environments}.
\newblock Oxford Science Publications. The Clarendon Press Oxford University
  Press, New York, 1996.
\newblock Random environments.

\bibitem[Key84]{Ke84}
Eric~S. Key.
\newblock Recurrence and transience criteria for random walk in a random
  environment.
\newblock {\em Ann. Probab.}, 12(2):529--560, 1984.

\bibitem[KV86]{KV86}
C.~Kipnis and S.~R.~S. Varadhan.
\newblock Central limit theorem for additive functionals of reversible {M}arkov
  processes and applications to simple exclusions.
\newblock {\em Comm. Math. Phys.}, 104(1):1--19, 1986.

\bibitem[LP04]{LP97}
R.~Lyons and Y~Peres.
\newblock {\em Probability on Trees and Networks}.
\newblock Cambridge University Press, in progress. Current version published on
  the web at http://php.indiana.edu/{$\sim$}rdlyons, 2004.

\bibitem[MP05]{MP08}
B.~Morris and Yuval Peres.
\newblock Evolving sets, mixing and heat kernel bounds.
\newblock {\em Probab. Theory Related Fields}, 133(2):245--266, 2005.

\bibitem[MP07]{MP07}
P.~Mathieu and A.~Piatnitski.
\newblock Quenched invariance principles for random walks on percolation
  clusters.
\newblock {\em Proc. R. Soc. Lond. Ser. A Math. Phys. Eng. Sci.},
  463(2085):2287--2307, 2007.

\bibitem[NS94]{NS94}
Amos Nevo and Elias~M. Stein.
\newblock A generalization of {B}irkhoff's pointwise ergodic theorem.
\newblock {\em Acta Math.}, 173(1):135--154, 1994.

\bibitem[R{\'e}v05]{Re05}
P{\'a}l R{\'e}v{\'e}sz.
\newblock {\em Random walk in random and non-random environments}.
\newblock World Scientific Publishing Co. Pte. Ltd., Hackensack, NJ, second
  edition, 2005.

\bibitem[SS09]{SS09}
Vladas Sidoravicius and Alain-Sol Sznitman.
\newblock Percolation for the vacant set of random interlacements.
\newblock {\em Comm. Pure Appl. Math.}, 62(6):831--858, 2009.

\bibitem[Var04]{Va02}
S.~R.~S. Varadhan.
\newblock Random walks in a random environment.
\newblock {\em Proc. Indian Acad. Sci. Math. Sci.}, 114(4):309--318, 2004.

\bibitem[Zei04]{Ze03}
Ofer Zeitouni.
\newblock Random walks in random environment.
\newblock In {\em Lectures on probability theory and statistics}, volume 1837
  of {\em Lecture Notes in Math.}, pages 189--312. Springer, Berlin, 2004.

\end{thebibliography}
\bibliographystyle{alpha}


\end{document}